\documentclass{amsart}
\usepackage[utf8]{inputenc}
\usepackage[T1]{fontenc}
\usepackage{lmodern}
\usepackage{amsmath}
\usepackage{amssymb}
\usepackage{mathtools}
\usepackage{latexsym}
\usepackage[lite]{amsrefs}
\usepackage{nicefrac}
\usepackage{microtype}
\usepackage{color} 
\usepackage{tikz-cd}
\usepackage{MnSymbol}
\usepackage{enumitem} 
\setlist[enumerate,1]{label=\textup{(\arabic*)}}

\usepackage[all]{xy}
\newdir{ >}{{}*!/-5pt/@{>}}
\usepackage[pdftitle={Dualisable categories in NCG},
pdfauthor={Devarshi Mukherjee},
pdfsubject={Mathematics}
]{hyperref}

 \usepackage{multicol}
 \usepackage{tabularx}
 
\theoremstyle{plain}
\newtheorem{theorem}{Theorem}[section]
\newtheorem{lemma}[theorem]{Lemma}
\newtheorem{corollary}[theorem]{Corollary}
\newtheorem{proposition}[theorem]{Proposition}
\newtheorem{defprop}[theorem]{Definition/Proposition}
\theoremstyle{remark}
\newtheorem{remark}[theorem]{Remark}
\theoremstyle{question}

\newcommand{\op}{\mathrm{op}}
\theoremstyle{definition}
\newtheorem{definition}[theorem]{Definition}
\newtheorem{example}[theorem]{Example}
\newtheorem{exercise}[theorem]{Exercise}
\newtheorem{conjecture}[theorem]{Conjecture}
\newtheorem{warning}[theorem]{Warning}
\numberwithin{theorem}{section}

\newcommand{\coma}{\widehat}
\newcommand\haotimes{\mathbin{\coma{\otimes}}}

\newcommand{\defeq}{\mathrel{:=}} 

\usepackage{tikz-cd}
\newcommand{\bA}{\mathbf{A}}
\newcommand{\cA}{\mathcal{A}}
\newcommand{\Q}{\mathbb{Q}}
\newcommand{\Z}{\mathbb{Z}}
\newcommand{\C}{\mathbb{C}}
\newcommand{\R}{\mathbb{R}}
\newcommand{\N}{\mathbb{N}}

\newcommand{\bD}{\mathbf{D}}
\newcommand{\bC}{\mathbf{C}}
\newcommand{\bE}{\mathbf{E}}
\newcommand{\cX}{\mathcal{X}}
\newcommand{\colim}{\mathrm{colim}}
\newcommand{\bdd}{\mathcal{B}}
\newcommand{\jhat}{\widehat{\jmath}}

\newcommand{\Prld}{\mathbf{Pr}^\mathrm{L}_\mathrm{dual}}
\newcommand{\xto}{\xrightarrow}
\newcommand{\calC}{\mathbf{C}}
\newcommand{\calD}{\mathbf{D}}
\newcommand{\calE}{\mathbf{E}}
\newcommand{\Calk}{\mathbf{Calk}}

\newcommand{\Ind}{\mathrm{Ind}}
\newcommand{\Shv}{\mathbf{Shv}}
\newcommand{\Sp}{\mathbf{Sp}}
\newcommand{\dirlim}{\underrightarrow{\mathrm{colim}}\,}
\newcommand{\NcMot}{\mathbf{NcMot}}
\newcommand{\Fun}{\mathbf{Fun}}
\newcommand{\map}{\mathrm{map}}
\newcommand{\THH}{\mathrm{THH}}
\newcommand{\coS}{\widehat{\mathbf{coShv}}}
\newcommand{\alg}{\mathrm{alg}}

\theoremstyle{remark} 
\newtheorem*{ack}{Acknowledgements}

\begin{document}

\title{Dualisable categories in noncommutative geometry}

\author{Devarshi Mukherjee}
\email{Devarshi.Mukherjee@abdn.ac.uk}
\address{University of Aberdeen, Institute of Mathematics, Aberdeen, UK}

\author{Thomas Nikolaus}
\email{nikolaus@uni-muenster.de}
\address{Universit\"at M\"unster\\ Mathematics M\"unster, M\"unster, Germany}

\begin{abstract}
In this exposition based on a lecture series at the CIMPA School on Operator Algebras and \(K\)-theory, 2025, we highlight modern perspectives on noncommutative geometry using dualisable categories. To this end, we show that several categories naturally arising in functional analysis (derived categories of bornological and condensed modules), operator algebras (\(E\)-theory) and stable categories (noncommutative motives) are dualisable. This in particular leads to a definition of algebraic \(K\)-theory for analytic spaces. Furthermore, the dualisability of \(E\)-theory and motives  leads us to pursue an analogy between topological and algebraic \(K\)-theory, inspired by assembly map (Baum-Connes and Farrell-Jones) conjectures.
\end{abstract}

\maketitle

\section*{Introduction}

Noncommutative geometry studies ```noncommutative spaces'' through their invariants. The precise meaning of such a space depends on context, with each perspective extending a classical notion of geometry.

From the \emph{operator-algebraic} viewpoint, a noncommutative space is modelled by a (not necessarily commutative) $C^*$-algebra. This is motivated by the Gelfand-Naimark theorem, which identifies commutative $C^*$-algebras with algebras of continuous functions on locally compact Hausdorff spaces.

From the \emph{algebro-geometric} perspective, one assigns to a scheme $X$ its derived $\infty$-category of quasi-coherent sheaves. Under suitable hypotheses, this stable $\infty$-category determines $X$, suggesting that a ``noncommutative scheme'' should be understood as a certain stable~$\infty$-category. This idea, developed by Manin and others, forms the basis of noncommutative algebraic geometry.

Both approaches lead to universal invariants: analytic theories such as $KK$-theory and $E$-theory for $C^*$-algebras, and algebraic invariants such as algebraic $K$-theory for presentable stable~$\infty$-categories. This document surveys recent progress in these areas, including Efimov's advances using dualisable categories and new analytic techniques arising from bornological and condensed perspectives. These developments are closely linked. Our goal is to build a dictionary between the analytic and categorical frameworks, interpreting constructions such as assembly maps in both languages. This philosophy is motivated by major conjectures such as those of Baum--Connes and Farrell--Jones. The two perspectives can be summarised as follows:
\begin{itemize}
\item \textbf{Analytic viewpoint.} \emph{Noncommutative spaces are $C^*$-algebras.} The Gelfand duality identifies locally compact Hausdorff spaces with commutative \(C^*\)-algebras, leading to the point of view that noncommutative \(C^*\)-algebras be treated as models for noncommutative spaces. The prototypical examples of such spaces are topological groupoids, which includes group actions on spaces and graphs. The way one probes \(C^*\)-algebras is through topological $K$-theory, and trace-invariants mapping out of \(K\)-theory such as local cyclic homology. Both these invariants are special cases of \emph{bivariant} theories - $KK$-theory, and $E$-theory, which are the universal homology theories of \(C^*\)-algebras, and \emph{bivariant local cyclic homology}, the best cyclic homological approximation. These invariants coincide (rationally) on a subcategory of \(C^*\)-algebras called the \emph{UCT-class}.

\item \textbf{Categorical viewpoint.} \emph{A noncommutative space is a stable~$\infty$-category.} In analogy with the analytic viewpoint, we may associate to a topological space or scheme \(X\), the category \(\mathsf{Shv}(X)\) of sheaves, or \(\mathsf{QCoh}(X)\) of quasi-coherent sheaves on a scheme, prompting one to view noncommutative spaces as stable \(\infty\)-categories. In this setup, one analyses such spaces through structural notions such as dualisability, compact objects, symmetric monoidal structures, and Verdier localisations. The natural analogue of topological \(K\)-theory is algebraic \(K\)-theory, which is again the special case of a bivariant theory for stable \(\infty\)-categories, called \emph{noncommutative motives}.

\end{itemize}
\medskip

A $C^*$-algebra gives rise to a stable~$\infty$-category of modules, while dualisable stable categories are often equivalent to module categories. These assignments are not inverse equivalences in any reasonable sense, as the algebraic framework forgets analytic structure, but many methods can nonetheless be transferred between the two worlds. 
For example invariants such as $KK$-theory and algebraic $K$-theory may be viewed in parallel as functors from noncommutative spaces to spectra, characterised by excision, additivity, and, in favourable cases, $A^1$-invariance. The following dictionary outlines corresponding concepts in each framework.

\medskip
\begin{table}
\begin{tabularx}{0.8\textwidth} { 
  | >{\raggedright\arraybackslash}X 
  | >{\centering\arraybackslash}X 
  | >{\raggedleft\arraybackslash}X | }
 \hline
  \textbf{Operator algebras} & \textbf{Stable \(\infty\)-categories} \\
 \hline
   \(\C\) &  \(\mathbf{Sp}\)\\
  \hline
  Commutative \(C^*\)-algebras \(C_0(X,\C)\)  & \(\mathbf{Shv}(X, \mathbf{Sp})\)  \\
\hline
(Discrete) Group \(C^*\)-algebras \(C^*(G)\) &  \(\mathbf{Sp}^{BG}\) \\
\hline
Crossed products by discrete groups \(G \rtimes C_0(X,\C)\) & Equivariant sheaves \(\mathbf{Shv}^G(X, \mathbf{Sp})\)\\
\hline
Noncommutative \(C^*\)-algebra & Dualisable category \\
\hline
Topological \(K\)-theory & Algebraic \(K\)-theory \\
\hline
Local cyclic homology & Refined topological periodic homology \\
\hline
\(E\)-theory & Noncommutative motives \\
\hline
\end{tabularx}
\caption{A dictionary between operator algebras and stable \(\infty\)-categories}
\label{ncg-table}
\end{table}

The analogies of \ref{ncg-table} may appear somewhat superficial at first glance; to argue why this is not the case, recent work \cite{bredon} shows that for a finite group \(G\) acting on a locally compact Hausdorff space \(X\), we have \[K^\mathrm{top}(G \ltimes C_0(X,\C)) \simeq \Gamma_c^G(X, \mathbf{K}_G^{\mathrm{top}}),\] where the right hand side is a sheaf-theoretic version of Bredon cohomology, and \(\mathbf{K}_G^{\mathrm{top}}\) is the presheaf on the orbit category of \(G\) defined by \(G/H \mapsto K^{\mathrm{top}}(\C[H])\). In particular, for the trivial group, we have \(K^{\mathrm{top}}(C_0(X,\C)) \simeq \Gamma_c(X, \mathbf{KU})\), where the right hand side is compactly supported cohomology. On the algebraic \(K\)-theory side, we have similarly have
\[
K(\mathbf{Shv}^G(X, \mathbf{Sp})) \simeq \Gamma_c^G(X, K_G), 
\] where \(K_G\) is again the presheaf \(G/H \mapsto K(\mathbf{Sp}^{BH})\). This generalises Efimov's computation (\cite{efimov2024k}) of algebraic \(K\)-theory of sheaves on a locally compact Hausdorff space as compactly supported sheaf cohomology of the space. These computations follow from the characterisation of (equivariant) cohomology theories of locally compact Hausdorff (\(G\)-)spaces valued in dualisable categories, in terms of presheaves on the orbit category. The dualisability of the categories of noncommutative motives and \(E\)-theory applied to the cohomology theories \(X \mapsto C_0(X)\) and \(X \mapsto \mathbf{Shv}^G(X)\) then implies the computations above.

\section{Overview of the Contents}

\paragraph{Section 2 - Presentable, dualisable, and rigid categories.} 
We begin with presentable \(\infty\)-categories: accessibility via \(\Ind_\kappa\), compact objects, and the adjoint functor theorem. We discuss constructions from model categories and characterisations by Bousfield localisations. Within \(\mathrm{Pr}^L\) we recall the closed symmetric monoidal structure and define the tensor product \(C \otimes D\) by separately colimit-preserving functors. We then specialise to \emph{stable} \(\infty\)-categories, reviewing suspension/loop functors, triangulated homotopy categories, and stabilisation \(\Sp(C)\). A key point is the characterisation of \emph{dualisable} objects in \(\mathrm{Pr}^L_{\mathrm{st}}\) and their \emph{rigid} refinements, including the role of compact and trace-class morphisms and the ``dualisable core'' construction that produces a rigidification \(C \mapsto C^{\mathrm{rig}}\).

\paragraph{\emph{Section 3 - Functional analysis revisited: bornologies and condensed mathematics.}}
We formalise bornological sets and bornological \(R\)-modules (for a Banach ring \(R\)), introduce completeness and separation, and explain the \emph{dissection} functor into inductive systems of normed/Banach modules. This yields presentable, closed monoidal categories such as \(\Ind(\mathrm{Ban}_R)\) or its ``formal bornological'' subcategory, which remedy the lack of infinite (co)limits in classical Banach categories. We also indicate how condensed mathematics provides an alternative test-object formalism compatible with these constructions. The emphasis is on the tensor product, internal Hom, and the functorial control required for later \(K\)-theoretic and motivic arguments.

\paragraph{\emph{Section 4 - Higher-categorical perspectives on operator algebras.}}
We review \(KK\)-theory and \(E\)-theory from a higher-categorical vantage: exactness and stability properties, mapping objects, and bifunctoriality. The goal is to express familiar constructions in a language where adjunctions and monoidal structures in \(\mathrm{Pr}^L_{\mathrm{st}}\) can be applied directly.

\paragraph{\emph{Section 5 - Definition of algebraic \(K\)-theory.}}
We present algebraic \(K\)-theory for appropriate stable \(\infty\)-categories/rings, including the positive \(K\)-groups, the \emph{Calkin category}, and \emph{Efimov \(K\)-theory}. The treatment is designed to interface with both exact sequences (Verdier) and filtered colimits, and to prepare for comparisons with homotopy \(K\)-theory and excision.

\paragraph{\emph{Section 6 - Properties of \(K\)-theory.}}
We establish exactness for Verdier sequences, treat non-unital rings, formulate general excision, and introduce homotopy \(K\)-theory. The structural leitmotif is that the good categorical behaviour (exactness, colimit preservation) of the input categories translates into the expected long exact sequences and localisation theorems in \(K\)-theory.

\paragraph{\emph{Section 7 - Noncommutative motives.}}
We define noncommutative motives in a presentable, stable setting, discuss sheaves as motives, \(A^1\)-invariant motives, and \(A^1\)-acyclicity. The focus is on universal properties and the role of dualisability in ensuring that motivic functors interact well with tensor products and localisations.

\paragraph{\emph{Section 8 -  Assembly maps.}}
We formulate assembly maps via algebraic \(KK\)-theory and to relate them to functorial constructions such as cosheaves and functorial assembly. The chapter explains how the motivic and \(KK\)-theoretic assembly pictures can be compared inside the higher-categorical framework established earlier, clarifying the connection with major conjectures (Farrell-Jones) at a structural level.

\begin{ack}
Both authors were funded by Germany’s Excellence Strategy EXC 2044 390685587, Mathematics Münster: Dynamics-Geometry-Structure. The first named author was also supported by a Marie-Curie-Postdoctoral Fellowship, carried out at the Mathematical Institute, University of Oxford.
\end{ack}

\section{Presentable, dualisable and rigid categories}

In this section, we setup terminology that will be relevant to the narrative of the subsequent sections. We note at this point that this section is not meant to be ``yet another introduction to \(\infty\)-categories", and we assume that the reader is familiar with some form of higher category theory. The results stated in this section hold in most known models of higher category theory, though when we provide proofs, we mostly use  quasicategories. 
 The material in this section is assembled from various sources, notably \cite{nkp,Cnossen}.

 Let \(\kappa\) be a regular cardinal. A \emph{\(\kappa\)-finite} \(\infty\)-category is an \(\infty\)-category that can be obtained from taking \(\kappa\)-finite coproducts and pushouts of the categories \([0]\) and \([1]\) in \(\mathbf{Cat}_\infty\). A nonempty \(\infty\)-category \(\bC\) is said to be \emph{\(\kappa\)-filtered} if every diagram \(I \to \bC\) with \(I\) \(\kappa\)-finite admits a cocone. Finally, a \emph{\(\kappa\)-filtered diagram} \(J \to \bC\) is a functor whose indexing category is \(\kappa\)-filtered.

\subsection{Presentable \(\infty\)-categories}

\begin{definition}\label{def:compact}
Let \(\bC\) be an \(\infty\)-category with all small colimits. We call an object \(X \in \bC\) \(\kappa\)-\emph{compact} if \(\mathbf{Hom}_\bC(X,-)\) commutes with \(\kappa\)-filtered colimits. That is, for any \(\kappa\)-filtered diagram \(Y \colon I \to \bC\), the natural map \[\colim_{i \in I} \mathbf{Hom}_\bC(X, Y_i) \to \mathbf{Hom}_\bC(X,\colim_i Y_i)\] is an equivalence in anima.  Denote by \(\bC^\kappa\) the full subcategory of \(\kappa\)-compact objects.  
\end{definition}

Let \(\bC\) be a small \(\infty\)-category. Consider the \emph{\(\infty\)-category \(\mathbf{Fun}(\bC^\op, \mathbf{An})\) of anima-valued presheaves on \(\bC\)}; this has all colimits. In fact, it is the universal way to cocomplete an \(\infty\)-category in the sense that if \(\bD\) is an \(\infty\)-category with all small colimits, left Kan extension along the (fully faithful) Yoneda embedding \[j \colon \bC \to \mathbf{Fun}(\bC^\op, \mathbf{An}), \quad X \mapsto \mathbf{Hom}_\bC(-, X)\] induces an equivalence \begin{equation}\label{eq1-yoneda}
\mathbf{Fun}^{\colim}(\mathbf{Fun}(\bC^\op, \mathbf{An}), \bD) \to \mathbf{Fun}(\bC, \bD),
\end{equation} of \(\infty\)-categories. In what follows, we recall how to add \(\kappa\)-filtered colimits to \(\bC\) for any regular cardinal \(\kappa\). 

\begin{definition}\label{def:ind-kappa}
For a small \(\infty\)-category \(\bC\), we define \(\mathbf{Ind}_\kappa(\bC) \subseteq \mathbf{Fun}(\bC^\op, \mathbf{An})\) as the smallest \(\infty\)-category containing \(j(\bC)\) and is closed under \(\kappa\)-filtered colimits.  
\end{definition}

Let \(\mathbf{Fun}^{\colim_\kappa}\) denote the subcategory of the functor category consisting of \(\kappa\)-filtered colimit preserving functor. Then for an \(\infty\)-category with \(\kappa\)-filtered colimits, one has an equivalence 

\[\mathbf{Fun}^{\colim_\kappa}(\mathbf{Ind}_{\kappa}(\bC),\bD) \to \mathbf{Fun}(\bC, \bD)\] induced by restriction along \(\bC \to \mathbf{Ind}_\kappa(\bC)\) and left Kan extension.

\begin{lemma}\label{lem:compact-generation}\cite[Proposition 2.1.8]{nkp}
Let \(\bC\) be a small \(\infty\)-category and \(\bD\) an \(\infty\)-category with \(\kappa\)-filtered colimits. Let \(f \colon \bC \to \bD\) be a functor and \(F \colon \mathbf{Ind}_\kappa(\bC) \to \bD\) its Yoneda extension. 
\begin{enumerate}
\item If \(f\) is fully faithful, and for every object \(c \in \bC\), \(f(c)\) is \(\kappa\)-compact in \(\bD)\), then  \(F\) is fully faithful;
\item If the image of \(f\) generates \(\bD\) under \(\kappa\)-filtered colimits, then \(F\) is an equivalence of \(\infty\)-categories. 
\end{enumerate}
\end{lemma}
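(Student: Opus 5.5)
We prove (1) by computing mapping spaces in \(\mathbf{Ind}_\kappa(\bC)\) and transporting them along \(F\). The key input is that every object of \(\mathbf{Ind}_\kappa(\bC)\) is a \(\kappa\)-filtered colimit of representables, say \(X \simeq \colim_{i\in I} j(c_i)\) with \(I\) \(\kappa\)-filtered. This holds because the full subcategory of presheaves of this form contains \(j(\bC)\) and is closed under \(\kappa\)-filtered colimits; the latter uses that a \(\kappa\)-filtered colimit of \(\kappa\)-filtered colimits of representables can be reindexed as one, via the usual Grothendieck construction on filtered diagrams. By Definition~\ref{def:ind-kappa} it therefore equals \(\mathbf{Ind}_\kappa(\bC)\). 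Representables \(j(c)\) are \(\kappa\)-compact in \(\mathbf{Ind}_\kappa(\bC)\): by Yoneda, \(\mathbf{Hom}(j(c),-)\) is evaluation at \(c\), and colimits in \(\mathbf{Ind}_\kappa(\bC)\) are computed in \(\mathbf{Fun}(\bC^\op,\mathbf{An})\) because the subcategory is closed under \(\kappa\)-filtered colimits.

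Now take \(X=\colim_i j(c_i)\) and \(Y=\colim_k j(d_k)\), both \(\kappa\)-filtered colimits. Since \(F\) preserves \(\kappa\)-filtered colimits and extends \(f\), we get \(F(X)\simeq \colim_i f(c_i)\) and \(F(Y) \simeq \colim_k f(d_k)\). The comparison map \(\mathbf{Hom}(X,Y)\to \mathbf{Hom}(F X,F Y)\) then decomposes as follows:
\[
\lim_i \colim_k \mathbf{Hom}_\bC(c_i,d_k) \longrightarrow \lim_i \colim_k \mathbf{Hom}_\bD(f c_i, f d_k) \longrightarrow \lim_i \mathbf{Hom}_\bD\bigl(f c_i, \colim_k f d_k\bigr).
\]
The left-hand side is \(\mathbf{Hom}(X,Y)\): the colimit in the first variable becomes a limit, and compactness of \(j(c_i)\) together with Yoneda gives the inner colimit. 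The first arrow is an equivalence by full faithfulness of \(f\). The second is an equivalence by \(\kappa\)-compactness of \(f(c_i)\) in \(\bD\). The resulting target is \(\mathbf{Hom}_\bD(FX,FY)\).

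The main obstacle is checking that this composite really is the map induced by \(F\), rather than just some abstract equivalence. I would handle this by fixing \(X\) and letting \(Y\) vary. Both \(\mathbf{Hom}(X,-)\) and \(\mathbf{Hom}(FX,F-)\) preserve \(\kappa\)-filtered colimits on objects of the form \(j(d)\) when \(X\) is representable, so the natural transformation between them is determined by its restriction to representables, where it is Yoneda plus \(f\). Passing from representable \(X\) to general \(X\) is then a limit argument in the first variable.

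For (2), (1) already makes \(F\) fully faithful; this presumably uses the standing hypotheses of (1) as well. So it remains to prove essential surjectivity. The essential image of \(F\) contains \(f(\bC)\). It is also closed under \(\kappa\)-filtered colimits: given a \(\kappa\)-filtered diagram in the image, full faithfulness lets us lift it to a diagram \(Z\colon I\to\mathbf{Ind}_\kappa(\bC)\), and then \(F(\colim Z)\simeq\colim FZ\). Since \(f(\bC)\) generates \(\bD\) under \(\kappa\)-filtered colimits by hypothesis, the essential image is all of \(\bD\).
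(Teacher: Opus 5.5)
The paper gives no proof of its own here (it only cites \cite[Proposition 2.1.8]{nkp}); your argument is correct and is the standard one for this fact. You compare mapping spaces on pairs of representables using full faithfulness of \(f\), extend in the second variable because both \(\mathbf{Hom}(j(c),-)\) and \(\mathbf{Hom}_{\bD}(f(c),F(-))\) preserve \(\kappa\)-filtered colimits, extend in the first variable because \(F\) preserves \(\kappa\)-filtered colimits, and get essential surjectivity because the essential image is closed under \(\kappa\)-filtered colimits.

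Your preliminary presentation \(X\simeq\colim_i j(c_i)\) and the explicit \(\lim\)--\(\colim\) formula can be dropped. The closure argument only uses that \(\mathbf{Ind}_\kappa(\bC)\) is the smallest full subcategory containing \(j(\bC)\) and closed under \(\kappa\)-filtered colimits, which also spares you the reindexing lemma you only sketched. You are right that (2) must include the hypotheses of (1): for \(f\colon \mathrm{pt}\sqcup\mathrm{pt}\to\mathrm{pt}\) the image generates, but \(\mathbf{Ind}_\kappa(\mathrm{pt}\sqcup\mathrm{pt})\simeq\mathrm{pt}\sqcup\mathrm{pt}\), so \(F\) is not fully faithful.
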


We specialise the situation of Lemma \ref{lem:compact-generation} to the case where \(\bC\) is an \(\infty\)-category with \(\kappa\)-filtered colimits. Then its full subcategory \(\bC^\kappa \subseteq \bC\) of \(\kappa\)-compact objects induces a fully faithful functor \[k \colon \mathbf{Ind}_\kappa(\bC^\kappa) \to \bC,\] which is an equivalence if \(\bC^\kappa\) generates \(\bC\). This motivates the following:

\begin{definition}\label{def:presentable}
Let \(\bC\) be an \(\infty\)-category with small colimits. We call \(\bC\) \emph{presentable} if there is a regular cardinal \(\kappa\) such that the canonical functor \(k \colon \mathbf{Ind}_\kappa(\bC^\kappa) \to \bC\) is an equivalence.
\end{definition}

In what follows, we consider several examples of presentable \(\infty\)-categories. 

\subsubsection{Presentable \(1\)-categories to presentable \(\infty\)-categories}

Let \(\mathsf{C}\) be a locally presentable \(1\)-category. Recall that by definition, this means that \(\mathsf{C}\) is locally small, has all small colimits, and that it is \emph{accessible} in the sense that it is equivalent to \(\mathsf{Ind}_\kappa(D)\) for some small category \(D\) and regular cardinal \(\kappa\).  

\begin{proposition}\label{prop:nerve-presentable}
The nerve of a locally presentable \(1\)-category is a presentable \(\infty\)-category.  
\end{proposition}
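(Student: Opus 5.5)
The plan is to check Definition~\ref{def:presentable} directly for \(N(\mathsf{C})\): find a regular cardinal \(\kappa\) such that \(N(\mathsf{C})\) has all small colimits and \(k \colon \mathbf{Ind}_\kappa(N(\mathsf{C})^\kappa) \to N(\mathsf{C})\) is an equivalence. The engine is Lemma~\ref{lem:compact-generation}. We apply it to the small category \(N(\mathsf{C}^\kappa)\), where \(\mathsf{C}^\kappa\) is the (essentially small) full subcategory of \(\kappa\)-compact objects in the \(1\)-categorical sense. We take \(f\) to be the inclusion into \(N(\mathsf{C})\).

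The first step is to compare colimits in \(\mathsf{C}\) with colimits in \(N(\mathsf{C})\). Mapping anima in a nerve are discrete, \(\mathbf{Hom}_{N(\mathsf{C})}(X,Y) = \mathrm{Hom}_{\mathsf{C}}(X,Y)\). A cocone in \(N(\mathsf{C})\) is therefore a colimit iff, for every \(T\), the map \(\mathrm{Hom}(\colim Y_i, T) \to \lim \mathrm{Hom}(Y_i,T)\) is an equivalence of anima. Here the limit is taken in \(\mathbf{An}\) of a diagram of sets. A limit of discrete anima is discrete and agrees with the set-theoretic limit, since \(\mathbf{Set} \subseteq \mathbf{An}\) is closed under limits. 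So ordinary colimits in \(\mathsf{C}\) are \(\infty\)-colimits in \(N(\mathsf{C})\), and \(N(\mathsf{C})\) has all small colimits. One should also note that an \(\infty\)-diagram \(I \to N(\mathsf{C})\) factors through the homotopy category \(hI\). Its colimit is the \(1\)-categorical colimit of \(hI \to \mathsf{C}\), which exists.

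The second step, which I expect to be the main obstacle, is matching filtered colimits and compactness. The indexing \(\infty\)-category \(I\) is \(\kappa\)-filtered, but \(hI\) need not be \(\kappa\)-filtered in a naive sense. The first thing I would try is to use cofinality: every \(\kappa\)-filtered \(\infty\)-category admits a cofinal functor from a \(\kappa\)-directed poset (Lurie, HTT~5.3.1.18). This lets us restrict to \(\kappa\)-directed posets, where the \(1\)- and \(\infty\)-categorical notions agree. Next, \(\kappa\)-filtered colimits in \(\mathbf{An}\) of discrete anima are discrete and computed in \(\mathbf{Set}\), because \(\pi_n\) commutes with filtered colimits. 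Hence an object \(X\) is \(\kappa\)-compact in \(N(\mathsf{C})\) exactly when it is \(\kappa\)-compact in \(\mathsf{C}\).

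The final step chooses \(\kappa\) with \(\mathsf{C} \simeq \mathsf{Ind}_\kappa(D)\). By the classical theory, every object of \(\mathsf{C}\) is then a \(\kappa\)-filtered colimit (over a \(\kappa\)-directed poset) of objects of \(D\), and these are \(\kappa\)-compact. So \(f\) is fully faithful with \(\kappa\)-compact image, and by the second step its image generates \(N(\mathsf{C})\) under \(\kappa\)-filtered colimits. Lemma~\ref{lem:compact-generation}(1) and~(2) then give that \(\mathbf{Ind}_\kappa(N(\mathsf{C})^\kappa) \to N(\mathsf{C})\) is an equivalence, as required.
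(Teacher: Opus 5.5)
Your proof is correct, but it follows a different route from the paper's.

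\textbf{The paper's route.} It compares colimits by quoting the model-categorical comparison theorem (HTT, Theorem 4.2.4.1): homotopy colimits in $\mathsf{C}$, viewed as a constant simplicial category, compute $\infty$-colimits of diagrams $N(F)$ in $N(\mathsf{C})$. It then uses that $N(\mathsf{Set}) \to \mathbf{An}$ preserves filtered colimits to transport the presentation $\mathsf{C} \simeq \mathsf{Ind}_\kappa(D) \subseteq \mathsf{Fun}(D^\op,\mathsf{Set})$ into $\mathbf{Fun}(N(D)^\op,\mathbf{An})$. It concludes by placing $N(\mathsf{C})$ inside $\mathbf{Ind}_\kappa(N(D))$.

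\textbf{Your route.} You work intrinsically in $N(\mathsf{C})$.
\begin{enumerate}
\item Discreteness of mapping anima, together with closure of $\mathsf{Set} \subseteq \mathbf{An}$ under limits, gives the colimit comparison. Passing through $hI$, this covers arbitrary indexing shapes.
\item Cofinality from $\kappa$-directed posets, plus $\pi_n$ commuting with filtered colimits, identifies $N(\mathsf{C})^\kappa$ with $N(\mathsf{C}^\kappa)$.
\item Lemma~\ref{lem:compact-generation} then verifies Definition~\ref{def:presentable} on the nose.
\end{enumerate}

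\textbf{What each buys.} The paper's argument is shorter and leans on an external comparison theorem. However, it only treats diagrams of the form $N(F)$, and it handles $\kappa$-filtered $\infty$-diagrams and $\kappa$-compactness only implicitly. Its final step (``a full subcategory of the accessible $\mathbf{Ind}_\kappa(N(D))$, hence accessible'') also needs an extra observation. Namely, $N(\mathsf{C})$ contains $N(D)$ and is closed under $\kappa$-filtered colimits there, so it is all of $\mathbf{Ind}_\kappa(N(D))$.

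Your argument is more elementary and self-contained relative to the paper's own definitions. Its cost is the cofinality input. As a by-product, it explicitly identifies the $\kappa$-compact objects of $N(\mathsf{C})$ with those of $\mathsf{C}$.
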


\begin{proof}
Viewing ordinary categories as constant simplicial categories, we may apply  \cite[Theorem 4.2.4.1]{HTT}, to conclude that the colimit of a diagram \(F \colon I \to \mathsf{C}\) (which coincides with the homotopy colimit of \(\mathsf{C}\) viewed as a constant simplicial category) is equivalent to the \(\infty\)-categorical colimit of the diagram \(N(F) \colon N(I) \to N(\mathsf{C})\).  Furthermore, the \(\infty\)-functor \(N(\mathsf{Set}) \to N_\Delta(\mathsf{Kan}) \simeq \mathbf{An}\) induced by the functor \(\mathsf{Set} \to \mathsf{Kan}\), assigning to \(X\) the constant simplicial set at \(X\), preserves filtered colimits. Now the accessibility of \(\mathsf{C}\) in particular says that \(\mathsf{Ind}_\kappa(D) \simeq \mathsf{C}\) for some regular cardinal \(\kappa\), and some small category \(D\). In other words, \(\mathsf{C}\) is generated under \(\kappa\)-filtered colimits inside \(\mathsf{Fun}(D^\op, \mathsf{Set})\) by objects in \(j(D)\). The nerve functor preserves these colimits, which are now taken in \(\mathbf{Fun}(N(D)^\op, N(\mathsf{Set}))\). Finally, since the functor \(N(\mathsf{Set}) \to \mathbf{An}\) preserves filtered colimits, we have \(N(\mathsf{C})\) is a full subcategory of \(\mathbf{Ind}_\kappa(N(D))\), which is accessible, so that \(N(\mathsf{C})\) is itself accessible.  
\end{proof}

Proposition \ref{prop:nerve-presentable} gives us many examples of presentable \(\infty\)-categories. 

\begin{example}
The nerve of the category of sets is presentable. As a \(1\)-category this is clear as any set can be written as a filtered union of finite sets. 
\end{example}

\begin{example}\label{ex:Ban}
The category \(\mathsf{Ban}_\C^{\leq 1}\) of Banach spaces with contracting linear maps is presentable. The coequaliser of a parallel pair of bounded linear maps \(f,g \colon V \rightrightarrows W\) is given by the quotient map \(W \to W/\overline{\{f(x) - g(x) \vert x \in W\}}\). For a family \((X_i)_{i \in I}\) of Banach spaces, the coproduct is given by the \(l^1\)-direct sum \[\bigoplus_{i \in I} X_i = \{(x_i) \in \prod_{i \in I} X_i \vert \sum_{i \in I} \vert x_i \vert_i < \infty \}.\] This shows that \(\mathsf{Ban}_\C^{\leq 1}\) has all colimits. Now observe that \(\mathsf{Hom}_{\mathsf{Ban}_\C^{\leq 1}}(\C,X) \cong B_X\), where \(B_X\) is the unit ball of \(X\), and that the unit ball functor preserves \(\omega_1\)-filtered colimits. This shows that \(\C\) is \(\omega_1\)-compact. To see that \(\C\) is a strong generator, let \(f\), \(g\) be two distinct maps \(X \to Y\). Choose \(x\) such that \(f(x) \neq g(x)\). After possibly rescaling, we may assume that \(\vert x \vert \leq 1\). Then \(u \colon \C \to X\), \(1 \mapsto x\) is a contraction satisfying \(f \circ u \neq g \circ u\), showing that \(\C\) is a generator. It is strong as if \(f \colon X \to Y\) induces an isomorphism \(\mathsf{Hom}_{\mathsf{Ban}_\C^{\leq 1}}(\C,X) \cong \mathsf{Hom}_{\mathsf{Ban}_\C^{\leq 1}}(\C,Y)\), then \(B_X \cong B_Y\) implying that \(f\) is an isomorphism in \(\mathsf{Ban}^{\leq 1}\). 
\end{example}

\subsubsection{Presentable \(\infty\)-categories through model categories}

The classical way to come up with presentable \(\infty\)-categories has been to use \emph{model category} structures on suitable \(1\)-categories. Suppose we have a simplicial model category \(\mathsf{C}\). Let \(\mathsf{C}_{cf}\) denote its full subcategory of fibrant-cofibrant objects. Then taking its homotopy coherent nerve \(N_\Delta(\mathsf{C}_{cf})\) is an \(\infty\)-category, which we call the \emph{underlying \(\infty\)-category} of a model category. When the model structure is nice enough, the following result says that all presentable \(\infty\)-categories arise this way:

\begin{theorem}\label{thm:model-simplicial}\cite[A.3.7.6]{HTT}
An \(\infty\)-category is presentable if and only if it is equivalent to the underlying \(\infty\)-category of combinatorial, (simplicial) model category.
\end{theorem}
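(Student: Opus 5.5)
The plan is to prove the two implications separately. The ``if'' direction is the easier one, and the ``only if'' direction carries the real content.

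\emph{Combinatorial model categories give presentable \(\infty\)-categories.} Let \(\mathsf{M}\) be a combinatorial simplicial model category, so its underlying \(1\)-category is locally presentable and it is cofibrantly generated. I would use Dugger's theorem: \(\mathsf{M}\) is Quillen equivalent to a left Bousfield localisation \(L_S \mathsf{Fun}(D^\op,\mathsf{sSet})_{\mathrm{proj}}\) of a projective model structure on simplicial presheaves on a small category \(D\), for a small set \(S\) of maps. Quillen equivalences between simplicial model categories induce equivalences of underlying \(\infty\)-categories \(N_\Delta(-_{cf})\). So it suffices to treat this localisation. The fibrant-cofibrant objects of \(\mathsf{Fun}(D^\op,\mathsf{sSet})_{\mathrm{proj}}\) model \(\mathbf{Fun}(N(D)^\op,\mathbf{An})\), which is presentable: it is \(\mathbf{Ind}_\omega\) of the small subcategory of finite colimits of representables, and those representables are compact. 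The \(S\)-local fibrant objects form the full subcategory of \(S\)-local presheaves. This subcategory is closed under \(\kappa\)-filtered colimits for \(\kappa\) larger than the sizes of the sources and targets in \(S\), and it is the image of an accessible localisation. It is therefore generated under \(\kappa\)-filtered colimits by the localisations of \(\kappa\)-compact presheaves, and Lemma~\ref{lem:compact-generation}(2) identifies it with \(\mathbf{Ind}_\kappa\) of a small category.

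\emph{Presentable \(\infty\)-categories come from combinatorial model categories.} Let \(\bC\simeq \mathbf{Ind}_\kappa(\bC^\kappa)\). The first step is to choose a small simplicial category \(\mathcal{D}\) whose homotopy coherent nerve is equivalent to \(\bC^\kappa\), using the rigidification \(\mathfrak{C}\dashv N_\Delta\). Next, \(\bC\) sits as a full subcategory of \(\mathbf{Fun}((\bC^\kappa)^\op,\mathbf{An})\), namely the presheaves sending \(\kappa\)-small colimits in \(\bC^\kappa\) to limits. This makes it the accessible localisation at a small set \(S\) of maps of the form \(\colim_i j(c_i)\to j(\colim_i c_i)\). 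The model is then the left Bousfield localisation \(L_S\) of the projective model structure on simplicial enriched functors \(\mathcal{D}^\op\to\mathsf{sSet}\). This localisation exists and is combinatorial and simplicial, by Smith's theorem, since the projective structure is left proper and combinatorial.

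\emph{Main obstacle.} The key input is the comparison theorem that \(N_\Delta\) of the fibrant-cofibrant simplicial functors \(\mathcal{D}^\op\to\mathsf{sSet}\) is equivalent to \(\mathbf{Fun}(N_\Delta(\mathcal{D})^\op,\mathbf{An})\), i.e.\ that straightening/unstraightening is compatible with the projective model structure. I would take this from the straightening equivalence in HTT rather than reprove it. Granting it, the remaining check is that fibrant objects of the Bousfield localisation correspond exactly to \(S\)-local presheaves. This follows from the characterisation of \(S\)-local fibrant objects via derived mapping spaces, which is precisely the definition of locality in the \(\infty\)-category.
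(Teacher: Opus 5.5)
Your proposal is correct and is essentially the standard proof of the result that the paper only cites, \cite[A.3.7.6]{HTT}. In one direction you write the presentable category as an accessible localisation of presheaves on a small simplicial category and model it by a left Bousfield localisation of the projective structure via straightening; in the other you use Dugger's presentation theorem to reduce a combinatorial model category to such a localisation of simplicial presheaves. The one step that deserves an explicit reference is that Dugger's Quillen equivalence, which need not be simplicial, still induces an equivalence on \(N_\Delta((-)_{cf})\); this follows from identifying \(N_\Delta(\mathsf{M}_{cf})\) with the Dwyer--Kan localisation of the cofibrant objects at the weak equivalences \cite[\S 1.3.4]{HA}.
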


Using Theorem \ref{thm:model-simplicial}, we can come up with many interesting examples:

\begin{example}\label{ex:anima}
The category of simplicial sets with the \emph{Kan-Quillen model structure} is a combinatorial model category. Its fibrant-cofibrant objects are precisely the Kan complexes; taking the homotopy-coherent nerve yields anima. 
\end{example}

\begin{example}\label{ex:modules}
The derived \(\infty\)-category of modules over a ring is presentable. To see this, one equips the category of chain complexes over the Grothendieck abelian category of \(R\)-modules with the \emph{projective model structure}. It has kernels and cokernels, and is therefore idempotent complete; being Grothendieck abelian ensures implies that it is locally presentable. As a consequence, the model structure is combinatorial. We shall return to this example in greater detail. 
\end{example}

\subsection{Presentability and adjunctions}

Recall that any left adjoint functor \(F \colon \bC \to \bD\) between two \(\infty\)-categories preserves colimits. The main feature of presentable \(\infty\)-categories is that the converse also holds. More precisely, we have the following:

\begin{theorem}[Adjoint functor theorem]\label{thm:adjoint-functor-thm}\cite[Corollary 5.5.2.9, 5.5.2.10]{HTT}
Let \(\bC\) be presentable and \(\bD\) be any \(\infty\)-category and $F \colon \bC \to \bD$ be any functor.
\begin{enumerate}
\item \(F\) admits a right adjoint if and only if it preserves small colimits;
\item If \(\bD\) is additionally presentable, then \(F\) admits a left adjoint if and only if it preserves limits and \(\kappa\)-filtered colimits for some \(\kappa\).
\end{enumerate}
\end{theorem}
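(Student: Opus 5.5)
The plan is to prove both parts by the same mechanism. Since \(\bC\) is presentable, it is an accessible localisation of a presheaf category. The universal property \eqref{eq1-yoneda} then turns questions about functors out of \(\bC\) into representability questions for presheaves. The forward implications are formal: a left adjoint preserves all colimits, and a right adjoint preserves all limits. Moreover, if \(F\) has a left adjoint \(G\) between presentable categories, choose \(\kappa\) so large that \(G\) sends a set of generating compact objects of \(\bD\) to \(\kappa\)-compact objects of \(\bC\). Then \(\mathbf{Hom}_\bD(d, F(-)) \simeq \mathbf{Hom}_\bC(Gd,-)\) commutes with \(\kappa\)-filtered colimits for each generator \(d\). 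Since the generators detect equivalences, \(F\) preserves \(\kappa\)-filtered colimits. So the work is in the converses.

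For (1), write \(\bC \simeq \mathbf{Ind}_\kappa(\bC^\kappa)\) as in Definition \ref{def:presentable}. For each \(d \in \bD\), consider the presheaf \(c \mapsto \mathbf{Hom}_\bD(F(c), d)\) on \(\bC\). Because \(F\) preserves colimits, this functor sends colimits in \(\bC\) to limits of anima. A right adjoint exists exactly when each such presheaf is representable; the adjoint is then assembled functorially by the Yoneda lemma. The key step is the representability criterion: a limit-preserving presheaf \(\bC^\op \to \mathbf{An}\) on a presentable \(\bC\) is representable. To prove it, restrict the presheaf to the small category \(\bC^\kappa\). This gives an object \(P\) of \(\mathbf{Fun}((\bC^\kappa)^\op,\mathbf{An})\), which is a colimit of representables. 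Since \(\bC\) is cocomplete, \(P\) extends to \(\bC\) via \eqref{eq1-yoneda}, giving a candidate representing object \(X = \colim_{(c,p) \in \int P} c\). The map \(\mathbf{Hom}_\bC(-,X) \to \) (our presheaf) is an equivalence on \(\bC^\kappa\). This uses that the presheaf sends the colimit defining \(X\) to a limit, together with the fact that \(\kappa\)-compact objects see \(\kappa\)-filtered colimits. The equivalence then extends to all of \(\bC\), because every object is a \(\kappa\)-filtered colimit of objects of \(\bC^\kappa\) and both sides send colimits to limits. I expect this comparison on \(\bC^\kappa\) to be the main obstacle. I would handle it by enlarging \(\kappa\) so that \(\int P\) can be replaced by a \(\kappa\)-filtered diagram; alternatively, as in \cite{HTT}, I would cite the fact that \(\bC\) is a localisation of presheaves and reduce to the presheaf case, where the statement is \eqref{eq1-yoneda} directly.

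For (2), I would dualise. A left adjoint exists if and only if each functor \(\mathbf{Hom}_\bD(d, F(-)) \colon \bC \to \mathbf{An}\) is corepresentable. This functor preserves limits and \(\kappa\)-filtered colimits, so it is an accessible, limit-preserving functor between presentable categories. It remains to show that such a functor is corepresentable. Here I would use the standard solution-set argument: accessibility lets us restrict to \(\kappa\)-compact objects, giving a small set of candidates. Limit-preservation lets us form the limit over the category of elements restricted to this small set, and that limit serves as an initial object of the category of elements, i.e.\ a corepresenting object. The smallness produced by accessibility is the essential input, and it is exactly where presentability of \(\bD\) and the \(\kappa\)-filtered hypothesis enter.
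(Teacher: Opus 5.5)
The paper only cites \cite{HTT}, so what matters is whether your sketch closes. The forward implications are fine, but each converse has a step that does not go through as written.

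In (1), enlarging \(\kappa\) is not the fix. It merely replaces \(P\) by another restriction of \(H=\mathbf{Hom}_\bD(F(-),d)\), whose category of elements is no more obviously filtered. The missing observation is that no enlargement is needed. Since \(\bC^\kappa\) is closed under \(\kappa\)-small colimits in \(\bC\), and \(H\) turns colimits into limits, \(P=H|_{\bC^\kappa}\) preserves \(\kappa\)-small limits. Presheaves on a small category with \(\kappa\)-small colimits having this property are exactly the objects of \(\mathbf{Ind}_\kappa\). Hence \(\textstyle\int P\) is \(\kappa\)-filtered, and \(P\simeq\mathbf{Hom}_\bC(-,X)|_{\bC^\kappa}\) for your \(X=\colim_{\int P}c\). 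The comparison on \(\bC^\kappa\) is then co-Yoneda plus \(\kappa\)-compactness, and the extension to \(\bC\) works as you say. Your fallback (reduce to presheaves via a localisation \(L\)) is the argument underlying the citation. It is fine once you add its one real check. If \(H\circ L^{\op}\simeq\mathbf{Hom}(-,Q)\) on presheaves, then \(Q\) is local, because \(H\circ L^{\op}\) inverts \(L\)-equivalences. So \(Q\) lies in \(\bC\) and represents \(H\).

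In (2), the sentence ``that limit serves as an initial object'' is the whole content of the corepresentability criterion, and smallness alone does not give it. Put \(\Phi=\mathbf{Hom}_\bD(d,F(-))\) and \(\mathcal{E}=\textstyle\int\Phi\). Let \(\mathcal{E}_0\subseteq\mathcal{E}\) be the small full subcategory lying over \(\bC^{\kappa'}\), let \(L=\lim\mathcal{E}_0\), and let \(\pi_s\) be the projections. Weak initiality of \(\mathcal{E}_0\) yields maps \(L\to b\), but neither their uniqueness up to homotopy nor contractibility of \(\mathbf{Hom}(L,b)\).

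There are two ways to close this.
\begin{itemize}
\item \emph{Use the full strength of accessibility.} Take \(\kappa'\) large enough that \(d\) is \(\kappa'\)-compact and \(\bC\simeq\mathbf{Ind}_{\kappa'}(\bC^{\kappa'})\). Then the slice \(\mathcal{E}_0\times_{\mathcal{E}}\mathcal{E}_{/(c,a)}\) has the weak homotopy type of the fibre over \(a\) of the equivalence \(\colim_{(\bC^{\kappa'})_{/c}}\Phi(c')\to\Phi(c)\). So \(\mathcal{E}_0\hookrightarrow\mathcal{E}\) is initial (limit-cofinal), and \(L\) is a limit of \(\mathrm{id}_{\mathcal{E}}\), hence initial.
\item \emph{Run Freyd's argument.} Yoneda gives \(y\circ\pi_s\simeq\mathrm{id}_L\) for every \(y\colon s\to L\) with \(s\in\mathcal{E}_0\). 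Equalisers then make any two maps \(L\to b\) homotopic. Applying this to the cotensors \(b^{S^n}\) gives \(\mathbf{Hom}(L,b)\simeq *\).
\end{itemize}

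Note also that \(\Phi\) need not preserve \(\kappa\)-filtered colimits for the given \(\kappa\). Passing to \(\kappa'\) with \(d\) \(\kappa'\)-compact is where presentability of \(\bD\) is used. The smallness of the solution set comes from presentability of \(\bC\), not of \(\bD\).
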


We denote by \(\mathbf{Pr}^L\) the subcategory of \(\mathbf{Cat}_\infty\) generated by presentable \(\infty\)-categories with colimit-preserving (or equivalently, left adjoint) functors as morphisms. 

\subsubsection{Presentability via Bousfield localisations}

We end this section with a characterisation of presentable \(\infty\)-categories in terms of a generators and relations-type identification. Let \(\bC\) be a presentable \(\infty\)-category, and \(W\) a small set of morphisms. We call an object \(X \in \bC\) \emph{\(W\)-local} if for any \(f \colon Y \to Z \in W\), the map \[\mathbf{Hom}_\bC(f,X) \colon \mathbf{Hom}_\bC(Z,X) \to  \mathbf{Hom}_\bC(Y,X)\] is an equivalence. Let \(\bC_W\) be the full subcategory of \(W\)-local objects in \(\bC\). Then the inclusion \(\bC_W \to \mathbf{C}\) has a left adjoint \(L \colon \bC \to \bC_W\) that sends morphisms in \(W\) to equivalences. More generally, we have the following: 

\begin{definition}\label{def:Bousefield}
A \emph{left Bousfield localisation} of an \(\infty\)-category \(\bC\) is an adjoint functor pair \(L \colon \bC \leftrightarrows \bD \colon R\), such that \(R \colon \bD \to \bC\) is fully faithful.  
\end{definition}

\begin{proposition}\label{prop:presentable-char}\cite[Corollary 2.1.28]{nkp}
An \(\infty\)-category \(\bC\) is presentable if and only if it is a left Bousfield localisation of \(\mathbf{Fun}(\bC_0^\op, \mathbf{An})\) for some small \(\infty\)-category \(\bC_0\).  
\end{proposition}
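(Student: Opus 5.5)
We prove the two implications separately. For ``only if'', suppose \(\bC\) is presentable, so by Definition \ref{def:presentable} there is a regular cardinal \(\kappa\) with \(k \colon \mathbf{Ind}_\kappa(\bC^\kappa) \xto{\sim} \bC\). We take \(\bC_0 = \bC^\kappa\), which is essentially small; this needs the standard fact that \(\bC^\kappa\) has only a set of equivalence classes of objects, and we can arrange it by replacing \(\kappa\) by a larger cardinal if needed. Let \(\iota \colon \bC \simeq \mathbf{Ind}_\kappa(\bC_0) \hookrightarrow \mathbf{Fun}(\bC_0^\op, \mathbf{An})\) be the inclusion of Definition \ref{def:ind-kappa}. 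Concretely, \(\iota\) is the restricted Yoneda functor \(X \mapsto \mathbf{Hom}_\bC(-,X)|_{\bC_0}\). It is fully faithful by construction, and it preserves all small limits, since limits in presheaf categories are computed pointwise and \(\mathbf{Hom}_\bC(c,-)\) preserves limits. It also preserves \(\kappa\)-filtered colimits, because each \(c \in \bC_0\) is \(\kappa\)-compact, so \(\mathbf{Hom}(c,-)\) commutes with them. Both \(\bC\) and the presheaf category are presentable, so part (2) of Theorem \ref{thm:adjoint-functor-thm} gives a left adjoint \(L\) to \(\iota\). Hence \((L,\iota)\) is a left Bousfield localisation in the sense of Definition \ref{def:Bousefield}. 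Alternatively, \(L\) is simply the left Kan extension of \(\bC_0 \hookrightarrow \bC\) along the Yoneda embedding, using \eqref{eq1-yoneda}, and the adjunction can be checked directly on representables.

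For ``if'', suppose \(L \colon \mathcal{P} := \mathbf{Fun}(\bC_0^\op, \mathbf{An}) \rightleftarrows \bC \colon R\) with \(R\) fully faithful. First, \(\bC\) has all small colimits: for a diagram \(p\) in \(\bC\) one gets \(\colim p \simeq L(\colim Rp)\), using that \(LR \simeq \mathrm{id}\) and that \(L\) is a left adjoint. Next, \(\mathcal{P}\) is presentable: it is \(\mathbf{Ind}_\omega\) of the closure of representables under finite colimits, or one may use Theorem \ref{thm:model-simplicial} via the projective model structure on simplicial presheaves.

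The main obstacle is accessibility of \(\bC\), because \(L\) alone does not obviously preserve compact generators. The plan is to show that \(R\) preserves \(\kappa\)-filtered colimits for some \(\kappa\). Since \(R\) is an accessible functor between accessible categories, we may choose \(\kappa\) so large that this holds; I expect this to be the genuinely technical point, and here I would cite the accessibility of adjoints from \cite{HTT}. Given this, for each \(c \in \bC_0\) the object \(L(j(c))\) is \(\kappa\)-compact in \(\bC\), by the formula
\[
\mathbf{Hom}_\bC(Lj(c),-) \simeq \mathbf{Hom}_{\mathcal P}(j(c),R(-)) = (R-)(c),
\]
which commutes with \(\kappa\)-filtered colimits. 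These objects generate \(\bC\) under colimits: every \(X\) satisfies \(X \simeq LRX\), and \(RX\) is a colimit of representables.

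Finally, let \(\bC'\) be the closure of \(\{Lj(c)\}\) under \(\kappa\)-small colimits; it is small and consists of \(\kappa\)-compact objects. Every \(X \in \bC\) is then a \(\kappa\)-filtered colimit of objects of \(\bC'\), obtained by writing an arbitrary colimit as a \(\kappa\)-filtered colimit of \(\kappa\)-small colimits. Lemma \ref{lem:compact-generation} therefore gives \(\mathbf{Ind}_\kappa(\bC') \simeq \bC\). The idempotent completion of \(\bC'\) is exactly \(\bC^\kappa\), so the canonical functor \(k\) is an equivalence and \(\bC\) is presentable.
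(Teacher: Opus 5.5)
Your ``only if'' direction is fine, and it produces a localisation whose right adjoint \(\iota\) preserves \(\kappa\)-filtered colimits. The gap is in the ``if'' direction, at exactly the step you flag as technical. You claim that \(R\) preserves \(\kappa\)-filtered colimits for some \(\kappa\) because ``\(R\) is an accessible functor between accessible categories''. The result on accessibility of adjoints, \cite[Proposition~5.4.7.7]{HTT}, assumes that source and target are already accessible. Accessibility of \(\bC\) is exactly what you are trying to prove, so this step is circular. Nothing else in the hypotheses forces \(R\) to preserve \(\kappa\)-filtered colimits. Without it you get neither \(\kappa\)-compactness of the objects \(Lj(c)\) nor a small subcategory \(\bC'\) to feed into Lemma~\ref{lem:compact-generation}.

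The gap cannot be closed, because for an arbitrary left Bousfield localisation the ``if'' direction is not provable in ZFC. Suppose Vop\v{e}nka's principle fails, which is consistent with ZFC. Then there is a proper class of graphs \((A_i)\) with no maps \(A_i \to A_j\) for \(i \neq j\) and no non-identity endomorphisms. Take the graphs receiving no map from any \(A_i\), together with the terminal graph. These form a reflective, limit-closed subcategory of graphs that is not closed under \(\kappa\)-filtered colimits for any \(\kappa\): a large \(A_i\) is a \(\kappa\)-filtered union of proper subgraphs, and these lie in the subcategory by rigidity. Being reflective, this subcategory is therefore not accessible. Graphs are reflective in presheaves of sets on \(\bullet \rightrightarrows \bullet\), which are reflective in \(\mathbf{An}\)-valued presheaves. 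So this yields a non-presentable left Bousfield localisation of a presheaf category (compare Ad\'amek--Rosick\'y, \emph{Locally presentable and accessible categories}, Chapter~6).

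The correct statement, \cite[Theorem~5.5.1.1]{HTT}, requires the localisation to be \emph{accessible}, i.e.\ \(R\) preserves \(\kappa\)-filtered colimits for some \(\kappa\); the proposition as printed omits this. The hypothesis holds automatically for localisation at a small set \(W\), as in the paragraph before the proposition. Choose \(\kappa\) so that all sources and targets of maps in \(W\) are \(\kappa\)-compact; then \(W\)-local objects are closed under \(\kappa\)-filtered colimits. Once you assume accessibility, the rest of your argument is correct: \(\kappa\)-compactness of \(Lj(c)\) via the adjunction, generation via \(X \simeq LRX\), closure under \(\kappa\)-small colimits, Lemma~\ref{lem:compact-generation}, and identifying \(\bC^\kappa\) with the idempotent completion of \(\bC'\). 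Together with your ``only if'' direction, it proves the corrected equivalence.
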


As a corollary, we get another important class of examples which will appear later in the course:

\begin{example}\label{ex:sheaves}
Let \(X\) be a topological space and \(\mathsf{Op}(X)\) the category of its open subsets and inclusions between open subsets. Then the \(\infty\)-category \(\mathbf{Fun}(\mathsf{Op}(X)^\op, \mathbf{An})\) of \emph{presheaves on \(X\)} is presentable by Proposition \ref{prop:presentable-char}. The \(\infty\)-category \(\mathbf{Shv}(X, \mathbf{An})\) of \emph{sheaves on \(X\)} is a left Bousfield localisation of  \(\mathbf{Fun}(\mathsf{Op}(X)^\op, \mathbf{An})\) with respect to the local objects relative to the morphisms \(\emptyset \to j(\emptyset)\), \(j(U) \coprod_{j(U \cap V)} j(V) \to j(U \cup V)\) and \(\mathrm{colim}_{i \in I} j(U_i) \to j(\bigcup_{i \in I} U_i)\) of representable presheaves, where \(I\) is a filtered category. 
\end{example}

\subsection{The category of presentable \(\infty\)-categories}

We end this section with some properties of the category of presentable \(\infty\)-categories. 

\begin{corollary}\cite[Corollary 2.1.30]{nkp}\label{cor:fun-present}
Let \(\bC\) and \(\bD\) be presentable \(\infty\)-categories. Then the category of colimit-preserving functors \(\mathbf{Fun}^L(\bC,\bD)\) is itself presentable. 
\end{corollary}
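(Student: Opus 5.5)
We want to show that \(\mathbf{Fun}^L(\bC,\bD)\) is presentable whenever \(\bC\) and \(\bD\) are. The plan is to reduce to functor categories out of a small \(\infty\)-category and then use the characterisation of presentability by left Bousfield localisations in Proposition \ref{prop:presentable-char}.

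First, by Proposition \ref{prop:presentable-char}, write \(\bC\) as a left Bousfield localisation \(L\colon \mathbf{Fun}(\bC_0^\op,\mathbf{An}) \rightleftarrows \bC \colon R\) for some small \(\bC_0\). We may take \(\bC_0 = \bC^\kappa\), with \(L\) the left Kan extension of the inclusion \(\bC^\kappa \subseteq \bC\). By the localisation theory recalled before Definition \ref{def:Bousefield}, \(L\) is the localisation at a small set \(W\) of morphisms. For instance, \(W\) can be taken to be the maps \(\colim_{i} j(c_i) \to j(\colim_i c_i)\) for \(\kappa\)-small diagrams in \(\bC^\kappa\).

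From this we get the identification
\[
\mathbf{Fun}^L(\bC,\bD) \simeq \mathbf{Fun}^{L}_{W}(\mathbf{Fun}(\bC_0^\op,\mathbf{An}),\bD) \simeq \mathbf{Fun}_{W}(\bC_0,\bD).
\]
The middle term consists of colimit-preserving functors that send \(W\) to equivalences. This is the universal property of the localisation: precomposition with \(L\) is fully faithful, and its essential image is exactly these functors. The second equivalence comes from \eqref{eq1-yoneda}. Here \(\mathbf{Fun}_W(\bC_0,\bD)\) denotes the full subcategory of functors \(f\) whose Yoneda extension inverts \(W\). Concretely, \(f\) must send the chosen \(\kappa\)-small colimit diagrams in \(\bC_0\) to colimit diagrams in \(\bD\).

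Second, we show that \(\mathbf{Fun}(\bC_0,\bD)\) is presentable. Write \(\bD\) as a localisation of \(\mathbf{Fun}(\bD_0^\op,\mathbf{An})\) at a small set \(S\), again by Proposition \ref{prop:presentable-char}. Then there is an equivalence
\[
\mathbf{Fun}(\bC_0,\mathbf{Fun}(\bD_0^\op,\mathbf{An})) \simeq \mathbf{Fun}((\bC_0^\op\times\bD_0)^\op,\mathbf{An}),
\]
which is a presheaf category. Inside it, \(\mathbf{Fun}(\bC_0,\bD)\) is the full subcategory of objects that are pointwise \(S\)-local. This is the local subcategory for the small set of maps \(\{c\}\times s\), obtained by tensoring representables in the \(\bC_0\)-variable with \(s\in S\) via the presheaf Yoneda embedding. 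It is therefore a left Bousfield localisation of a presheaf category, hence presentable by Proposition \ref{prop:presentable-char}.

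Third, we cut down to \(\mathbf{Fun}_W(\bC_0,\bD)\). The condition that \(f\) carries a given \(\kappa\)-small cocone \(\colim_i c_i \to c\) to a colimit cocone is a locality condition. It asks that \(\mathbf{Hom}(-,d)\) applied to the comparison map in \(\bD\) be an equivalence for all \(d\) ranging over a set of generators of \(\bD\).

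The main obstacle is expressing this as locality in \(\mathbf{Fun}(\bC_0,\bD)\) with respect to a small set of morphisms. I would first try to use the left adjoint \(\bD \to \mathbf{Fun}(\bC_0,\bD)\) of evaluation at \(c\), given by \(d \mapsto \mathbf{Hom}(c,-)\otimes d\), where \(\otimes\) is the tensoring of the presentable \(\bD\) over \(\mathbf{An}\). Using it, the map \(\colim_i (\mathbf{Hom}(c_i,-)\otimes d) \to \mathbf{Hom}(c,-)\otimes d\) detects the condition. The objects \(f\) local against these maps, for \(d\) among the generators \(\bD^\lambda\), are exactly the \(W\)-preserving ones. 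Since locality is tested only on the generators \(\bD^\lambda\) and the diagrams are \(\kappa\)-small, the set of test maps is small.

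A localisation of a presentable category at a small set is presentable. This follows from Proposition \ref{prop:presentable-char} by composing localisations. Hence \(\mathbf{Fun}^L(\bC,\bD)\) is presentable.
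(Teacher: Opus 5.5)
Your first two steps are fine and match the standard reductions. The paper itself gives no proof here and simply cites \cite{nkp}. Concretely, $\mathbf{Fun}^L(\bC,\bD)\simeq\mathbf{Fun}_W(\bC_0,\bD)$ follows from the universal property of the localisation and \eqref{eq1-yoneda}. And $\mathbf{Fun}(\bC_0,\bD)$ is presentable as the pointwise $S$-local objects in presheaves on $\bC_0^\op\times\bD_0$.

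The gap is in the third step, and better test maps cannot repair it: $\mathbf{Fun}_W(\bC_0,\bD)$ is in general \emph{not} the subcategory of local objects for any class of morphisms in $\mathbf{Fun}(\bC_0,\bD)$. A subcategory of local objects is closed under all limits that exist in the ambient category, and in particular contains the terminal object. Colimit-preserving functors are closed under colimits but not under limits. Take $\bC=\bD=\mathbf{An}$ and $\kappa=\omega$:
\begin{itemize}
\item the terminal object of $\mathbf{Fun}(\mathbf{An}^{\omega},\mathbf{An})$, the constant functor at a point, does not send the initial object to the initial object;
\item the product of the inclusion with itself, $X\mapsto X\times X$, does not preserve binary coproducts.
\end{itemize}
So the inclusion $\mathbf{Fun}_W(\bC_0,\bD)\subseteq\mathbf{Fun}(\bC_0,\bD)$ is coreflective, not reflective.

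The problem already shows in your test map. $\mathbf{Hom}_{\bC_0}(c,-)$ is contravariant in $c$, so a cocone $c_i\to c$ produces maps $\mathbf{Hom}_{\bC_0}(c,-)\otimes d\to\mathbf{Hom}_{\bC_0}(c_i,-)\otimes d$. Hence there is no natural map $\colim_i(\mathbf{Hom}_{\bC_0}(c_i,-)\otimes d)\to\mathbf{Hom}_{\bC_0}(c,-)\otimes d$. Locality probes $\mathbf{Map}_{\bD}(d,f(c))$, i.e.\ maps \emph{into} $f(c)$. Preservation of the colimit is naturally a condition on maps \emph{out of} $f(c)$, as you observed yourself.

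What the third step needs instead is an accessibility argument showing that this colimit-closed full subcategory is presentable. For example:
\begin{itemize}
\item For each $w\in W$, the functor $\mathbf{Fun}(\bC_0,\bD)\to\mathbf{Fun}([1],\bD)$, $f\mapsto F(w)$ with $F$ the Yoneda extension of $f$, preserves colimits.
\item The functor $d\mapsto\mathrm{id}_d$ identifies $\bD$ with the full subcategory of equivalences in $\mathbf{Fun}([1],\bD)$, and it preserves colimits.
\item Then $\mathbf{Fun}_W(\bC_0,\bD)$ is the pullback of $\mathbf{Fun}(\bC_0,\bD)\to\prod_{w\in W}\mathbf{Fun}([1],\bD)\leftarrow\prod_{w\in W}\bD$.
\item This is a small limit of presentable categories along colimit-preserving functors. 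It is therefore presentable, because $\mathbf{Pr}^L$ is closed under small limits in large $\infty$-categories.
\end{itemize}
Equivalently, $\mathbf{Fun}_W(\bC_0,\bD)$ is an accessible full subcategory of a presentable category that is closed under small colimits. This input goes beyond Proposition \ref{prop:presentable-char}, and it is what must replace your third step.
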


As a consequence of Corollary \ref{cor:fun-present}, we deduce that the category of presentable \(\infty\)-categories \(\mathbf{Pr}^L\) is enriched over itself. In what follows, we show that it also has a tensor product, making \(\mathbf{Pr}^L\) a \emph{closed} symmetric monoidal category. 

Let \(\bC\), \(\bD\) be presentable \(\infty\)-categories; denote by \(\mathbf{Fun}^{bL}(\bC \times \bD, \mathbf{E})\) full subcategory of functors \(\bC \times \bD \to \mathbf{E}\) that preserve colimits separately in each input, that is, separately continuous functors. Then the \emph{tensor product} is a presentable \(\infty\)-category \(\bC \otimes \bD\) with a separately continuous functor \(\bC \times \bD \to \bC \otimes \bD\) such that precomposition induces an equivalence \[\mathbf{Fun}^L(\bC \otimes \bD, \mathbf{E}) \to \mathbf{Fun}^{bL}(\bC \times \bD, \mathbf{E})\] of \(\infty\)-categories. It is an easy exercise to check that the tensor product is unique if it exists. The following proposition shows that the tensor product indeed exists:

\begin{proposition}\label{prop:tensor-product}\cite[Proposition 4.8.1.7]{HA}
We have \(\bC \otimes \bD \simeq \mathbf{Fun}^{\lim}(\bC^\op, \bD)\), where the right hand side denotes the full subcategory of limit-preserving functors \(\bC^\op \to \bD\). 
\end{proposition}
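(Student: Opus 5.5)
We will show that \(\mathbf{Fun}^{\lim}(\bC^\op, \bD)\) has the universal property defining \(\bC \otimes \bD\); since the tensor product is unique if it exists, this suffices. The approach is to reduce to a universal property of presheaf categories, using the Yoneda universal property \eqref{eq1-yoneda} together with the adjoint functor theorem (Theorem \ref{thm:adjoint-functor-thm}).

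\emph{Step 1: the case of presheaves.} First take \(\bC = \mathbf{Fun}(\bC_0^\op, \mathbf{An})\) with \(\bC_0\) small. By \eqref{eq1-yoneda}, restriction along the Yoneda embedding identifies limit-preserving functors \(\bC^\op \to \bD\) with arbitrary functors \(\bC_0^\op \to \bD\). This gives
\[\mathbf{Fun}^{\lim}(\bC^\op, \bD) \simeq \mathbf{Fun}(\bC_0^\op, \bD).\]
Separately colimit-preserving functors \(\bC \times \bD \to \mathbf{E}\) correspond, again by \eqref{eq1-yoneda} applied in the first variable, to functors \(\bC_0 \to \mathbf{Fun}^L(\bD, \mathbf{E})\). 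By adjunction these are the same as colimit-preserving functors out of \(\mathbf{Fun}(\bC_0^\op, \bD)\) into \(\mathbf{E}\). To justify this last step, note that \(\mathbf{Fun}(\bC_0^\op,\bD)\) is generated under colimits by the objects \(j(c) \otimes d\), the left Kan extensions of \(d\) along \(c\); here one uses that \(\mathbf{Fun}(\bC_0^\op,\bD) \simeq \mathbf{Fun}^R(\bD^\op, \mathbf{Fun}(\bC_0^\op,\mathbf{An}))\) is presentable. The universal functor is \((X, d) \mapsto (c \mapsto \mathbf{Hom}(j(c), X) \otimes d)\), which is exactly the functor \(\bC^\op \to \bD\) sending \(Y \mapsto \mathbf{Hom}(Y,X)\otimes d\). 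This handles the presheaf case.

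\emph{Step 2: localisations.} For general \(\bC\), Proposition \ref{prop:presentable-char} writes \(\bC\) as a left Bousfield localisation of \(\mathcal{P}=\mathbf{Fun}(\bC_0^\op,\mathbf{An})\). We may take this localisation to be at a small set \(W\) of morphisms, by accessibility of the localisation functor.

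Separately continuous functors \(\bC \times \bD \to \mathbf{E}\) are then those on \(\mathcal{P}\times\bD\) that invert \(f \times \mathrm{id}_d\) for all \(f \in W\) and \(d \in \bD\). On the other side, \(\mathbf{Fun}^{\lim}(\bC^\op,\bD)\) is the full subcategory of \(\mathbf{Fun}^{\lim}(\mathcal{P}^\op,\bD)\) of functors \(G\) with \(G(f)\) an equivalence for \(f\in W\). To see this, use that \(\bC^\op \to \mathcal{P}^\op\) is a localisation and that limit-preserving functors on \(\bC^\op\) are exactly limit-preserving functors on \(\mathcal{P}^\op\) inverting \(W\).

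Testing \(G(f)\) against \(\mathbf{Hom}_\bD(d,-)\) shows this subcategory is the local subcategory for the small set \(\{f \otimes d\}\), with \(d\) ranging over generators of \(\bD\). It is therefore a Bousfield localisation, and its colimit-preserving functors to \(\mathbf{E}\) are those inverting \(f\otimes d\). This matches the description of separately continuous functors above, via Step 1.

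\emph{Main obstacle.} The delicate point is Step 2. One must check that inverting \(f\otimes d\) for \(d\) in a small generating set suffices, and that the localisation of \(\mathcal{P}\otimes\bD\) is genuinely the limit-preserving subcategory rather than a larger one. To settle this, I would first try to verify local-object equivalences directly through the mapping anima \(\mathbf{Hom}(j(c)\otimes d, G)\simeq \mathbf{Hom}_\bD(d, G(j(c)))\).
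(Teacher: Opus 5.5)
The paper gives no proof of its own beyond citing \cite{HA}, and your proof is correct and essentially the standard one found there: you treat the presheaf case via the Yoneda universal property and passage to right adjoints, then exhibit \(\mathbf{Fun}^{\lim}(\bC^\op,\bD)\) as the local subcategory of \(\mathbf{Fun}(\bC_0^\op,\bD)\simeq\mathcal{P}\otimes\bD\) for the small set \(\{f\otimes d : f\in W,\ d\in\bD^\kappa\}\); the two points you flag are routine, since \(\mathbf{Hom}(B\otimes d,G)\simeq\mathbf{Hom}_\bD(d,G(B))\) passes from representable to arbitrary \(B\) by writing \(B\) as a colimit of representables, and for fixed \(f\colon A\to B\) in \(W\) the objects \(d\) with \(F(f,\mathrm{id}_d)\) invertible form a colimit-closed class because \(F(A,-)\) and \(F(B,-)\) preserve colimits. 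One aside in Step 1 is false, though never used: the limit-preserving extension of \(c\mapsto X(c)\otimes d\) is its right Kan extension \(Y\mapsto\lim X(c)\otimes d\), with the limit taken over the category of elements of \(Y\), and not \(Y\mapsto\mathbf{Hom}(Y,X)\otimes d\), which does not preserve limits (it sends the empty presheaf to \(d\) rather than to a terminal object).
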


We note that a functor \(\bC^\op \to \bD\) is limit preserving precisely if it is right adjoint. To see this apply the adjint functor theorem to the functor considered as a functor $\bC \to \bD^\op$.

\subsection{Stable \(\infty\)-categories}

We now come to \(\infty\)-categories that are sufficiently additive, rendering the possibility to do homological algebra in higher categorical settings. But before we get there, we consider a precursor formalism commonly used in noncommutative geometry, called \emph{triangulated categories}. 

We start with an example. Let \(\mathsf{C}\) be an additive category and \(\mathsf{HoKom}(\mathsf{C})\) its homotopy category of chain complexes. Morphisms in this category are chain maps \(f \colon X \to Y\) up to chain homotopy. For any such chain map, we may form a diagram \[A \overset{f}\to B \to \mathsf{cone}(f) \to A[1],\] where \(\mathsf{cone}(f)\) is the \emph{mapping cone} of \(f\), and \(A[1]\) is the chain complex \(A\) shifted by \(1\)-degree. The diagram above induces a long exact sequence in bivariant homology \[\cdots \to H_i(A,D) \to H_i(B,D) \to H_i(\mathsf{cone}(f), D) \to H_{i-1}(A,D) \to \cdots,\] where \(D\) is an arbitrary chain complex. An immediate implication is that a chain map \(f \colon A \to B\) is a chain homotopy equivalence if and only if the mapping cone is contractible, showing that the mapping cone plays the role of a joint ``kernel" and ``cokernel" in the homotopy category. Summarising, we have 

\begin{enumerate}
\item an additive category \(\mathsf{HoKom}(\mathsf{C})\);
\item an auto equivalence \(\Sigma \colon \mathsf{HoKom}(\mathsf{C}) \to \mathsf{HoKom}(\mathsf{C})\), namely, the \emph{shift} \(X[1]\) of a chain complex \(X\);
\item distinguished diagrams of the form \(A \to B \to C \to \Sigma(A)\), where \(C\) is called the \emph{cofibre} of the map \(A \to B\); 
\item functors (such as \(\mathsf{Hom}_\mathsf{C}(D,-)\)) taking a class of distinguished triangles to a long exact sequence.
\end{enumerate}    

In the setting of \(1\)-categories, when the data above is subject to certain axioms, we arrive at the notion of a \emph{triangulated category}. As mentioned above, several categories in noncommutative (algebraic) geometry come equipped with triangulated category structures. This includes the (bounded) derived category of a scheme, the stable homotopy category of CW complexes, Kasparov's bivariant \(K\)-theory and related homology theories of (topological) algebras. While triangulated categories provide a reasonable setting for homological algebra (see for instance, \cite{meyer2007homological}) in noncommutative-geometric settings, they are inconvenient for (at least) the following reasons:

\begin{enumerate}
\item Exact triangles are \emph{extra structure} (rather than a \emph{property} of the category);
\item In the case of classical relative homological algebra, we may start with a symmetric monoidal category \(\mathsf{C}\). For an algebra object \(A \in \mathsf{Alg}(\mathsf{C})\), we may consider the category of \(A\)-modules \(\mathsf{Mod}_A(\mathsf{C})\) and do homological algebra (that is, define projective resolutions and derived functors) within this category. If one tries to do the same thing with a triangulated category by considering \(A\)-module objects in a triangulated category, the resulting category does not have an obvious triangulated category structure. A similar problem appears when one wants to add colimits to a triangulated category, say by taking an ind-completion. The resulting category does not inherit any canonical triangulated category structure from the original triangulated category. This applies concretely to bivariant algebraic \(K\)-theory (\cite{Cortinas-Thom:Bivariant_K}), which does not seem to have infinite colimits. 
\item Homotopy (co)limits of diagrams are neither always functorial, nor are they unique in a triangulated category. 
\end{enumerate}

To correct these shortcomings, we work with a suitable homotopy theoretic enhancement called \emph{stable \(\infty\)-categories}.

\begin{definition}\label{def:pointed}
An \(\infty\)-category \(\bC\) is called \emph{pointed} if it has an initial and a terminal object and they both coincide. We call this the zero object and denote it by \(0\).  
\end{definition}

We call a diagram \(A \overset{f}\to B \overset{g}\to C\) in a pointed \(\infty\)-category \(\bC\) an \emph{extension} if it is part of a commuting square 
\[
\begin{tikzcd}
A \arrow{r}{f} \arrow{d}{} & B \arrow{d}{g} \\
0 \arrow{r}{} & C
\end{tikzcd}
\] in \(\bC\). We call an extension a \emph{fibre sequence} (resp. \emph{cofibre sequence}) if it is a pullback square (resp. pushout square).

\begin{definition}\label{def:stable}
A pointed \(\infty\)-category \(\bC\) with pushouts and pullbacks is called \emph{stable} if every pushout square is a pullback square (and vice-versa).
\end{definition}

To understand stable \(\infty\)-categories and relate them to triangulated categories, in any pointed \(\infty\)-category \(\bC\) with fibres and cofibres, we may consider the pushout diagram 
\[
\begin{tikzcd}
A \arrow{r}{} \arrow{d}{} & 0 \arrow{d}{g} \\
0 \arrow{r}{} & \Sigma A
\end{tikzcd}
\] along the canonical map \(A \to 0\) to the zero object. The object \(\Sigma A\) is called the \emph{suspension} of \(A\). Moreover, by the functoriality of pushouts in \(\infty\)-categories, we get an endofunctor \(\Sigma \colon \bC \to \bC\). Dually, the pullback along the unique map \(0 \to A\) induces a functor \(\Omega \colon \bC \to \bC\) called the \emph{loop functor}.

\begin{lemma}\label{def:suspension-loop-adjunction}
Let \(\bC\) be a pointed \(\infty\)-category with fibres and cofibres. Then \(\Omega \colon \bC \to \bC\) is right adjoint to the suspension functor \(\Sigma\). 
\end{lemma}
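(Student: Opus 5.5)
We have to produce, for all objects \(A, B \in \bC\), a natural equivalence \[\mathbf{Hom}_\bC(\Sigma A, B) \simeq \mathbf{Hom}_\bC(A, \Omega B).\] Both sides will be computed via the universal properties of the defining squares, and the resulting descriptions will then be identified. The plan is to show that each side is equivalent to the anima of commuting squares
\[
\begin{tikzcd}
A \arrow{r}{} \arrow{d}{} & 0 \arrow{d}{} \\
0 \arrow{r}{} & B
\end{tikzcd}
\]
in \(\bC\), i.e. of \emph{null-homotopies data}: pairs of maps \(A \to 0 \to B\) together with a homotopy between the two composites \(A \to B\). Since \(0\) is both initial and terminal, the spaces \(\mathbf{Hom}(A,0)\) and \(\mathbf{Hom}(0,B)\) are contractible. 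So this anima is equivalent to \(\mathbf{Hom}_{\mathbf{Hom}(A,B)}(0_{AB}, 0_{AB})\), the loop space of \(\mathbf{Hom}_\bC(A,B)\) at the zero map.

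\textbf{Step 1 (the left side).} Since \(\Sigma A\) is the pushout \(0 \sqcup_A 0\), the universal property of pushouts gives
\[
\mathbf{Hom}(\Sigma A, B) \simeq \mathbf{Hom}(0,B) \times_{\mathbf{Hom}(A,B)} \mathbf{Hom}(0,B).
\]
Using contractibility of \(\mathbf{Hom}(0,B)\), this is \(* \times_{\mathbf{Hom}(A,B)} *\), which is \(\Omega_0 \mathbf{Hom}(A,B)\).

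\textbf{Step 2 (the right side).} Dually, \(\Omega B = 0 \times_B 0\), so
\[
\mathbf{Hom}(A,\Omega B) \simeq \mathbf{Hom}(A,0) \times_{\mathbf{Hom}(A,B)} \mathbf{Hom}(A,0) \simeq * \times_{\mathbf{Hom}(A,B)} *.
\]
The basepoints in both fibre products are the zero map \(A \to B\), since both are the composite through \(0\).

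\textbf{Step 3 (naturality).} To promote the pointwise equivalences to an adjunction, I would avoid pointwise arguments and instead work with the full subcategory \(\mathcal{M} \subseteq \Fun([1]\times[1], \bC)\) of squares with \(0\) in the off-diagonal corners. I would then use the standard trick (HTT 4.3.2.15) that Kan extension along the inclusion of the spans, respectively cospans, gives trivial fibrations onto pushout, respectively pullback, squares. This yields functors \(\mathcal{M}^{po} \simeq \bC\) (evaluation at the initial vertex) and \(\mathcal{M}^{pb} \simeq \bC\) (evaluation at the final vertex). The evaluation \(\mathcal{M} \to \bC\) at the top-left corner restricted to pushout squares then has right adjoint given by the pullback construction. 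Equivalently, I would check that the composites \(\mathcal{M} \to \bC\) at the two corners admit adjoints \(\mathcal{M}^{po} \hookrightarrow \mathcal{M}\) (left adjoint to inclusion via pushout) and \(\mathcal{M}^{pb} \hookrightarrow \mathcal{M}\) (right adjoint via pullback). Composing these adjunctions gives \(\Sigma \dashv \Omega\).

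The main obstacle I expect is this naturality/coherence bookkeeping in Step 3. The pointwise identification of mapping anima in Steps 1 and 2 is routine, and the only subtlety there is the contractibility of maps into and out of \(0\), which is immediate from pointedness (Definition \ref{def:pointed}).
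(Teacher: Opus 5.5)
Your Steps 1 and 2 are exactly the paper's proof. The paper uses the universal properties of pullback and pushout to identify both \(\mathbf{Hom}_\bC(X,\Omega Y)\) and \(\mathbf{Hom}_\bC(\Sigma X,Y)\) with the anima of commuting squares having \(0\) in the off-diagonal corners. Your further identification of this anima with \(\Omega_0\mathbf{Hom}(A,B)\) says the same thing. The paper stops there and leaves naturality implicit, so it only produces objectwise equivalences.

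The real difference is your Step 3, which the paper does not have. It is what turns those objectwise equivalences into an adjunction. In fact it proves the lemma on its own, with Steps 1--2 as its pointwise shadow.

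When you write it up, fix the directions of the adjunctions, which are garbled as stated. Evaluation at the top-left corner restricted to \(\mathcal{M}^{po}\) is an equivalence; it is not a functor whose right adjoint is the pullback construction. The clean version is as follows. Let \(s_{po},s_{pb}\colon\bC\to\mathcal{M}\) be the pushout- and pullback-square constructions, and \(e_0,e_1\colon\mathcal{M}\to\bC\) evaluation at the top-left and bottom-right corners. Then \(s_{po}\dashv e_0\) and \(e_1\dashv s_{pb}\), hence \(\Sigma=e_1s_{po}\dashv e_0s_{pb}=\Omega\). Equivalently, the inclusion \(\mathcal{M}^{po}\subseteq\mathcal{M}\) is a left adjoint (with right adjoint \(s_{po}e_0\)), and \(\mathcal{M}^{pb}\subseteq\mathcal{M}\) is a right adjoint (with left adjoint \(s_{pb}e_1\)).

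The input for \(s_{po}\dashv e_0\) has two parts. A pushout square is left Kan extended from its span. The span \(0\leftarrow A\to 0\) is right Kan extended from \(A\), because \(0\) is terminal. Together these give \(\mathbf{Hom}_{\mathcal{M}}(s_{po}A',Q)\simeq\mathbf{Hom}_\bC(A',e_0Q)\). The adjunction \(e_1\dashv s_{pb}\) is dual, using that \(0\) is initial.
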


\begin{proof}
Consider the mapping space \(\mathbf{Hom}_\bC(X, \Omega Y)\). By the universal property of pullbacks, maps to the pullback are equivalent to the anima of natural transformations \(\mathsf{const}_X \to F\) from the constant diagram to the diagram \(F\) corresponding to the morphisms \(0 \to Y \leftarrow 0\) in \(\bC\). This in turn is equivalent to commuting diagrams 
\[
\begin{tikzcd}
X \arrow{r}{} \arrow{d}{} & 0 \arrow{d}{} \\
0 \arrow{r}{} & Y.
\end{tikzcd}
\] Dualising the argument for the pushout of the maps \(0 \leftarrow X \to 0\) in \(\bC\) shows that the mapping anima \(\mathbf{Hom}_\bC(\Sigma X, Y)\) is equivalent to the same commuting diagram above.  
\end{proof}

\begin{proposition}\label{prop:stable-equiv}
The following statements are equivalent for a pointed \(\infty\)-category \(\bC\):
\begin{enumerate}
\item \(\bC\) is a stable \(\infty\)-category;
\item \(\bC\) has fibres and cofibres, and every fibre sequence is a cofibre sequence (and vice-versa);
\item  \(\bC\) has fibres and the loop functor \(\Omega \colon \bC \to \bC\) is an equivalence;
\item \(\bC\) admits cofibres and the suspension functor \(\Sigma \colon \bC \to \bC\) is an equivalence.
\end{enumerate}
\end{proposition}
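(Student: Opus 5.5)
The plan is to prove the cycle (1)\(\Rightarrow\)(2)\(\Rightarrow\)(3),(4) and then the converses (3)\(\Rightarrow\)(1), (4)\(\Rightarrow\)(1); the latter two are dual, so I only do (4)\(\Rightarrow\)(1).

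\emph{(1)\(\Rightarrow\)(2).} A stable \(\infty\)-category has pushouts and pullbacks. Since it is pointed, fibres and cofibres are the special pullbacks and pushouts along \(0\). A fibre sequence is a pullback square with a \(0\) corner, so by stability it is a pushout square, i.e.\ a cofibre sequence, and conversely.

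\emph{(2)\(\Rightarrow\)(3),(4).} The square defining \(\Sigma A\) is a cofibre sequence \(A \to 0 \to \Sigma A\). By (2) it is also a pullback, which exhibits \(A \simeq \Omega\Sigma A\). This equivalence is exactly the unit of the adjunction \(\Sigma \dashv \Omega\) of Lemma \ref{def:suspension-loop-adjunction}: under the identification in its proof, the unit corresponds to that very square. Dually, the counit \(\Sigma\Omega Y \to Y\) is an equivalence. Hence \(\Sigma\) and \(\Omega\) are inverse equivalences.

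\emph{(4)\(\Rightarrow\)(1).} This is the main obstacle. Suppose \(\Sigma\) is an equivalence; then its inverse is the right adjoint, so \(\Omega\) exists and is an equivalence as well.

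First I would show that fibres exist and that a square with a zero corner is a pushout if and only if it is a pullback. Given \(f\colon A\to B\), set \(F \defeq \Omega\,\mathrm{cof}(f)\). The pasting of the cofibre square of \(f\) with the pushout \(B\to \mathrm{cof}(f)\to \Sigma A\) identifies \(\mathrm{cof}(\mathrm{cof}(f)\to\Sigma A)\) with \(\Sigma B\), via the Puppe sequence built from the pasting law for pushouts. Rotating backwards using \(\Sigma^{-1}\) then exhibits \(A\) as the fibre of \(B\to \mathrm{cof}(f)\). To conclude it is a pullback, test against arbitrary \(X\). The functor \(\mathbf{Hom}(X,-)\) takes cofibre sequences to sequences whose comparison to fibre sequences of anima can be checked after applying \(\Omega\) enough times. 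Here I use that \(\mathbf{Hom}(X,-)\) factors through spectra, since \(\mathbf{Hom}(X,Y)\simeq \Omega\mathbf{Hom}(X,\Sigma Y)\) gives deloopings. In spectra, cofibre and fibre sequences agree because exact triangles give long exact sequences of homotopy groups; this is the five-lemma check.

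Second, for general squares: a square is a pushout if and only if the induced map on horizontal cofibres is an equivalence, by pasting. Likewise it is a pullback if and only if the induced map on horizontal fibres is an equivalence. Since fibre and cofibre sequences coincide, and pushouts can be built as the cofibre of \(A\to B\oplus C\) (biproducts exist because \(\bC\) is additive once \(\Sigma\) is invertible), both conditions agree. I expect the bookkeeping in this step to be the delicate part, and I would first try reducing everything to the mapping-spectrum argument above.
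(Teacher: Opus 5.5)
Your steps (1)$\Rightarrow$(2) and (2)$\Rightarrow$(3),(4) are essentially the paper's argument. The paper then proves (2)$\Rightarrow$(1) directly: it builds biproducts from fibres and cofibres of zero maps, takes pullbacks as $\mathsf{fib}(B\oplus C\to D)$ and pushouts as cofibres. It only cites Cnossen for (3),(4)$\Rightarrow$(2). Closing the loop through (4)$\Rightarrow$(1) is a legitimate organisation, but the decisive step of that implication is not justified.

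Under (4) you only have cofibres. You need the covariant functor $\mathbf{Hom}(X,-)$ to turn the cofibre sequence $A\xrightarrow{f}B\xrightarrow{p}\mathsf{cofib}(f)$ into a fibre sequence. Lifting $\mathbf{Hom}(X,-)$ to spectra via $\mathbf{Hom}(X,Y)\simeq\Omega\mathbf{Hom}(X,\Sigma Y)$ does not give you this. A corepresented functor, and its spectral lift, preserves limits, not pushouts. So the agreement of fibre and cofibre sequences in $\mathbf{Sp}$ is only usable once you know that a cofibre sequence of $\bC$ goes to a cofibre sequence of spectra, and that is exactly what is to be proved.

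The same problem affects your five-lemma step. Write $[Y,Z]\defeq\pi_0\mathbf{Hom}(Y,Z)$. The long exact sequence you would compare against is $\cdots\to[\Sigma^kX,A]\to[\Sigma^kX,B]\to[\Sigma^kX,\mathsf{cofib}(f)]\to\cdots$, and its exactness is the actual content of the implication. Pushouts only give you the contravariant half-exactness $[\mathsf{cofib}(f),T]\to[B,T]\to[A,T]$ for free. Likewise, backward rotation only shows that $\Sigma^{-1}\mathsf{cofib}(f)\to A\to B$ is again a cofibre sequence; it does not yet say anything about fibres.

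What is missing is a lifting argument of TR3 type. Start with $g\colon X\to B$ and a null-homotopy of $p\circ g$, viewed as a map of arrows $(X\to 0)\to(B\xrightarrow{p}\mathsf{cofib}(f))$.
\begin{itemize}
\item Functoriality of cofibres, together with the pasting identification $\mathsf{cofib}(p)\simeq\Sigma A$, gives a map $\Sigma X\to\Sigma A$. Applying $\Sigma^{-1}$ gives $h\colon X\to A$.
\item Compare the next cofibres, $\mathsf{cofib}(0\to\Sigma X)\simeq\Sigma X$ and $\mathsf{cofib}(\mathsf{cofib}(f)\to\Sigma A)\simeq\Sigma B$. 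This shows $f\circ h\simeq\pm g$.
\item The sign is removed using the group structure on $[X,B]$ coming from $\mathbf{Hom}(X,B)\simeq\Omega^2\mathbf{Hom}(\Sigma^{-2}X,B)$.
\end{itemize}
Apply this to all rotations and to all $\Sigma^kX$, and check that the connecting maps agree up to sign. Only then is your five-lemma comparison legitimate, and with it the existence of fibres as $\Sigma^{-1}\mathsf{cofib}(f)$.

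Your second step depends on this in the same way. The criterion ``pushout iff the map on horizontal cofibres is an equivalence'' needs that a map with vanishing cofibre is an equivalence. The biproducts you invoke must also be built from cofibres alone, for example coproducts as $\mathsf{cofib}(\Sigma^{-1}X\xrightarrow{0}Y)$ as in the paper. This has to happen before pushouts can be written as $\mathsf{cofib}(A\to B\oplus C)$.
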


\begin{proof}
(1) clearly implies (2). For the converse, the fibre sequence 
\[
\begin{tikzcd}
\Omega X \arrow{r}{} \arrow{d}{} & 0 \arrow{d}{} \\
0 \arrow{r}{} & X
\end{tikzcd}
\] is also a cofibre sequence. But this implies that the natural map \(\Sigma \Omega X \to X\) is an equivalence. Symmetrically, the natural map \(\Omega \Sigma X \to X\) is an equivalence, showing that \(\Sigma\) and \(\Omega\) are mutual inverses. Now consider the fibre \(Q = \mathsf{fib}(X \overset{0}\to \Sigma(Y))\). Using this, we have for \(T \in \bC\), \[\mathbf{Hom}_{\bC}(T, Q) \simeq \mathbf{Hom}_{\bC}(T,X) \times \Omega \mathbf{Hom}_{\bC}(T, \Sigma(Y)) \simeq \mathbf{Hom}_{\bC}(T,X) \times \mathbf{Hom}_{\bC}(T,Y).\] This establishes the existence of products. Similarly, applying \(\mathbf{Hom}_{\bC}(-,T)\) to the cofibre sequence \(0 \to \Sigma^{-1}(X) \overset{0}\to Y)\) gives coproducts. The coincidence of fibre and cofibre sequences implies that binary products and coproducts coincide. To construct arbitrary pullbacks, we consider the cospan \(B \overset{h}\to D \overset{k}\leftarrow C\), and construct the map \([h,-k] \colon B \oplus C \to D\). Define by \(Q = \mathsf{fib}([h,-k])\); then for any \(T\),

$$ \mathbf{Hom}_{\bC}(T,Q) \simeq * \times_{\mathbf{Hom}_{\bC}(T,D)} \mathbf{Hom}_{\bC}(T,B\oplus C). $$
Using the biproduct, we have

$$ \mathbf{Hom}_{\bC}(T,B\oplus C) \simeq \mathbf{Hom}_{\bC}(T,B)\times \mathbf{Hom}_{\bC}(T,C), $$

and the map to \(\mathbf{Hom}_{\bC}(T,D)\) is $ (u,v)\longmapsto h\circ u-k\circ v$.

Thus

$$\mathbf{Hom}_{\bC}(T,Q) \simeq \mathbf{Hom}_{\bC}(T,B) \times_{\mathbf{Hom}_{\bC}(T,D)} \mathbf{Hom}_{\bC}(T,C).$$

Hence \(B \times_D C \simeq \mathsf{fib}(B \oplus C \overset{[h,-k]}\to D)\). Similarly, pushouts are constructed by taking the span \(B \overset{f}\leftarrow A \overset{g}\to C\) and forming the cofibre of the map \((f,-g) \colon A \to B \oplus C\). It is now an easy exerise to see that the pushouts and pullbacks agree. This proves that (2) and (1) are equivalent. The proof of the statement that (2) implies (1) shows that (2) implies (3) and (4). We leave the proof of (3) and (4) implying (2) as an exercise (see \cite[Theorem 4.2.2]{Cnossen}).
\end{proof}

Just as we saw with presentable \(\infty\)-categories, a rich supply of examples of stable \(\infty\)-categories comes from \emph{stable, simplicial} model categories. 

\begin{theorem}\label{thm:stable-model}
Let \(\mathsf{C}\) stable simplicial model category. Then its underlying \(\infty\)-category \(\bC = N_\Delta(C_{cf})\) is stable.  
\end{theorem}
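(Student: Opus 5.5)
We plan to use the characterisation of Proposition \ref{prop:stable-equiv}: it suffices to show that \(\bC = N_\Delta(\mathsf{C}_{cf})\) is pointed, has cofibres, and that \(\Sigma \colon \bC \to \bC\) is an equivalence. Recall that a simplicial model category \(\mathsf{C}\) is called \emph{stable} if it is pointed and the homotopy-level suspension functor on \(\mathrm{Ho}(\mathsf{C})\), computed as the homotopy pushout of \(0 \leftarrow X \to 0\) for cofibrant \(X\), is an equivalence of the homotopy category. So the proof reduces to comparing the \(\infty\)-categorical constructions in \(\bC\) with homotopy (co)limits in \(\mathsf{C}\).

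First we check pointedness. The zero object \(0\) of \(\mathsf{C}\) is initial, hence cofibrant, and terminal, hence fibrant, so \(0 \in \mathsf{C}_{cf}\). For \(X\) fibrant-cofibrant the simplicial mapping spaces \(\mathrm{Map}(0,X)\) and \(\mathrm{Map}(X,0)\) are the point, so \(0\) is both initial and terminal in \(N_\Delta(\mathsf{C}_{cf})\). Next, we invoke \cite[Theorem 4.2.4.1]{HTT} (as in the proof of Proposition \ref{prop:nerve-presentable}): homotopy colimits and limits in a simplicial model category compute \(\infty\)-categorical colimits and limits in the underlying \(\infty\)-category. Since a model category has all finite homotopy pushouts, computed for a span \(B \leftarrow A \to C\) of cofibrant objects by replacing one leg by a cofibration and taking the ordinary pushout, followed by fibrant replacement, \(\bC\) admits cofibres. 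In particular \(\Sigma X\) in \(\bC\) is represented by the homotopy pushout of \(0 \leftarrow X \to 0\), i.e. by the classical model-categorical suspension.

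Finally, \(\Sigma \colon \bC \to \bC\) induces on homotopy categories \(h\bC \simeq \mathrm{Ho}(\mathsf{C})\) the model-categorical suspension, which is an equivalence by stability of \(\mathsf{C}\). A functor of \(\infty\)-categories is an equivalence iff it is fully faithful and essentially surjective; essential surjectivity is detected on \(h\bC\). For full faithfulness, we use that \(\Sigma\) has a right adjoint \(\Omega\) (Lemma \ref{def:suspension-loop-adjunction}, as \(\bC\) also has fibres by the dual argument), and it suffices that the unit \(X \to \Omega\Sigma X\) and counit \(\Sigma\Omega X \to X\) are equivalences; but a morphism in \(\bC\) is an equivalence iff its image in \(h\bC\) is an isomorphism, and on \(\mathrm{Ho}(\mathsf{C})\) these are the unit and counit of the derived \(\Sigma \dashv \Omega\) adjunction, which are isomorphisms since \(\Sigma\) is an equivalence there. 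Proposition \ref{prop:stable-equiv}(4) then gives stability.

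The main obstacle is the second step: rigorously identifying the \(\infty\)-categorical pushout in \(N_\Delta(\mathsf{C}_{cf})\) with the model-categorical homotopy pushout, and ensuring the derived suspension/loop adjunction on \(\mathrm{Ho}(\mathsf{C})\) agrees with the one from Lemma \ref{def:suspension-loop-adjunction} on \(h\bC\). We would handle this by citing \cite[Theorem 4.2.4.1]{HTT} for the colimit comparison, and by noting that both adjunctions are characterised by the same mapping-space description via commuting squares with zeros, so their units agree up to the identification \(h\bC \simeq \mathrm{Ho}(\mathsf{C})\).
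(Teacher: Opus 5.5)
The paper states this theorem without proof; it is quoted as a standard fact, and the text goes straight on to an example. So there is no argument in the paper to compare yours with. Your proof is the standard one and it is correct. The structure is right:
\begin{itemize}
\item pointedness, because \(0 \in \mathsf{C}_{cf}\) and its mapping spaces to and from any \(X\) are points;
\item existence of cofibres and fibres, from model-categorical homotopy pushouts and pullbacks together with the comparison of homotopy (co)limits with \(\infty\)-categorical ones;
\item the reduction of ``\(\Sigma\) is an equivalence'' to the unit and counit of \(\Sigma \dashv \Omega\) being isomorphisms in \(h\bC\).
\end{itemize}

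The step that genuinely needs the care you flag is identifying \(h\Sigma\) with the derived suspension on \(\mathrm{Ho}(\mathsf{C})\). This must be an identification of functors, not just of objects, because you need the specific map \(X \to \Omega\Sigma X\) to be invertible. The cleanest way to do it is Yoneda. For each \(X\), both \(\Sigma X\) and the model-categorical suspension (e.g.\ \(X \wedge S^1\) for cofibrant \(X\)) corepresent the functor \(Y \mapsto \pi_1(\mathrm{Map}(X,Y),0)\) on \(h\bC \simeq \mathrm{Ho}(\mathsf{C})\), naturally in \(X\). Hence \(h\Sigma\) is naturally isomorphic to the derived suspension, so it is an autoequivalence of \(h\bC\). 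The unit and counit of the induced adjunction \(h\Sigma \dashv h\Omega\) are then automatically isomorphisms.

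A smaller point in the cofibre step: you construct the cocone on a replaced span, not the original one. A span in \(N_\Delta(\mathsf{C}_{cf})\) is literally a strict span, since there is no coherence data on \(\Lambda^2_0\). Replacing a leg \(A \to B\) by a cofibration \(A \to B'\) changes the diagram only by the equivalence \(B' \to B\). So the strict homotopy pushout of the replaced span does compute the \(\infty\)-categorical pushout of the original span.
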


\begin{example}
Recall that a spectrum is a collection of pointed spaces \((E_n)_{n \in \N}\) and linking homeomorphisms \(E_n \overset{\simeq}\to \Omega E_{n+1}\) for each \(n\), where \(\Omega\) is the loop space functor. The collection of spectrum objects form an \(\infty\)-category, called \emph{spectra}. One way to see its stability is to use any of the stable model structures on a model of the \(1\)-category of spectra (such as symmetric spectra), and appealing to Theorem \ref{thm:stable-model}. It turns out that the underlying \(\infty\)-categories of each of these model categories are equivalent. But instead of providing any further details here, we will return to the \(\infty\)-category of spectra from a different perspective.
\end{example}

\begin{theorem}\label{thm:stable-triangulated}
Let \(\bC\) be a stable \(\infty\)-category. Then its homotopy category \(\mathbf{Ho}(\bC)\) is triangulated. 
\end{theorem}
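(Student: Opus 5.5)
The plan is to follow Lurie's argument in \cite[Theorem 1.1.2.14]{HA}, organised in four steps: additivity, the shift functor, the choice of distinguished triangles, and the verification of axioms TR1--TR4.

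\emph{Additivity.} I first show that \(\mathbf{Ho}(\bC)\) is additive. The proof of Proposition~\ref{prop:stable-equiv} already shows that finite products and coproducts exist in \(\bC\) and coincide. Since \(\Sigma\) is an equivalence, we have
\[
\pi_0\mathbf{Hom}_\bC(X,Y)\cong \pi_0\mathbf{Hom}_\bC(\Sigma^2\Sigma^{-2}X,Y)\cong \pi_0\Omega^2\mathbf{Hom}_\bC(\Sigma^{-2}X,Y)=\pi_2(\ldots).
\]
This is naturally an abelian group, and composition is bilinear. Hence \(\mathbf{Ho}(\bC)\) is additive, because the homotopy category of an \(\infty\)-category with finite (co)products has the corresponding finite (co)products.

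\emph{Shift functor.} By Proposition~\ref{prop:stable-equiv}, \(\Sigma\) is an autoequivalence of \(\bC\). It therefore descends to an additive autoequivalence \([1]\) of \(\mathbf{Ho}(\bC)\).

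\emph{Distinguished triangles.} A triangle \(X\to Y\to Z\to X[1]\) is declared distinguished if it is isomorphic to one obtained as follows. Take a diagram in \(\bC\) consisting of two adjacent pushout squares: the first has corners \(X,Y,0,Z\), and the second has corners \(Y,0,Z,W\). The outer rectangle is then a pushout of \(0\leftarrow X\to 0\), so \(W\simeq\Sigma X\). Using this identification, read off the maps \(f, g, h\) in \(\mathbf{Ho}(\bC)\), where \(h\colon Z\to W\simeq\Sigma X\). It is important to insert the sign convention, replacing \(h\) by \(-h\) where needed, so that rotation works out.

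\emph{TR1--TR4.} TR1 follows because every morphism admits a cofibre and \(X\xto{\mathrm{id}}X\to 0\) is a cofibre sequence. For TR2, rotation, I extend the rectangle by one more pushout square. The pasting law for pushouts identifies the new cofibre with \(\Sigma Y\). The resulting map is the negative of \(\Sigma f\), because the two identifications of \(\Sigma X\) differ by the flip of the square, which acts by \(-1\) on \(\pi_0\). TR3, completing a morphism of triangles, uses functoriality of cofibres: a commuting square in \(\mathbf{Ho}(\bC)\) lifts to a square in \(\bC\), that is, to a functor \(\Delta^1\times\Delta^1\to\bC\), and taking cofibres functorially yields the third map.

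The main obstacle is the octahedral axiom TR4. Given composable maps \(X\to Y\to Z\), I choose a diagram \(\Delta^2\to\bC\) realising them. I then form the left Kan extension to a staircase diagram in which every square is a pushout, with zeros placed on the appropriate diagonal. This is the standard \(3\times 3\) grid containing the cofibres \(Y/X\), \(Z/X\), \(Z/Y\) and the relevant suspensions. The pasting law shows that \(Y/X\to Z/X\to Z/Y\) is a cofibre sequence, and that all the required squares commute in \(\mathbf{Ho}(\bC)\). The care needed lies in tracking the sign conventions and checking that the connecting maps agree, up to the fixed signs, with those defined in the previous step.
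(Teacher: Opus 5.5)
Your proposal is correct and follows the same route as the paper: shift functor $\Sigma$, with distinguished triangles read off from adjacent cofibre squares. It goes further than the paper's sketch by actually verifying additivity and TR1--TR4 along the lines of Lurie's argument in Higher Algebra, which the paper omits; one minor clarification is that with this convention no sign needs to be inserted into $h$ itself, since the sign enters only in the rotated triangle as $-\Sigma f$, exactly as your TR2 step shows.
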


\begin{proof}
To show that the homotopy category is triangulated, we need an autoequivalence on \(\bC\) and a class of distinguished triangles satisfying the required axioms of a triangulated category. Since \(\bC\) is stable, the functor \(\Sigma \colon \bC \to \bC\) is an autoequivalence, so that \(\mathsf{Ho}(\Sigma) \colon \mathsf{Ho}(\bC) \to \mathsf{Ho}(\bC)\) is an equivalence. 
For the distinguished triangles, consider the pushout diagram \[
\begin{tikzcd}
A \arrow{r}{f} \arrow{d}{} & B \arrow{d}{} \\
0 \arrow{r}{} & \mathsf{cofib}(f).
\end{tikzcd}
\] This induces a diagram \(A \to B \to \mathsf{cofib}(f) \to \Sigma(A)\) in \(\mathsf{Ho}(\bC)\).
\end{proof}

\begin{definition}\label{def:exact-fun}
A functor \(F \colon \bC \to \bD\) between stable \(\infty\)-categories is called \emph{exact} if \(F(0) \simeq 0\), and \(F\) preserves fibre (or equivalently cofibre) sequences.
\end{definition}

We denote by \(\mathbf{Cat}_{\infty}^{st} \subset \mathbf{Cat}_\infty\) the subcategory spanned by stable \(\infty\)-categories and exact functors. 

\begin{theorem}\label{thm:lim-underlying}
The category \(\mathbf{Cat}_{\infty}^{st}\) has all small limits, and the inclusion \(\mathbf{Cat}_{\infty}^{st} \to \mathbf{Cat}_{\infty}\) preserves them. 
\end{theorem}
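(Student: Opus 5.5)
The plan is to compute limits of stable \(\infty\)-categories in \(\mathbf{Cat}_\infty\) and show that the result is again stable and has the correct universal property in \(\mathbf{Cat}_\infty^{st}\).

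Let \(p \colon I \to \mathbf{Cat}_\infty^{st}\) be a small diagram, write \(\bC_i = p(i)\), and let \(\bC = \lim_i \bC_i\) be the limit computed in \(\mathbf{Cat}_\infty\), which exists since \(\mathbf{Cat}_\infty\) is complete. Denote by \(\pi_i \colon \bC \to \bC_i\) the projections. The argument has four steps.

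\textbf{Step 1: finite limits and colimits in \(\bC\).} I would use the standard fact \cite[Proposition 5.4.7.2]{HTT}: if \(K\) is a simplicial set, each \(\bC_i\) admits \(K\)-indexed colimits, and every transition functor preserves them, then the limit \(\bC\) admits \(K\)-indexed colimits. Moreover, these colimits are detected by the projections \(\pi_i\) and preserved by them. The dual statement holds for limits.

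Concretely, one can see this by presenting \(\bC\) as the \(\infty\)-category of coCartesian sections of the unstraightening \(\mathcal{E} \to I\). A diagram \(K \to \bC\) is a compatible family of diagrams \(K \to \bC_i\). Forming the colimit pointwise gives a compatible family, because the transition functors preserve these colimits. One then checks that this pointwise family is a colimit in \(\bC\) by computing mapping anima in the limit: they are limits of the mapping anima in the \(\bC_i\).

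Since exact functors between stable categories preserve the zero object, pushouts and pullbacks, \(\bC\) has a zero object, all pushouts and all pullbacks, and each \(\pi_i\) preserves them. In particular \(\bC\) is pointed: the family of zero objects is both initial and terminal, because the mapping anima out of it and into it are limits of contractible anima.

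\textbf{Step 2: stability of \(\bC\).} Take a pushout square in \(\bC\). By Step 1 its image under each \(\pi_i\) is a pushout square in \(\bC_i\), hence a pullback square there, since \(\bC_i\) is stable. Because pullbacks in \(\bC\) are detected jointly by the \(\pi_i\), the square is a pullback in \(\bC\). The converse implication is symmetric. By Definition \ref{def:stable}, \(\bC\) is stable, and the \(\pi_i\) are exact.

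\textbf{Step 3: universal property in \(\mathbf{Cat}_\infty^{st}\).} Let \(\bD\) be stable and let \(F \colon \bD \to \bC\) be a functor corresponding to a compatible family \(F_i = \pi_i \circ F\). I claim \(F\) is exact if and only if every \(F_i\) is exact.

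If \(F\) is exact, each \(F_i\) is exact because the \(\pi_i\) are. Conversely, suppose every \(F_i\) is exact and take a cofibre sequence in \(\bD\). Its image under \(F\) is sent by each \(\pi_i\) to a pushout square, so by the joint detection in Step 1 it is a pushout square in \(\bC\). The zero object is handled the same way.

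Hence \(\mathbf{Fun}^{ex}(\bD, \bC) \simeq \lim_i \mathbf{Fun}^{ex}(\bD, \bC_i)\), as full subcategories of \(\mathbf{Fun}(\bD, \bC) \simeq \lim_i \mathbf{Fun}(\bD, \bC_i)\). Passing to the underlying anima of these categories shows that \(\bC\) is the limit in \(\mathbf{Cat}_\infty^{st}\). Since \(\bC\) was formed in \(\mathbf{Cat}_\infty\), the inclusion \(\mathbf{Cat}_\infty^{st} \to \mathbf{Cat}_\infty\) preserves this limit.

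\textbf{Step 4: the main obstacle.} The delicate point is the joint detection of (co)limits in Step 1. It is a genuine \(\infty\)-categorical statement about limits of \(\infty\)-categories along functors preserving a class of colimits, not just a levelwise check. I would cite \cite[5.4.7.2]{HTT} directly. Alternatively, I would reduce to the two basic cases of products and pullbacks of \(\infty\)-categories, where the mapping anima computation can be carried out by hand.
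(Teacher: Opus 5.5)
The paper states this result without proof; it is the standard fact \cite[Theorem 1.1.4.4]{HA}, so there is no argument in the text to compare with. Your proposal is correct and is essentially the standard proof. It rests on the same key input: a limit in \(\mathbf{Cat}_\infty\) of categories having a class of (co)limits, taken along functors that preserve them, again has those (co)limits, and they are preserved and jointly detected by the projections \cite[\S 5.4.7]{HTT}.

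The one organisational difference from Lurie's write-up concerns your Step 3. Exact, left exact and right exact functors between stable \(\infty\)-categories coincide \cite[Proposition 1.1.4.1]{HA}. Hence \(\mathbf{Cat}_\infty^{st}\) is a \emph{full} subcategory of the \(\infty\)-category of \(\infty\)-categories with finite limits and left exact functors, and that category is already closed under small limits in \(\mathbf{Cat}_\infty\). In that route the universal property comes for free, and only stability of the limit has to be checked. You instead verify by hand the special case needed, namely that a functor into the limit is exact if and only if each component is. That is fine, and it makes the universal property visible on mapping anima.

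If you write this up, two details deserve a sentence each.
\begin{enumerate}
\item Definition \ref{def:exact-fun} only asks exact functors to preserve the zero object and cofibre sequences. Getting preservation of \emph{all} pushouts and pullbacks uses that a square in a stable category is a pushout if and only if the induced map on horizontal cofibres is an equivalence.
\item The coherence issue you isolate in Step 4 can be settled directly. For a fixed diagram \(q\) in \(\bC\), the anima of colimit cocones under \(\pi_i q\) in each \(\bC_i\) is contractible, and the transition functors carry these to one another. So the compatible pointwise-colimit cocones form a limit of contractible anima. Your mapping-anima computation then shows such a cocone is a colimit in \(\bC\). The same remark also produces the zero object in Step 1.
\end{enumerate}
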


\subsubsection{Stabilisation of \(\infty\)-categories}

Let \(\bC\) be an \(\infty\)-category with a terminal object \(*\). We may then define the \(\infty\)-category of \emph{pointed objects of} \(\bC\) as the pullback 

\[
\begin{tikzcd}
\bC_* \arrow{r}{} \arrow{d}{} & \mathbf{Fun}([1], \bC) \arrow{d}{\mathrm{ev}_0} \\
* \arrow{r}{} & \bC 
\end{tikzcd}
\] in \(\mathbf{Cat}_\infty\). Loosely, \(\bC_*\) comprises of objects \(X \in \bC\) with a morphism \(* \to X\) from the terminal object. 

\begin{lemma}\cite[Lemma 4.1.9]{Cnossen}\label{lem:pointed-char}
Let \(\bC\) be an \(\infty\)-category with a terminal object. Then \(\bC_*\) is pointed. Furthermore, \(\bC\) is pointed if and only if the forgetful functor \(\bC_* \to \bC\) is an equivalence. 
\end{lemma}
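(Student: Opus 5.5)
We have to prove two things: that \(\bC_*\) is pointed, and that \(\bC\) is pointed exactly when the forgetful functor \(U \colon \bC_* \to \bC\) is an equivalence. Throughout, mapping anima in \(\bC_*\) will be computed from the pullback definition. Since \(\bC_*\) is the pullback of \(\mathrm{ev}_0 \colon \mathbf{Fun}([1],\bC) \to \bC\) along \(* \to \bC\), the mapping anima in \(\bC_*\) between \((* \to X)\) and \((* \to Y)\) is the fibre
\[
\mathbf{Hom}_{\bC_*}(X,Y) \simeq \mathbf{Hom}_{\mathbf{Fun}([1],\bC)}\bigl((*\to X),(*\to Y)\bigr) \times_{\mathbf{Hom}_\bC(*,*)} \{\mathrm{id}\}.
\]
Because \(\mathbf{Hom}_\bC(*,*)\simeq *\), this reduces to the mapping anima of arrows. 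The standard formula for maps in \(\mathbf{Fun}([1],\bC)\) then identifies it with the fibre of \(\mathbf{Hom}_\bC(X,Y) \to \mathbf{Hom}_\bC(*,Y)\) over the given point \(y \colon * \to Y\). Concretely, it is the anima of maps \(X\to Y\) under \(*\).

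For the first claim, I take the object \((\mathrm{id}\colon * \to *)\) as the candidate zero object.
\begin{itemize}
\item \emph{Terminal.} Maps from any \((x\colon * \to X)\) to it form the fibre of \(\mathbf{Hom}_\bC(X,*) \to \mathbf{Hom}_\bC(*,*)\). Both of these anima are contractible, so the fibre is contractible.
\item \emph{Initial.} Maps from it to \((y \colon * \to Y)\) form the fibre of \(\mathbf{Hom}_\bC(*,Y) \to \mathbf{Hom}_\bC(*,Y)\) over \(y\). This map is precomposition with \(\mathrm{id}_*\), hence an equivalence, so its fibres are contractible.
\end{itemize}
Therefore \(\bC_*\) is pointed.

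For the converse direction of the second claim, suppose \(U\) is an equivalence. Then \(\bC\) is equivalent to the pointed category \(\bC_*\), and being pointed is invariant under equivalence, so \(\bC\) is pointed.

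For the forward direction, suppose \(\bC\) is pointed, so that \(*\) is also initial. Then \(\mathbf{Hom}_\bC(*,X)\simeq *\) for every \(X\), and the plan has three steps.
\begin{itemize}
\item \emph{Essentially surjective.} Every \(X\) admits an essentially unique point \(*\to X\), so \(U\) is essentially surjective.
\item \emph{Fully faithful.} Since \(\mathbf{Hom}_\bC(*,Y)\) is contractible, the fibre description above gives \(\mathbf{Hom}_{\bC_*}(X,Y)\simeq \mathbf{Hom}_\bC(X,Y)\).
\item \emph{A more structural check.} When \(*\) is initial, \(\mathrm{ev}_0\) restricted to the full subcategory of \(\mathbf{Fun}([1],\bC)\) on arrows with source \(*\) has \(\mathrm{ev}_1\) as an equivalence onto \(\bC\). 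Its inverse is the left Kan extension along \(\{1\}\to[1]\), equivalently \(X\mapsto(\emptyset\to X)\). Consequently \(U\simeq \mathrm{ev}_1\) is an equivalence.
\end{itemize}

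The main obstacle is the mapping-anima computation in \(\bC_*\). It requires knowing that mapping spaces in a pullback of \(\infty\)-categories are the pullbacks of the mapping spaces, together with the description of mapping spaces in \(\mathbf{Fun}([1],\bC)\) as a pullback \(\mathbf{Hom}(X_0,Y_0)\times_{\mathbf{Hom}(X_0,Y_1)}\mathbf{Hom}(X_1,Y_1)\). Both are standard facts, which I would cite from \cite{Cnossen}. Once they are in place, every remaining step is a contractibility check.
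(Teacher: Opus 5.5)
Your proof is correct. The identification of $\mathbf{Hom}_{\bC_*}\bigl((x\colon *\to X),(y\colon *\to Y)\bigr)$ with the fibre over $y$ of $\mathbf{Hom}_\bC(X,Y)\to\mathbf{Hom}_\bC(*,Y)$ is exactly what is needed. The contractibility checks for $(\mathrm{id}\colon *\to *)$ then show that $\bC_*$ is pointed. Essential surjectivity plus full faithfulness settle the second claim, since the projection from that fibre to $\mathbf{Hom}_\bC(X,Y)$ is precisely the map induced by $U=\mathrm{ev}_1|_{\bC_*}$.

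The paper gives no argument of its own and simply cites Cnossen; your direct mapping-anima computation is the standard proof of this fact. Your third bullet is redundant. Its first sentence should say that $\mathrm{ev}_1$, restricted to the full subcategory of arrows whose source is initial, is an equivalence onto $\bC$, with inverse the left Kan extension $X\mapsto(\emptyset\to X)$.
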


To see that \(\bC_*\) is the universal way to turn an \(\infty\)-category with a terminal object into a pointed \(\infty\)-category, consider the subcategory \(\mathbf{Cat}_\infty^* \subset \mathbf{Cat}_\infty\) of \(\infty\)-categories with a terminal object and terminal object preserving functors, and the full subcategory \(\mathbf{Cat}_\infty^{\mathrm{pt}} \subset \mathbf{Cat}_\infty^*\) of pointed \(\infty\)-categories. 

\begin{lemma}\cite[Lemma 4.1.13]{Cnossen}\label{lem:adding-point}
The inclusion \(\mathbf{Cat}_\infty^{\mathrm{pt}} \to \mathbf{Cat}_\infty^*\) admits a right adjoint \[\mathbf{Cat}_\infty^* \to \mathbf{Cat}_\infty^{\mathrm{pt}}, \quad \bC \mapsto \bC_*\] with counit given by the forgetful functor \(\bC_* \to \bC\).   
\end{lemma}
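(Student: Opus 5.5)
The plan is to verify the adjunction directly on mapping spaces. We need to show that for a pointed $\infty$-category $\bD$ and an $\infty$-category $\bC$ with terminal object $*$, composition with the forgetful functor $U \colon \bC_* \to \bC$ gives an equivalence
\[
\mathbf{Hom}_{\mathbf{Cat}_\infty^{\mathrm{pt}}}(\bD, \bC_*) \xrightarrow{\ \simeq\ } \mathbf{Hom}_{\mathbf{Cat}_\infty^*}(\bD, \bC).
\]
Both sides are the subspaces of the cores of $\mathbf{Fun}(\bD,\bC_*)$ and $\mathbf{Fun}(\bD,\bC)$ spanned by terminal-object-preserving functors. By Lemma~\ref{lem:pointed-char}, $\bC_*$ is pointed, so it lies in $\mathbf{Cat}_\infty^{\mathrm{pt}}$. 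We also need $U$ to preserve terminal objects. This holds because the terminal object of $\bC_*$ is $\mathrm{id}_* \colon * \to *$, which maps to $*$. Hence $U$ is a legitimate candidate counit.

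The key step is to describe $\mathbf{Fun}(\bD,\bC_*)$. Since $\mathbf{Fun}(\bD,-)$ preserves pullbacks, the defining pullback of $\bC_*$ gives
\[
\mathbf{Fun}(\bD,\bC_*) \simeq \mathbf{Fun}(\bD,\mathbf{Fun}([1],\bC)) \times_{\mathbf{Fun}(\bD,\bC)} \{\mathrm{const}_*\}.
\]
So a functor $\bD \to \bC_*$ is a functor $G \colon \bD \to \bC$ together with a natural transformation $\mathrm{const}_* \Rightarrow G$. The map to $\mathbf{Fun}(\bD,\bC)$ forgets the transformation. Because $\mathrm{const}_*$ is a terminal object of $\mathbf{Fun}(\bD,\bC)$ (terminal objects in functor categories are computed pointwise), the fibre of this forgetful map over $G$ is the space $\mathbf{Hom}(\mathrm{const}_*, G)$. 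In general this space is not contractible, so we must restrict to terminal-preserving functors.

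Now suppose $G$ preserves the terminal object. Since $\bD$ is pointed, its zero object $0$ is also initial, and $G(0) \simeq *$. Then
\[
\mathbf{Hom}(\mathrm{const}_*, G) \simeq \mathbf{Hom}(G(0)\text{-const}, G) \simeq \mathbf{Hom}(\mathrm{const}_{G(0)}, G).
\]
Using the Yoneda lemma or the initiality of $0$, a transformation out of the constant functor at $G(0)$ that is compatible with $G$ is equivalent to $\mathbf{Hom}_{\bD}(0,-)$ pushed forward. I expect this identification to be the main obstacle. The cleanest route is to note that $\mathrm{const}_*$ is the left Kan extension of $*$ along $\{0\} \to \bD$, because $0$ is initial. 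Therefore
\[
\mathbf{Hom}_{\mathbf{Fun}(\bD,\bC)}(\mathrm{const}_*, G) \simeq \mathbf{Hom}_\bC(*, G(0)) \simeq \mathbf{Hom}_\bC(*,*) \simeq \mathrm{pt}.
\]
So over terminal-preserving $G$ the fibres are contractible, and the forgetful map restricts to an equivalence onto terminal-preserving functors.

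It remains to check that the lifted functor $\tilde G \colon \bD \to \bC_*$ preserves terminal objects, so that both sides match exactly. The object $\tilde G(0)$ is the map $* \to G(0) \simeq *$. This is an equivalence, hence terminal in $\bC_*$. Conversely, if $\bD \to \bC_*$ preserves terminal objects, then its composite with $U$ does too, as shown above. Naturality of the equivalence in $\bD$ follows since it is induced by postcomposition with $U$. This exhibits $\bC \mapsto \bC_*$ as right adjoint to the inclusion with counit $U$.
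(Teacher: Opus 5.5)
Your proof is correct. The paper gives no argument of its own here (it only cites \cite{Cnossen}), so there is nothing to match step by step. What you supply is a self-contained check of the pointwise universal property of a right adjoint: for pointed $\bD$, postcomposition with $U$ gives an equivalence $\mathbf{Fun}^*(\bD,\bC_*)^{\simeq}\to\mathbf{Fun}^*(\bD,\bC)^{\simeq}$, and functoriality of $\bC\mapsto\bC_*$ then follows automatically.

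The heart of the argument is your computation $\mathbf{Hom}_{\mathbf{Fun}(\bD,\bC)}(\mathrm{const}_*,G)\simeq\mathbf{Hom}_\bC(*,G(0))\simeq\mathrm{pt}$, and the Kan extension justification is sound. Since $0$ is initial, $i\colon\{0\}\to\bD$ is left adjoint to $\bD\to\mathrm{pt}$. Hence $i_!$ is restriction along $\bD\to\mathrm{pt}$ and $i_!(*)\simeq\mathrm{const}_*$, with no colimit hypotheses on $\bC$.

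This computation says exactly that $\mathbf{Fun}^*(\bD,\bC)$ is pointed with zero object $\mathrm{const}_*$. Your pullback description identifies $\mathbf{Fun}^*(\bD,\bC_*)$ with $\mathbf{Fun}^*(\bD,\bC)_*$. So you could equally finish by applying Lemma~\ref{lem:pointed-char} to $\mathbf{Fun}^*(\bD,\bC)$, which packages your fibre argument into a result the paper has already stated.

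Two expository fixes are worth making.
\begin{enumerate}
\item Delete the paragraph about $\mathrm{const}_{G(0)}$ and ``$\mathbf{Hom}_\bD(0,-)$ pushed forward''. It is vague, and the Kan extension argument supersedes it.
\item Make explicit why contractible fibres suffice. The fibre of $\mathcal{E}_{x/}\to\mathcal{E}$ over $y$ is $\mathbf{Hom}_{\mathcal{E}}(x,y)$ for any $x$; terminality of $\mathrm{const}_*$ plays no role there. Contractible fibres over the terminal-preserving $G$ give an equivalence because this projection is a left fibration (equivalently, because $\mathbf{Fun}([1],\mathcal{E})^{\simeq}\to\mathcal{E}^{\simeq}\times\mathcal{E}^{\simeq}$ has the mapping spaces as homotopy fibres). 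For an arbitrary functor of $\infty$-categories, contractible fibres would not suffice.
\end{enumerate}
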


To get a stable \(\infty\)-category from a pointed \(\infty\)-category, we additionally need to ensure that the \(\infty\)-category we start with has pullbacks. This is equivalent to the category we start with having finite limits. 

\begin{definition}\label{def:stabilisation}
Let \(\bC\) be an \(\infty\)-category with finite limits. The \emph{stabilisation of \(\bC\)} is defined as the limit \[\mathbf{Sp}(\bC) := \lim (\cdots \overset{\Omega}\to C_* \overset{\Omega}\to C_*)\] taken in \(\mathbf{Cat}_{\infty}\).  
\end{definition}

When we start with the presentable \(\infty\)-category \emph{anima}, which in particular has finite limits, the stabilisation \[\mathbf{Sp}(\mathbf{An}) = \lim (\cdots \mathbf{An}_* \overset{\Omega}\to \mathbf{An}_*)\] is called the \(\infty\)-category of \emph{spectra}, and is denoted simply by \(\mathbf{Sp}\). We call the projection to the first factor \(\Omega_*^\infty \colon \mathbf{Sp} \to \mathbf{An}_*\) the \emph{infinite loop space}. This functor has a left adjoint \[\Sigma_*^\infty \colon \mathbf{An}_* \to \mathbf{Sp}\] called the \emph{suspension spectrum} functor. Postcomposing \(\Omega_*^\infty\) with the forgetful functor \(\mathbf{An}_* \to \mathbf{An}\), we get a functor \[\Omega^\infty \colon \mathbf{Sp} \to \mathbf{An},\] which is left adjoint to the functor \[\mathbb{S}[-] \colon \mathbf{An} \to \mathbf{Sp}, \quad X \mapsto \Sigma_*^\infty(X_+),\] where \(X_+ = X \sqcup \{*\}\). In particular, \(\mathbb{S} := \mathbb{S}[\mathrm{pt}]\) is called the \emph{sphere spectrum}.

We end this subsection with a characterisation of stable \(\infty\)-categories that are additionally presentable, and a procedure to stabilise presentable \(\infty\)-categories. 

\begin{theorem}\cite[Corollary 2.8.12]{nkp}\label{thm:stable-presentable}
\begin{enumerate}
\item The functor \(\mathbb{S}[-] \colon \mathbf{An} \to \mathbf{Sp}\) induces an equivalence \(\mathbf{Sp} \simeq \mathbf{Sp} \otimes \mathbf{Sp}\). The inverse exhibits \(\mathbf{Sp}\) as a commutative algebra in \(\mathbf{Pr}^L\);
\item A presentable \(\infty\)-category \(\bC\) is stable if and only if \(\bC \simeq \mathbf{Sp} \otimes \bC\).  
\end{enumerate}
\end{theorem}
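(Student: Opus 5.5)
We will use the description of the tensor product in Proposition \ref{prop:tensor-product}, \(\bC \otimes \bD \simeq \mathbf{Fun}^{\lim}(\bC^\op, \bD)\), and identify \(\mathbf{Sp} \otimes \bC\) with the stabilisation \(\mathbf{Sp}(\bC)\). The key step is an identification, for any presentable \(\bC\),
\[
\mathbf{Sp} \otimes \bC \simeq \mathbf{Fun}^{\lim}(\mathbf{Sp}^\op, \bC) \simeq \mathbf{Fun}^{\lim}(\mathbf{An}^\op, \mathbf{Sp}(\bC)) \simeq \mathbf{Sp}(\bC).
\]
The first equivalence is Proposition \ref{prop:tensor-product}. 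For the second we use that \(\mathbf{Sp} = \lim(\cdots \xto{\Omega} \mathbf{An}_* \xto{\Omega} \mathbf{An}_*)\) in \(\mathbf{Cat}_\infty\), and that this limit of right adjoints is also a colimit in \(\mathbf{Pr}^L\) along the left adjoints \(\Sigma\). Since \(-\otimes \bC\) preserves colimits in \(\mathbf{Pr}^L\) (it has a right adjoint \(\mathbf{Fun}^L(\bC,-)\) by Corollary \ref{cor:fun-present}), we get
\[
\mathbf{Sp}\otimes\bC \simeq \lim(\cdots \xto{\Omega} \mathbf{An}_*\otimes\bC).
\]
Next, \(\mathbf{An}\otimes\bC \simeq \mathbf{Fun}^{\lim}(\mathbf{An}^\op,\bC)\simeq \bC\), because \(\mathbf{An}\) is freely generated by a point under colimits (equation \eqref{eq1-yoneda}). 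Then \(\mathbf{An}_*\otimes \bC \simeq \bC_*\), since pointing is the cofibre \(\mathbf{An}_* = \mathbf{An}\otimes \mathbf{An}_*\), or more directly: limit-preserving functors \(\mathbf{An}_*^\op \to \bC\) are determined by the value on \(S^0\) together with its pointing. Finally, the induced transition maps are \(\Omega\). Hence \(\mathbf{Sp}\otimes\bC\simeq \mathbf{Sp}(\bC)\).

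For (2): if \(\bC\) is stable, it is pointed, so \(\bC_*\simeq\bC\) by Lemma \ref{lem:pointed-char}. Also \(\Omega\) is an equivalence by Proposition \ref{prop:stable-equiv}, so the tower is constant and \(\mathbf{Sp}(\bC)\simeq\bC\). Conversely, \(\mathbf{Sp}(\bC)\) is always stable: it is pointed, it has finite limits computed levelwise (Theorem \ref{thm:lim-underlying}-style reasoning in \(\mathbf{Cat}_\infty\)), and \(\Omega\) acts on it as the shift of the tower, which is invertible. Stability then follows from Proposition \ref{prop:stable-equiv}(3). Hence if \(\bC\simeq \mathbf{Sp}\otimes\bC\), then \(\bC\) is stable.

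For (1): apply the identification to \(\bC=\mathbf{Sp}\), which is stable by the argument just given. This yields \(\mathbf{Sp}\otimes\mathbf{Sp}\simeq\mathbf{Sp}\), and one checks that the equivalence is the one induced by \(\mathbb{S}[-]\), i.e.\ that it sends \(\mathbb{S}\otimes X\) to \(X\). The resulting map \(\mathbf{Sp}\otimes\mathbf{Sp}\to\mathbf{Sp}\), together with the unit \(\mathbb{S}[-]\colon\mathbf{An}\to\mathbf{Sp}\), exhibits \(\mathbf{Sp}\) as an idempotent object of \(\mathbf{Pr}^L\). It is a standard fact that an idempotent object (unit map \(u\) with \(u\otimes \mathrm{id}\) an equivalence) carries a unique commutative algebra structure, and we will invoke this.

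The main obstacle is the second equivalence: checking that the sequential limit in \(\mathbf{Cat}_\infty\) along right adjoints agrees with the colimit in \(\mathbf{Pr}^L\), and that tensoring commutes with it, with correct identification of transition maps. We would cite the \(\mathbf{Pr}^L\simeq(\mathbf{Pr}^R)^\op\) limit-colimit principle from \cite{HTT}.
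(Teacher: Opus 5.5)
Your argument is correct and is essentially the standard proof of the result that the paper only cites from \cite{nkp}. That proof identifies $\mathbf{Sp}\otimes\bC$ with the stabilisation $\mathbf{Sp}(\bC)$, observes that $\mathbf{Sp}(\bC)$ is stable and collapses to $\bC$ when $\bC$ is stable, and gets the commutative algebra structure from idempotency. You reach the first identification by commuting $-\otimes\bC$ with the presentation of $\mathbf{Sp}$ as a colimit in $\mathbf{Pr}^L$ along $\Sigma$, which is just the dual of commuting $\mathbf{Fun}^{\lim}(\bC^\op,-)$ with the tower of $\Omega$'s.

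The one step you should write out is your ``one checks''. In your presentation, $\mathbb{S}[-]\otimes\mathrm{id}_{\bC}$ becomes the composite of two functors: $X\mapsto X\sqcup *$ from $\bC$ to $\bC_*$, followed by the stage-zero insertion $\bC_*\to\mathbf{Sp}(\bC)$ (left adjoint to the projection onto the zeroth term). Both are equivalences when $\bC$ is stable. This gives the idempotency of $\mathbb{S}[-]\otimes\mathrm{id}_{\mathbf{Sp}}$ that you need, rather than a mere abstract equivalence $\mathbf{Sp}\otimes\mathbf{Sp}\simeq\mathbf{Sp}$.
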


\begin{corollary}\label{cor:stabilisation-presentable}
Let \(\bC\) be a presentable \(\infty\)-category. Then \(\bC \otimes \mathbf{Sp}\) is stable. 
\end{corollary}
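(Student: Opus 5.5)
The plan is to reduce the claim to Theorem \ref{thm:stable-presentable}(2). That result says a presentable \(\infty\)-category \(\bD\) is stable if and only if \(\bD \simeq \mathbf{Sp} \otimes \bD\). So it suffices to check two things for \(\bD := \bC \otimes \mathbf{Sp}\): that it is presentable, and that the canonical functor \(\bD \to \mathbf{Sp} \otimes \bD\) is an equivalence.

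Presentability is built into the construction. The tensor product in \(\mathbf{Pr}^L\) is by definition a presentable \(\infty\)-category, and Proposition \ref{prop:tensor-product} exhibits it concretely as \(\mathbf{Fun}^{\lim}(\bC^\op, \mathbf{Sp})\).

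The main step is to compute \(\mathbf{Sp} \otimes \bD\). By the symmetry and associativity of the tensor product on \(\mathbf{Pr}^L\), and using Theorem \ref{thm:stable-presentable}(1), we get
\[
\mathbf{Sp} \otimes (\bC \otimes \mathbf{Sp}) \simeq \bC \otimes (\mathbf{Sp} \otimes \mathbf{Sp}) \simeq \bC \otimes \mathbf{Sp}.
\]
Here the first equivalence is associativity and symmetry, and the second is the equivalence \(\mathbf{Sp} \simeq \mathbf{Sp} \otimes \mathbf{Sp}\) from part (1).

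Theorem \ref{thm:stable-presentable}(2) needs a specific map to be an equivalence, namely the one induced by the unit \(\mathbf{An} \to \mathbf{Sp}\), that is, \(\mathbb{S}[-]\) tensored with \(\bD\). An abstract equivalence is not enough, so we must check that the composite above is this map. Unwinding, this amounts to the statement that the multiplication \(\mathbf{Sp} \otimes \mathbf{Sp} \to \mathbf{Sp}\) is inverse to the map \(\mathbb{S}[-] \otimes \mathbf{Sp}\), which is the content of part (1). Since \(\mathbf{Sp}\) is an idempotent commutative algebra in \(\mathbf{Pr}^L\), the module \(\bC \otimes \mathbf{Sp}\) is a \(\mathbf{Sp}\)-module, and for such modules the unit map is an equivalence.

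I expect the main obstacle to be this identification of the canonical map. I would handle it by tensoring the diagram of part (1) with \(\bC\) on the left, which is functorial because \(-\otimes-\) is a functor of two variables on \(\mathbf{Pr}^L\).

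As a sanity check, there is a more hands-on route: \(\mathbf{Fun}^{\lim}(\bC^\op, \mathbf{Sp})\) is pointed, closed under limits in \(\mathbf{Fun}(\bC^\op,\mathbf{Sp})\), and its loop functor is computed pointwise. Since \(\Omega\) is an equivalence on \(\mathbf{Sp}\) and preserves limits, \(\Omega\) is an equivalence on \(\bD\), and Proposition \ref{prop:stable-equiv}(3) then gives stability directly.
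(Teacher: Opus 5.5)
Your main argument is essentially the paper's: the paper gives no separate proof and treats the statement as immediate from Theorem~\ref{thm:stable-presentable}, via $\mathbf{Sp}\otimes(\bC\otimes\mathbf{Sp})\simeq\bC\otimes(\mathbf{Sp}\otimes\mathbf{Sp})\simeq\bC\otimes\mathbf{Sp}$ followed by part (2). Your check that this equivalence is the canonical unit map is correct, and is extra care the paper does not take, since its part (2) only asks for some equivalence. Your fallback argument via $\mathbf{Fun}^{\lim}(\bC^\op,\mathbf{Sp})$, with pointwise $\Omega$ and Proposition~\ref{prop:stable-equiv}(3), is also a valid and more elementary route.
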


We denote by \(\mathbf{Pr}_{st}^{L} \subset \mathbf{Pr}^L\) the full subcategory of presentable, stable \(\infty\)-categories with left adjoint functors as morphisms. Note that an immediate consequence of Theorem \ref{thm:stable-presentable} and Corollary \ref{cor:stabilisation-presentable} is that the tensor product of presentable, stable \(\infty\)-categories is again stable. Therefore, the Lurie tensor product restricts to a closed symmetric monoidal structure on \(\mathbf{Pr}_{st}^L\) with \(\mathbf{Sp}\) as the tensor unit, and internal Hom given by \(\mathbf{Fun}^L(\mathbf{C}, \bD)\).  

Finally, let \(\kappa\) be a fixed regular cardinal. Consider the subcategory \(\mathbf{Pr}_{st}^{L, \kappa} \subset \mathbf{Pr}_{st}^L\) of \(\kappa\)-compactly generated, stable \(\infty\)-categories, with colimit-preserving functors \(F \colon \bC \to \bD\) restricting to a functor \(\bC^\kappa \to \bD^\kappa\). Then given \(\bC \in \mathbf{Pr}_{st}^{L,\kappa}\), its full subcategory \(\bC^\kappa\) of \(\kappa\)-compact objects is a \(\kappa\)-small, stable \(\infty\)-category. Denote by \(\mathbf{Cat}_\infty^{\kappa, \mathrm{perf}}\) the category of small stable \(\infty\)-categories with \(\kappa\)-small colimits and \(\kappa\)-small colimit preserving functors, spanned by the idempotent complete categories.

\begin{theorem}\label{thm:compact-gen-cat-inf}
The functor \[\mathbf{Pr}_{st}^{L,\kappa} \to \mathbf{Cat}_\infty^{\kappa, \mathrm{perf}}, \quad \bC \mapsto \bC^\kappa\] constitutes an equivalence of \(\infty\)-categories, with inverse given by \(\bC \mapsto \mathbf{Ind}_\kappa(\bC)\). 
\end{theorem}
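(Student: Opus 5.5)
We show that the two functors are well defined and that the unit and counit of the evident comparison are equivalences, using Lemma \ref{lem:compact-generation} throughout.

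\textbf{Well-definedness.} For \(\bC \in \mathbf{Pr}_{st}^{L,\kappa}\), the subcategory \(\bC^\kappa\) is closed under finite limits and \(\kappa\)-small colimits. A \(\kappa\)-small colimit of \(\kappa\)-compact objects is \(\kappa\)-compact, because \(\kappa\)-small limits commute with \(\kappa\)-filtered colimits in anima. Since \(\Sigma\) is an equivalence, \(\bC^\kappa\) is also closed under \(\Omega = \Sigma^{-1}\), and hence under fibres. So \(\bC^\kappa\) is stable. It is idempotent complete, since a retract of a \(\kappa\)-compact object is \(\kappa\)-compact. It is essentially small because \(\bC\) is presentable. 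A morphism of \(\mathbf{Pr}_{st}^{L,\kappa}\) preserves colimits and restricts to compact objects, so it gives a \(\kappa\)-small colimit preserving exact functor.

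Conversely, for \(\bD \in \mathbf{Cat}_\infty^{\kappa,\mathrm{perf}}\) we take \(\mathbf{Ind}_\kappa(\bD)\). Since \(\bD\) has \(\kappa\)-small colimits, this can be identified with the functors \(\bD^\op \to \mathbf{An}\) that preserve \(\kappa\)-small limits. That category is a Bousfield localisation of presheaves, hence presentable by Proposition \ref{prop:presentable-char}. It has all colimits, because arbitrary colimits are \(\kappa\)-filtered colimits of \(\kappa\)-small ones. Stability: \(\kappa\)-filtered colimits in anima commute with finite limits, so \(\Sigma\) and \(\Omega\) on \(\mathbf{Ind}_\kappa(\bD)\) are the \(\kappa\)-filtered colimit extensions of \(\Sigma\) and \(\Omega\) on \(\bD\). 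These are mutually inverse, and we conclude by Proposition \ref{prop:stable-equiv}. Functoriality comes from left Kan extension, which preserves colimits and sends representables to representables.

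\textbf{Equivalence.} For \(\bC \in \mathbf{Pr}_{st}^{L,\kappa}\), the counit is the functor \(k \colon \mathbf{Ind}_\kappa(\bC^\kappa) \to \bC\). By hypothesis \(\bC^\kappa\) generates \(\bC\) under \(\kappa\)-filtered colimits, so \(k\) is an equivalence by Lemma \ref{lem:compact-generation}(2).

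For the unit, we must show \(\bD \simeq \mathbf{Ind}_\kappa(\bD)^\kappa\). Representables are \(\kappa\)-compact, since \(\kappa\)-filtered colimits of presheaves are computed pointwise, and the Yoneda embedding is fully faithful. The substantive step is the converse: every \(\kappa\)-compact object is representable. This is the step I expect to be the main obstacle.

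Let \(X\) be \(\kappa\)-compact and write \(X = \colim_{i \in I} j(d_i)\) with \(I\) \(\kappa\)-filtered. Compactness makes \(\mathrm{id}_X\) factor through some \(j(d_i)\), so \(X\) is a retract of \(j(d_i)\). The Yoneda embedding is fully faithful, so this retraction gives an idempotent on \(d_i\). Because \(\bD\) is idempotent complete, that idempotent splits in \(\bD\), and \(j\) preserves the splitting. Hence \(X \simeq j(d)\) for some \(d \in \bD\).

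Finally, full faithfulness on morphism categories follows from the universal property \(\mathbf{Fun}^{\colim_\kappa}(\mathbf{Ind}_\kappa(\bD),\bE) \simeq \mathbf{Fun}(\bD,\bE)\), restricted to functors preserving \(\kappa\)-small colimits and compact objects. Combined with the two equivalences above, this shows the functors are mutually inverse equivalences.
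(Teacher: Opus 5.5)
The paper states this theorem without proof, as background from its references, so there is no argument in the paper to compare yours against. Your proof is correct and follows the standard route, as in Lurie's \emph{Higher Topos Theory}, \S5.3.5 and \S5.5.7. You identify \(\mathbf{Ind}_\kappa(\bD)\) with the \(\kappa\)-small-limit-preserving presheaves on \(\bD\). You get the counit \(\mathbf{Ind}_\kappa(\bC^\kappa)\to\bC\) as an equivalence from Lemma \ref{lem:compact-generation}. You show that a \(\kappa\)-compact object of \(\mathbf{Ind}_\kappa(\bD)\) is a retract of a representable, hence representable because \(\bD\) is idempotent complete. Full faithfulness then comes from the universal property of \(\mathbf{Ind}_\kappa\).

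Only minor points need tightening:
\begin{itemize}
\item Before invoking Proposition \ref{prop:stable-equiv}, record that \(j(0)\) is a zero object of \(\mathbf{Ind}_\kappa(\bD)\): it is initial because \(j\) preserves \(\kappa\)-small colimits, and terminal because it is the terminal presheaf.
\item Your claim that \(\mathbf{Ind}_\kappa(f)\) preserves \(\kappa\)-compact objects depends on your later identification of those objects with (retracts of) representables, so it should come after that step.
\item The idempotent on \(d_i\) is a coherent idempotent, not merely a homotopy idempotent, because it comes from a retract diagram and \(j\) is fully faithful. It is worth saying this explicitly in the \(\infty\)-categorical setting.
\item In the last step, the universal property you need is the one for \emph{all} colimits. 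Colimit-preserving functors out of \(\mathbf{Ind}_\kappa(\bD)\) correspond to functors out of \(\bD\) that preserve \(\kappa\)-small colimits. The version for \(\kappa\)-filtered colimits alone is not enough.
\end{itemize}
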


We denote the special case of \(\kappa = \omega\) by \(\mathbf{Cat}_\infty^{\mathrm{perf}}\). 

\subsubsection{Homological algebra}

Let \(\cA\) be an abelian category. We associate to \(\cA\) a stable \(\infty\)-category \(\bD(\cA)\), whose homotopy category is the usual derived category over \(\cA\); the latter being defined as the Verdier localisation of the homotopy category of unbounded chain complexes over \(\cA\) at the quasi-isomorphisms. 

Recall that a \emph{chain complex} \((X,d)\) is a diagram \[\cdots X_n \overset{d_n}\to X_{n-1} \overset{d_{n-1}}\to \cdots \to X_0 \to X_{-1} \to \cdots,\] with \(X_n \in \cA\) and \(d_* \colon X_* \to X_{*-1}\) a morphism in \(\cA\), where \(d_{n-1}\circ d_n = 0\) for each \(n \in \Z\). We often leave the maps \(d_*\) out of the notation, and simply shorten the notation of a chain complex \((X,d)\) to \(X\). A \emph{chain map} \(f \colon X \to Y\) are morphisms \(f_n \colon X_n \to Y_n\) in \(\cA\) such that the following diagram 
\[
\begin{tikzcd}
X_n \arrow{d}{f_n} \arrow{r}{d_n^X} & X_{n-1} \arrow{d}{f_{n-1}} \\
Y_n \arrow{r}{d_n^Y} & Y_{n-1} 
\end{tikzcd}
\] commutes for each \(n\). Chain complexes and chain maps in an additive category themselves form a category that we denote by \(\mathsf{Ch}(\cA)\). The full subcategory \(\mathsf{Ch}_{\geq 0}(\cA)\) of chain complexes \((X,d)\) with \(X_n \cong 0\) for \(n <0\) is called the category of \emph{connective chain complexes}. The inclusion \(\mathsf{Ch}_{\geq 0}(\cA) \subset \mathsf{Ch}(\cA)\) is left adjoint to the \emph{truncation} functor \[\tau_{\geq 0} \colon \mathsf{Ch}(\cA) \to \mathsf{Ch}_{\geq 0}(\cA), \quad (X,d) \mapsto (\cdots \to X_2 \to X_1 \to \ker(d_0)\to 0 \to \dotsc).\] In what follows, we specialise to the additive category \(\cA = \mathsf{Mod}_\Z\) of abelian groups; in this case, we have the following well-known result:

\begin{lemma}[Dold-Kan Correspondence]\label{lem:Dold-Kan}
The category \(\mathsf{Ch}_{\geq 0}(\mathsf{Mod}_\Z)\) of connective chain complexes of abelian groups is equivalent to the category of simplicial abelian groups. Furthermore, the functor \(\mathsf{DK} \colon \mathsf{Ch}_{\geq 0}(\mathsf{Mod}_\Z) \to \mathsf{Fun}(\Delta^\op, \mathsf{Mod}_\Z)\) implementing the equivalence is lax monoidal.  
\end{lemma}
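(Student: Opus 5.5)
We prove the two assertions separately: first build the inverse functors explicitly, then check the monoidal comparison.

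\emph{Step 1: the functors.} Define the normalised chain functor
\[
N \colon \mathsf{Fun}(\Delta^\op, \mathsf{Mod}_\Z) \to \mathsf{Ch}_{\geq 0}(\mathsf{Mod}_\Z), \qquad N(A)_n = \bigcap_{i=1}^{n} \ker(d_i \colon A_n \to A_{n-1}),
\]
with differential \(d_0\). We check \(d_0^2 = 0\) on \(N(A)\) using the simplicial identities. In the other direction, set
\[
\mathsf{DK}(X)_n = \bigoplus_{[n] \twoheadrightarrow [k]} X_k .
\]
A map \(\theta \colon [m] \to [n]\) acts on the summand indexed by a surjection \(\sigma \colon [n] \twoheadrightarrow [k]\) as follows. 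Factor \(\sigma\theta\) as a surjection \([m] \twoheadrightarrow [l]\) followed by an injection \(\delta \colon [l] \hookrightarrow [k]\). Then apply \(d\) if \(\delta = d^0\), the identity if \(\delta\) is the identity, and zero otherwise. Functoriality follows from uniqueness of the epi--mono factorisation in \(\Delta\).

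\emph{Step 2: equivalence.} \(N\mathsf{DK}(X) \cong X\) is a direct computation: only the summand indexed by the identity surjection survives the intersection of face kernels. For the other composite, we first show \(A_n \cong N(A)_n \oplus D(A)_n\), where \(D(A)_n\) is the subgroup generated by degeneracies. Inducting on \(n\) and on the degeneracies, we get the natural decomposition
\[
A_n \cong \bigoplus_{[n] \twoheadrightarrow [k]} N(A)_k .
\]
This identifies \(A \cong \mathsf{DK}(N(A))\) naturally.

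\emph{Step 3: lax monoidality.} Equip simplicial abelian groups with the levelwise tensor product, and chain complexes with the usual tensor product. Since \(\mathsf{DK}\) and \(N\) are inverse, it is cleaner to build the maps on the \(N\) side. The Eilenberg--Zilber (shuffle) map
\[
\nabla \colon N(A) \otimes N(B) \to N(A \otimes B)
\]
sends \(a \otimes b\) in degrees \((p,q)\) to the signed sum over \((p,q)\)-shuffles \((\mu,\nu)\) of \(s_\nu a \otimes s_\mu b\). This makes \(N\) lax monoidal. We then transport this structure along the equivalence, using the unit \(\Z\) in degree \(0\) corresponding to the constant simplicial group \(\Z\), to get the lax structure maps
\[
\mathsf{DK}(X) \otimes \mathsf{DK}(Y) \to \mathsf{DK}(X \otimes Y)
\]
as the mate. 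Alternatively, the Alexander--Whitney map \(AW \colon N(A \otimes B) \to N(A) \otimes N(B)\) makes \(N\) oplax. The adjoint of \(AW\) along the equivalence makes \(\mathsf{DK}\) lax, so the statement for \(\mathsf{DK}\) follows in this way as well.

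\emph{Main obstacle.} The hardest part is the associativity and unitality coherences for the shuffle (or Alexander--Whitney) map. The sign bookkeeping over shuffles is messy, and the cleanest route is to verify associativity on the free simplicial objects \(\Z[\Delta^p] \otimes \Z[\Delta^q] \otimes \Z[\Delta^r]\) and extend by naturality. The degeneracy decomposition in Step 2 is the other technical point, but it is a standard induction.
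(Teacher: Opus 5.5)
\paragraph{Steps 1--2.} These follow the paper's proof: you use the same normalised complex (kernels of \(d_1,\dots,d_n\), differential \(d_0\)) and the same \(\mathsf{DK}\). Its matrix entries are \(\mathrm{id}\) or \(d\) according to whether the mono part of \(\sigma\theta\) is the identity or \(d^0\), the face with image \(\{1,\dots,k\}\). The paper only asserts that the two functors are mutually inverse, so your decomposition \(A_n\cong\bigoplus_{[n]\twoheadrightarrow[k]}N(A)_k\) supplies the missing argument.

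One small fix: functoriality of \(\mathsf{DK}(X)\) needs more than uniqueness of epi--mono factorisations. When the two mono parts compose to \(d^0d^0\), your rule assigns \(0\), while the two individual steps give \(d\circ d\). So you use \(d^2=0\), together with the fact that a composite of injections lies in \(\{\mathrm{id},d^0\}\) only if both factors do.

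\paragraph{Step 3.} The genuine problem is that the shuffle route goes the wrong way. Regard \(N\) as right adjoint to \(\mathsf{DK}\). Doctrinal adjunction turns a lax structure on a right adjoint into an \emph{oplax} structure on its left adjoint. Concretely, the mate of \(\nabla\) is
\[
\mathsf{DK}(X\otimes Y)\cong\mathsf{DK}(N\mathsf{DK}X\otimes N\mathsf{DK}Y)\xrightarrow{\mathsf{DK}(\nabla)}\mathsf{DK}N(\mathsf{DK}X\otimes\mathsf{DK}Y)\cong\mathsf{DK}X\otimes\mathsf{DK}Y,
\]
which is not a map \(\mathsf{DK}X\otimes\mathsf{DK}Y\to\mathsf{DK}(X\otimes Y)\).

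Equivalently, after applying the equivalence \(N\), a lax structure on \(\mathsf{DK}\) is exactly a natural map \(N(A\otimes B)\to NA\otimes NB\), that is, an oplax structure on \(N\). You cannot get one by inverting \(\nabla\), since \(\nabla\) is only a chain homotopy equivalence. We have \(AW\circ\nabla=\mathrm{id}\) but \(\nabla\) is not invertible. For example, with \(A=B=\Z[\Delta^1]\) in degree \(2\), \(\nabla\) goes from a rank-one group to the rank-two group spanned by the two nondegenerate \(2\)-simplices of \(\Delta^1\times\Delta^1\).

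So drop the shuffle route. Your ``alternative'' via Alexander--Whitney is the correct one and suffices on its own: using \(N\dashv\mathsf{DK}\), the lax structure map is
\[
\mathsf{DK}X\otimes\mathsf{DK}Y\to\mathsf{DK}N(\mathsf{DK}X\otimes\mathsf{DK}Y)\xrightarrow{\mathsf{DK}(AW)}\mathsf{DK}(N\mathsf{DK}X\otimes N\mathsf{DK}Y)\cong\mathsf{DK}(X\otimes Y).
\]
Strict associativity and unitality of \(AW\) can be read off directly from the front-face/back-face formula, so the verification on representables is more than you need.

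Finally, \(AW\) is not symmetric, so \(\mathsf{DK}\) comes out lax monoidal but not lax \emph{symmetric} monoidal. This is consistent with the statement, and the paper's proof does not address the monoidal claim at all.
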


\begin{proof}
In one direction, we define the functor \(\mathsf{DK} \colon \mathsf{Ch}_{\geq 0}(\mathsf{Mod}_\Z) \to \mathsf{Fun}(\Delta^\op, \mathsf{Mod}_\Z) \) that takes a non-negatively graded chain complex \((A_n, d_n)\) to the simplicial abelian group defined by \[[n] \mapsto \bigoplus_{[n] \twoheadrightarrow [k]} A_k,\] where \([n] \twoheadrightarrow [k] \) is an epimorphism in \(\Delta\). For \([n'] \to [n]\) in \(\Delta\), the corresponding map \(\bigoplus_{[n] \twoheadrightarrow [k]} A_k \to \bigoplus_{[n'] \to [k']} A_{k'}\) is defined by a matrix of maps \(f_{k,k'} \colon A_k \to A_{k'}\) as follows:

\begin{itemize}
\item if \(k = k'\) and the diagram 
\[
\begin{tikzcd}
 {[n']} \arrow{r}{} \arrow{d}{} & {[n]} \arrow{d}{} \\
{[k']} \arrow{r}{\mathrm{id}} & {[k]}
\end{tikzcd}
\] commutes, the entries \(f_{k,k'} = \mathrm{id}\);
\item if \(k' = k-1\), and the diagram  
\[
\begin{tikzcd}
 {[n']} \arrow{r}{} \arrow{d}{} & {[n]} \arrow{d}{} \\
{[k'] = \{1,\dotsc, k\}} \arrow{r}{\mathrm{id}} & {[k]}
\end{tikzcd}
\] commutes, the entries \(f_{k,k'} = d\);
\item \(0\) otherwise.
\end{itemize}

In the other direction, we define the normalised chain complex functor \[N \colon \mathsf{Fun}(\Delta^\op, \mathsf{Mod}_\Z)  \to \mathsf{Ch}_{\geq 0}(\mathsf{Mod}_\Z)\] that associates to a simplicial abelian group \((A_\bullet)\), the chain complex whose entries are \[N_n(A) = \ker(A_n \overset{(d_{1 \leq i \leq n})}\to \bigoplus_{1 \leq i \leq n} A_{n-1}).\] For \(n >0\), the face map \(d_0 \colon N_n(A) \to N_{n-1}(A)\), so that we get a chain complex \(\cdots \to N_2(A) \to N_1(A) \to N_0(A) \to 0 \to \cdots\). These two functors are mutually inverse to each other.
\end{proof}

We now consider a simplicial enhancement of the category \(\mathsf{Ch}(\cA)\) of complexes over an additive category. To this end, we first observe that for any two chain complexes \(X\) and \(Y\), we may consider the \emph{internal mapping complex} \(\underline{\mathsf{Hom}}(X,Y)\) defined as \begin{multline*}
\underline{\mathsf{Hom}}(X,Y)_n = \prod_{k \in \Z} \mathsf{Hom}_\cA(X_k, Y_{n+k}), \\ \delta_n \colon \underline{\mathsf{Hom}}(X,Y)_n \to \underline{\mathsf{Hom}}(X,Y)_{n-1}, (f_k) \mapsto (d_{k+n}^Y \circ f_k - (-1)^n f_{k-1}\circ d^X).
\end{multline*}

\noindent For two fixed complexes \(X\) and \(Y\), the internal mapping complex \((\underline{\mathsf{Hom}}(X,Y), \delta)\) is a chain complex of abelian groups. The elements of \(\mathrm{ker}(\delta_n)\) are precisely chain maps \(X[n] \to Y\), where \(X[n]\) is the shifted chain complex defined by \(X[n]_k = X_{k - n}\) and differentials \(d[n]_k = (-1)^n d_{n-k}^X\). In a precise sense, \(\mathsf{Ch}(\cA)\) is enriched over the monoidal category of chain complexes of abelian groups. Base-changing along the  truncation functor \(\tau_{\geq 0} \colon \mathsf{Ch}(\mathsf{Mod}_\Z) \to \mathsf{Ch}_{\geq 0}(\mathsf{Mod}_\Z)\), and subsequently the Dold-Kan functor \(\mathsf{Ch}_{\geq 0}(\mathsf{Mod}_\Z) \to \mathsf{Fun}(\Delta^\op, \mathsf{Mod}_\Z)\), we may view the category \(\mathsf{Ch}(\cA)\) as enriched over simplicial abelian groups. Finally, using the forgetful functor from simplicial abelian groups to simplicial sets, and the fact that any simplicial abelian group is automatically a Kan-complex (see \cite[Remark 1.2.3.14]{HA}), the category \(\mathsf{Ch}(\cA)\) is a fibrant simplicial category. Applying the homotopy coherent nerve functor, we get an \(\infty\)-category \(\mathbf{Ch}(\cA)\) of chain complexes.

\noindent To define the derived \(\infty\)-category, we require the underlying category \(\cA\) to have more structure (for instance, to define homology and quasi-isomorphisms conveniently). For simplicity, let \(\cA\) be a Grothendieck abelian category; that is, it is an abelian category which is presentable, and the filtered colimit of monomorphisms is a monomorphism.

\begin{example}\label{ex:R-mod}
Let \(R\) be a commutative, unital ring. Then the category \(\mathsf{Mod}_R\) is a Grothendieck abelian category. 
\end{example}

\noindent We now take the Dwyer-Kan localisation at the chain maps \[W= \{f\colon X \to Y\}\vert\{H_n(f) \colon H_n(X) \overset{\cong}\to H_n(Y) \text{ for all }n\}\] inducing isomorphism in homology (ie, the \emph{quasi-isomorphisms}), and get an \(\infty\)-category \[\bD(\cA) = \mathbf{Ch}(\cA)[W^{-1}]\] that we call the \emph{derived \(\infty\)-category} of an abelian category \(\cA\).  Note in particular that the definition did not require that \(\cA\) be a \emph{Grothendieck} abelian category.

\begin{proposition}\label{prop:D-stable}
Let \(\cA\) be an abelian category. Then the derived \(\infty\)-category \(\bD(\cA)\) is a stable \(\infty\)-category. If furthermore \(\cA\) is a Grothendieck abelian category, then \(\bD(\cA)\) is a presentable \(\infty\)-category. 
\end{proposition}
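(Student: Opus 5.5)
The plan has two parts: first show that \(\bD(\cA)\) is stable, then show that it is presentable when \(\cA\) is Grothendieck.

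\emph{Stability.} I would work first with the \(\infty\)-category \(\mathbf{Ch}(\cA)\) built above as the homotopy coherent nerve of the simplicially enriched category of chain complexes. The homotopy category of \(\mathbf{Ch}(\cA)\) is \(\mathsf{HoKom}(\cA)\), since \(\pi_0\) of the mapping anima is \(H_0\) of the mapping complex \(\underline{\mathsf{Hom}}(X,Y)\).

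The first step is to show that \(\mathbf{Ch}(\cA)\) is stable, using Proposition \ref{prop:stable-equiv}(4).
\begin{itemize}
\item The zero complex is a zero object.
\item Cofibres exist and are computed by mapping cones. To see this, check that the square with corners \(A \to B\), \(0 \to \mathsf{cone}(f)\) is a homotopy pushout in the simplicial category. Concretely, for every \(T\), the mapping complex \(\underline{\mathsf{Hom}}(\mathsf{cone}(f),T)\) is the fibre of \(\underline{\mathsf{Hom}}(B,T)\to\underline{\mathsf{Hom}}(A,T)\) as a complex. The truncation \(\tau_{\geq 0}\) followed by Dold--Kan turns this into a homotopy fibre of Kan complexes.
\item In particular \(\Sigma X \simeq \mathsf{cone}(X \to 0) = X[1]\). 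The shift is an isomorphism of simplicial categories, with inverse \(X[-1]\), so \(\Sigma\) is an equivalence.
\end{itemize}

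The second step is to pass to the localisation. The quasi-isomorphisms \(W\) are closed under shifts and are compatible with cones: a map of cofibre sequences that is a quasi-isomorphism on two terms is one on the third, by the five lemma on long exact homology sequences. Hence \(W\) is the class of maps whose cone lies in the full subcategory of acyclic complexes. That subcategory is a stable subcategory, closed under shifts and cones. I would then identify \(\mathbf{Ch}(\cA)[W^{-1}]\) with the Verdier quotient of \(\mathbf{Ch}(\cA)\) by the acyclic complexes, and use that a Verdier quotient of a stable \(\infty\)-category by a stable subcategory is stable, with exact localisation functor.

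This identification, together with the stability of the Verdier quotient, is the step I expect to be the main obstacle. The Dwyer--Kan localisation is defined abstractly, so one must show that it agrees with the localisation that inverts exactly the maps with acyclic cone. I would argue this via the universal property. Exact functors out of \(\mathbf{Ch}(\cA)\) that invert \(W\) are exactly those that kill acyclic complexes. In addition, the localisation inherits finite colimits because \(W\) is stable under pushout along arbitrary maps: cones of pushouts agree. Given that, the localisation functor preserves the zero object and cofibres, and \(\Sigma\) descends to an autoequivalence, so Proposition \ref{prop:stable-equiv}(4) applies again.

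\emph{Presentability.} For Grothendieck \(\cA\), I would follow Example \ref{ex:modules}. Equip \(\mathsf{Ch}(\cA)\) with the injective model structure, whose weak equivalences are the quasi-isomorphisms and whose cofibrations are the monomorphisms; this works for any Grothendieck abelian category. It is combinatorial because \(\mathsf{Ch}(\cA)\) is locally presentable when \(\cA\) is, and it is compatible with the simplicial enrichment used above.

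Its underlying \(\infty\)-category is \(\mathbf{Ch}(\cA)[W^{-1}] = \bD(\cA)\). This uses the standard fact that the underlying \(\infty\)-category of a simplicial model category, namely \(N_\Delta\) of the fibrant-cofibrant objects, is the Dwyer--Kan localisation at the weak equivalences. Theorem \ref{thm:model-simplicial} then gives presentability. Stability could alternatively be read off from Theorem \ref{thm:stable-model}, since this model category is stable because the shift is a Quillen equivalence. That gives a second route to the first claim in the Grothendieck case.
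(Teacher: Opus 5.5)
Your proof is correct. The presentability half (injective model structure on \(\mathsf{Ch}(\cA)\), combinatoriality, and identifying its underlying \(\infty\)-category with the Dwyer--Kan localisation) is the same as the paper's. For stability, however, you decompose the argument differently.

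\emph{Your route.} You first prove that \(\mathbf{Ch}(\cA)\) is itself stable: cones compute cofibres because \(\underline{\mathsf{Hom}}(\mathsf{cone}(f),T)\) is the fibre of \(\underline{\mathsf{Hom}}(B,T)\to\underline{\mathsf{Hom}}(A,T)\), and \(\Sigma\) is the invertible shift. You then rewrite \(W\) as the maps with acyclic cone. Finally you appeal to the general theorem that localising a stable \(\infty\)-category at the maps whose cofibre lies in a stable subcategory gives a stable \(\infty\)-category with exact localisation functor (e.g.\ Nikolaus--Scholze, \emph{On topological cyclic homology}, Thm.~I.3.3). If the Verdier quotient is defined as this localisation, the identification you flag as the main obstacle is tautological, and all the content sits in that theorem.

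\emph{The paper's route.} The paper never isolates the stability of \(\mathbf{Ch}(\cA)\). It takes degreewise monomorphisms and epimorphisms as cofibrations and fibrations. It then uses Cisinski's theory of \(\infty\)-categories with weak equivalences and (co)fibrations to get that \(\bD(\cA)\) has finite limits and colimits and that \(L\) is left and right exact. Finally it transports \(\Sigma\Omega\simeq\mathrm{id}\simeq\Omega\Sigma\) along \(L\), using only that the shift computes \(\Sigma\) and \(\Omega\) in \(\mathbf{Ch}(\cA)\).

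Your fallback argument is this same argument with every map declared a cofibration and only the colimit half used. Note that ``\(W\) is stable under pushout, so the localisation has finite colimits preserved by \(L\)'' is not a formality. It is precisely Cisinski's theorem (its factorisation axiom is trivial when all maps are cofibrations), so you should cite it rather than assert it.

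\emph{What each buys.} Your route gives a clean separation: mapping cones and the five lemma are the only homological input, and the rest is formal stable \(\infty\)-category theory. The Verdier-quotient description also gives the standard colimit formula for mapping spectra in \(\bD(\cA)\). The paper's route obtains finite limits and colimits of \(\bD(\cA)\) in one stroke, at the cost of checking Cisinski's axioms for the monomorphism and epimorphism classes.
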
 

\begin{proof}
The \(\infty\)-category \(\mathbf{Ch}(\cA)\) with the distinguished classes \(W\) and \(I\) of degreewise monomorphisms (respectively, epimorphisms) defines an \(\infty\)-category with weak equivalences and cofibrations (respectively, fibrations). By Cisinski, \(\bD(\cA)\) has finite limits and colimits, and the localisation functor \(L \colon \mathbf{Ch}(\cA) \to \bD(A)\) is right and left exact. Consequently, \(\bD(\cA)\) inherits the structure of an additive category from \(\mathbf{Ch}(\cA)\).  To see that it is stable, we first observe that we have a pushout square 
\[
\begin{tikzcd}
L(X) \arrow{r}{} \arrow{d}{} & 0 \arrow{d}{} \\
0 \arrow{r}{} & L(\Sigma(X)),
\end{tikzcd}
\] which in turn follows from the fact that \(L\) preserves pushouts and the shift functor is by definition the \(\infty\)-categorical pushout of the diagram \(0 \leftarrow X \to 0\) in \(\mathbf{Ch}(\cA)\). This in particular shows that \(L(\Sigma(X)) \simeq \Sigma(L(X))\). By a dual argument involving pullbacks, we get \(L(\Omega(X)) \simeq \Omega(L(X))\). This shows that the unit \(L(X) \to \Omega\Sigma(L(X))\) and counit \(\Sigma \Omega(L(X)) \to L(X)\) are equivalences in \(\bD(A)\), as required. 

Finally, it remains to show that \(\bD(A)\) is presentable. The easiest way to see this is to use the fact that when \(\cA\) is a Grothendieck abelian category, the category \(\mathsf{Ch}(\cA)\) with weak equivalences as quasi-isomorphisms, cofibrations as termwise monomorphic chain maps, and fibrations as chain maps with the right lifting property with respect to acyclic cofibrations is a combinatorial model category. The underlying \(\infty\)-category of such a model category is presentable, and may be identified with \(\bD(\cA)\) (see \cite[1.3.5.13, 1.3.5.15, 1.3.4.22]{HA}).   
\end{proof}

\subsection{Dualisable and rigid categories}

In this subsection, we introduce the most important class of categories from the viewpoint of noncommutative geometry, namely, \emph{dualisable categories}. Recall that in any symmetric monoidal (\(\infty\))-category \(\mathcal{X}\), we call an object \(c \in \mathcal{X}\) \emph{dualisable} if there exists an object \(c^\vee \in \mathcal{X}\) with morphisms \[\mathrm{ev} \colon c^\vee \otimes c \to 1_{\cX}, \quad \mathrm{coev} \colon 1_{\cX} \to c \otimes c^\vee\] such that the compositions \[c \overset{\mathrm{coev} \otimes 1}\to c \otimes c^\vee \otimes c \overset{1 \otimes \mathrm{ev}}\to c, \quad c^\vee \overset{1 \otimes \mathrm{coev}}\to c^\vee \otimes c \otimes c^\vee \overset{\mathrm{ev} \otimes 1}\to c^\vee\] are homotopic to the identity. When \(\cX\) is \emph{closed} symmetric monoidal, then the dual object is forced by the tensor-Hom adjunction to be \(c^\vee \simeq \underline{\mathbf{Hom}}(c,1_\cX)\).

Recall that by Proposition \ref{prop:tensor-product}, the category \(\mathbf{Pr}_{st}^L\) has a closed symmetric monoidal structure.

\begin{definition}\label{def:bad-def-dualisable}
A dualisable \(\infty\)-category \(\bC\) is a dualisable object in the symmetric monoidal \(\infty\)-category \(\mathbf{Pr}_{st}^L\).  
\end{definition} 

Since \(\mathbf{Pr}_{st}^L\) is also closed, the dual \(\bC^\vee \simeq \mathbf{Fun}^L(\bC, \mathbf{Sp})\). Unfortunately, the definition is extrinsic in the sense that it requires working in the \(2\)-category of \(\infty\)-categories to witness the natural transformation witnessing the homotopy in the triangle identities. The following provides an \emph{intrinsic} characterisation of dualisable categories:

\begin{theorem}[Lurie]\label{thm:dualisable-char}
Let \(\bC \in \mathbf{Pr}_{st}^L\). Then the following are equivalent:

\begin{enumerate}
\item \(\bC\) is a dualisable \(\infty\)-category;
\item The colimit functor \(k \colon \mathbf{Ind}(\bC) \to \bC\) has a left adjoint;
\item \(\bC\) is \(\omega_1\)-compactly generated and the colimit functor \(k \colon \mathbf{Ind}(\bC^{\omega_1}) \to \bC\) has a fully faithful left adjoint;
\item \(\bC\) is the retract in \(\mathbf{Pr}^L\) of a compactly generated stable \(\infty\)-category.
\end{enumerate}

\end{theorem}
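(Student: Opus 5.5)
I will prove the cycle (2) \(\Rightarrow\) (4) \(\Rightarrow\) (1) \(\Rightarrow\) (2), and separately (2) \(\Leftrightarrow\) (3). Throughout, \(\mathbf{Ind}(\bC)\) means Ind-completion of the large category with respect to a sufficiently large universe, or equivalently of \(\bC^\kappa\) for \(\kappa\) large. The key formal fact is that \(\mathbf{Ind}(\bC)\) is compactly generated stable, hence dualisable. Indeed, by Theorem \ref{thm:compact-gen-cat-inf} we have \(\mathbf{Ind}(\mathcal{E})\otimes\mathbf{Ind}(\mathcal{E}') \simeq \mathbf{Ind}(\mathcal{E}\otimes\mathcal{E}')\). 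The dual of \(\mathbf{Ind}(\mathcal{E})\) is \(\mathbf{Ind}(\mathcal{E}^\op)\). The evaluation is induced by \(\mathbf{Hom}\colon \mathcal{E}^\op\times\mathcal{E}\to\mathbf{Sp}\), and the coevaluation classifies the object \(\colim_{e}\, e\boxtimes e^{\op}\), using the Yoneda description \eqref{eq1-yoneda}. The triangle identities reduce to the co-Yoneda lemma.

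\textbf{(2) \(\Rightarrow\) (4).} Let \(\hat y\) be the left adjoint of \(k\). Since \(\hat y\) is a left adjoint it lies in \(\mathbf{Pr}^L_{st}\). I first check that \(k\hat y\simeq\mathrm{id}\). The plan is to show that \(\hat y\) is fully faithful: the counit is compared with the Yoneda embedding \(y\), which is a fully faithful right adjoint-section of \(k\), and there is a natural transformation \(\hat y\to y\) mated to \(k y\simeq \mathrm{id}\). Then \(\bC\xrightarrow{\hat y}\mathbf{Ind}(\bC)\xrightarrow{k}\bC\) exhibits \(\bC\) as a retract in \(\mathbf{Pr}^L\) of a compactly generated category. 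The one caveat is that \(\mathbf{Ind}(\bC)\) is large. This is handled by first showing that \(\hat y\) lands in \(\mathbf{Ind}(\bC^{\omega_1})\), which is essentially (3).

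\textbf{(4) \(\Rightarrow\) (1).} Dualisable objects in any symmetric monoidal \(\infty\)-category are closed under retracts. Given \(\bC\xrightarrow{i}\mathcal{D}\xrightarrow{p}\bC\) with \(pi\simeq\mathrm{id}\) and \(\mathcal{D}\) dualisable, define \(\bC^\vee\) as the retract of \(\mathcal{D}^\vee\) given by \(i^\vee p^\vee\). This is an idempotent splitting, which exists since \(\mathbf{Pr}^L\) is idempotent complete. Evaluation and coevaluation are obtained by conjugating those of \(\mathcal{D}\).

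\textbf{(1) \(\Rightarrow\) (2).} Here \(\bC^\vee=\mathbf{Fun}^L(\bC,\mathbf{Sp})\). Dualisability gives \(\bC\otimes\mathcal{X}\simeq\mathbf{Fun}^L(\bC^\vee,\mathcal{X})\) for all \(\mathcal{X}\), so \(-\otimes\bC\) preserves all limits and colimits in \(\mathbf{Pr}^L_{st}\). Write \(k\) as \(\mathrm{id}_\bC\otimes\) (the analogous colimit functor for \(\mathbf{Sp}\)), realised through \(\mathbf{Ind}(\bC)\simeq \bC\otimes\mathbf{Ind}(\mathbf{Sp}^{\kappa})\)-type identifications. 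Since tensoring with a dualisable object preserves adjunctions internal to the \((\infty,2)\)-category \(\mathbf{Pr}^L\), and \(\mathbf{Sp}\) is compactly generated, the colimit map for \(\mathbf{Sp}\) has the left adjoint \(y\) in \(\mathbf{Pr}^L\), because \(\mathbf{Sp}^\omega\) is closed under colimits in the appropriate sense. This tensors up to a left adjoint of \(k\). I expect this to be the \textbf{main obstacle}. The difficulty is making precise that \(k\) is a base change of a compactly generated situation, and that \(2\)-functoriality of \(\bC\otimes-\) transports adjunctions. An alternative I would try first is more direct. I would show that the functor \(\bC\to\mathbf{Ind}(\bC)\), \(x\mapsto (\mathrm{id}\otimes\mathrm{ev})(\mathrm{coev}\otimes x)\) expressed as an Ind-object, is left adjoint to \(k\), and verify the adjunction via the triangle identities.

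\textbf{(2) \(\Leftrightarrow\) (3).} For (2) \(\Rightarrow\) (3), apply \(\hat y\) to an object and use presentability to write it as an Ind-system of \(\omega_1\)-compacts. Stability and countable sequential colimits then show that \(\bC^{\omega_1}\) generates, and that \(\hat y\) factors through \(\mathbf{Ind}(\bC^{\omega_1})\). Full faithfulness of \(\hat y\) follows as in step (2) \(\Rightarrow\) (4). The converse is immediate, by composing with the inclusion \(\mathbf{Ind}(\bC^{\omega_1})\subset\mathbf{Ind}(\bC)\) induced by Lemma \ref{lem:compact-generation}.
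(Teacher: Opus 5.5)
Your proposal has a genuine gap at (1) $\Rightarrow$ (2), and it breaks the whole scheme. This is the only arrow you give leading from (1) or (4) back to (2)/(3), so without it you have only (2) $\Leftrightarrow$ (3) $\Rightarrow$ (4) $\Rightarrow$ (1).

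\textbf{Your main route for (1) $\Rightarrow$ (2) cannot work as stated.} Write $k_{\mathbf{Sp}}\colon\mathbf{Ind}(\mathbf{Sp}^\kappa)\to\mathbf{Sp}$, with left adjoint $\hat y_{\mathbf{Sp}}$. The adjunction $\bC\otimes\hat y_{\mathbf{Sp}}\dashv\bC\otimes k_{\mathbf{Sp}}$ holds for \emph{every} presentable $\bC$, because $\bC\otimes-$ is a $2$-functor on $\mathbf{Pr}^L$ whether or not $\bC$ is dualisable.
\begin{itemize}
\item So all the content would have to lie in identifying $k_\bC$ with $\bC\otimes k_{\mathbf{Sp}}$, and you never say how dualisability produces such an identification.
\item In fact the natural comparison $\bC\otimes\mathbf{Ind}(\mathbf{Sp}^{\kappa})\to\mathbf{Ind}(\bC^{\kappa})$, induced by $P\boxtimes S\mapsto P\otimes S$ (the one compatible with the two colimit functors), is not even fully faithful. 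Take $\bC=\mathbf{D}(\Q)$ and $\kappa=\omega_1$. The endomorphism spectrum of $\Q\boxtimes\bigoplus_{\N}\mathbb{S}$ in $\mathbf{Ind}(\mathrm{Perf}(\Q)\otimes\mathbf{Sp}^{\omega_1})$ is $H\Q\otimes\prod_{\N}\bigoplus_{\N}\mathbb{S}$. That of $\bigoplus_{\N}\Q$ in $\mathbf{Ind}(\bC^{\omega_1})$ is $\prod_{\N}\bigoplus_{\N}H\Q$. On $\pi_0$ the comparison is the non-surjective inclusion of column-finite rational matrices with bounded denominators into all column-finite rational matrices.
\end{itemize}

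\textbf{Your fallback formula does not close the gap either.} The expression $x\mapsto(\mathrm{id}\otimes\mathrm{ev})(\mathrm{coev}\otimes x)$ just returns $x$. Everything hinges on an Ind-lift of $\mathrm{coev}$, which you do not construct. Even given one, the resulting section of $k$ need not be left adjoint to $k$, because lifts are far from unique. For $\bC=\mathbf{Sp}$ you may add to $j\mathbb{S}$ the nonzero Ind-object $\text{``colim''}_n\bigoplus_{m\geq n}\mathbb{S}$ (along projections), whose colimit is $0$. So no appeal to the triangle identities can yield the adjunction.

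\textbf{What is missing are the two steps the paper uses instead.}
\begin{itemize}
\item For (1) $\Rightarrow$ (4): $k\colon\mathbf{Ind}(\bC^\kappa)\to\bC$ is a localisation, i.e.\ a cofibre of $\ker k\to\mathbf{Ind}(\bC^\kappa)$ in $\mathbf{Pr}^L_{st}$. Since $-\otimes\bC^\vee$ preserves cofibres, $k\otimes\bC^\vee$ is again a localisation, hence essentially surjective. So the object $\mathrm{coev}\in\bC\otimes\bC^\vee$ lifts to $\mathbf{Ind}(\bC^\kappa)\otimes\bC^\vee$, and contracting the lift with $\mathrm{ev}$ gives a colimit-preserving section of $k$.
\item For (4) $\Rightarrow$ (2): when $\mathcal{D}=\mathbf{Ind}(\bC_0)$, the left adjoint of $k$ is the Ind-extension of $\bC_0\to\mathbf{Ind}(\mathbf{Ind}(\bC_0))$. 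One then needs a separate lemma that existence of this left adjoint passes to retracts in $\mathbf{Pr}^L$; the paper cites one. This is not formal: the evident candidate $\mathbf{Ind}(p)\circ\hat y_{\mathcal{D}}\circ i$ need not be left adjoint to $k_\bC$ in general.
\end{itemize}

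\textbf{A lesser point on (2) $\Rightarrow$ (3).} Your sketch misplaces its input. Presentability only supplies $\kappa$-compact objects for some $\kappa$. What actually gives $\omega_1$ is that the maps in $\hat yX$ are compact morphisms (representables are compact in $\mathbf{Ind}(\bC)$). Hence $X$ is generated by sequential colimits along compact morphisms, and these are $\omega_1$-compact.

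Your (2) $\Rightarrow$ (4), via the adjoint triple $\hat y\dashv k\dashv y$, and your (4) $\Rightarrow$ (1) agree with the paper.
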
 

\begin{proof}
We first prove the equivalence between (1) and (4). Let \(\bC\) be a dualisable category, and \(\bC^{\vee}\) its dual object in \(\mathbf{Pr}_{st}^L\). Since \(\bC\) is presentable, there is in particular a \(\kappa\) for which it is \(\kappa\)-compactly generated. As \(\bC\) has all colimits, we may take the ind-extension \(\mathbf{Ind}(\bC^\kappa) \to \bC\) of the inclusion \(\bC^\kappa \subseteq \bC\). This is a left Bousfield localisation, with right adjoint given by the \(\mathbf{Ind}_\kappa\)-extension of \(j \colon \bC^\kappa \to \mathbf{Ind}(\bC^\kappa)\) (see \cite[Proposition 2.1.26]{nkp}). Then \(\mathbf{Ind}(\bC^\kappa) \otimes \bC^\vee \to \bC \otimes \bC^\vee\) is also a left Bousfield localisation, so that it is in particular essentially surjective. By the universal property of spectra as the initial object in \(\mathbf{CAlg}(\mathbf{Pr}_{st}^L\), the coevaluation \(\mathbf{Sp} \to \bC \otimes \bC^\vee\) lifts to a functor \(\mathbf{Sp} \to   \mathbf{Ind}(\bC^\kappa) \otimes \bC^\vee\). Tensoring by \(\bC\) and using duality, we get a functor \(\bC \to \mathbf{Ind}(\bC^\kappa)\) lifting the identity on \(\bC\). This shows that \(\bC\) is the retract of a compactly generated category. Conversely, we first show that a compactly generated stable \(\infty\)-category is dualisable. To see this, consider \(\mathbf{Ind}(\bC_0)\) for a small, stable \(\infty\)-category \(\bC_0\). Then the \(\mathbf{Ind}\)-extension \[\mathbf{Ind}(\bC_0^\op) \otimes \mathbf{Ind}(\bC_0) \simeq \mathbf{Ind}(\bC_0^\op \times \bC_0) \to \mathbf{Sp}\] of \(\mathbf{Hom}_{\bC_0}(-,-) \colon \bC_0^\op \times \bC_0 \to \mathbf{Sp}\) gives the evaluation map.   For the coevaluation map \(\mathbf{Sp} \to \mathbf{Ind}(\bC_0^\op \times \bC_0)\), since the target is stable, a functor is the same as a choice of object in \(\mathbf{Ind}(\bC_0^\op \otimes \bC_0)\). These are in turn is equivalent to finite colimit-preserving functors \(\mathbf{An} \to \bC_0^\op \otimes \bC_0\), or equivalently, finite limit-preserving functors \((\bC_0 \times \bC_0^\op)^\op \to \mathbf{An}\), or equivalently, finite limit-preserving functors \(\bC_0 \times \bC_0^\op \to \mathbf{An}\) in both variables, which may be identified with the mapping space bifunctor \(\mathbf{Hom}_{\bC_0}(-,-)\). 

We now prove the equivalence between (4), (2) and (3). Suppose \(\bC\) is a compactly generated stable \(\infty\)-category, so that it is of the form \(\mathbf{Ind}(\bC_0)\) for some small \(\bC_0\). Then a left adjoint to \(\mathbf{Ind}(\mathbf{Ind}(\bC_0)) \to \mathbf{Ind}(\bC_0)\) is obtained by taking the ind-extension of the functor \(\bC_0 \to \mathbf{Ind}(\mathbf{Ind}(\bC_0))\). The general case of a retract of a compactly generated category reduces to the latter (see \cite[Lemma 2.3.22]{nkp}). This shows that (4) implies (2). The implication (2) to (3) is slightly more complicated: the idea is that if \(k \colon \mathbf{Ind}(\bC) \to \bC\) admits a left adjoint, then \(\bC\) is generated under colimits by objects of the form \[X \simeq \colim (X_0 \to X_1 \to \cdots),\] where each \(X_i \to X_{i+1}\) is a compact morphism (in the sense of Definition \ref{def:compact} above). By \cite[Lemma 2.3.18]{nkp}, the filtered colimits are exact in \(\bC\). Putting these facts together, \cite[Lemma 2.3.12]{nkp} implies that \(\bC\) is \(\omega_1\)-compactly generated, so that \(k \colon \mathbf{Ind}(\bC) \to \bC\) and therefore its left adjoint factor through \(\mathbf{Ind}(\bC^{\omega_1})\). Finally, it remains to prove that (3) implies (4). To see this, since \(\bC\) is \(\omega_1\)-compactly generated, the restricted Yoneda embedding \(\bC \to \mathbf{Ind}(\bC^{\omega_1})\) is a fully faithful right adjoint of the colimit functor \(k \colon \mathbf{Ind}(\bC^{\omega_1}) \to \bC\). It is now a general fact that if we have an adjoint triple, such as \((\hat{j}, k, j)\), then \(\hat{j}\) is fully faithful if and only if \(j\) is. This shows that \(k \hat{j} \simeq 1\), so that \(k \colon \mathbf{Ind}(\bC^{\omega_1}) \to \bC\) is a retraction.   
\end{proof}

We remark that there are further equivalent conditions, but we only provide these for what will follow later. To define the right class of functors between dualisable categories, we consider the following:

\begin{definition}\label{def:dualisable-functor}
Let \(F \colon \bC \to \bD\) be a left adjoint functor between dualisable categories, with right adjoint denoted by \(G\). We call \(F\) \emph{dualisable} if \(G\) commutes with filtered colimits. 
\end{definition}

Denote by \(\mathbf{Pr}^{L,\mathrm{dual}}\) the (non full) subcategory of \(\mathbf{Pr}_{st}^L\) generated by dualisable categories and dualisable functors as morphisms.

We end this section by describing a way to force a closed symmetric monoidal presentable \(\infty\)-category to be dualisable. In fact, this procedure takes such a category to a particularly well-behaved class of dualisable categories called \emph{rigid categories}. The definition of a rigid category will take some preparation:

\begin{definition}
Let \(\bC\) be a presentable stable \(\infty\)-category. 
\begin{enumerate}
\item A morphism \(f \colon X \to Y\) in \(\bC\) is called \textit{compact} if for any morphism \(Y \to Z = colim_{i \in I} Z_i\) into a filtered colimit, the composite \(X \to Z\) factorises through a finite stage \(X \to Z_i\). 
\item Assume additionally that \(\bC\) is symmetric monoidal (with unit denoted by \(1_\bC\)). We call a morphism \(f \colon X \to Y\) \textit{trace-class} if there is morphism \(1_\bC \to X^{\vee} \otimes Y\) such that the composition \(X \to X \otimes X^{\vee} \otimes Y \to Y\) is homotopic to \(f\). 
\end{enumerate}
\end{definition}

As we shall later see, this definition of trace-class morphism is intimately related to the notion of a trace-class operator between Banach or Hilbert spaces, used in operator algebra theory.

\begin{definition}\label{def:rigid}
Let \(\bC\) be a stable, presentable \(\infty\)-category with a closed symmetric monoidal structure. We call \(\bC\) \emph{rigid} if 
\begin{enumerate}
\item \(\bC\) is \emph{dualisable};
\item compact morphisms agree with trace-class morphisms. 
\end{enumerate}
\end{definition}


We end this section by describing a way to force categories to be rigid (and therefore dualisable). Denote the full subcategory of rigid \(\infty\)-categories in \(\mathbf{CAlg}(\mathbf{Pr}_{st}^L)\) by \(\mathbf{Rig}\). We now associate to a presentable stable \(\infty\)-category \(\bC\) a rigid category, using a very general construction due to Clausen-Gaitsgory-Rozenblyum. Let \(S\) be the collection of all compact morphisms in \(\bC\). 

\begin{proposition}[Clausen]\label{prop:dualisable-core}
There exists a dualisable \(\infty\)-category \((\bC, S)^{\mathrm{dual}}\) with a colimit-preserving functor to \(\bC\) such that 
\[
\mathbf{Fun}^{\mathrm{cs}}(\bD, (\bC, S)^{\mathrm{dual}}) \overset{\simeq}\to \mathbf{Fun}^{\mathrm{cpt}}(\bD, \bC) 
\] for all \(D \in \mathbf{Pr}_{st}^{L, \mathrm{dual}}\), where the left hand side denotes strongly continuous functors, and the right hand side colimit-preserving functors taking compact morphisms in \(\bD\) to \(S\).  
\end{proposition}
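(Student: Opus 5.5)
We construct \((\bC,S)^{\mathrm{dual}}\) explicitly as a subcategory of \(\mathbf{Ind}(\bC)\), modelled on characterisation (3) of Theorem \ref{thm:dualisable-char}. Call an object of \(\mathbf{Ind}(\bC)\) an \emph{\(S\)-sequence} if it is of the form \(\mathrm{``colim"}(X_0 \to X_1 \to \cdots)\) with every transition map in \(S\). Define \((\bC,S)^{\mathrm{dual}} \subseteq \mathbf{Ind}(\bC)\) as the full subcategory generated under colimits by the \(S\)-sequences. Take the structure functor to \(\bC\) to be the restriction of the colimit functor \(k \colon \mathbf{Ind}(\bC) \to \bC\).

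The first step is to check that this category is stable and presentable. Since \(S\) is closed under composition with arbitrary maps and is stable under shifts and finite sums, \(S\)-sequences are closed under \(\Sigma^{\pm 1}\) and finite direct sums. There is only a small set of \(S\)-sequences of \(\kappa\)-compact objects, for \(\kappa\) large, and they generate, so the category is presentable. Stability then follows from closure under shifts.

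The second step, which I expect to be the main obstacle, is dualisability. For this I would show that the inclusion \(\iota \colon (\bC,S)^{\mathrm{dual}} \to \mathbf{Ind}(\bC)\) has a right adjoint \(\rho\) that preserves filtered colimits, i.e.\ that \(\iota\) is a strongly continuous embedding. The key input is a ``compact maps between \(S\)-sequences'' lemma. If \(X = \mathrm{``colim"} X_n\) is an \(S\)-sequence, then each map \(X_n \to X\) is compact in \(\mathbf{Ind}(\bC)\) relative to the subcategory. The reason is that a compact morphism in \(\bC\) factors through a finite stage of any filtered colimit, and this factorisation can be lifted along the Ind-presentation. Granting this, \(\rho\) commutes with filtered colimits. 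Then \((\bC,S)^{\mathrm{dual}}\) is a retract in \(\mathbf{Pr}^L\) of the compactly generated \(\mathbf{Ind}(\bC)\), via \(\iota\) and \(\rho\), so it is dualisable by Theorem \ref{thm:dualisable-char}(4).

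The third step establishes the universal property. Let \(\bD\) be dualisable with strongly continuous functor \(F \colon \bD \to (\bC,S)^{\mathrm{dual}}\). The composite \(k\iota F\) sends compact morphisms to \(S\). Indeed, \(\iota F\) is strongly continuous, hence preserves compact morphisms, and compact morphisms in \(\mathbf{Ind}(\bC)\) map under \(k\) to compact morphisms of \(\bC\). To see this, one uses that a compact map in \(\mathbf{Ind}(\bC)\) factors through a representable, which is then followed by the image of a map of \(\bC\).

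Conversely, let \(G \colon \bD \to \bC\) send compact morphisms into \(S\). Using Theorem \ref{thm:dualisable-char}(2), write \(\hat{\jmath} \colon \bD \to \mathbf{Ind}(\bD)\) for the left adjoint of \(k\). Each \(\hat{\jmath}(d)\) is represented by a sequence of compact morphisms \(d_0 \to d_1 \to \cdots\), by the description in the proof of that theorem. Define \(\tilde G = \mathbf{Ind}(G) \circ \hat{\jmath}\). It lands in \(S\)-sequences, hence in \((\bC,S)^{\mathrm{dual}}\). Its right adjoint is a composite of right adjoints that commute with filtered colimits, so \(\tilde G\) is strongly continuous, and \(k\tilde G \simeq G\).

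Finally, the two constructions are inverse to each other. One composite is the equivalence \(k\hat{\jmath} \simeq \mathrm{id}\). For the other, observe that a strongly continuous \(F\) satisfies \(\iota F \simeq \mathbf{Ind}(k\iota F)\hat{\jmath}\). This holds because both sides are strongly continuous functors \(\bD \to \mathbf{Ind}(\bC)\) agreeing after \(k\), and strongly continuous functors into \(\mathbf{Ind}(\bC)\) are determined by their composite with \(k\), using the universal property of \(\hat{\jmath}\) as a left adjoint. This completes the argument.
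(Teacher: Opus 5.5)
The gap is in your Step 2, and it comes from indexing the basic objects by \(\N\). The paper instead takes systems \(X_\lambda \to X_\mu\) in \(S\) for all \(\lambda<\mu\) in a dense order (think \(\Q\)). This is exactly the difference between \(\mathbf{Nuc}\) and \(\mathbf{Nuc}^\infty\) in Proposition \ref{prop:abstract-nuc}.

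Your key lemma is true but does no work. \(j(X_n)\) is a compact object of \(\mathbf{Ind}(\bC)\), so every map out of it is compact; moreover \(j(X_n)\) is in general not an object of \((\bC,S)^{\mathrm{dual}}\) at all. What you need, both for dualisability and for \(\rho\) to preserve filtered colimits, is that each generator \(X\) is a colimit \emph{inside} \((\bC,S)^{\mathrm{dual}}\) of morphisms that are compact in \(\mathbf{Ind}(\bC)\), e.g.\ morphisms factoring through some \(j(W)\).

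For an \(\N\)-indexed \(S\)-sequence there is in general no such presentation. Routing \(j(X_n)\to X\) through some \(Y\to j(W)\to Y'\), with \(Y\) built from \(S\)-sequences, forces \(X_n\to X_{n'}\) to factor through the colimit of an infinite chain of maps in \(S\). That fails when maps in \(S\) are not composites of two maps in \(S\).

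This is a real obstruction, not a missing detail. Your Step 2 uses nothing beyond \(S\) being an ideal of compact morphisms stable under sums and shifts. The same reasoning would therefore apply to trace-class maps and show that \((\bC,S_t)^{\mathrm{dual}}=\bC^{\mathrm{rig}}\) has essential image \(\mathbf{Nuc}(\bC)\) rather than \(\mathbf{Nuc}^\infty(\bC)\). That contradicts Proposition \ref{prop:abstract-nuc} whenever trace-class maps are not factorisably trace-class, as over \(\C\), where \(\mathbf{Nuc}(\bC)\) is not rigid. For \(S\) the compact morphisms of an arbitrary presentable \(\bC\), such factorisations are likewise not automatic; they are automatic only when \(\bC\) is already dualisable.

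The repair is to use dense indexing, as the paper does. Take a \(\Q\)-indexed system \(X\) with all \(X_q\to X_{q'}\) in \(S\) for \(q<q'\). Its truncations \(X_{<r}=\text{``colim''}_{q<r}X_q\) are again basic objects, and \(X\simeq\colim_r X_{<r}\). For \(r<s<r'\) the map \(X_{<r}\to X_{<r'}\) factors through \(j(X_s)\), so it is compact in \(\mathbf{Ind}(\bC)\) and hence in the subcategory. Each generator is then a sequential colimit of compact morphisms, which gives dualisability. Compact morphisms of the subcategory factor through some \(j(X_s)\), so \(\iota\) preserves them and is strongly continuous.

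Correspondingly, in Step 3 you must represent \(\jhat(d)\) by a \(\Q\)-indexed system of compact morphisms. This is possible because in a dualisable \(\bD\) every compact morphism is a composite of two compact morphisms. With these changes, Steps 1 and 3, including the identification \(\iota F\simeq\mathbf{Ind}(k\iota F)\jhat\), go through essentially as you wrote them.
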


\begin{proof}
Take the full subcategory of \(\mathbf{Ind}(\bC)\) generated under colimits by basic objects given by systems of maps \(X_\lambda \to X_\mu\) in \(S\) such that \(\lambda < \mu\). This is \((\bC, S)^{\mathrm{dual}}\). 
\end{proof}

\begin{proposition}\cite[Theorem 4.4.17]{nkp}
The inclusion of rigid categories \(\mathbf{Rig} \subseteq \mathbf{CAlg}(\mathbf{Pr}_{st}^L)\) into stable, presentable symmetric monoidal \(\infty\)-categories admits a right adjoint, namely, the functor \[\bC \mapsto \bC^{\mathrm{rig}} := (\bC, S_t)^{\mathrm{dual}},\] where \(S_t\) denotes the collection of trace-class morphisms. 
\end{proposition}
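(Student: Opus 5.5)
We show that \(\bC^{\mathrm{rig}} := (\bC, S_t)^{\mathrm{dual}}\) is a rigid symmetric monoidal category and that the structure map \(\bC^{\mathrm{rig}} \to \bC\) is couniversal among symmetric monoidal colimit-preserving functors out of rigid categories. There are three steps.

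\textbf{Step 1: \(S_t\) is an ideal compatible with the tensor product.} We check that \(S_t\) satisfies the hypotheses needed for Proposition~\ref{prop:dualisable-core}.
\begin{itemize}
\item \(S_t\) is a two-sided ideal: if \(f\) is trace-class, witnessed by \(\eta\colon 1_\bC \to X^\vee\otimes Y\), then \(g\circ f\circ h\) is witnessed by \((h^\vee\otimes g)\circ\eta\).
\item \(S_t\) is closed under tensor products: if \(f\) and \(f'\) are witnessed by \(\eta\) and \(\eta'\), then \(f\otimes f'\) is witnessed by \(\eta\otimes\eta'\), using the map \(X^\vee\otimes X'^\vee\to (X\otimes X')^\vee\).
\item Identities of \(1_\bC\) are trace-class.
\item \(S_t\) has the interpolation property: every trace-class map factors as a composite of two trace-class maps. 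This is not automatic, and is where the construction really needs the colimit \(\colim\) over chains in the definition of basic objects.
\end{itemize}
Applying Proposition~\ref{prop:dualisable-core} then gives a dualisable category \((\bC,S_t)^{\mathrm{dual}}\subseteq\Ind(\bC)\), generated by ind-objects \(X_0\to X_1\to\cdots\) with trace-class transition maps. We check that this subcategory is closed under the pointwise tensor product of \(\Ind(\bC)\) and contains the unit. For the unit we use \(1_\bC\xrightarrow{\mathrm{id}}1_\bC\to\cdots\); if \(\mathrm{id}_{1}\) is not trace-class, we take the basic object built from the unit and argue via idempotence. This makes \(\bC^{\mathrm{rig}}\) an object of \(\mathbf{CAlg}(\mathbf{Pr}_{st}^L)\), and the colimit functor \(\bC^{\mathrm{rig}}\to\bC\) symmetric monoidal.

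\textbf{Step 2: \(\bC^{\mathrm{rig}}\) is rigid.} Dualisability holds by Step~1. For condition (2) of Definition~\ref{def:rigid} we must show that compact morphisms in \(\bC^{\mathrm{rig}}\) coincide with trace-class ones. This is the main obstacle. The plan is as follows.
\begin{itemize}
\item Describe compact morphisms between basic objects: a map \(``\colim" X_i \to ``\colim" Y_j\) is compact iff it factors through some trace-class \(X_i\to Y_j\). This uses the description in the proof of Theorem~\ref{thm:dualisable-char} via the left adjoint \(\hat{\jmath}\).
\item Show that such a factorisation is trace-class in \(\bC^{\mathrm{rig}}\). The witness \(1\to X_i^\vee\otimes Y_j\) in \(\bC\) must be lifted to \(\bC^{\mathrm{rig}}\). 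The idea is to interpolate trace-class maps, so that the dual of a basic object is again basic.
\item For the converse direction, use that trace-class morphisms in any dualisable symmetric monoidal category are compact: the coevaluation-style factorisation through \(1\to X^\vee\otimes Y\), with \(1\) compact, gives compactness.
\end{itemize}

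\textbf{Step 3: the adjunction.} For rigid \(\bD\), we show that a symmetric monoidal colimit-preserving functor \(F\colon\bD\to\bC\) factors uniquely through \(\bC^{\mathrm{rig}}\), compatibly with the monoidal structure.
\begin{itemize}
\item Since \(\bD\) is rigid, \(F\) is strongly continuous. Indeed, its right adjoint preserves colimits, because \(\bD\) is generated by colimits along trace-class maps and the right adjoint is \(\bD\)-linear.
\item \(F\) preserves duals and coevaluations, hence sends trace-class maps to trace-class maps.
\item Compact maps in \(\bD\) are trace-class, so \(F\) sends compact morphisms to \(S_t\).
\item Proposition~\ref{prop:dualisable-core} then gives the unique lift \(\bD\to\bC^{\mathrm{rig}}\). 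Monoidality of this lift follows from the description in Step~1 and uniqueness applied to \(\bD\otimes\bD\).
\end{itemize}
Finally, when \(\bC\) is already rigid, \(\bC^{\mathrm{rig}}\to\bC\) is an equivalence, so the counit is an equivalence on \(\mathbf{Rig}\), which gives full faithfulness of the inclusion.
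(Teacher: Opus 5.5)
\textbf{Gap: interpolation of trace-class maps.} The gap is the interpolation step. In Step~1 you list, as a property of \(S_t\), that every trace-class map is a composite of two trace-class maps. You then build \(\bC^{\mathrm{rig}}\) from \(\N\)-indexed ind-objects \(X_0 \to X_1 \to \cdots\) with trace-class transitions. In Step~2 you rely on interpolating trace-class maps of \(\bC\), both to lift the witness \(1 \to X_i^\vee \otimes Y_j\) into \(\bC^{\mathrm{rig}}\) and to make duals of basic objects basic.

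That property is false in general. The paper stresses that in the complex bornological setting trace-class and factorisably trace-class maps between Banach spaces differ: the latter are Grothendieck's order-\(0\) trace-class maps. This is exactly why it needs \(\mathbf{Nuc}^\infty\). Your category is the \(\N\)-indexed analogue of \(\mathbf{Nuc}(\bC)\). Proposition~\ref{prop:abstract-nuc}(1) says that \(\mathbf{Nuc}(\bC)\) is rigid if and only if every trace-class map is factorisably trace-class. Also, the natural candidate for the dual of \(X_0 \to X_1 \to \cdots\) is the system of dual maps, which is indexed by \(\N^{\op}\) and so is not again of this form. So Step~2 cannot be carried out in general, e.g.\ for complete bornological \(\C\)-vector spaces.

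\textbf{The repair, which is the paper's route.} Put the interpolation into the basic objects instead of asking it of \(S_t\). In the proof of Proposition~\ref{prop:dualisable-core} the basic objects are systems \(X_\lambda \to X_\mu\) in \(S\) for \(\lambda<\mu\) in a dense order such as \(\Q\). Proposition~\ref{prop:abstract-nuc}(2) identifies the result with \(\mathbf{Nuc}^\infty(\bC)\), generated by \(\Q\)-basic nuclear objects. Then:
\begin{itemize}
\item each transition \(X_p \to X_q\) factors as \(X_p \to X_r \to X_q\) with both maps trace-class;
\item the self-duality \(\Q \cong \Q^{\op}\) makes the dual of a basic object basic;
\item trace-class witnesses lift.
\end{itemize}
This is what the paper means by the dualisable core ``forcing trace-class maps to be compact''. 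Of \(S_t\) itself the paper only uses that it is a precompact ideal. With this change, your Steps~2 and~3 follow the paper's sketch: symmetric monoidal functors preserve trace-class maps, and compact equals trace-class in a rigid \(\bD\).

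\textbf{A smaller error in Step~3.} A symmetric monoidal left adjoint \(F\colon \bD \to \bC\) out of a rigid \(\bD\) need not be strongly continuous when \(\bC\) is not dualisable. For example, take the unit functor \(\mathbf{Sp} \to \Shv(X,\mathbf{Sp})\) for \(X\) an infinite discrete set; its right adjoint \(\prod_X\) does not commute with filtered colimits. You do not need this claim. Proposition~\ref{prop:dualisable-core} only asks for a colimit-preserving functor sending compact morphisms into \(S_t\), which your remaining bullets supply.
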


\begin{proof}
The construction in Proposition \ref{prop:dualisable-core} may be done with any collection \(S\) of morphisms that form a so-called precompact ideal. It is easy to see that trace-class maps form such an ideal. Furthermore, the dualisable core construction forces trace-class maps to be compact, thereby forcing rigidity. 
\end{proof}

\section{Functional analysis revisited - bornologies and condensed mathematics}\label{chapter3}

In this chapter, we introduce two frameworks for functional analysis, starting with very different foundations. The first of these formalises the notion of \emph{bounded subsets}, capturing the idea that specifying a sequence of seminorms defining the topology of a locally convex vector space is equivalent to the unit balls of the seminorms. The second formalism (\emph{condensed mathematics}) builds on the idea that the topology on a vector space is constructed out of maps of test profinite sets into it.    

\subsection{Bornological analysis}

The most general setup of bornological starts with a set \(X\) with a collection of subsets \(\mathcal{B}_X \subseteq 2^X\) called bounded sets satisfying the following axioms:

\begin{enumerate}
\item finite subsets are bounded;
\item if \(S\) and \(T \in \bdd_X\), then \(S \cup T\in \bdd_X\);
\item if \(S \subseteq T\), and \(T \in \bdd_X\), then \(S \in \bdd_X\).  
\end{enumerate}

A set-map \(f \colon X \to Y\) between bornological sets is called \emph{bounded} if \(f(S) \in \bdd_Y\) for each \(S \in \bdd_X\). Bornological sets with bounded maps form a category denoted by \(\mathsf{Born}(\mathsf{Set})\), which we may identify with filtering unions of sets. More precisely, let \(\mathsf{Ind}^m(\mathsf{Set})\) be the full subcategory of \(\mathsf{Ind}(\mathsf{Set})\) consisting of filtered diagrams of sets that admit a presentation with injective transition maps.   

\begin{proposition}\label{prop:bornological-sets}
We have an equivalence \(\mathsf{Born}(\mathsf{Set}) \simeq \mathsf{Ind}^m(\mathsf{Set})\). 
\end{proposition}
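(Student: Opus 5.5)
We construct explicit functors in both directions and check that they are mutually inverse. The model for \(\mathsf{Ind}^m(\mathsf{Set})\) we use is the standard one: an object is a formal colimit \(\text{``}\colim\text{''}_{i \in I} X_i\) with \(I\) filtered and all transition maps injective, and
\[
\mathsf{Hom}\bigl(\text{``}\colim\text{''}_i X_i, \text{``}\colim\text{''}_j Y_j\bigr) = \lim_i \colim_j \mathsf{Hom}_{\mathsf{Set}}(X_i, Y_j).
\]

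\textbf{From bornological sets to ind-sets.} Send \((X, \bdd_X)\) to the diagram \(\bdd_X \to \mathsf{Set}\), \(S \mapsto S\), indexed by the poset \(\bdd_X\) ordered by inclusion. This poset is filtered by axiom (2), since \(S, T \subseteq S \cup T\), and all transition maps are inclusions, so the result lies in \(\mathsf{Ind}^m(\mathsf{Set})\). A bounded map \(f\) sends each \(S\) into the bounded set \(f(S)\), which gives a compatible family of maps \(S \to f(S)\), i.e.\ an element of \(\lim_S \colim_T \mathsf{Hom}(S, T)\). Functoriality is clear.

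\textbf{From ind-sets to bornological sets.} Given \(\text{``}\colim\text{''}_i X_i\) with injective transitions, let \(X = \colim_i X_i\) in \(\mathsf{Set}\). Because \(I\) is filtered and the transitions are injective, each map \(X_i \to X\) is injective. Declare \(S \subseteq X\) bounded if it lies in the image of some \(X_i\).
\begin{itemize}
\item Axiom (2) holds by filteredness.
\item Axiom (3) is immediate.
\item Axiom (1) asks that finite sets be bounded. A single point lies in the image of some \(X_i\), so by (2) every finite set does. The empty set needs \(I \neq \emptyset\), which holds since filtered categories are nonempty.
\end{itemize}
On morphisms, an element of \(\lim_i \colim_j \mathsf{Hom}(X_i, Y_j)\) induces a map of colimits \(X \to Y\). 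Each image of \(X_i\) lands inside the image of some \(Y_j\), so the induced map is bounded.

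\textbf{Checking the composites.} Going bornological set \(\to\) ind-set \(\to\) bornological set returns \(\bigcup \bdd_X = X\), using that singletons are bounded. The new bounded sets are the subsets of members of \(\bdd_X\), which by axiom (3) is exactly \(\bdd_X\). For the other composite, we must show that \(\text{``}\colim\text{''}_i X_i\) is isomorphic to the ind-object indexed by its bounded subsets. There is a comparison map sending \(X_i\) to its image \(X_i \subseteq X\). Its inverse sends a bounded \(S\) to \(X_i\) via the inclusion \(S \subseteq \mathrm{im}(X_i)\), for some \(i\) with \(S\) in that image. Injectivity of \(X_i \to X\) makes this a well-defined map \(S \to X_i\), and filteredness makes the choice of \(i\) irrelevant in the colimit. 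Equivalently, the images of the \(X_i\) form a cofinal subsystem of the bounded sets.

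\textbf{Main obstacle: full faithfulness.} The step needing the most care is showing
\[
\mathsf{Hom}_{\mathsf{Born}}\bigl((X,\bdd_X),(Y,\bdd_Y)\bigr) \cong \lim_{S} \colim_{T} \mathsf{Hom}(S,T).
\]
Here one uses injectivity of the transition maps in two ways. First, the colimit \(\colim_T \mathsf{Hom}(S,T)\) is just the set of maps \(S \to Y\) with bounded image: two maps \(S \to T\) and \(S \to T'\) are identified in the colimit iff they agree as maps into \(Y\), because the transitions are inclusions. Second, a compatible family over \(S \in \bdd_X\) then amounts to a single function \(X \to Y\), since the bounded sets cover \(X\), which is bounded on every bounded set. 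This is precisely a bounded map. Without injectivity the colimit over \(T\) would not be detected inside \(Y\), and this is where the restriction to \(\mathsf{Ind}^m\) is essential.
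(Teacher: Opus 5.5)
Your proposal is correct and follows the same route as the paper. Like the paper, you send $(X,\bdd_X)$ to $\text{``}\colim_{S\in\bdd_X}\text{''}\, S$, show this is fully faithful with essential image the ind-systems with injective transition maps, and invert it by taking the colimit with the bornology of subsets contained in some term. You also supply details the paper leaves implicit: the cofinality of the images of the $X_i$, and the identification of $\colim_T \mathsf{Hom}(S,T)$ with maps $S\to Y$ of bounded image.
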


\begin{proof}
Let \((X,\bdd_X)\) be a bornological set. Then the collection of its bounded subsets \(\bdd_X\) with inclusions as morphisms is a filtered category, so that we get a functor \[\mathsf{Born}(\mathsf{Set}) \to \mathsf{Ind}(\mathsf{Set}), \quad (X,\bdd_X) \mapsto ``colim_{S \in \bdd_X}" S.\] This functor is fully faithful, with essential image inductive systems with injective structure maps. The functor in the other direction is taking inductive limits with the bornology where a subset is bounded if and only if it is contained in one of the terms of the inductive system. 
\end{proof}

Proposition \ref{prop:bornological-sets} shows that the category of bornological sets is in some sense too large for functional analysis. For instance, complex vector spaces with a collection of bounded subsets satisfying the axioms above include non-convex topological vector spaces, which are unwieldy. To rectify this, one requires two further axioms on a complex vector space \(X\), namely:

\begin{itemize}
\item if \(c>0\), then \(c \cdot S \in \bdd_X\) whenever \(S \in \bdd_X\);
\item the disked hull of a bounded set is bounded. 
\end{itemize}

A \(\C\)-vector space with a bornology \(\bdd_X\) and the additional axioms above is called a \emph{convex} bornological vector space. Since we mostly consider convex bornologies anyway, we will drop this adjective. In fact, the theory of bornological \(\C\)-vector spaces extends to a theory of functional analysis over any base ring \(R\) that is complete and normed\footnote{For us a norm is a function \(\rho \colon R \to R_{\geq 0}\) that is nondegenerate, satisfies the triangle inequality \(\rho(a+b) \leq \rho(a) + \rho(b)\), and \(\rho(ab) \leq C\rho(a) \rho (b)\), for some \(C > 0\).}, that is, a \emph{Banach ring}.

We now provide a synthetic description of bornological modules over a Banach ring \(R\), following \cite{Kelly-M}. To fix intuition, one should think of three classes of examples: (1) \(\R\) and \(\C\) with the Euclidean norm (\emph{archimedean fields}); (2) \(\Q_p\) and \(\C_p\) with the \(p\)-adic norm (\emph{nonarchimedean fields}); (3) \(\Z\) with the Euclidean norm and \(\Z_p\) with the \(p\)-adic norm, which we think of as \emph{global} base rings for analytic geometry.  A \emph{semi-normed} (resp. \emph{normed}) \(R\)-module is an \(R\)-module with a semi-norm (resp. norm). We call a normed \(R\)-module a \emph{Banach} \(R\)-module if it is complete with respect to its norm. We call a (semi)-normed (resp. Banach) \(R\)-module \emph{nonarchimedean} if its norm satisfies the  ultra-metric property. Implicitly, whenever the base Banach ring \(R\) is nonarchimedean, we will assume that the Banach modules over such rings are nonarchimedean as well. An \(R\)-linear map \(f \colon M \to N\) between semi-normed \(R\)-modules is called \emph{bounded} if there is a \(C>0\) such that \(\rho_{N}(f(m)) \leq C\rho_M(m)\) for all \(m \in M\), where \(\rho_M\) and \(\rho_N\) are the seminorms on \(M\) and \(N\), respectively.  \footnote{To avoid set-theoretic issues and to eventually get a presentable category, we will assume that the underlying sets of our (semi)-normed and Banach spaces are of size at most a strongly inaccessible cardinal \(\kappa\).}

Denote by \(\mathsf{Norm}_R^{1/2}\) (resp \(\mathsf{Norm}_R\) and \(\mathsf{Ban}_R)\) the categories of semi-normed, normed and Banach \(R\)-modules, and bounded \(R\)-linear maps. These are symmetric monoidal categories with the (completed) projective tensor product \(- \haotimes - \colon \mathsf{Ban}_R \times \mathsf{Ban}_R \to \mathsf{Ban}_R\) defined as the (completion of the) algebraic tensor product \(M \otimes_R N\) with (respect to) the seminorm \[\varrho(x) \defeq 
\begin{cases}
\inf \{\sum_{i=0}^n \varrho(m_i)_M \varrho(n_i)_N : x = \sum_{i=0}^n m_i \otimes n_i\} \quad \text{ in the archimedean case} \\
\inf \max\{\varrho(m_i)_M \varrho(n_i)_N  : x = \sum_{i=0}^n m_i \otimes n_i\} \quad \text{ in the nonarchimedean case}. 
\end{cases} 
\] The (completed) projective tensor product is universal for bounded bilinear maps into a Banach \(R\)-module, and there is an internal Hom given by the \(R\)-module \[\mathsf{Hom}(M,N) = \{T \colon M \to N \text{ }R\text{ -linear }:\varrho(T)< \infty\}\] with the operator norm. The functor \(M \haotimes -\) is left adjoint to \(\mathsf{Hom}(M,-)\), or in other words, \(\mathsf{Norm}_R^{1/2}\), \(\mathsf{Norm}_R\) and \(\mathsf{Ban}_R\) are \emph{closed} symmetric monoidal categories. 

The categories of (semi-)normed and Banach \(R\)-modules are finitely complete and finitely cocomplete, but lack infinite products and coproducts. Consequently, they are insufficient for the purposes of analytic geometry. For instance, smooth functions and overconvergent analytic functions on a smooth manifold or complex analytic spaces do not lie in the category of Banach spaces. One way to address the lack of (co)limits is to take the filtered cocompletion \(\mathsf{Ind}(\mathsf{Ban}_R)\) of Banach (or (semi-)normed) \(R\)-modules. This is still a closed symmetric monoidal category, but is now a presentable category. However, the category \(\mathsf{Ind}(\mathsf{Ban}_R)\) is still larger than what is strictly \emph{needed}. Indeed, it is sometimes more convenient to work with a smaller \emph{concrete} category \(\mathsf{Ind}^s(\mathsf{Ban}_R)\) of inductive systems of Banach \(R\)-modules with \emph{monomorphic} structure maps. This category is closed symmetric monoidal, and continues to remain complete and cocomplete. We call this the category of \emph{formal}, complete bornological \(R\)-modules. When the base Banach ring is a nontrivially valued Banach field, we recover bornological \(R\)-modules defined as modules with a bornology in the following sense:

\begin{definition}\label{def:bornological-module}
Let \(R\) be a Banach ring with a nontrivial norm. A \emph{bornological \(R\)-module} \(M\) is a module with a bornology such that the addition and scalar multiplication maps \[+ \colon M \times M \to M, \quad R \times M \to M\] are bounded functions. 
\end{definition} 

To define complete bornological modules, we first describe how bornologies arise from modules over a Banach ring. For simplicity, we only do this for Banach rings that are fields which are complete with respect to a nontrivial norm. Given a module \(M\) over such a field, we may define for each nonempty subset (which we may without loss of generality take to be discs)  \(S \in \bdd_M\), the seminorm \[\rho_S(x) \defeq \inf_{\lambda \in R}\{\vert \lambda \vert : x \in \lambda S\},\] called the \emph{gauge seminorm}. The \(R\)-module \(M_S\) generated by \(S\) is a semi-normed \(R\)-module, whose unit ball is precisely \(S\).  We call a bornology on an \(R\)-module \emph{convex} if the collection of bounded discs is cofinal in the original bornology (ordered by inclusion). We shall always assume that our bornologies are convex, and drop this adjective from here onwards. A bornological \(R\)-module is called \emph{separated} if for every bounded disc \(S\), the associated seminorm \(\rho_S\) is a norm. Finally, a bornological \(R\)-module \(M\) is called \emph{complete} if for every bounded disc \(S\), the associated seminormed space \((M_S, \rho_S)\) is a Banach \(R\)-module. Denote by \(\mathsf{Born}_R\) and \(\mathsf{CBorn}_R\) the categories of separated and complete bornological \(R\)-modules.

To relate the concrete definition of bornologies above with inductive systems, we observe that there are canonical functors 

\[
\mathsf{diss} \colon \mathsf{Born}_R \to \mathsf{Ind}(\mathsf{Norm}_R), \quad \mathsf{CBorn}_R \to \mathsf{Ind}(\mathsf{Ban}_R), 
\] taking a (complete) bornological \(R\)-module \(M\) to the filtered system \(``\mathrm{colim}" (M_S, \rho_S)\) of (complete) normed \(R\)-modules. We call this the \emph{dissection} functor.

\begin{theorem}\cite[Proposition 2.14]{kelly2022analytic}\label{thm:dissection}
Let \(R\) be a nontrivially valued Banach field. Then the dissection functor \(\mathrm{diss} \colon \mathsf{Born}_R \to \mathsf{Ind}(\mathsf{Norm}_R)\) is fully faithful, and its essential image may be identified with formal bornological \(R\)-modules. Similar claims hold for \(\mathsf{CBorn}_R\) and \(\mathsf{Ind}(\mathsf{Ban}_R)\).  
\end{theorem}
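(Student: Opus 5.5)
The plan is to prove full faithfulness directly from the formula for morphisms in an ind-category, and then to read off the essential image from the structure maps. For bornological modules \(M,N\), write \(\mathsf{diss}(M) = ``\mathrm{colim}_{S}" M_S\) and \(\mathsf{diss}(N) = ``\mathrm{colim}_{T}" N_T\), where \(S\) and \(T\) range over the bounded discs. By definition of morphisms in \(\mathsf{Ind}(\mathsf{Norm}_R)\),
\[
\mathsf{Hom}(\mathsf{diss}(M),\mathsf{diss}(N)) \cong \lim_{S} \mathrm{colim}_{T} \mathsf{Hom}_{\mathsf{Norm}_R}(M_S, N_T).
\]
The first step is to note that every transition map \(M_S \to M_{S'}\) (for \(S \subseteq S'\)) is injective, and likewise for \(N\). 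Hence the colimit over \(T\) is an increasing union of subsets of \(\mathsf{Hom}_R(M_S, N)\). Concretely, it is the set of \(R\)-linear maps \(M_S \to N\) that factor boundedly through some \(N_T\).

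The second step is to compare this with bounded maps. A linear map \(f\colon M_S \to N\) factors boundedly through some \(N_T\) exactly when \(f(S)\) is bounded in \(N\). In one direction, take \(T\) to be the disked hull of \(f(S)\), which is bounded by convexity. Then \(\rho_T(f(x)) \le \rho_S(x)\), using that the gauge of \(\lambda S\) scales by \(|\lambda|\); this uses that \(R\) is a nontrivially valued field. In the other direction, a bounded map \(M_S \to N_T\) sends \(S\) into \(C\cdot T\), which is bounded. Taking the limit over \(S\), and using that the \(M_S\) exhaust \(M\) (finite sets are bounded), one obtains compatible families of maps on each \(M_S\). These are exactly the linear maps \(M \to N\) that send every bounded disc to a bounded set, i.e.\ the bounded linear maps, since discs are cofinal. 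This gives full faithfulness. Separatedness is used here so that each \(\rho_S\) is a norm, which makes \(M_S\) an object of \(\mathsf{Norm}_R\).

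For the essential image, the image manifestly consists of ind-objects with monomorphic structure maps, i.e.\ formal bornological modules. Conversely, given \(``\mathrm{colim}_i" V_i\) with injective structure maps, set \(M = \mathrm{colim}_i V_i\) in \(R\)-modules. Give \(M\) the bornology in which a set is bounded iff it lies in the image of a bounded set of some \(V_i\); one checks the axioms using filteredness. Then the bounded discs of \(M\) are cofinal with the images of the unit balls of the \(V_i\).

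The main obstacle is showing that this ind-system is isomorphic to \(\mathsf{diss}(M)\). The normed space \(M_S\) attached to the image of a unit ball carries the gauge norm, and this must be shown to agree with the norm of \(V_i\) up to equivalence. The subtle point is that the gauge of the image of a unit ball in an injective image need not coincide with the original norm. I would handle this by showing the two induced ind-systems are mutually cofinal: each \(V_i \to M_{B_i}\) is a bounded bijection, and each \(M_{B}\) maps boundedly into some \(V_j\). Ind-isomorphism then follows without needing pointwise equality of norms.

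For \(\mathsf{CBorn}_R\) and \(\mathsf{Ind}(\mathsf{Ban}_R)\), the same argument runs using complete bounded discs. These are cofinal by completeness, so the \(M_S\) are Banach spaces.
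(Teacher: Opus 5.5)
Your proof is correct and follows the standard argument. Full faithfulness comes from the ind-Hom formula \(\lim_S \mathrm{colim}_T \mathsf{Hom}(M_S,N_T)\) together with injectivity of the transition maps, and the essential image comes from the mutual cofinality of the bounded discs of \(M=\mathrm{colim}_i V_i\) with the rescaled unit balls of the \(V_i\).

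The paper gives no proof of its own here and only cites Kelly et al., so there is no route to compare against. Two points should be checked explicitly rather than folded into "one checks the axioms": that the constructed \(M\) is separated, and, in the Banach case, that the images of the unit balls are completant discs. Both are immediate: every bounded disc of \(M\) lies in a rescaled unit ball of some \(V_j\), and each \(M_{B_i}\) is just \(V_i\) with an equivalent norm.
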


Using Theorem \ref{thm:dissection}, we will identify the categories \(\mathsf{CBorn}_R\) and \(\mathsf{Ind}^m(\mathsf{Ban}_R)\) from this point onwards. Next, we see that these categories contain the categories \(\mathsf{TVS}_R\) and \(\mathsf{Fr}_R\) of metrisable and complete metrisable locally convex vector spaces as full subcategories, when \(R = \C\) and \(\Q_p\). Consider the functor \(\mathsf{vN} \colon \mathsf{TVS}_R \to \mathsf{Born}_R\), that associates to a locally convex topological vector space the bornology where a subset \(S\) is bounded if and only if \(\varrho(S)\) is a bounded subset of \(\R_{>0}\), for every seminorm \(\varrho\) defining the topology. We then have

\begin{proposition}\label{prop:fully-faithful}\cite{meyer2003bornological}
The functor \(\mathsf{vN} \colon \mathsf{TVS}_R \to \mathsf{Born}_R\) is faithful. It restricts to a fully faithful functor \[\mathsf{vN} \colon \mathsf{Fr}_R \to \mathsf{CBorn}_R.\]
\end{proposition}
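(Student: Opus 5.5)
Faithfulness of \(\mathsf{vN}\) is immediate. The functor does not change the underlying \(R\)-module or the underlying map; it only replaces the topology by the von Neumann bornology. Two continuous linear maps that agree as maps of bornological modules therefore agree as set-maps. Before anything else I will check that \(\mathsf{vN}\) is well defined:
\begin{itemize}
\item a continuous linear map \(f\colon M\to N\) is bounded, since for every continuous seminorm \(q\) on \(N\) there are a seminorm \(p\) on \(M\) and \(C>0\) with \(q\circ f\le Cp\), so \(f\) sends von Neumann bounded sets to von Neumann bounded sets;
\item the von Neumann bornology is convex, since absolutely convex hulls of bounded sets remain bounded;
\item it is separated whenever the topology is Hausdorff.
\end{itemize}

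For the restriction to Fréchet spaces there are two steps. First I show that \(\mathsf{vN}(M)\) is complete when \(M\) is Fréchet. Given a bounded disc \(S\subseteq M\), its closure \(\overline{S}\) is again a bounded disc, and \(S\subseteq \overline{S}\), so closed bounded discs are cofinal. For a closed bounded disc \(S\), a \(\rho_S\)-Cauchy sequence is Cauchy in the Fréchet topology, because \(S\) is bounded. It therefore converges in \(M\). Since \(S\) is closed, the standard argument shows the limit lies in \(M_S\) and that the convergence holds in \(\rho_S\). Hence each \(M_S\) is Banach, so \(\mathsf{vN}(M)\in\mathsf{CBorn}_R\).

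Second, and this is the heart of the matter, I prove fullness: every bounded linear map \(f\colon M\to N\) between metrisable locally convex spaces is continuous. This is the classical fact that metrisable locally convex spaces are bornological. I would argue by sequences. Suppose \(f\) is bounded but not continuous. Since \(M\) is metrisable, there is a sequence \(x_n\to 0\) with \(f(x_n)\not\to 0\). Fix a decreasing countable base of zero neighbourhoods \(U_k\) and choose \(n_k\) increasing with \(x_{n_k}\in k^{-2}U_k\). Rescaling by \(y_k = k\,x_{n_k}\), the sequence \(y_k\) still tends to \(0\) in \(M\) (over non-archimedean \(R\) one uses scalars \(\lambda_k\) with \(|\lambda_k|\to\infty\), available since the norm is nontrivial). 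Convergent sequences are bounded, so \(\{y_k\}\) is bounded and hence \(\{f(y_k)\}=\{k f(x_{n_k})\}\) is bounded in \(N\). Then \(f(x_{n_k}) = k^{-1}f(y_k)\to 0\). Since the same argument applies to every subsequence, \(f(x_n)\to 0\), a contradiction. Thus \(f\) is sequentially continuous, and on a metrisable space this is the same as continuity.

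I expect the main obstacle to be the non-archimedean case. There, disc hulls and the gauge seminorm must be interpreted with \(R\)-absolutely convex sets (\(\mathcal{O}_R\)-submodules), and the closure and rescaling arguments need a nontrivially valued field so that scalars of arbitrarily large norm exist. The overall structure of the argument is unchanged. Full faithfulness on \(\mathsf{Fr}_R\) then follows by combining faithfulness with fullness.
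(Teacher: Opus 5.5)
Your proof is correct. The paper gives no argument of its own and only cites Meyer, and what you wrote is the standard proof behind that citation: null sequences in a metrisable space can be rescaled by scalars of unbounded norm to null sequences, so bounded linear maps out of metrisable spaces are sequentially continuous and hence continuous, and closed bounded discs in a Fr\'echet space are cofinal and completant. Your completeness step proves completeness in the usual sense, where completant discs need only be cofinal. That is the right reading of the paper's definition, since for a non-closed bounded disc \(S\) (e.g.\ the unit ball of the finitely supported sequences inside \(\ell^2\)) the space \(M_S\) is not Banach.
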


We end this subsection by describing the various categories thus introduced:

\[
\begin{tikzcd}
\mathsf{CBorn}_R \arrow{r}{\mathrm{diss}} \arrow{d}{\subset} & \mathsf{Ind}(\mathsf{Ban}_R) \arrow{d}{\subset} \\
\mathsf{Born}_R \arrow{r}{\mathrm{diss}}  & \mathsf{Ind}(\mathsf{Norm}_R),
\end{tikzcd}
\] where inclusions are right adjoint to the completion functor \(\mathsf{Born}_R \to \mathsf{CBorn}_R\) (resp. \(\mathsf{Ind}(\mathsf{Norm}_R) \to \mathsf{Ind}(\mathsf{Ban}_R)\)), defined by taking Banach space completions in the representation of a bornological \(R\)-module as a filtered union of normed \(R\)-modules as described above.

\subsubsection{Homotopy theory for bornological modules}

Recall that an additive category with kernels and cokernels is called \emph{quasi-abelian} if it is stable under pullbacks (resp. pushouts) of cokernels (resp. kernels) by arbitrary morphisms. It is easy to see that \(\mathsf{Norm}_R\), \(\mathsf{Ban}_R\), \(\mathsf{Born}_R\), \(\mathsf{CBorn}_R\), \(\mathsf{Ind}(\mathsf{Norm}_R)\), and \(\mathsf{Ind}(\mathsf{Ban}_R)\) are quasi-abelian categories. Being a quasi-abelian category implies that these categories have well-behaved derived categories, defined by taking the Verdier quotient of the triangulated homotopy category of unbounded chain complexes by the thick subcategory of exact chain complexes. Furthermore, being quasi-abelian, they have left and right \(t\)-structures, whose left hearts we denote by \(\mathsf{LH}(-)\).  Recall that a quasi-abelian category is said to have \emph{enough projectives} if for every object \(M\), there is a cokernel \(P \to M\) from a projective object \(P\).

\begin{theorem}\label{thm:elementary}\cite{Kelly-M}
Let \(R\) be a Banach ring. 
\begin{enumerate}
\item The category \(\mathsf{Ban}_R\) has enough projectives, and the tensor product of projectives is projective;
\item The category \(\mathsf{Ind}(\mathsf{Ban}_R)\) is an elementary quasi-abelian category (in the sense of \cite{Schneiders:Quasi-Abelian});
\item The categories \(\mathsf{LH}(\mathsf{Ind}(\mathsf{Ban}_R))\) and \(\mathsf{LH}(\mathsf{Ind}^m(\mathsf{Ban}_R))\) are Grothendieck abelian categories;
\item We have a monoidal equivalence of categories \[\mathsf{D}(\mathsf{Ind}(\mathsf{Ban}_R)) \simeq \mathsf{D}(\mathsf{LH}(\mathsf{Ind}(\mathsf{Ban}_R))) \simeq \mathsf{D}(\mathsf{LH}(\mathsf{Ind}^s(\mathsf{Ban}_R))) \simeq \mathsf{D}(\mathsf{Ind}^s(\mathsf{Ban}_R)).\] 
\end{enumerate}
\end{theorem}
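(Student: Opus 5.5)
We prove the four items in order; each uses the previous one.

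For (1) we construct projectives explicitly. For a set \(S\) and a function \(r \colon S \to \R_{>0}\), let \(\ell^1_r(S)\) be the Banach \(R\)-module of families \((a_s)\) with \(\sum_s \rho(a_s) r(s) < \infty\). In the nonarchimedean case we use \(c_0\)-type sums with the supremum norm instead. A bounded map \(\ell^1_r(S) \to M\) is the same as a family \((m_s)\) with \(\rho_M(m_s) \leq C r(s)\) for a uniform \(C\). From this description, lifting along a cokernel (an open surjection) follows by choosing preimages of controlled norm, using the open mapping property of strict epimorphisms. So \(\ell^1_r(S)\) is projective.

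Every Banach module \(M\) receives a cokernel \(\ell^1_{\rho}(M) \to M\): take \(S = M \setminus \{0\}\), \(r = \rho_M\), and send \(e_m \mapsto m\). This map is surjective and the quotient norm is equivalent to \(\rho_M\). Hence projectives are exactly retracts of such \(\ell^1\)-sums. Since \(\ell^1_r(S) \haotimes \ell^1_{r'}(S') \cong \ell^1_{rr'}(S \times S')\), and retracts are preserved by the tensor product, tensor products of projectives are projective.

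For (2) we recall that "elementary quasi-abelian" means cocomplete, with exact filtered colimits, and with a small strictly generating set of tiny projective objects. In \(\mathsf{Ind}(\mathsf{Ban}_R)\):
\begin{itemize}
\item colimits exist by Ind-completion of a finitely cocomplete category;
\item filtered colimits are exact, since they are computed formally;
\item the objects \(``\ell^1_r(S)"\), with \(S\) ranging over a small set of cardinalities below \(\kappa\), are projective, because Ind-extension preserves projectivity of compact projectives;
\item these objects are tiny, since they are compact in the Ind-category;
\item they strictly generate, by applying (1) levelwise.
\end{itemize}
The delicate step in this list is the projectivity in \(\mathsf{Ind}\) itself. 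Strict epimorphisms in \(\mathsf{Ind}(\mathsf{Ban}_R)\) are levelwise representable up to reindexing, so a map from a compact projective factors through a stage, where we lift using (1).

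For (3) we invoke Schneiders' theorem: the left heart of an elementary quasi-abelian category is an elementary abelian category, hence Grothendieck, with generators given by the same tiny projectives. For \(\mathsf{Ind}^m(\mathsf{Ban}_R)\), which is equivalent to \(\mathsf{CBorn}_R\) by Theorem~\ref{thm:dissection}, we show it is a reflective subcategory closed under the relevant operations. It is still elementary with the same generators \(\ell^1_r(S)\), since these are monomorphic systems. So Schneiders applies again.

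For (4), Schneiders gives \(\mathsf{D}(\mathcal{E}) \simeq \mathsf{D}(\mathsf{LH}(\mathcal{E}))\) for any quasi-abelian \(\mathcal{E}\); this yields the two outer equivalences. For the middle equivalence, the inclusion \(\mathsf{Ind}^m \subseteq \mathsf{Ind}\) and its left adjoint (the separated quotient) induce an exact functor on left hearts. Both hearts have the same set of tiny projective generators with the same Hom groups, so each heart is equivalent to additive presheaves on that generating set that commute with the relevant limits. Hence the functor is an equivalence of abelian categories.

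Monoidality is proved by deriving \(\haotimes\) on projective resolutions. By (1) and its Ind-version, tensor products of projectives are projective, so the derived tensor products are computed by underived tensor products on projective complexes. The equivalences send projectives to projectives and are monoidal on them, because they restrict to the identity on the generators \(\ell^1\).

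The main obstacle is the middle equivalence in (4), together with the statement in (3) for \(\mathsf{Ind}^m\). I expect it to hinge on checking that the reflection \(\mathsf{Ind} \to \mathsf{Ind}^m\) is exact enough, in particular on verifying strict-epi and kernel compatibility of the separated quotient. I would handle this by working entirely with projective generators and their Hom groups rather than with general objects.
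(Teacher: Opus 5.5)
Your arguments for (1), (2) and the two outer equivalences in (4) are sound and follow the paper's route, which is largely by citation.

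\textbf{The gap.} It is in (3) and propagates to the middle equivalence in (4). You claim \(\mathsf{Ind}^m(\mathsf{Ban}_R)\) is elementary with the same tiny generators ``since these are monomorphic systems''. Membership in \(\mathsf{Ind}^m\) is not the issue. Tininess must be tested against filtered colimits \emph{in} \(\mathsf{Ind}^m\), and these are reflections of the formal colimits of \(\mathsf{Ind}\), which can differ from them.

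Here is a counterexample. Let \(X=\ell^1(\N,R)\), let \(\sigma\) be the left shift, and take the system \(X\xrightarrow{\sigma}X\xrightarrow{\sigma}\cdots\).
\begin{enumerate}
\item Since \(\sigma\) is surjective and \(\bigcup_m\ker\sigma^m=c_{00}\) (finitely supported sequences), the formal side is \(\colim_n\mathsf{Hom}(R,X)\cong X/c_{00}\neq 0\).
\item Let \(g_n\colon X\to Z\) be a compatible family into a monomorphic system. Then \(g_0(e_i)=g_m(\sigma^m e_i)=0\) for \(m>i\).
\item Factor \(g_0\) through a Banach stage of \(Z\); injectivity of the transition maps lets you test vanishing there. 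Continuity and density of \(c_{00}\) give \(g_0=0\).
\item Then \(g_n=0\) for all \(n\), because \(\sigma^n\) is surjective.
\end{enumerate}
So the colimit in \(\mathsf{Ind}^m\) is \(0\); bornologically, the separated quotient of \(\ell^1/c_{00}\) vanishes. Hence \(R=\ell^1(\mathrm{pt})\) is not tiny in \(\mathsf{Ind}^m\). Your appeal to Schneiders' theorem for elementary categories, and your claim in (4) that both hearts share tiny projective generators, therefore rest on a false statement.

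\textbf{The repair.} It is short, and uses four facts.
\begin{enumerate}
\item Sums in \(\mathsf{Ind}^m\) are computed as in \(\mathsf{Ind}\), since a sum of monomorphic systems is monomorphic.
\item The reflection \(\mathsf{Ind}\to\mathsf{Ind}^m\) is a levelwise quotient by closed submodules: quotient \(X_i\) by the intersection of the kernels of all maps from \(X\) to monomorphic systems. Hence strict epimorphisms of \(\mathsf{Ind}^m\) are strict in \(\mathsf{Ind}\), and the \(\ell^1\)-objects stay projective.
\item The strict epimorphism \(\bigoplus_i\ell^1(X_i)\to X\) already lies in \(\mathsf{Ind}^m\).
\item So the \(\ell^1_r(S)\) form a strictly generating set of \emph{small} (sum-commuting) projectives. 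That is, \(\mathsf{Ind}^m(\mathsf{Ban}_R)\) is quasi-elementary.
\end{enumerate}
Schneiders shows that the left heart of a quasi-elementary category is elementary abelian. In an abelian category a small projective is automatically tiny, because \(\mathsf{Hom}(P,-)\) is right exact and commutes with sums, hence with all colimits. This gives (3).

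With this in place your middle equivalence works. Both hearts are equivalent to all additive presheaves on the same category of \(\ell^1\)-type Banach modules, by Gabriel--Mitchell; no limit condition is needed. The inclusion \(\mathsf{Ind}^m\subseteq\mathsf{Ind}\) preserves kernels, strict epimorphisms and sums, and induces the equivalence.

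\textbf{Comparison with the paper.} The paper gets the middle equivalence from the adjunction \(\varinjlim\dashv\mathrm{diss}\), citing \cite[Proposition 3.16]{bambozzi2020sheafyness}. Your heart-level argument is a genuinely different route, and it has the merit of describing both hearts explicitly.

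Finally, Theorem~\ref{thm:dissection} identifies \(\mathsf{Ind}^m\) with \(\mathsf{CBorn}_R\) only over nontrivially valued Banach fields. For a general Banach ring you should argue directly in \(\mathsf{Ind}^m\), as above.
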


\begin{proof}
Let \(M\) be a Banach \(R\)-module. Then the Banach \(R\)-module \(l^1(M)\) is a projective Banach \(R\)-module with an epimorphism to \(M\), proving (1). To see that the completed projective tensor product of two projectives is projective, we first use the standard identification \(l^1(X) \haotimes l^1(Y) \cong l^1 (X \times Y)\), and then the fact that any projective object in \(\mathsf{Ban}_R\) is a retract of an \(l^1\)-space, and the fact that the tensor product commutes with finite colimits. Statement (2) follows formally from the first part of (1). That the left hearts are Grothendieck abelian is a consequence of \cite[Definition 2.2]{bode2021six}. The monoidal derived equivalence between a symmetric monoidal quasi-abelian category and its left heart is purely formal (see \cite[Corollary 2.11]{bambozzi2020sheafyness}). The monoidal derived equivalence between inductive systems and bornologies is induced by the adjoint pair \[\varinjlim \colon \mathsf{Ind}(\mathsf{Ban}_R) \rightleftarrows \mathsf{CBorn}_R \colon \mathrm{diss},\] whose proof is spelled out in \cite[Proposition 3.16]{bambozzi2020sheafyness}. 
\end{proof}





We end this section with the \(\infty\)-categorical enhancements of the derived categories defined so far. By Theorem \ref{thm:elementary} and \cite[Theorem 5.3.58]{kelly2016homotopy}, since \(\mathsf{Ind}(\mathsf{Ban}_R)\) is an elementary quasi-abelian category, the category \(\mathsf{Ch}(\mathsf{Ind}(\mathsf{Ban}_R))\) of unbounded complexes of ind-Banach \(R\)-modules has a \emph{projective model structure}, whose weak equivalences and fibrations are the quasi-isomorphisms and termwise cokernels, respectively. Furthermore, the model structure is stable, simplicial, combinatorial and projectively monoidal. Consequently, localising at the weak equivalences yields \[\bD(\mathsf{Ind}(\mathsf{Ban}_R)) := N(\mathsf{Ch}(\mathsf{Ind}(\mathsf{Ban}_R)))[W^{-1}]\] a presentably stable symmetric monoidal \(\infty\)-category.

\subsubsection{Categories of nuclear modules}

For a ring \(R\), one defines localising invariants \(E(R) := E(\mathsf{Perf}(R))\). Now given a Banarch ring \(R\) and a normed set \(X\), one has a split short exact sequence 
\[
0 \to l^1(\N,R) \to l^1(\N,R) \to R \to 0,
\] so that the functor \[F \colon \mathsf{Ban}_R \to \mathsf{Ban}_R, \quad A \mapsto l^1(\N,A)\] satisfies \(F(A) = A \oplus F(A)\), leading to Eilenberg-Swindle (so that \(K\)-theory vanishes). Consequently, we cannot use the category of compact objects in the derived category \(\bD(R)\) of a Banach ring as the input category for localising invariants. The right replacement of perfect complexes in functional analytic contexts is the category of suitably \emph{nuclear modules}, which is a rigid category.

\begin{definition}\label{nuclear}
Let \(R\) be a Banach ring. 
\begin{itemize}
\item A map \(f \colon X \to Y\) of Banach \(R\)-modules is called \emph{trace-class} if it lies in the image of the map \(\mathsf{Hom}_R(R,X^\vee \haotimes Y) \to \mathsf{Hom}_R(X,Y)\). Unravelling the definition of the completed projective tensor product, one immediately sees that a map is trace-class if and only if it is of the form \(f(x) = \sum_{n=0}^\infty \lambda_n x_n'(x) y_n\) for an \(l^1\)-summable sequence \((\lambda_n)\), and bounded sequences \((x_n') \in X^\vee\) and \((y_n)\in Y\);
\item A bornological \(R\)-module is called \emph{nuclear} if it is an \(\N\)-indexed colimit of Banach \(R\)-modules with injective, trace-class structure maps;
\item A map \(f\colon X_0 \to X_1\) is called \emph{factorisably} trace-class if it extends over a \([0,1] \cap \Q\)-indexed diagram, all of whose nonidentity morphisms are trace-class. 
\item A bornological \(R\)-module is called \emph{strongly nuclear} if it is a \(\Q\)-indexed colimit of Banach \(R\)-modules with injective, trace-class transition maps.  Equivalently, a bornological \(R\)-module is strongly nuclear if it is an \(\N\)-indexed colimit with injective, factorisably trace-class transition maps.  
\end{itemize}
\end{definition}

The definitions of (factorisably) trace-class maps and (strongly) nuclear modules make sense in the generality of any compactly generated, stable \(\infty\)-category \(\bC\) with a closed symmetric monoidal structure and compact tensor unit. 

\begin{definition}\label{def:nuclear}
Let \(\bC\) be a compactly generated, closed symmetric monoidal, stable \(\infty\)-category with compact unit object \(1\). 

\begin{itemize}
\item A map \(f \colon X \to Y\) is called \emph{trace-class} if it is in the image of the canonical map \(\pi_0(\mathbf{Hom}(1, M^\vee \otimes N)) \to \mathbf{Hom}(M,N)\);
\item An object \(X \in \bC\) is called \emph{basic nuclear} if it is an \(\N\)-indexed colimit of compact objects with trace-class transition maps;
\item An object is called \emph{\(\Q\)-basic nuclear} if it is a \(\Q\)-indexed colimit of compact objects with trace-class transition maps. 
\end{itemize}
\end{definition}

Denote by \(\mathbf{Nuc}(\bC)\) (resp. \(\mathbf{Nuc}^\infty(\bC)\)) the full subcategory of \(\bC\) generated under colimits by basic (resp. \(\Q\)-basic) nuclear objects.

In this generality, we have the following:

\begin{proposition}\label{prop:abstract-nuc}\cite{CS3}
Let \(\bC\) be as in Definition \ref{def:nuclear}. 
\begin{enumerate}
\item The category \(\mathbf{Nuc}(\bC)\) is \(\omega_1\)-compactly generated and stable. Then \(\omega_1\)-compact objects are precisely the basic nuclear objects. It is rigid if and only if every trace-class map is factorisably trace-class;
\item The essential image of the inclusion \(\bC^\mathrm{rig} \to \bC\) coincides with the full subcategory \(\mathbf{Nuc}^{\infty}(\bC)\) of infinitely nuclear objects of \(\bC\).  In particular, \(\mathbf{Nuc}^\infty(\bC)\) is rigid, and the \(\omega_1\)-compact objects are precisely the \(\Q\)-basic nuclear objects. 
\end{enumerate}
\end{proposition}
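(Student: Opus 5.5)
The plan is to deduce both parts from the general dualisable-core construction of Proposition~\ref{prop:dualisable-core} and the rigidification \(\bC^{\mathrm{rig}} = (\bC,S_t)^{\mathrm{dual}}\). Throughout we use two facts: the unit \(1\) is compact, and trace-class maps form a two-sided ideal. The ideal property follows directly from the description of a trace-class map as \(X \to X\otimes X^\vee\otimes Y \to Y\) via a point \(1\to X^\vee\otimes Y\).

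\textbf{First step: trace-class maps are compact.} A trace-class map \(f\colon X\to Y\) factors through \(1 \to X^\vee \otimes Y\), which is a map out of the compact object \(1\). So for any map \(Y \to \colim Z_i\), the composite \(X\to \colim Z_i\) is determined by a point \(1\to X^\vee\otimes \colim Z_i \simeq \colim (X^\vee\otimes Z_i)\). By compactness of \(1\) this point factors through a finite stage, hence so does \(X \to \colim Z_i\). Consequently every \(\N\)-indexed (resp.\ \(\Q\)-indexed) system of compact objects with trace-class transitions is a basic object in the sense of the construction of \((\bC,S)^{\mathrm{dual}}\).

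\textbf{Part (1).} I would show that a basic nuclear \(X=\colim_n X_n\) is \(\omega_1\)-compact in \(\mathbf{Nuc}(\bC)\), using that countable colimits of compact objects are \(\omega_1\)-compact in \(\bC\). I would then check that the class of basic nuclear objects is closed under cofibres and countable colimits, up to retracts and reindexing of \(\N\)-systems by a diagonal argument. This gives \(\mathbf{Nuc}(\bC)^{\omega_1}=\) basic nuclear objects and stability, hence \(\omega_1\)-compact generation. For dualisability, I would produce the left adjoint \(\hat{j}\colon \mathbf{Nuc}(\bC)\to \mathbf{Ind}(\mathbf{Nuc}(\bC)^{\omega_1})\) of the colimit functor on generators by \(X \mapsto ``\colim" X_n\), using the compactness of the transition maps from the first step, and then apply Theorem~\ref{thm:dualisable-char}(3).

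For rigidity, I would compare compact morphisms in \(\mathbf{Nuc}(\bC)\) with trace-class ones. Compact maps between basic nuclear objects factor through some transition \(X_n\to X_{n+1}\), hence are trace-class; conversely, trace-class maps are compact by the first step. The subtle point is that dualisability of \(\mathbf{Nuc}(\bC)\) as a monoidal category requires the dual of a trace-class transition to again admit refinements, which amounts to each trace-class map factoring as a composite of trace-class maps. Iterating such factorisations yields exactly a \([0,1]\cap\Q\)-indexed diagram, i.e.\ a factorisably trace-class map. I would prove the equivalence by showing that, in a rigid category, every compact map factors as a composite of two compact maps, and conversely using such factorisations to build the needed \(\Q\)-indexed refinements.

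\textbf{Part (2).} Here \(\bC^{\mathrm{rig}}\) is by construction generated by ind-systems \((X_\lambda)_{\lambda\in\Q}\) with trace-class transitions, and the functor \(\bC^{\mathrm{rig}}\to\bC\) takes colimits. I would prove that this functor is fully faithful by computing mapping spectra between basic objects: the \(\Q\)-density lets one interpolate, so that \(\lim_\lambda\colim_\mu\) of maps in \(\mathbf{Ind}(\bC)\) agrees with maps of colimits in \(\bC\). Its image is then generated by \(\Q\)-basic nuclear objects, i.e.\ it is \(\mathbf{Nuc}^\infty(\bC)\). Rigidity is inherited from \(\bC^{\mathrm{rig}}\), and the \(\omega_1\)-compact objects are identified as in part (1).

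\textbf{Main obstacle.} I expect the full faithfulness in part (2), and the analogous rigidity criterion in part (1), to be the hardest step: controlling the \(\lim^1\)-type terms and showing that compact morphisms in the nuclear category are exactly the factorisably trace-class ones. I would attack this first by the interpolation argument along \(\Q\).
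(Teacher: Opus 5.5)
The paper gives no proof of this result; it only quotes it from Clausen--Scholze. Your plan for (2) via \((\bC,S_t)^{\mathrm{dual}}\) is the natural one, but part (1) has a genuine gap.

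\textbf{The construction of \(\hat{j}\) in part (1) does not make sense.} Your \(\hat{j}(X)\), the formal ind-object of the \(X_n\), is not an object of \(\mathbf{Ind}(\mathbf{Nuc}(\bC)^{\omega_1})\). The \(X_n\) are compact in \(\bC\) but in general not nuclear (e.g.\ \(\prod_{\N}\Z\) in \(\bD(\mathsf{Solid})\)). Compactness of \(X_n\to X_{n+1}\) in \(\bC\) therefore does not by itself produce any compact morphisms between objects of \(\mathbf{Nuc}(\bC)\).

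For the same reason, your claim that a compact map of \(\mathbf{Nuc}(\bC)\) into \(Y=\colim Y_m\) factors through some \(Y_m\) is unjustified. Compactness in \(\mathbf{Nuc}(\bC)\) only tests filtered diagrams of nuclear objects, and \((Y_m)\) is not one. Moreover, trace-class maps in \(\mathbf{Nuc}(\bC)\) must be measured with its own duals \(R(X^\vee)\), where \(R\) is the right adjoint of the inclusion, not with \(X^\vee\).

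Relatedly, \(\N\)-indexed trace-class systems are not basic objects of the construction in Proposition~\ref{prop:dualisable-core}. The dense indexing there is exactly what makes the transitions compact in the new category.

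The symptom is that your three claims together prove too much. Dualisability, compact \(\Rightarrow\) trace-class, and trace-class \(\Rightarrow\) compact would make \(\mathbf{Nuc}(\bC)\) rigid for every \(\bC\). Combined with the ``only if'' you are proving, every trace-class map would then be factorisable. The paper notes this fails for complex Banach spaces (order-\(0\) versus ordinary trace-class).

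\textbf{What is missing, and a sound order of proof.} To present a basic nuclear object as a sequential colimit with compact transitions \emph{inside} \(\mathbf{Nuc}(\bC)\), you need nuclear objects interpolating the transitions. If \((Q_r)_{r\in[0,1]\cap\Q}\) refines a transition, then \(N_s=\colim_{r<s}Q_r\) is basic nuclear. For \(s<t\), the map \(N_s\to N_t\) factors through the compact object \(Q_s\), hence is compact in \(\mathbf{Nuc}(\bC)\). This is precisely where factorisability enters. So prove (2) first, then get the ``if'' direction of (1) from \(\mathbf{Nuc}(\bC)=\mathbf{Nuc}^\infty(\bC)\).

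In (2), full faithfulness does not come from \(\Q\)-density. It comes from the natural factorisation \(\mathrm{Hom}(X_\mu,Z)\to\mathrm{Hom}(1,X_\lambda^\vee\otimes Z)\to\mathrm{Hom}(X_\lambda,Z)\). Its middle term commutes with filtered colimits in \(Z\) because \(1\) is compact. This interleaves the towers \(\mathrm{Hom}(X_\lambda,\colim_\mu Y_\mu)\) and \(\colim_\mu\mathrm{Hom}(X_\lambda,Y_\mu)\), so their limits agree.

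You also need to replace the arbitrary objects \(X_\lambda\) in basic objects of \(\bC^{\mathrm{rig}}\) by compact ones, to see that the image is \(\mathbf{Nuc}^\infty(\bC)\). This works because a trace-class map factors through a compact object, again by compactness of \(1\).

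Finally, your argument does not reach the ``only if'' direction of (1). Factoring compact maps in a rigid \(\mathbf{Nuc}(\bC)\) only factors maps between nuclear objects. Factorisability, however, concerns trace-class maps between compact, typically non-nuclear, objects of \(\bC\). The universal property of \(\bC^{\mathrm{rig}}\) gives \(\mathbf{Nuc}(\bC)\subseteq\mathbf{Nuc}^\infty(\bC)\). You would still need to turn this into a factorisation of an arbitrary trace-class map.
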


We apply Definition \ref{def:nuclear} and Proposition \ref{prop:abstract-nuc} to our category \(\mathbf{D}(R)\). To relate the categories \(\mathbf{Nuc}(R) = \mathbf{Nuc}(\bD(R))\) and \(\mathbf{Nuc}^{\infty}(R) = \mathbf{Nuc}^\infty(\bD(R))\) to stable infinity categories generated under suitable colimits by nuclear bornological modules, we have the following:

\begin{proposition}\label{prop:nuclear-borno}\cite{Kelly-M} The following holds
\begin{enumerate}
\item The category \(\mathsf{Nuc}(\Q_p) \subset \mathsf{CBorn}(\Q_p)\) is an exact full subcategory, with exact structure restricted from \(\mathsf{CBorn}(\Q_p)\);
\item The category \(\mathbf{Nuc}(\bD(\Q_p))\) is rigid, and we have an equivalence of categories \(\mathbf{Nuc}(\bD(\Q_p)) \simeq \bD(\mathsf{Nuc}(\Q_p))\);
\item The category \(\mathsf{Nuc}^\infty(\C) \subset \mathsf{CBorn}(\C)\) of strongly nuclear bornological \(\C\)-vector spaces is an exact full subcategory, with exact structure induced from \(\mathsf{CBorn}(\C)\);
\item The category \(\bD(\mathsf{Nuc}^\infty(\C))\) is rigid and we have a fully faithful functor \[\bD(\mathsf{Nuc}^\infty(\C)) \to \mathbf{Nuc}^\infty(\bD(\C)).\]
\end{enumerate}
\end{proposition}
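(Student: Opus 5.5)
The plan is to treat the two base fields in parallel: first establish the exactness statements (1) and (3) by hand in $\mathsf{CBorn}$, and then deduce (2) and (4) by comparing the abstract construction of Proposition \ref{prop:abstract-nuc} with the derived category of the exact category. For (1), I would check that $\mathsf{Nuc}(\Q_p)$ is closed under extensions in $\mathsf{CBorn}(\Q_p)$, and under kernels of admissible epimorphisms and cokernels of admissible monomorphisms. This is what is needed for the restricted class of kernel–cokernel pairs to be an exact structure.

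Write $X = \colim_n X_n$ with injective trace-class transition maps. Closed subspaces and quotients by closed subspaces of the $X_n$ again form $\N$-indexed systems. The restriction of a trace-class map $X_n \to X_{n+1}$ to a closed subspace is still trace-class. Over $\Q_p$ this uses that, in the nonarchimedean setting, compact-type maps are trace-class and closed subspaces have Hahn–Banach extensions of functionals; the dual of a subspace is a quotient of the dual. Induced maps on quotients are trace-class by pushing forward the sum $\sum \lambda_n x_n' \otimes y_n$. For extensions, I would lift a presentation of the quotient and use the splitting of short exact sequences of $\Q_p$-Banach spaces of countable type to build a nuclear presentation of the middle term.

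Statement (3) is proved the same way, indexing by $\Q$ instead of $\N$. Over $\C$ there is no Hahn–Banach extension of trace-class maps to larger spaces, which is exactly why only strongly nuclear objects behave well. I would therefore use the factorisation through a $\Q$-indexed diagram of trace-class maps: a composite of two trace-class maps restricts to closed subspaces as a trace-class map, via Grothendieck's factorisation through $\ell^1$ and $\ell^2$.

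For (2) and (4), I would use the exact inclusion $\mathsf{Nuc}(\Q_p) \subset \mathsf{CBorn}(\Q_p)$ to obtain an exact functor $\bD(\mathsf{Nuc}(\Q_p)) \to \bD(\mathsf{CBorn}(\Q_p)) \simeq \bD(\Q_p)$; the latter equivalence is Theorem \ref{thm:elementary}(4). The first step is to show that this functor is fully faithful. I would do this by showing that every object of $\mathsf{Nuc}$ admits resolutions within $\mathsf{Nuc}$ that compute Ext in $\mathsf{CBorn}$, for instance using that nuclear Fréchet/LB-type objects are flat for the projective tensor product and that the relevant $\lim^1$ terms vanish.

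The second step is to identify the essential image with $\mathbf{Nuc}(\bD(\Q_p))$. A basic nuclear object $\colim_n X_n$ of Definition \ref{def:nuclear}, with the $X_n$ compact (hence retracts of finite complexes of $\ell^1$-spaces), has trace-class transition maps. Over $\Q_p$ these can be replaced by injective maps of Banach spaces. This shows that basic nuclear objects lie in the image, and conversely that the image is generated by them under colimits. The third step is rigidity. By Proposition \ref{prop:abstract-nuc}(1), it suffices to see that over $\Q_p$ every trace-class map is factorisably trace-class. This holds because a nonarchimedean trace-class map is given by a sequence $\lambda_n \to 0$, and one can factor it by interpolating sequences $\lambda_n^{t}$ for $t \in [0,1]\cap\Q$.

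Over $\C$ this factorisation fails, so for (4) I would instead use Proposition \ref{prop:abstract-nuc}(2). Strongly nuclear spaces map into $\mathbf{Nuc}^\infty(\bD(\C)) = \bD(\C)^{\mathrm{rig}}$, which gives fully faithfulness. Rigidity of $\bD(\mathsf{Nuc}^\infty(\C))$ follows because it is generated by $\Q$-basic nuclear objects, and by the universal property of the rigidification in Proposition \ref{prop:dualisable-core}. I only expect a fully faithful functor here, not an equivalence, since derived objects need not be represented by complexes of genuine bornological spaces.

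The main obstacle is the fully faithfulness of $\bD(\mathsf{Nuc}) \to \bD(\mathsf{CBorn})$, that is, the comparison of Ext groups computed in the exact subcategory with those in the ambient quasi-abelian category. I would attack this first via a Keller-type criterion: every admissible epimorphism in $\mathsf{CBorn}$ onto a nuclear object should be dominated by one from a nuclear object. I would construct the required nuclear cover by hand, using $\ell^1$-spaces precomposed with a diagonal trace-class operator.
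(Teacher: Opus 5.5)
\paragraph{Gaps in (2) and (4).} The paper gives no proof of this proposition; it is quoted from Kelly--Mukherjee, so your outline has to stand on its own, and it has genuine gaps. Your \(\Q_p\) rigidity step is fine, provided you use \(p^{\lfloor t\,v_p(\lambda_n)\rfloor}\) in place of the meaningless \(\lambda_n^t\). The problems are the following.

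\begin{itemize}
\item \emph{The essential-image step is false as stated.} You claim a basic nuclear object of \(\bD(\Q_p)\) is a nuclear bornological space because ``over \(\Q_p\) these can be replaced by injective maps of Banach spaces''. Take \(X_n=c_0(\N)\) and \(f_n(e_k)=0\) for \(k<n\), \(f_n(e_k)=p^ke_k\) for \(k\ge n\). Each \(f_n\) is trace-class and \(c_0(\N)\) is compact projective, so \(\colim_nX_n\) is basic nuclear. But \(\ker(X_0\to X_m)=\mathrm{span}(e_0,\dots,e_{m-2})\) grows forever, towards a dense subspace. So the formal colimit in \(\mathsf{Ind}(\mathsf{Ban}_{\Q_p})\) is not essentially monomorphic. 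This formal colimit computes the colimit in \(\bD\), since filtered colimits are exact there. Hence the object is not equivalent to any object of \(\mathsf{CBorn}(\Q_p)\) in degree \(0\). It is instead the two-term complex \([\bigoplus_{\N}\Q_p\to N]\), with \(N=\colim(c_0(\N),\mathrm{diag}(p^k)_k)\), and the map is a dense, hence (in \(\mathsf{CBorn}(\Q_p)\)) non-strict, injection.
\item \emph{What is actually needed.} You must realise such objects as complexes of nuclear spaces. First replace homotopy-trace-class maps of perfect complexes by degreewise trace-class chain maps. Then absorb the growing kernels into lower degrees. No such construction is offered.
\item \emph{Fully faithfulness for unbounded complexes.} A Keller-type criterion only controls bounded(-above) derived categories. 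Projective \(\Z/4\)-modules satisfy your criterion. Yet \(\cdots\xrightarrow{2}\Z/4\xrightarrow{2}\Z/4\xrightarrow{2}\cdots\) is nonzero in the derived category of that exact category and zero in \(\bD(\Z/4)\).
\item \emph{Part (4) does not follow from what you say.} Landing in \(\mathbf{Nuc}^\infty(\bD(\C))\) does not give fully faithfulness; that is the same comparison again, now over \(\C\), where only retracts of \(\ell^1\)-spaces are projective. The universal property of \((-)^{\mathrm{rig}}\) only produces functors into \(\bD(\C)^{\mathrm{rig}}\); it does not make a colimit-closed subcategory rigid.
\item \emph{A route to rigidity in (4).} Construct \(\jhat\) directly. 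Rewrite \(\colim_{q\in\Q}X_q\) as \(\colim_qY_q\) with \(Y_q=\colim_{r<q}X_r\). Each \(Y_q\) is again strongly nuclear, and the transition maps factor through \(\ell^1\)-spaces via the trace-class maps, so they are compact. Then check that compact and trace-class maps agree inside the subcategory.
\end{itemize}

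\paragraph{Gaps in (1) and (3).} The restricted exact structure only requires closure under extensions; closure under admissible sub- and quotient objects is extra.

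\begin{itemize}
\item \emph{Your extension argument does not transfer to \(\C\).} It uses splitting of Banach short exact sequences. That holds over \(\Q_p\), where every Banach space is isomorphic to a projective \(c_0(I)\). It fails over \(\C\), e.g.\ \(c_0\hookrightarrow\ell^\infty\twoheadrightarrow\ell^\infty/c_0\). So (3) is not ``proved the same way''.
\item \emph{A repair that needs no splitting, over either field.} Take \(A\hookrightarrow B\overset{q}{\twoheadrightarrow}C\) and choose Banach discs \(B_1\subset B_2\subset B_3\) of \(B\). Arrange, by interleaving with the given presentations, that \(A\cap B_1\to A\cap B_2\to A\cap B_3\) and \(q(B_1)\to q(B_2)\to q(B_3)\) are trace-class. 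Lift a decomposition \(\sum_k\lambda_k\gamma_k\otimes c_k\) of \(q(B_1)\to q(B_2)\) to \(S=\sum_k\lambda_k(\gamma_k\circ q)\otimes b_k\colon B_1\to B_2\), with \(\|b_k\|\le 2\|c_k\|\). Then \(\iota-S\) lands boundedly in \(A\cap B_2\), where \(\iota\colon B_1\to B_2\) is the inclusion. So \(B_1\to B_3\) equals \((B_2\to B_3)\circ S+(A\cap B_2\to A\cap B_3)\circ(\iota-S)\), which is trace-class. Composites of two transition maps are therefore trace-class, which suffices after reindexing.
\item \emph{Sub- and quotient objects.} Hahn--Banach is beside the point, since restricting the source of a trace-class map is trivial. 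The real issue is that the vectors of a decomposition need not lie in the target subspace, and the functionals need not vanish on the source subspace. So ``pushing forward the sum'' does not work. Over \(\Q_p\) compactness settles this; over \(\C\) you need the \(\ell^2\)-factorisation of a composite of two trace-class maps.
\end{itemize}
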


The factorisably trace-class maps between Banach \(\C\)-vector spaces are precisely the \emph{order \(0\)-trace-class} maps defined in \cite{grothendieck1955produits}, characterised in a manner similar to trace-class maps, but with sequences of rapid decay in place of \(l^1\)-summable sequences. In the nonarchimedean setting, the distinction between trace-class and factorisably trace-class maps disappears as the former are characterised by a null-sequence of singular values representing a trace-class maps.

\subsection{Condensed mathematics}

In this section, we describe another convenient formalism for functional analysis, namely, Clausen and Scholze's condensed mathematics (\cite{CS1,CS2}). The starting point and the building blocks of the theory is the category of \emph{light profinite sets} defined as follows:

\begin{definition}\label{def:light-profinite}[Light profinite sets]
A \emph{light profinite} set is a functor \(F \colon I^\op \to \mathsf{Fin}\), where \(I\) is a countable directed set, and \(\mathsf{Fin}\) is the category of finite sets. We shall denote such an inverse system by \(``lim_{i \in I}" F_i\), where \(F_i = F(i)\), and the actual inverse limit in topological spaces by \(\varprojlim_{i \in I} F_i\).   
\end{definition}

\begin{example}
We consider two canonical examples of light profinite sets:

\begin{enumerate}
\item Consider the set \(\N \cup \{\infty\} = \varprojlim_n \{0,1, \dotsc, \infty\}\) where the structure maps are the obvious base-point preserving projections \[\{0,1,\dotsc, n, \infty\} \twoheadrightarrow \{0,1, \dotsc, n-1, \infty\}\] that maps \(n\) to \(\infty\) and the other numerals identically.  
\item The Cantor set \(\{0,1\}^\N = \varprojlim_n \{0,1\}^n\).
\end{enumerate}
\end{example}

Light profinite sets form a category \(\mathsf{Pro}_{\N}(\mathsf{Fin})\) with morphisms given by \[\mathsf{Hom}(``\lim_i" X_n, ``\lim_j" Y_n) := \lim_{j \in J} \mathrm{colim}_{i \in I}\mathsf{Hom}(X_i,Y_j).\] 

By an appropriate modification of the Stone duality, we have the following:

\begin{theorem}[Light Stone duality]
The following categories are equivalent:
\begin{enumerate}
\item \(\mathsf{Pro}_{\N}(\mathsf{Fin})\);
\item Metrizable, totally disconnected compact Hausdorff spaces;
\end{enumerate}
\end{theorem}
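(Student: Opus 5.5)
We prove light Stone duality by writing down a functor in each direction and checking that it is fully faithful and essentially surjective. The functor $\Phi \colon \mathsf{Pro}_{\N}(\mathsf{Fin}) \to \mathsf{Top}$ sends $``\lim_{i \in I}" F_i$ to the topological limit $\varprojlim_{i\in I} F_i$, where each $F_i$ carries the discrete topology.

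\emph{The image lands in the right class.} By Tychonoff, $\prod_i F_i$ is compact Hausdorff and totally disconnected. The limit is a closed subspace of it, cut out by equalities of continuous maps into discrete spaces, so it has the same three properties. Because $I$ is countable, the product $\prod_i F_i$ is a countable product of finite discrete (hence metrisable) spaces, and is therefore metrisable. Subspaces of metrisable spaces are metrisable, so the image of $\Phi$ consists of metrizable, totally disconnected compact Hausdorff spaces.

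\emph{Fully faithful.} Let $X = \varprojlim_i X_i$ and $Y = \varprojlim_j Y_j$. Continuous maps into a limit are limits of maps, so
\[
\mathrm{Hom}(X, Y) = \lim_j \mathrm{Hom}(X, Y_j).
\]
It therefore suffices to show that $\mathrm{colim}_i \mathrm{Hom}(X_i, Y_j) \to \mathrm{Hom}_{\mathrm{cont}}(X, Y_j)$ is a bijection for each finite discrete set $Y_j$. Before doing so, reduce to surjective transition maps: replacing each $X_i$ by the image of $\varprojlim X$ in $X_i$ does not change the limit, and changes the pro-object only up to isomorphism. This uses that the images stabilise, which they do because finite inverse systems of nonempty sets have nonempty limits, i.e.\ a compactness argument.

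\emph{Surjectivity.} A continuous map $X \to Y_j$ is a finite partition of $X$ into clopen sets. The clopen sets of the form $p_i^{-1}(S)$, for $i \in I$ and $S \subseteq X_i$, form a basis of the topology that is closed under finite unions. By compactness every clopen set is a finite union of basic opens, hence is pulled back from a single stage, using directedness of $I$. So the map factors through some $X_i$.

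\emph{Injectivity.} Since the projections $X \to X_i$ are surjective, two maps $X_i \to Y_j$ that agree on $X$ already agree at stage $i$.

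\emph{Essentially surjective.} This is where metrisability is used, and I expect it to be the main point. Let $X$ be a metrizable, totally disconnected compact Hausdorff space. Compact Hausdorff plus totally disconnected gives zero-dimensionality: the clopen sets form a basis. A compact metrizable space is second countable, and a compactness argument then shows that $X$ has only countably many clopen sets. Concretely, every clopen set is a finite union of members of a fixed countable basis of clopens.

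Enumerate the clopens as $C_0, C_1, \dots$. Let $P_n$ be the finite partition of $X$ generated by $C_0, \dots, C_n$; these partitions form a countable directed system. The natural map $X \to \varprojlim_n P_n$ is then:
\begin{itemize}
\item injective, because the clopens separate points in a totally disconnected compact Hausdorff space;
\item surjective, because a compatible family of partition blocks is a decreasing family of nonempty closed sets, which has nonempty intersection by compactness.
\end{itemize}
A continuous bijection from a compact space to a Hausdorff space is a homeomorphism, so $X \cong \varprojlim_n P_n$.

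The essential surjectivity step carries the only real content: it is the ordinary Stone duality, with metrisability of $X$ translating into countability of the indexing system. The rest is formal.
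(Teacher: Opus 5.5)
Your proof is correct. It differs from the paper's mainly in the essential-surjectivity step, and in that you actually prove full faithfulness.

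The paper shows that $\varprojlim X_i$ is compact and metrisable using the explicit metric $d(x,y)=\inf\{2^{-n} \mid p_n(x)=p_n(y)\}$; your countable-product argument makes the same point. For the converse, the paper quotients $X$ by the relation $d(x,y)\le 2^{-n}$ and takes the limit of the resulting finite sets. It leaves functoriality, i.e.\ full faithfulness, to the reader.

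You instead prove full faithfulness directly. You reduce to continuous maps into a finite discrete set and show that they factor through a finite stage, after replacing the system by the images $p_i(X)$. You then get essential surjectivity from the countability of the clopen algebra, so metrisability enters only through second countability. This is essentially the Boolean-algebra form of Stone duality.

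Your route gives a complete proof of the equivalence. It also avoids a genuine flaw in the paper's converse. For a general compatible metric, $d(x,y)\le 2^{-n}$ is not transitive: on $\{0\}\cup\{1/k \mid k\geq 1\}\subset\R$ with $n=1$, we have $1\sim 1/2\sim 1/3$ but $1\not\sim 1/3$. So the paper's quotient is only literally correct for an ultrametric. Alternatively, it can be repaired by passing to the equivalence relation generated by $2^{-n}$-chains. Those classes are clopen and finite in number, and they shrink to points because $X$ is totally disconnected. Once repaired, the paper's approach is more concrete and ties the tower directly to the metric scale.

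One small wording fix. The stabilisation of images needs the fact that inverse limits of nonempty finite sets over a directed set are nonempty. You stated it as a claim about ``finite inverse systems'', which is a different and trivial statement.
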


\begin{proof}
Let \(X = ``\varprojlim_{i \in J}" X_i\) be a light profinite set. Its inverse limit is a closed subspace of the product \(\prod_{i \in J} X_i\), which is compact as each of the \(X_i\)'s is finite and discrete. To see that it is metrisable, we equip the resulting inverse limit with the metric \(d_n(x,y) = \inf_n \{2^{-n} \vert p_n(x) = p_n(y)\}\), which generates the topology of the inverse limit. Conversely, given a metrisable, totally disconnected compact Hausdorff space \(X\), we may write this as an inverse limit as follows: for each \(n\in \N\), consider the equivalence relation where \(x \simeq_n y\) if and only if \(d(x,y) \leq 2^{-n}\). Then \(X_n := X/ \simeq_n\) is a finite discrete space and its inverse limit is homeomorphic to \(X\). The required functor is then given by the assignment \(X \mapsto ``\varprojlim_{n \in \N}" X_n\); we leave it for the reader to check that this assignment is indeed functorial. This establishes the equivalence between (1) and (2). 
\end{proof}

By virtue of the Theorem above, we often identify (light) profinite sets \(X = ``\lim_{i \in I}" X_i\) with the underlying topological space defined by taking the inverse limit of the finite discrete sets \(X_i\) in the pro-system \(X\).

\begin{definition}\label{def:light-condensed}
A presheaf \(X \colon \mathsf{Pro}_{\N}(\mathsf{Fin})^\op \to \mathsf{Sets}\) is called a \emph{light condensed set} if 
\begin{enumerate}
\item \(X(\phi) = \{\mathrm{pt}\}\);
\item \(X(S \coprod T) \overset{\simeq}\to X(S) \times X(T)\);
\item for a surjection \(T \to S\), we have an equaliser diagram \[X(S) \overset{\simeq}\to \mathrm{Eq}(X(T) \rightrightarrows X(T \times_S T))\] in the category of sets.   
\end{enumerate}
\end{definition}

In other words, a light condensed set is a sheaf for the Grothendieck topology on \(\mathsf{Pro}_\N(\mathsf{Fin})\) whose covers are finite families \((S_i \to S)\) of maps that are jointly surjective. Denote by \(\mathsf{Cond}_\N(\mathsf{Set})\) the category of light condensed sets.

\begin{example}
Let \(X\) be a fixed topological space. Then the assignment \(\underline{X} \defeq S \mapsto C(S,X)\) on light profinite sets is a light condensed set. We thus get a functor from the category of topological spaces to light condensed sets, taking \(X \mapsto \underline{X}\). 
\end{example}

Given a condensed set \(X\), we may extract out of it a set in the obvious manner by evaluating it at a point \(X(*)\).

\begin{proposition}\label{prop:condensed-top}
The functor \[\mathsf{Top} \to \mathsf{Cond}_\N(\mathsf{Set}), \quad X \mapsto \underline{X}\] is right adjoint to the functor \(X \mapsto X(*)\), where \(X(*)\) is equipped with the quotient topology from \(\coprod_{S \to X} S \to X(*)\). This functor is fully faithful when restricted to the full subcategory of sequential topological spaces. 
\end{proposition}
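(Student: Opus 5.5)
We have to prove two things: that \(X \mapsto X(*)\) with the quotient topology is left adjoint to \(X \mapsto \underline{X}\), and that \(\underline{(-)}\) is fully faithful on sequential spaces.

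For the adjunction, fix a light condensed set \(Y\) and a topological space \(X\). Write \(Y(*)_{\mathrm{top}}\) for \(Y(*)\) with the quotient topology induced by \(\coprod_{(S,s)} S \to Y(*)\), where \(s \in Y(S)\) and the map sends \(t \in S\) to \(s|_t\). A continuous map \(f \colon Y(*)_{\mathrm{top}} \to X\) is the same as a set map \(f \colon Y(*) \to X\) such that \(f \circ s \colon S \to X\) is continuous for every \(s \in Y(S)\).

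\begin{itemize}
\item Given such an \(f\), define \(\phi_S \colon Y(S) \to C(S,X)\) by \(s \mapsto f\circ s\). This is natural in \(S\), so it is a map of sheaves \(Y \to \underline{X}\).
\item Conversely, a map \(\phi \colon Y \to \underline{X}\) gives at the point the set map \(f = \phi_* \colon Y(*) \to C(*,X) = X\). By naturality, \(f \circ s = \phi_S(s)\), which is continuous, so \(f\) is continuous for the quotient topology.
\end{itemize}

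To see these constructions are mutually inverse, we must check that \(\phi\) is determined by \(\phi_*\). Naturality along points \(* \to S\) gives \(\phi_S(s)(t) = \phi_*(s|_t)\), so \(\phi_S(s)\) is determined pointwise by \(\phi_*\). Naturality of the bijection in \(Y\) and \(X\) is evident.

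For full faithfulness, it suffices to show the counit \(\underline{X}(*)_{\mathrm{top}} \to X\) is a homeomorphism when \(X\) is sequential. As a set, \(\underline{X}(*) = X\). The quotient topology is the final topology for all continuous maps \(S \to X\) from light profinite \(S\), so it is finer than the original topology. The main step is the reverse inclusion.

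Let \(U\) be open in the quotient topology. If \(U\) is not open in \(X\), then, since \(X\) is sequential, \(U\) is not sequentially open. So there is a sequence \(x_n \to x\) with \(x \in U\) and \(x_n \notin U\) for all \(n\). This sequence defines a continuous map \(\N\cup\{\infty\} \to X\), and \(\N\cup\{\infty\}\) is a light profinite set. The preimage of \(U\) under this map contains \(\infty\) but no \(n\), so it is not open, contradicting the choice of \(U\). Hence the two topologies agree, and the counit is a homeomorphism.

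Finally, an adjunction whose counit is an isomorphism on a full subcategory gives a fully faithful right adjoint on that subcategory. Concretely,
\[
\mathsf{Hom}(\underline{X},\underline{X'}) \cong \mathsf{Hom}(\underline{X}(*)_{\mathrm{top}}, X') \cong \mathsf{Hom}(X,X'),
\]
so \(\underline{(-)}\) is fully faithful on sequential spaces. The key input is that convergent sequences are exactly the maps out of \(\N\cup\{\infty\}\), which is why the light site suffices for sequential spaces.
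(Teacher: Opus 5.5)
Your proof is correct and follows the same route as the paper. The adjunction comes from identifying maps \(Y \to \underline{X}\) with set maps \(Y(*) \to X\) that are continuous on every \(S \to Y(*)\), and full faithfulness comes from the counit \(\underline{X}(*)_{\mathrm{top}} \to X\) being a homeomorphism, which is detected by convergent sequences \(\N\cup\{\infty\} \to X\).

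There is one difference. You show directly that a quotient-open set is sequentially open (passing to a subsequence, implicit in your argument, gives \(x_n \notin U\) for all \(n\)). The paper instead goes through the Cantor set to identify the quotient topology with the sequentialisation of \(X\) in general. That also yields the converse (the counit is a homeomorphism only for sequential \(X\)), which the statement does not need.
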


\begin{proof}
Let \(X\) be a condensed set and denote by \(X(*)_{\mathrm{top}}\) the topology on the underlying set \(X(*)\) induced by condensed sets mapping to \(X\). An application of the Yoneda lemma then shows that for any topological space \(Y\), continuous maps \(X(*)_{\mathrm{top}} \to Y\) are equivalent to maps from (light) profinite sets \(S \to X\), such that the composition \(S \to X(*) \to Y\) is continuous, which in turn is equivalent to a map \(X \to \underline{Y}\) of (light) condensed sets. 

That the functor \(\mathsf{Top} \to \mathsf{Cond}_\N(\mathsf{Set})\) is faithful is easy to see. Since this functor has a left adjoint, it will be fully faithful on a full subcategory \(\mathcal{B}\) if the counit map \(\underline{X}(*)_{\mathrm{top}} \to X\) is a homeomorphism of topological spaces for \(X \in \mathcal{B}\). Let us investigate the quotient map \(\coprod_{S \to X} S \to X\). Using the fact that any light profinite set is a quotient of the Cantor set, this quotient is equivalent to the quotient \(\coprod_{C \to X} C \to X\), where \(C\) is the Cantor set. But since the Cantor set \(C\) is itself a sequential topological space, we may write refine the quotient above to a quotient map \(\coprod_{\N \cup \{\infty\} \to X} \N \cup \{\infty\} \to X\). This is equivalent to \(X\) being a sequential topological space. 
\end{proof}

By Proposition \ref{prop:condensed-top}, we may embed several known categories of topological spaces into (light) condensed sets. 


We now consider linearisations of the category of light condensed sets. Since \(\mathsf{Cond}_\N(\mathsf{Set})\) is a topos, we may speak of abelian group objects inside it. Denote this category by \(\mathsf{Cond}_\N(\mathsf{Ab})\). We may turn a light condensed set into a (light) condensed abelian group via the following free-construction that works in any topos: consider the forgetful functor \(\mathsf{Cond}_\N(\mathsf{Ab}) \to \mathsf{Cond}_\N(\mathsf{Set})\). This has a left adjoint given by the assignment \(\mathcal{F} \mapsto \Z[\mathcal{F}] \colon L(S \mapsto \Z[\mathcal{F}(S)])\), where \(L\) denotes sheafification. In particular, by the Yoneda lemma, for any light profinite set \(S\) and a condensed abelian group \(M\), we have a bijection \[\mathsf{Hom}_{\mathsf{Cond}_\N(\mathsf{Ab})}(\Z[S], M) \cong M(S)\] that is natural in light profinite sets.

Again, as in any sheaf category, abelian sheaves form a Grothendieck abelian category.  Furthermore, we may define the tensor product of two condensed abelian groups as follows: for \(M\), \(N \in \mathsf{Cond}_\N(\mathsf{Ab})\), \[M \otimes N \colon L(S \mapsto M(S) \otimes_\Z N(S)).\]  For a condensed abelian group \(N \in \mathsf{Cond}_\N(\mathsf{Ab})\), the functor \(- \otimes N \colon \mathsf{Cond}_\N(\mathsf{Ab}) \to \mathsf{Cond}_\N(\mathsf{Ab})\) is left adjoint to the internal Hom functor defined by \[\underline{\mathsf{Hom}}(M,N)(S) \defeq \mathsf{Hom}_{\mathsf{Cond}(\mathsf{Ab})}(\Z[S] \otimes M, N).\]

\begin{example}\label{ex:convergent-sequence}
Of particular relevance is the free condensed abelian group \(\Z[\N \cup \{\infty\}]\), called the \emph{condensed abelian group classified by a convergent sequence}. More elaborately, if \(M\) is a condensed abelian group, then by the Yoneda lemma, maps \(\Z[\N \cup \{\infty\}] \to M\) are equivalent to \(M(\N \cup \{\infty\})\), so that if \(M = \underline{A}\) is a topological abelian group, then \(\underline{A}(\N \cup \{\infty\}) = C(\N \cup \{\infty\}, A)\), which is nothing but convergent sequences in \(A\).  
\end{example}

\begin{example}\label{ex:null-sequence}
The second important example of a condensed abelian group is the one which classifies null sequences. Consider the quotient \(\Z[\N \cup \{\infty\}]/ \Z[\{\infty\}]\) taken in the category of condensed abelian groups. Intuitively, for a condensed abelian group \(M\), a map of condensed abelian groups \(\Z[\N \cup \{\infty\}]/ \Z[\{\infty\}] \to M\) corresponds to a map \(\N \cup \{\infty\} \to M\) taking \(\infty \mapsto 0\). It is then an easy exercise to verify that if \(\underline{A}\) is a topological abelian group (viewed as a condensed abelian group), then \(\mathsf{Hom}_{\mathsf{Cond}_\N(\mathsf{Ab})}(\Z[\N \cup \{\infty\}]/\Z[\{\infty\}],A) \cong C_0(\N,A)\). 
\end{example}

As \(\mathsf{Cond}_\N(\mathsf{Ab})\) form a Grothendieck abelian category, we may form its derived category \(\mathsf{D}(\mathsf{Cond}_\N(\mathsf{Ab}))\), and finally its derived \(\infty\)-category enhancement \[\mathbf{D}(\mathsf{Cond}_\N(\mathsf{Ab})) = N(\mathsf{Ch}(\mathsf{Cond}_\N(\mathsf{Ab}))[W^{-1}]),\] by localising at the quasi-isomorphisms \(W\). Summarily, we apply to general result of Proposition \ref{prop:D-stable} to get the following:

\begin{theorem}\label{thm:condensed-stable}
The category \(\mathbf{D}(\mathsf{Cond}_\N(\mathsf{Ab}))\) is a presentable, stable \(\infty\)-category. It inherits a symmetric monoidal structure from light condensed abelian groups. 
\end{theorem}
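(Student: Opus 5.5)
The statement has two parts: presentable stability, and a symmetric monoidal structure on the \(\infty\)-category. The plan for the first part is to check the hypotheses of Proposition \ref{prop:D-stable}, namely that \(\mathsf{Cond}_\N(\mathsf{Ab})\) is a Grothendieck abelian category.

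For the Grothendieck property, I would argue as follows. \(\mathsf{Cond}_\N(\mathsf{Set})\) is the category of sheaves on the small site \(\mathsf{Pro}_\N(\mathsf{Fin})\). This site is essentially small, since countable inverse systems of finite sets form, up to equivalence, a set. Hence \(\mathsf{Cond}_\N(\mathsf{Ab})\) is the category of abelian sheaves on a small site. The standard facts then apply:
\begin{enumerate}
\item Limits are computed sectionwise, and colimits are the sheafification of the presheaf colimits, so the category is complete, cocomplete and abelian.
\item Filtered colimits are exact, because they are exact in presheaves and sheafification \(L\) is exact.
\item The objects \(\Z[S]\) for \(S\) a light profinite set form a set of generators, by the Yoneda bijection \(\mathsf{Hom}(\Z[S],M)\cong M(S)\).
\end{enumerate}
Proposition \ref{prop:D-stable} then gives that \(\mathbf{D}(\mathsf{Cond}_\N(\mathsf{Ab}))\) is stable and presentable. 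It is realised as the underlying \(\infty\)-category of the combinatorial injective model structure on \(\mathsf{Ch}(\mathsf{Cond}_\N(\mathsf{Ab}))\), which agrees with the Dwyer--Kan localisation at the quasi-isomorphisms \(W\).

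For the monoidal structure, I would derive the tensor product \(M\otimes N = L(S\mapsto M(S)\otimes_\Z N(S))\). The cleanest route is to find a monoidal model structure on \(\mathsf{Ch}(\mathsf{Cond}_\N(\mathsf{Ab}))\) with the quasi-isomorphisms as weak equivalences. I would use the flat model structure, or the projective-type structure generated by the free objects \(\Z[S]\). The key inputs are:
\begin{enumerate}
\item \(\Z[S]\otimes\Z[T]\cong\Z[S\times T]\), since both sides are the sheafification of the presheaf tensor product of free presheaves.
\item \(S\times T\) is again light profinite, so the generating class is closed under \(\otimes\).
\end{enumerate}
These give the pushout--product axiom and the unit axiom, with unit \(\Z=\Z[*]\). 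Lurie's result \cite[4.1.7.4, 4.1.7.6]{HA} then produces a symmetric monoidal structure on the underlying \(\infty\)-category \(\mathbf{D}(\mathsf{Cond}_\N(\mathsf{Ab}))\) such that the localisation functor is symmetric monoidal. The internal Hom \(\underline{\mathsf{Hom}}\) gives closedness after deriving.

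There is an alternative if the model-categorical bookkeeping becomes awkward. One identifies \(\mathbf{D}(\mathsf{Cond}_\N(\mathsf{Ab}))\) with hypercomplete sheaves of spectra, or of \(\bD(\Z)\)-modules, on the light profinite site. One then uses that sheafification is a symmetric monoidal localisation of the presheaf category with Day convolution.

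The main obstacle I expect is that \(\Z[S]\) need not be projective, so the generating cofibrations do not obviously make the derived tensor product homotopically correct. For this I would invoke the fact that \(\Z[S]\) is flat: the tensor product with it is sheafified from a sectionwise tensor with free abelian groups, and filtered colimits are exact. Flatness of the generators suffices for the monoidal model structure, through the flat model structure on complexes over a Grothendieck abelian category with a flat generating set (Gillespie).
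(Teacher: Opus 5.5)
Your argument for presentability and stability is the paper's own argument. The paper gives no proof beyond the remark that abelian sheaves on a site form a Grothendieck abelian category, followed by an appeal to Proposition \ref{prop:D-stable}. Your checks (essential smallness of \(\mathsf{Pro}_\N(\mathsf{Fin})\), exact filtered colimits via exact sheafification, the generators \(\Z[S]\)) just make that remark explicit. The symmetric monoidal claim, by contrast, is only asserted in the paper, so there you supply an argument the paper does not contain.

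Your inputs for the monoidal part are correct. First, \(\Z[S]\otimes\Z[T]\cong\Z[S\times T]\). Second, \(\Z[S]\) is flat: \(M\mapsto M\otimes\Z[S]\) factors as the inclusion into presheaves, then sectionwise tensoring with the free groups \(\Z[\mathsf{Hom}(T,S)]\), then exact sheafification, and each step preserves monomorphisms.

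Be careful with the ``projective-type'' option. Whatever its generating acyclic cofibrations, its acyclic fibrations are the maps that are surjective quasi-isomorphisms on sections over every \(S\). Suppose the generating acyclic cofibrations are just \(0\to D^n\Z[S]\). Take a sheaf-exact complex \(K\) with \(K(S)\) not exact; such a \(K\) exists precisely because \(\Z[S]\) need not be projective. Then \(K\to 0\) is a fibration and a quasi-isomorphism but not an acyclic fibration. So you must either enlarge the generating acyclic cofibrations, as Hovey does for complexes of sheaves, or localise at hypercovers. Similarly, Gillespie's flat model structure needs the flat objects to form a Kaplansky class, not merely a flat generating set.

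Your second route sidesteps all of this and fits the paper's formal treatment:
\begin{itemize}
\item \(\mathbf{D}(\mathsf{Cond}_\N(\mathsf{Ab}))\simeq\mathbf{Shv}^{\mathrm{hyp}}(\mathsf{Pro}_\N(\mathsf{Fin});\mathbf{D}(\Z))\);
\item the pointwise tensor product of presheaves is Day convolution for the cartesian product of light profinite sets;
\item covering sieves and hypercovers are stable under product with a representable, so hypersheafification is a symmetric monoidal localisation.
\end{itemize}
On objects of the heart, \(\pi_0\) of the resulting tensor product is \(L(S\mapsto M(S)\otimes_\Z N(S))\), as required. The model-categorical route buys explicit flat resolutions for computing derived tensor products. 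The \(\infty\)-categorical route avoids every existence question about model structures.
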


\subsubsection{Completeness from the condensed viewpoint}

So far we have defined a category that contains sequential topological abelian groups as a full subcategory. In this subsection, we consider a full subcategory of condensed abelian groups of \emph{complete} objects, that correspond to the complete sequential topological abelian groups.

Recall that an object in an exact catergory (so in particular, an abelian category) is projective if and only if \(\mathsf{Ext}^i(M,P) = 0\) for all \(M\) and \(i>0\). But since we have a closed symmetric monoidal category, we may also consider the \emph{internal derived Hom spaces} by taking the \(i\)-th right derived functors \[\underline{\mathsf{RHom}^i}(M,-) \colon \mathsf{Cond}_\N(\mathsf{Ab}) \to \mathsf{Cond}_\N(\mathsf{Ab}), \quad N \mapsto \underline{\mathsf{Ext}^i}(M,N) := \underline{\mathsf{RHom}^i}(M,N)\] of the internal Hom functor. 

\begin{definition}\label{def:internally-projective}
A condensed abelian group \(M\) is said to be \emph{internally projective} if \(\underline{\mathsf{Ext}^i}(M,N) = 0\) for all \(i >0\) and \(N \in \mathsf{Cond}_{\N}(\mathsf{Ab})\).
\end{definition}

\begin{proposition}
The object \(\Z[\N \cup \{\infty\}]\) is internally projective. 
\end{proposition}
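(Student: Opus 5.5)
We want to show that \(\underline{\mathsf{Ext}^i}(\Z[\N \cup \{\infty\}],N)=0\) for all \(i>0\) and all light condensed abelian groups \(N\). The plan is to reduce this to a statement about sheaf cohomology of light profinite sets.

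\textbf{Step 1: evaluate the internal Ext on a test object.} By definition of the internal Hom and the tensor–Hom adjunction, for a light profinite set \(S\) we have
\[
\underline{\mathsf{RHom}}(\Z[\N\cup\{\infty\}],N)(S)\simeq \mathsf{RHom}(\Z[S]\otimes \Z[\N\cup\{\infty\}],N)\simeq \mathsf{RHom}(\Z[S\times(\N\cup\{\infty\})],N).
\]
Here we use that \(\Z[-]\) is symmetric monoidal, since the free functor carries products of condensed sets to tensor products. Moreover \(S\times(\N\cup\{\infty\})\) is again light profinite. One also has to note that the value of a sheafified cohomology sheaf at \(S\) is only the sheafification of \(S\mapsto H^i\); it is enough, however, to show that \(H^i\) vanishes on every light profinite set.

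\textbf{Step 2: compute \(\mathsf{Ext}^i(\Z[T],N)\) for \(T\) light profinite.} By Yoneda, \(\mathsf{Ext}^i(\Z[T],N)=H^i(T,N)\) is sheaf cohomology on the site of light profinite sets over \(T\). So the claim becomes: light profinite sets have vanishing higher cohomology with coefficients in any sheaf. This step is the main obstacle.

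The approach is to show that every cover of a light profinite set splits, or at least that every covering sieve can be refined to a split one. Any surjection \(T'\to T\) of light profinite sets admits a continuous section. To see this, write both sides as sequential limits of finite sets with compatible surjections, and build the section inductively. Countability of the index set makes this induction work; Cantor-type arguments show that metrisable totally disconnected compacta are projective relative to surjections between such spaces.

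The finite disjoint-union covers are already taken care of by the sheaf condition. With split covers, every Čech nerve of a cover of \(T\) is split augmented, so the Čech complexes are contractible. Then a standard Čech-to-derived argument (Cartan's lemma), applied to the basis of all light profinite sets, gives \(H^i(T,N)=0\) for \(i>0\).

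If the splitting argument fails for general surjections, the fallback is to use that every light profinite set is a retract of a product of the Cantor set, and to prove acyclicity of the Cantor set directly.

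\textbf{Step 3: conclude.} Combining Steps 1 and 2, for every \(S\) we get \(\mathsf{Ext}^i(\Z[S\times(\N\cup\{\infty\})],N)=0\) for \(i>0\). The presheaves whose sheafifications are the \(\underline{\mathsf{Ext}^i}\) are therefore identically zero, so \(\underline{\mathsf{Ext}^i}(\Z[\N\cup\{\infty\}],N)=0\). Note that the same argument shows \(\Z[S]\) is internally projective for every light profinite \(S\).
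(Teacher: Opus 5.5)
\textbf{Step 2 is a genuine gap, and it cannot be repaired along these lines.} Both the lemma you propose and the statement it is meant to prove are false.

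\emph{The splitting lemma is false.} Surjections of light profinite sets need not split. Take
\(T_0=\{(n,n \bmod 2): n\in\N\}\cup\{(\infty,0),(\infty,1)\}\subset(\N\cup\{\infty\})\times\{0,1\}\). The projection \(T_0\to\N\cup\{\infty\}\) is surjective. Any section is forced to be \(n\mapsto(n,n \bmod 2)\), which has no limit at \(\infty\). For the same reason, the covering sieve generated by \(T_0\) cannot be refined to a split cover. The Cantor-type fact you have in mind is the dual one: light profinite sets are \emph{injective} (nonempty closed subsets are retracts), not projective.

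\emph{The target of Step 2 is false.} Acyclicity of all light profinite sets would make every \(\Z[T]\) projective. This already fails for the Cantor set \(C\cong(\N\cup\{\infty\})^{\N}\): the surjection \(T_0^{\N}\to(\N\cup\{\infty\})^{\N}\) admits no section of \(\Z[T_0^{\N}]\to\Z[(\N\cup\{\infty\})^{\N}]\). Here is the idea. A section over a compact base has coefficients of bounded total absolute value. Restricting it to \((\N\cup\{\infty\})^k\times\{0\}\) and inducting over the face \(\{q_1=\infty\}\), one shows that total must be at least \(2k+1\) for every \(k\). Three consequences follow:
\begin{enumerate}
\item Your Cantor-set fallback fails.
\item Since \(C\times(\N\cup\{\infty\})\cong C\), the vanishing of \(\mathsf{Ext}^1(\Z[S\times(\N\cup\{\infty\})],N)\) for all \(S\), which is what your Step 3 needs, is false at \(S=C\). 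The statement only asserts that this presheaf \emph{sheafifies} to zero.
\item Your closing remark that every \(\Z[S]\) is internally projective is also false. Internal projectivity implies projectivity, because evaluation at a point is exact.
\end{enumerate}

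\textbf{The missing idea} is to exploit the special structure of \(\Z[\N\cup\{\infty\}]\cong\Z\oplus P\), where \(P=\Z[\N\cup\{\infty\}]/\Z[\{\infty\}]\) classifies null sequences, and to allow passage to a cover of the parameter space. This is what the paper does.
\begin{enumerate}
\item A map \(\Z[S]\otimes P\to N\) is a map \(S\times(\N\cup\{\infty\})\to N\) killing \(S\times\{\infty\}\).
\item Lift it along the epimorphism \(M\to N\) on a cover \(T\to S\times(\N\cup\{\infty\})\), obtaining \(g\colon T\to M\).
\item Choose a retraction \(r\) of \(T\) onto the closed fibre \(T_\infty\) over \(S\times\{\infty\}\). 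Replace \(g\) by \(g-g\circ r\), which still lifts the original map and now vanishes on \(T_\infty\).
\item To descend, pass to the countable fibre product over \(S\) of the fibres \(T_n\). This is a light profinite cover \(S'\to S\) over which every \(T_n\) has a section. Continuity at \(\infty\) holds because \(g-g\circ r\) factors through \(T/T_\infty\).
\end{enumerate}
Step 4 is where lightness enters; the paper's sketch leaves it implicit.
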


\begin{proof}
It suffices to prove that the quotient \(P = \Z[\N \cup \{\infty\}]/\Z[\{\infty\}]\) is internally projective. To see this, consider an epimorphism of condensed abelian groups \(M \to N\) and an arbitrary map \(f \colon \Z[S] \otimes P \to N\), where \(S\) is a light profinite set. Note that the latter is an arbitrary map in the internal Hom. Such a map is equivalent to a map \(S \sqcup \N \sqcup \{\infty\} \to N\) mapping \(S \sqcup \infty \mapsto 0\). By definition of an epimorphism in a sheaf topos, there is a surjection \(S' \to S\) of light profinite sets and a map \(g \colon S' \to M\) making the obvious diagram commute. Consider the pullback \(S''\) of the diagram \(S' \to \N \cup \{\infty\} \leftarrow \{\infty\}\), which is closed in \(S'\). This is a light profinite set, and using the fact that any light profinite set is an injective object in profinite sets, there is a retraction \(r \colon S' \to S''\).  Then the composition \(g - g \circ r \colon S' \to M\) then induces the required lift \( \N \cup \{\infty\} \to M\), mapping \(\infty \mapsto 0\),  which extends to \(P \to M\) lifting the original map \(f\).  
\end{proof}

Consider the shift function \(S \colon \N  \to \N \), \(n \mapsto n + 1\). This map is clearly proper, so it induces a continuous map \( S \colon \N \cup \{\infty\} \to \N \cup \{\infty\}\), and therefore a map \[S \colon P \to P\] of condensed abelian groups, with \(P = \Z[\N \cup \{\infty\}]/\Z[\{\infty\}]\). Since \(P\) is internally projective, for any condensed abelian group \(M\), the map \(\sigma = 1 - S \colon P \to P\) induces a map \[\sigma^* \colon \underline{\mathsf{Hom}}(P, M) \to \underline{\mathsf{Hom}}(P, M)\] of condensed abelian groups. 

\begin{definition}\label{def:solid}
A light condensed abelian group \(M\) is called \emph{solid} if the map \(\sigma^*\) above is an isomorphism. 
\end{definition}

To see why this has anything to do with a completion, consider a complete metric space with an abelian group structure \(\underline{A}\). Then \(\mathsf{Hom}(P, \underline{A}) \cong C_0(\N, A)\) as we have just seen. But then for the map \[\sigma^* \colon C_0(\N,A) \to C_0(\N, A), \quad \sum_{n \in \N} a_n \delta_n \mapsto \sum_{n \in \N} a_n (\delta_{n} - \delta_{n+1})\] to be an isomorphism, we require that any null sequence in \(A\) comes from a Cauchy and hence convergent sequence. This is the prototypical situation in \(p\)-adic analysis, and as one should expect, no real topological vector space ever satisfies this condition.

Denote by \(\mathsf{Solid} \subset \mathsf{Cond}(\mathsf{Ab})\) the full subcategory of solid abelian groups. Note that at this point we have no reason to believe this category has any nontrivial object. The following result takes care of this:

\begin{proposition}\label{prop:Z-solid}
The group of integers \(\Z\) with the discrete topology is a solid abelian group. 
\end{proposition}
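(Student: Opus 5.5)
We have to show that for \(M = \underline{\Z}\) the map \(\sigma^* \colon \underline{\mathsf{Hom}}(P,\underline{\Z}) \to \underline{\mathsf{Hom}}(P,\underline{\Z})\) is an isomorphism of condensed abelian groups. Since \(\underline{\mathsf{Hom}}(P,\underline{\Z})(S) = \mathsf{Hom}(\Z[S]\otimes P, \underline{\Z})\), it suffices to check bijectivity after evaluating at every light profinite set \(S\), naturally in \(S\). By the description of \(P\) in Example \ref{ex:null-sequence}, such a map is the same as a continuous map \(S \times (\N\cup\{\infty\}) \to \Z\) vanishing on \(S \times \{\infty\}\).

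The first step is to make these continuous maps explicit. Since \(\Z\) is discrete and \(S\times(\N\cup\{\infty\})\) is compact, a continuous map \(f\) takes finitely many values and is locally constant. It vanishes on a neighbourhood of \(S\times\{\infty\}\), and by compactness of \(S\) this neighbourhood contains \(S\times\{n\geq N\}\) for some \(N\). Thus \(\underline{\mathsf{Hom}}(P,\underline{\Z})(S) \cong \bigoplus_{n\in\N} C(S,\Z)\), the finitely supported sequences \((f_n)\) of locally constant functions \(S\to\Z\). In this model, \(\sigma^*\) is precomposition with \(1-S\), so
\[\sigma^*(f)_n = f_n - f_{n+1}\]
(up to the indexing convention \(f\circ S\) shifting the sequence), and it is natural in \(S\).

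The second step is bijectivity. For injectivity: if \(f_n = f_{n+1}\) for all \(n\) and \(f\) is finitely supported, then \(f=0\). For surjectivity: given finitely supported \((g_n)\), set
\[f_n = \sum_{m\geq n} g_m .\]
This is a finite sum of locally constant functions, hence locally constant, and it vanishes for \(n\) large, so \((f_n)\) is finitely supported. It satisfies \(f_n-f_{n+1}=g_n\), and it is natural in \(S\). Hence \(\sigma^*\) is an isomorphism on \(S\)-points for all \(S\), and so an isomorphism of condensed abelian groups.

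The main point of care is the identification in the first step. It uses the adjunction for \(\underline{\mathsf{Hom}}\), the fact that \(\Z[S]\otimes\Z[\N\cup\{\infty\}] \cong \Z[S\times(\N\cup\{\infty\})]\) (which follows since free functors preserve products after sheafification), and a compactness argument showing uniform eventual vanishing. Once this is in place, the contrast with the remark above about real topological vector spaces is clear. Null sequences in \(\Z\) are eventually zero, so the telescoping sum defining \(f_n\) is finite and no completeness issue arises.
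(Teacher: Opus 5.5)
Your proof is correct and follows essentially the same route as the paper: identify \(\underline{\mathsf{Hom}}(P,\underline{\Z})\) with null sequences in \(\Z\), hence eventually zero sequences, and invert \(\sigma^*\) by the telescoping sum \((x_n)\mapsto(\sum_{i\ge n}x_i)_n\). The only difference is that the paper gets there via the adjunction and \(\underline{\mathsf{Hom}}(\Z[S],\underline{\Z})\cong\underline{C(S,\Z)}\). You instead evaluate directly at \(S\)-points as \(\bigoplus_n C(S,\Z)\) using the tube lemma, which makes explicit the \(S\)-dependence that the paper's identification \(\underline{\mathsf{Hom}}(P,\underline{\Z})\cong C_0(\N,\Z)\) glosses over.
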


\begin{proof}
We have \(
\underline{\mathsf{Hom}}(P, \underline{\Z})(S) = \mathsf{Hom}(P \otimes \Z[S], \underline{\Z})
\cong \mathsf{Hom}(P, \underline{\mathsf{Hom}}(\Z[S], \underline{\Z})).\) The relations \begin{multline*}
\underline{\mathsf{Hom}}(\Z[S],\underline{\Z})(T) \cong \mathsf{Hom}(\Z[S] \otimes \Z[T], \underline{Z}) \cong \mathsf{Hom}(\Z[S \times T], \underline{\Z}) \\
 \cong \underline{C(S \times T, \Z)} \cong \underline{C(T, \mathsf{C}(S,\Z)},
 \end{multline*} 
imply that \(\underline{\mathsf{Hom}}(\Z[S], \underline{Z}) \cong \underline{C(S,\Z)}\). This shows that \(\underline{\mathsf{Hom}}(P, \underline{\Z}) \cong \mathsf{Hom}(P, \underline{\Z}) \cong C_0(\N, \Z)\). Furthermore, for the induced map \[\sigma^* \colon C_0(\N, \Z) \to C_0(\N, \Z), (x_n) \mapsto (x_0 - x_1, x_1 - x_2, \cdots)\] to be an isomorphism, it suffices that any null sequence in \(\Z\) in the usual absolute value be summable, which is obvious. The inverse is then given by \((x_n) \mapsto (\sum_{i \geq n} x_i)_n\).  
\end{proof}

With at least one example of a solid abelian group, the following result shows how to build more. 

\begin{proposition}\label{prop:solid-properties}
The full subcategory inclusion \(\mathsf{Solid} \subset \mathsf{Cond}_{\N}(\mathsf{Ab})\) is closed under kernels, cokernels, extensions, limits and colimits, and (derived) internal \(\mathsf{Hom}\). Furthermore, the inclusion has a left adjoint functor \[(-)^\blacksquare \colon \mathsf{Cond}_{\N}(\mathsf{Ab}) \to \mathsf{Solid}.\] Finally, there is an induced colimit-preserving symmetric monoidal structure on \(\mathsf{Solid}\), and  the functor \((-)^\blacksquare\) is symmetric monoidal.
\end{proposition}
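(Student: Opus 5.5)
The plan is to reduce everything to the fact that solidity is the condition of being local, in both the underived and derived sense, with respect to the single map \(\sigma = 1 - S \colon P \to P\), where \(P = \Z[\N \cup \{\infty\}]/\Z[\{\infty\}]\) is internally projective. First I would show that, because \(P\) is internally projective, the functor \(M \mapsto \underline{\mathsf{Hom}}(P,M)\) is exact. Then \(M\) is solid exactly when \(\underline{\mathsf{Hom}}(\mathrm{cofib}(\sigma), M) = 0\), and \(\underline{\mathsf{Ext}}^1(\mathrm{cofib}(\sigma),M)=0\) as well. Here \(\mathrm{cofib}(\sigma)\) is the two-term complex \(Q = [P \overset{\sigma}\to P]\); it has \(\mathrm{Ext}\)-amplitude \(\le 1\) internally because \(P\) is internally projective.

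With this reformulation, the closure properties follow from exactness of \(\underline{\mathsf{Hom}}(P,-)\) and the five lemma. Given \(0\to M'\to M\to M''\to 0\), the sequence \(0\to \underline{\mathsf{Hom}}(P,M')\to\underline{\mathsf{Hom}}(P,M)\to\underline{\mathsf{Hom}}(P,M'')\to 0\) is exact, and \(\sigma^*\) acts compatibly on it. If two of the three terms are solid, so is the third. The same argument applied to the image factorisation handles kernels and cokernels. For limits, \(\underline{\mathsf{Hom}}(P,-)\) commutes with them tautologically. For colimits, I would use that \(P\) is compact: it is built from the light profinite set \(\N\cup\{\infty\}\), which is quasi-compact and quasi-separated in the light site, so \(\underline{\mathsf{Hom}}(P,-)\) commutes with filtered colimits and direct sums. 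Internal Hom is handled by adjunction, \(\underline{\mathsf{Hom}}(P,\underline{\mathsf{Hom}}(N,M)) \cong \underline{\mathsf{Hom}}(N,\underline{\mathsf{Hom}}(P,M))\), which intertwines \(\sigma^*\). The derived version uses \(R\underline{\mathsf{Hom}}\) and the fact that the solid objects in \(\bD\) are those with \(R\underline{\mathsf{Hom}}(Q,-)=0\); this is closed under all limits, colimits and \(R\underline{\mathsf{Hom}}(N,-)\).

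For the left adjoint, \(\mathsf{Solid}\) is a full subcategory of a Grothendieck abelian category. It is closed under limits and under \(\omega_1\)-filtered (indeed all) colimits, so it is accessible and presentable by the Bousfield localisation description (Proposition \ref{prop:presentable-char}), and the adjoint functor theorem (Theorem \ref{thm:adjoint-functor-thm}) gives \((-)^\blacksquare\). Alternatively, one can build it explicitly as the localisation at \(\{Q\otimes\Z[S]\}_S\).

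The monoidal structure comes from checking that the localisation is compatible with \(\otimes\). I need that \(M\to N\) being a \((-)^\blacksquare\)-equivalence implies that \(M\otimes X\to N\otimes X\) is one. This follows by adjunction from the fact that \(\underline{\mathsf{Hom}}(X,Z)\) is solid for solid \(Z\), which was shown above. By the standard Day-type localisation argument, \(\mathsf{Solid}\) then gets \(M\otimes^\blacksquare N := (M\otimes N)^\blacksquare\), which preserves colimits in each variable, and \((-)^\blacksquare\) is symmetric monoidal.

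I expect the main obstacle to be the closure under colimits. It relies on the compactness of \(P\) in the light condensed setting and on internal projectivity to commute the quotient defining \(P\) with colimits. It also needs the derived statement that cokernels of maps between solid objects are solid, rather than only that they become solid after applying \((-)^\blacksquare\).
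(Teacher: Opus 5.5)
Your proposal is correct and follows essentially the paper's route: exactness of \(\underline{\mathsf{Hom}}(P,-)\) from internal projectivity, the adjunction \(\underline{\mathsf{Hom}}(P,\underline{\mathsf{Hom}}(N,M))\cong\underline{\mathsf{Hom}}(N,\underline{\mathsf{Hom}}(P,M))\), localisation at the small set \(\{\mathrm{id}_{\Z[S]}\otimes\sigma\}_S\), and compatibility of solid equivalences with \(\otimes\) (equivalent to the paper's chain of Hom isomorphisms); you also usefully supply the extension, limit and colimit cases that the paper only asserts. Two small points: the left adjoint should rest on \(\mathsf{Solid}\) being exactly the objects local for that small set of maps (your ``alternative''), since closure under limits and filtered colimits together with Proposition \ref{prop:presentable-char} does not by itself establish accessibility; and for kernels and cokernels it is cleaner to apply the exact functor \(\underline{\mathsf{Hom}}(P,-)\) to \(0\to\ker f\to M\to N\to\mathrm{coker} f\to 0\), as the paper does, than to run two-out-of-three on the image factorisation, where a priori only one term of each short exact sequence is known to be solid.
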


\begin{proof}
Let \(M \to N\) be a morphism of solid abelian groups. Then the kernel \(K\) is a condensed abelian group. To see that it is solid, we need to prove that the map \(\sigma^* \colon \underline{\mathsf{Hom}}(P, K) \to \underline{\mathsf{Hom}}(P, K)\) is an isomorphism. But since \(P\) is internally projective, we have by left exactness \(\underline{\mathsf{Hom}}(P,K) = \mathrm{ker}(\underline{\mathsf{Hom}}(P,M) \to \underline{\mathsf{Hom}}(P,N))\). Then \(\sigma^* \colon \underline{\mathsf{Hom}}(P,K) \to \underline{\mathsf{Hom}}(P,K)\) is an isomorphism using that \(M\) and \(N\) are solid and the naturality of kernels. One similarly uses the right exactness of \(\underline{\mathsf{Hom}}(P,-)\) to prove that the cokernel of a map of solid abelian groups is solid. 

For internal-Hom, consider a solid abelian group \(M\) and an arbirary condensed abelian group \(N\). Then \(\sigma^* \colon \underline{\mathsf{Hom}}(P, \underline{\mathsf{Hom}}(N,M)) \to \underline{\mathsf{Hom}}(P, \underline{\mathsf{Hom}}(N,M))\) is equivalent to the map \(\underline{\mathsf{Hom}}(P \otimes N, M) \to \underline{\mathsf{Hom}}(P \otimes N, M)\), which in turn is equivalent to \[\underline{\mathsf{Hom}}(N, \underline{\mathsf{Hom}}(P,M)) \to \underline{\mathsf{Hom}}(N, \underline{\mathsf{Hom}}(P,M)),\] which is an isomorphism as \(M\) is solid. The same argument works for the internal derived-Hom. 

The existence of the left adjoint follows from the fact that \(\mathsf{Solid}\) is an accessible Bousfield localisation of \(\mathsf{Cond}(\mathsf{Ab})\) at the set of maps of the form \(\mathrm{id}_{\Z[T]} \otimes \sigma\), for light profinite sets \(T\).  The symmetric monoidal structure is given by applying the solidification functor to the tensor product in condensed abelian groups, that is, \[M \otimes^{\blacksquare} N := (M \otimes N)^\blacksquare,\] which manifestly preserves colimits in both variables as solidification is a left adjoint functor.  Finally, to see that solidification is symmetric monoidal, we want the natural map \[M \otimes^\blacksquare N \to (M^\blacksquare \otimes N^\blacksquare)^\blacksquare\] to be an isomorphism for condensed abelian groups \(M\) and \(N\). And to see this, consider for a solid abelian group \(P\), \begin{multline*}\mathsf{Hom}((M^\blacksquare \otimes N^\blacksquare)^\blacksquare, P) \cong  \mathsf{Hom}(M^\blacksquare \otimes N^\blacksquare, P) \cong \mathsf{Hom}(M^\blacksquare, \underline{\mathsf{Hom}}(N^\blacksquare, P)) \\ 
\mathsf{Hom}(M, \underline{\mathsf{Hom}}(N^\blacksquare, P)) \cong \mathsf{Hom}(N^\blacksquare, \underline{\mathsf{Hom}}(M, P)) \\
 \cong \mathsf{Hom}(N, \underline{\mathsf{Hom}}(M,P)) \cong \mathsf{Hom}(M \otimes N, P) \cong \mathsf{Hom}(M \otimes^\blacksquare N, P)
\end{multline*} as required. 
\end{proof}

The functor \((-)^\blacksquare\) is called the \emph{solidification} functor. It behaves like a completion in the following sense:

\begin{proposition}
A light condensed abelian group \(M\) is solid if and only if for any map of condensed sets \(S \to M\) from a light profinite set, there is a unique extension \(\Z[S]^\blacksquare \to M\). 
\end{proposition}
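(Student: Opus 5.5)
The forward direction is formal. By Proposition \ref{prop:solid-properties}, $(-)^\blacksquare$ is left adjoint to the inclusion $\mathsf{Solid} \subset \mathsf{Cond}_\N(\mathsf{Ab})$. So for solid $M$,
\[
\mathsf{Hom}(\Z[S]^\blacksquare, M) \cong \mathsf{Hom}(\Z[S], M) \cong M(S),
\]
the second isomorphism being the Yoneda bijection for free condensed abelian groups. Hence a map of condensed sets $S \to M$ extends uniquely along the unit $\Z[S] \to \Z[S]^\blacksquare$.

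For the converse, I will read the hypothesis internally: the restriction map
\[
\mathsf{Hom}(\Z[S]^\blacksquare, M) \to \mathsf{Hom}(\Z[S], M)
\]
is a bijection for every light profinite $S$. I will then deduce that $\sigma^* \colon \underline{\mathsf{Hom}}(P,M) \to \underline{\mathsf{Hom}}(P,M)$ is an isomorphism of condensed abelian groups. Evaluating at a light profinite $T$, we have
\[
\underline{\mathsf{Hom}}(P,M)(T) = \mathsf{Hom}(P \otimes \Z[T], M),
\]
so it suffices to show that precomposition with $\sigma \otimes \mathrm{id}_{\Z[T]}$ is bijective on $\mathsf{Hom}(P \otimes \Z[T], M)$. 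The strategy is to transport the question along the unit maps into the solid world.

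\begin{itemize}
\item First, by the proof of Proposition \ref{prop:solid-properties}, $\sigma \otimes \mathrm{id}_{\Z[T]}$ becomes an isomorphism after solidification, since these are exactly the maps inverted by the localisation.
\item Second, $P \otimes \Z[T]$ is a cokernel of free objects on light profinite sets: $\Z[\N\cup\{\infty\}] \otimes \Z[T] \cong \Z[(\N\cup\{\infty\}) \times T]$, and we quotient by the image of $\Z[\{\infty\} \times T]$.
\item Third, the hypothesis for $S = (\N\cup\{\infty\})\times T$ and $S = \{\infty\}\times T$, together with left exactness of $\mathsf{Hom}(-,M)$ and right exactness of $(-)^\blacksquare$, gives
\[
\mathsf{Hom}((P\otimes \Z[T])^\blacksquare, M) \cong \mathsf{Hom}(P \otimes \Z[T], M).
\]
\item Combining these, $(\sigma \otimes \mathrm{id})^*$ on $\mathsf{Hom}(P\otimes \Z[T], M)$ is identified with precomposition by the isomorphism $(\sigma\otimes\mathrm{id})^\blacksquare$, and so is bijective.
\end{itemize}

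The main obstacle is the converse. The unit $\Z[S] \to \Z[S]^\blacksquare$ is used for $M$ not yet known to be solid, so $\mathsf{Hom}(\Z[S]^\blacksquare, M)$ is not controlled by any adjunction. One must check that naturality of the bijections in $S$ makes the identifications compatible with the maps induced by $\sigma$; in particular, the cokernel step needs the restriction isomorphism to be natural along the inclusion $\{\infty\}\times T \to (\N\cup\{\infty\})\times T$. If this bookkeeping becomes awkward, a fallback is to use that every condensed abelian group is a colimit of objects $\Z[S]$. In that approach one shows that the hypothesis makes $M$ local with respect to all units $\Z[S]\to\Z[S]^\blacksquare$. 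Since these generate the localisation, $M$ is then local with respect to the generating maps $\mathrm{id}_{\Z[T]}\otimes\sigma$, which is the definition of solidity.
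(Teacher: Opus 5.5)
Your proposal is correct, and its forward direction is exactly the paper's proof: the solidification adjunction followed by the free--forgetful adjunction. The paper's proof stops there. Its chain of bijections starts from \(\mathsf{Hom}_{\mathsf{Solid}}(\Z[S]^\blacksquare, M)\), which already presupposes that \(M\) is solid, so the converse is never argued. Your second half supplies the missing direction.

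Your converse goes through. The bookkeeping you flag is handled entirely by naturality of the unit \(\eta\colon \mathrm{id}\Rightarrow(-)^\blacksquare\). Write \(S_1=(\N\cup\{\infty\})\times T\), \(S_0=\{\infty\}\times T\) and \(\phi=\sigma\otimes\mathrm{id}_{\Z[T]}\).

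\begin{enumerate}
\item Since \(\mathsf{Solid}\) is closed under cokernels in \(\mathsf{Cond}_\N(\mathsf{Ab})\), \((P\otimes\Z[T])^\blacksquare\) is the cokernel of \(\Z[S_0]^\blacksquare\to\Z[S_1]^\blacksquare\) in \(\mathsf{Cond}_\N(\mathsf{Ab})\) itself. This matters: for non-solid \(M\), \(\mathsf{Hom}(-,M)\) only converts colimits computed in \(\mathsf{Cond}_\N(\mathsf{Ab})\) into limits.
\item Naturality of \(\eta\) along \(\Z[S_0]\to\Z[S_1]\to P\otimes\Z[T]\) identifies the induced bijection on kernels with \(\eta_{P\otimes\Z[T]}^*\).
\item The relation \(\eta\circ\phi=\phi^\blacksquare\circ\eta\) shows that \(\phi^*\) is conjugate under \(\eta^*\) to precomposition with \(\phi^\blacksquare\).
\item \(\phi^\blacksquare\) is an isomorphism because solid objects are, by definition, exactly those local for the maps \(\phi\); then apply Yoneda in \(\mathsf{Solid}\).
\end{enumerate}

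Your fallback can be made shorter and avoids \(P\) and \(\sigma\) altogether. First show \(M\) is local for every unit, passing from the \(\Z[S]\) to arbitrary objects by presenting them as cokernels of direct sums of \(\Z[S]\)'s. Applying this to \(\eta_M\) gives \(r\colon M^\blacksquare\to M\) with \(r\eta_M=\mathrm{id}_M\). Then \(\eta_M r\) and \(\mathrm{id}_{M^\blacksquare}\) agree after precomposing with \(\eta_M\), and both land in the solid object \(M^\blacksquare\), so they are equal. Hence \(M\cong M^\blacksquare\) is solid.
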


\begin{proof}
By the adjunction in Proposition \ref{prop:solid-properties} and the free-forgetful adjunction, we have \[\mathsf{Hom}_{\mathsf{Solid}}(\Z[S]^\blacksquare, M) \cong \mathsf{Hom}_{\mathsf{Cond}(\mathsf{Ab})}(\Z[S],M) \cong \mathsf{Hom}_{\mathsf{Cond}(\mathsf{Set})}(S,M),\] as required. 
\end{proof}

Before moving on, we consider a few examples of solidification.

\begin{example}\label{ex:main-solid}
Consider the internally projective object \(P = \Z[\N \cup \{\infty\}]/ \Z[\{\infty\}]\) classfying null sequences. We may identify its solidification with the product \(\prod_{\N} \Z\) taken in condensed abelian groups. To see this, consider the space of bounded sequences \[\prod_{\N}^{\mathrm{bdd}} \Z := \bigcup_{n \in \N} \prod_{\N} (\Z \cap [-n,n]) \subset \prod_{\N} \Z,\] equipped with the subspace topology from the countable product of \(\Z\) with its discrete topology. To clarify what this is as a condensed abelian group, we assign to a light profinite set \(S\), the abelian group \((\prod_{\N}^{\mathrm{bdd}} \Z)(S) := C_b(S, \prod_{\N} \Z)\) of continuous functions with uniformly bounded range.  There is a natural map \[P \to \prod_{\N}^{\mathrm{bdd}} \Z, \quad [n] \mapsto (0,\dotsc,1,0,\dotsc).\] This induces an isomorphism upon taking solidification. Furthermore, we also have an identification \[(\prod_{\N}^{\mathrm{bdd}} \Z)^\blacksquare  \overset{\simeq}\to (\prod_\N \Z)^\blacksquare \cong \prod_{\N} \Z,\] where the last equivalence follows from the fact that \(\prod_{\N} \Z\) is already solid, and the first equivalence follows from the exactness of solidification, applied to the exact sequence \[0 \to \prod_{\N}^{\mathrm{bdd}} \Z \to \prod_{\N} \Z \to \prod_{\N} \Z / \prod_{\N}^{\mathrm{bdd}} \Z \to 0\] of condensed abelian groups, and using also that for any solid \(M\), \[\mathsf{Hom}(\prod_{\N} \Z / \prod_{\N}^{\mathrm{bdd}} \Z, M) = 0.\]  

Combining the two equivalences, we have that \[P^\blacksquare \simeq (\prod_{\N}^{\mathrm{bdd}}\Z)^\blacksquare  \simeq \prod_{\N} \Z.\]
\end{example}

\begin{remark}
In fact, an even stronger statement holds. For any solid abelian group, \[\mathsf{Ext}_{\mathsf{Solid}}^i(\prod_{\N} \Z, M) \overset{\simeq}\to \mathsf{Ext}_{\mathsf{Solid}}^i(P^\blacksquare, M) \overset{\simeq}\to \mathsf{Ext}_{\mathsf{Cond}(Ab)}^i(P, M) = 0\] for all \(i >0\). Consequently, \(\prod_{\N} \Z\) is a projective object. In fact, by the equivalence above, since \(\mathsf{Hom}(P, -)\) preserves colimits, \(\prod_{\N} \Z\) is also compact.   
\end{remark}


\begin{theorem}\label{thm:solid-gen}
The category \(\mathsf{Solid}\) has a single compact projective generator, namely, \(\prod_{\N} \Z\). 
\end{theorem}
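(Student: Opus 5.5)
The proof has three parts: show that \(\prod_{\N}\Z\) is compact and projective in \(\mathsf{Solid}\), then show that it generates. The first part is essentially the preceding Remark. By Example \ref{ex:main-solid}, \(\prod_{\N}\Z \simeq P^\blacksquare\) with \(P = \Z[\N\cup\{\infty\}]/\Z[\{\infty\}]\). Because solidification is left adjoint to the inclusion (Proposition \ref{prop:solid-properties}), for solid \(M\) we get
\[\mathsf{Hom}_{\mathsf{Solid}}(\textstyle\prod_{\N}\Z, M) \cong \mathsf{Hom}_{\mathsf{Cond}_\N(\mathsf{Ab})}(P, M) \cong \underline{\mathsf{Hom}}(P,M)(*).\]
Internal projectivity of \(P\) makes \(\mathsf{Hom}(P,-)\) exact. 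Since kernels and cokernels in \(\mathsf{Solid}\) are computed in condensed abelian groups, the functor \(\mathsf{Hom}_{\mathsf{Solid}}(\prod_\N\Z,-)\) is exact, so \(\prod_\N\Z\) is projective.

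For compactness we need \(\mathsf{Hom}(P,-)\) to commute with filtered colimits of condensed abelian groups. Colimits in \(\mathsf{Solid}\) are also computed in \(\mathsf{Cond}_\N(\mathsf{Ab})\) by Proposition \ref{prop:solid-properties}. The object \(P\) is a quotient of \(\Z[\N\cup\{\infty\}]\) by a summand, and \(\Z[S]\) is compact for \(S\) light profinite, since filtered colimits of sheaves on the site of light profinite sets (with finite covers) are computed sectionwise. So \(P\) is compact, and hence so is \(\prod_\N\Z\).

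Generation is the main obstacle. It suffices to show that every solid \(M\) receives an epimorphism from a direct sum of copies of \(\prod_\N\Z\); equivalently, that \(\mathsf{Hom}(\prod_\N\Z, M)=0\) forces \(M=0\), which together with projectivity gives generation in the abelian sense. In \(\mathsf{Cond}_\N(\mathsf{Ab})\) the objects \(\Z[S]\), for \(S\) light profinite, generate, so \(M\) is a quotient of \(\bigoplus \Z[S_i]\). Applying \((-)^\blacksquare\), which is a left adjoint and hence preserves colimits and epimorphisms, exhibits \(M = M^\blacksquare\) as a quotient of \(\bigoplus \Z[S_i]^\blacksquare\). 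It therefore remains to identify each \(\Z[S]^\blacksquare\) as a retract, or at least a quotient, of a sum of copies of \(\prod_\N\Z\).

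For this, write \(S = \varprojlim S_n\) with finite \(S_n\) and surjective transition maps. Choosing compatible splittings yields a decomposition \(C(S,\Z) \cong \bigoplus_{\N}\Z\) as abelian groups; this uses that \(C(S,\Z)\) is a countable free abelian group, by Nöbeling's theorem in the light case. The next step is to show that \(\Z[S]^\blacksquare \cong \underline{\mathsf{Hom}}(C(S,\Z),\Z) \cong \prod_\N\Z\) for infinite \(S\), and \(\Z^{|S|}\) for finite \(S\), which is a retract of \(\prod_\N\Z\). I would establish this by mimicking Example \ref{ex:main-solid}. Either identify \(\N\cup\{\infty\}\) with the universal case and reduce general \(S\) to it through a short exact sequence \(0\to \Z[\{\infty\}]\to\Z[\N\cup\{\infty\}]\to P\to 0\), combined with the fact that every light profinite set is a countable limit built from such convergent sequences; or check directly that \(\prod_\N\Z\) is solid and satisfies the universal property of \(\Z[S]^\blacksquare\), using the explicit inverse to \(\sigma^*\) from Proposition \ref{prop:Z-solid} to extend maps from \(S\) uniquely. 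This identification of \(\Z[S]^\blacksquare\) is the step I expect to require real work.
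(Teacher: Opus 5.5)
Your overall strategy is the same as the paper's. Compactness and projectivity of \(\prod_\N\Z\) come from \(\prod_\N\Z\simeq P^\blacksquare\) and the solidification adjunction. Generation comes from identifying \(\Z[S]^\blacksquare\) with \(\prod_\N\Z\) for infinite light profinite \(S\). The paper phrases generation as \(\mathsf{Hom}_{\mathsf{Solid}}(\prod_\N\Z,M)\cong M(S)\), while you use an epimorphism from \(\bigoplus\Z[S_i]^\blacksquare\); these are equivalent.

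Your treatment of compactness and projectivity is fine and more detailed than the paper's. The one unstated point is that exactness of \(\mathsf{Hom}(P,-)=\underline{\mathsf{Hom}}(P,-)(*)\) needs evaluation at the point to be exact. This holds because every surjection of light profinite sets onto \(*\) splits; alternatively, run the lifting argument from the internal projectivity proof with \(S=*\). Nöbeling is unnecessary: \(C(S,\Z)=\colim_n\Z^{S_n}\) has split injective transition maps, so it is visibly free of countable rank.

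The gap is the identification \(\Z[S]^\blacksquare\cong\prod_\N\Z\). Your target formula \(\Z[S]^\blacksquare\cong\underline{\mathsf{Hom}}(C(S,\Z),\Z)\) is correct, but neither of your routes proves it.

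\emph{Route (a).} Solidification is a left adjoint, and \(\Z[-]\) does not turn limits of profinite sets into limits. So writing \(S\) as a sequential limit of finite sets or convergent sequences tells you nothing about \(\Z[S]^\blacksquare\). It would only help if you already knew \(\Z[S]^\blacksquare\simeq\lim_n\Z[S_n]\), which is essentially the statement to be proved.

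\emph{Route (b).} The explicit inverse to \(\sigma^*\) in Proposition \ref{prop:Z-solid} is special to the group \(\Z\): a null sequence in \(\Z\) is eventually zero, hence summable. It gives no handle on an arbitrary solid \(M\). Verifying the universal property of \(\Z[S]^\blacksquare\) means showing \(\mathsf{Hom}(\prod_\N\Z,M)\to M(S)\) is bijective for every solid \(M\). The only thing you know about a general solid \(M\) is that \(\sigma^*\) is bijective on \(\underline{\mathsf{Hom}}(P,M)\).

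The missing idea is therefore to express \(\Z[S]\), up to solidification, in terms of \(P\). Concretely:
\begin{enumerate}
\item Choose points of \(S\); this is the use of choice in the paper. For example, choose lifts \(s(t)\in S\) of the points \(t\) of the finite quotients \(S_n\).
\item Form the differences \([s(t)]-[s(\pi t)]\), where \(\pi\colon S_{n+1}\to S_n\) is the transition map. These form a null sequence in \(\Z[S]\) and, with a base point, give a map \(\Z\oplus P\to\Z[S]\). The paper uses a map \(P\to\Z[S]\); this is the same thing, since \(P^\blacksquare\cong\Z\oplus P^\blacksquare\).
\item Prove this map becomes an isomorphism after solidification, i.e. that \(M(S)\cong M(*)\oplus\mathsf{Hom}(P,M)\) naturally for every solid \(M\).
\end{enumerate}

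The paper's proof only asserts this step too, so you are not missing anything it supplies. But your proposal should not present (a) or (b) as ways to close it.
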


\begin{proof}
The idea is that for any infinite light profinite set \(S\), one can define a morphism (using the axiom of choice) \(P \to \Z[S]\), such that \[P^\blacksquare \overset{\simeq}\to \Z[S]^\blacksquare.\] We have already seen that \(P^\blacksquare \cong \prod_{\N} \Z\) is a compact and projective object. Using the identification above, we have for any solid abelian group \(M\), \[\mathsf{Hom}_{\mathsf{Solid}}(\prod_{\N} \Z, M) = \mathsf{Hom}_{\mathsf{Solid}}(\Z[S]^\bullet, M) = \mathsf{Hom}_{\mathsf{Cond}(\mathsf{Ab})}(\Z[S], M) = 0\] if and only if \(M(S) = 0\) for all infinite light profinite sets, if and only if \(M = 0\).  
\end{proof}

Since \(\mathsf{Solid}\) is an extension closed subcategory of \(\mathsf{Cond}_{\N}(\mathsf{Ab})\), it is itself an abelian category. And by Theorem \ref{thm:solid-gen}, it is also a Grothendieck abelian category. In particular, it has a derived \(\infty\)-category that embeds \(\mathbf{D}(\mathsf{Solid}) \subset \mathbf{D}(\mathsf{Cond}_{\N}(\mathsf{Ab})\) fully faithfully inside the derived category of condensed abelian groups. This has a left adjoint \[\mathbf{D}(\mathsf{Cond}_{\N}(\mathsf{Ab})) \to \mathbf{D}(\mathsf{Solid}),\] called the \emph{derived solidification functor}.

Consider a ring object \(A \in \mathsf{Alg}(\mathsf{Cond}_{\N}(\mathsf{Ab}))\); we shall call this a \emph{condensed ring}. Suppose the underlying condensed abelian group of \(A\) is solid, we may consider the category \[\mathsf{Mod}_A(\mathsf{Solid}) := \mathsf{Mod}_A( \mathsf{Cond}_\N(\mathsf{Ab})) \times_{\mathsf{Cond}_\N(\mathsf{Ab})} \mathsf{Solid}\] of \emph{solid} \(A\)-modules.

As in Proposition \ref{prop:solid-properties}, we again have the following:

\begin{proposition}\label{prop:relative-solid}
Let \(A\) be a condensed ring whose underlying abelian group is solid. Then the full subcategory inclusion \(\mathsf{Mod}_A(\mathsf{Solid}) \subset \mathsf{Mod}_A(\mathsf{Cond}_\N(\mathsf{Ab}))\) is closed under kernels, cokernels, extensions, limits and colimits, and (derived) internal \(\mathsf{Hom}\). 
\end{proposition}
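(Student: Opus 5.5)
The plan is to reduce everything to Proposition \ref{prop:solid-properties} by using the forgetful functor \(U \colon \mathsf{Mod}_A(\mathsf{Cond}_\N(\mathsf{Ab})) \to \mathsf{Cond}_\N(\mathsf{Ab})\). By definition, \(\mathsf{Mod}_A(\mathsf{Solid})\) is the full subcategory of \(A\)-modules \(M\) with \(U(M)\) solid. Each closure property therefore follows if \(U\) carries the relevant construction in \(A\)-modules to the corresponding construction in condensed abelian groups, up to something the solid subcategory is already known to be closed under.

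First, \(U\) is exact and preserves all limits and colimits. It is both a left and a right adjoint: left adjoint to \(\underline{\mathsf{Hom}}(A,-)\) and right adjoint to \(A \otimes -\). Hence kernels, cokernels, limits and colimits of \(A\)-modules are computed underlying. Proposition \ref{prop:solid-properties} then shows that these are solid whenever the inputs are.

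Extensions behave the same way. A short exact sequence of \(A\)-modules maps under \(U\) to a short exact sequence of condensed abelian groups. Solid abelian groups are closed under extensions, so the middle term is solid.

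For the internal Hom, take \(M, N\) in \(\mathsf{Mod}_A(\mathsf{Cond}_\N(\mathsf{Ab}))\) with \(N\) solid. The internal Hom \(\underline{\mathsf{Hom}}_A(M,N)\) is the equaliser of the two maps
\[\underline{\mathsf{Hom}}(M,N) \rightrightarrows \underline{\mathsf{Hom}}(A \otimes M, N),\]
one induced by the action on \(M\) and the other by the action on \(N\). Both targets are solid by Proposition \ref{prop:solid-properties}, since \(N\) is solid. Equalisers of solid objects are solid, so \(\underline{\mathsf{Hom}}_A(M,N)\) is solid. If \(A\) is not commutative, one instead takes the bimodule Hom into an abelian-group-valued object, and the same equaliser argument applies.

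The derived statement is the main obstacle. Derived internal Hom of \(A\)-modules is not literally computed by underlying derived Hom. I would resolve \(M\) by free \(A\)-modules of the form \(A \otimes \Z[S]\), with \(S\) a light profinite set. This gives
\[R\underline{\mathsf{Hom}}_A(A \otimes \Z[S], N) \simeq R\underline{\mathsf{Hom}}(\Z[S], N),\]
which is solid by the derived part of Proposition \ref{prop:solid-properties}. For general \(M\), \(R\underline{\mathsf{Hom}}_A(M,N)\) is then a limit (a totalisation) of such objects. The homotopy groups of the result are solid because \(\mathbf{D}(\mathsf{Solid})\) is closed under limits in \(\mathbf{D}(\mathsf{Cond}_\N(\mathsf{Ab}))\). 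This last closure holds because the solid objects are the local objects of a Bousfield localisation, being characterised by \(\sigma^*\) acting as an isomorphism, and the class of local objects is stable under limits.
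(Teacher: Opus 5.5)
Your argument is correct. The paper gives no separate proof, saying only that the statement holds as in Proposition~\ref{prop:solid-properties}; your reduction along the forgetful functor (which is exact and preserves all limits and colimits), together with the equaliser description of \(\underline{\mathsf{Hom}}_A\) and its derived analogue via free resolutions, is the routine transfer that remark has in mind.
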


\begin{example}\label{ex:Zp-Qp}
Consider \(\Z_p\) as a topological \(\Z_p\)-module with its canonical \(p\)-adic topology. As a compact metric space, it is a light profinite set, so that it may be viewed as a condensed ring via the Yoneda embedding. To see that it is solid, by an argument similar to Example \ref{ex:main-solid}, we are reduced to showing that \[\sigma^* \colon C_0(\N, \Z_p) \to C_0(\N, \Z_p), \sum_{n \in \N} a_n \delta_n \mapsto \sum_{n \in \N} a_n(\delta_n - \delta_{n+1})\] is an isomorphism, which follows from the ultrametric property of the \(p\)-adic norm. A similar argument shows that \(\Q_p\) is a solid ring.  
\end{example}

In particular, we get categories of ``complete" topological \(\Z_p\)-modules and \(\Q_p\)-vector spaces working in \[\mathsf{Mod}_{\Z_p}(\mathsf{Solid}) \subset \mathsf{Mod}_{\Z_p}(\mathsf{Cond}(\mathsf{Ab})), \quad \mathsf{Mod}_{\Q_p}(\mathsf{Solid}) \subset \mathsf{Mod}_{\Q_p}(\mathsf{Cond}(\mathsf{Ab})),\] with left adjoint solidification functors playing the role of ``nonarchimedean" completions. These categories contain Banach \(\Z_p\)-modules and Banach \(\Q_p\)-vector spaces. Likewise, we have derived solidification functors that are left adjoint to the inclusions \[\mathbf{D}(\mathsf{Mod}_A(\mathsf{Solid})) \to \mathbf{D}(\mathsf{Mod}_A(\mathsf{Cond}_{\N}(\mathsf{Ab}))).\]

\subsubsection{Gaseous modules}

In this section, we consider archimedean completions. We view \(\R\) with its archimedean topology as a condensed abelian group in the usual way: \(S \mapsto C(S, \R)\).  The reason why archimedean topological vector spaces need to be treated separately is the following:

\begin{proposition}\label{lem:real-solid}\cite[Corollary 6.1]{CS1}
The solidification of \(\R\) is zero. Consequently, the solidification of any complex \(M\in \mathbf{D}(\mathsf{Mod}_\R(\mathsf{Cond}(\mathsf{Ab})))\) trivialises. 
\end{proposition}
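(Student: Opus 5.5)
The second claim follows quickly from the first, so the plan concentrates on showing \(\underline{\R}^\blacksquare = 0\).

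\textbf{Reducing the second claim to the first.} Since \(\underline{\R}\) is a condensed ring, any \(M \in \mathbf{D}(\mathsf{Mod}_\R(\mathsf{Cond}(\mathsf{Ab})))\) is a module over \(\R\). Because solidification is symmetric monoidal (Proposition \ref{prop:solid-properties}, extended to the derived setting), \(M^\blacksquare\) is a module over the solid ring \(\R^\blacksquare\). If \(\R^\blacksquare \simeq 0\), then the unit \(1 = 0\) in this ring, so \(M^\blacksquare\) is a module over the zero ring and therefore vanishes.

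\textbf{Strategy for the first claim.} I would show that every solid abelian group \(N\) admits no nonzero maps from \(\underline{\R}\). More precisely, I would show \(\mathsf{RHom}(\underline{\R}, N) = 0\), so that the derived solidification also vanishes.

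\textbf{Key step: a map that is both invertible and zero.} Consider \(\underline{\mathsf{Hom}}(P, -)\) with \(P = \Z[\N\cup\{\infty\}]/\Z[\{\infty\}]\). The plan is to construct a map \(\R \to \R\), or a presentation of \(\R\), on which \(\sigma^*\) cannot be inverted. This reflects the failure noted after Definition \ref{def:solid}: there are null sequences in \(\R\), such as \(a_n = 1/n\), that are not summable.

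I would make this concrete through a structural exact sequence. Write \(\R\) as a quotient of \(\prod_\N^{\mathrm{bdd}}\)-type objects: every real number in \([-1,1]\) is \(\sum_n \epsilon_n 2^{-n}\) with \(\epsilon_n \in \{-1,0,1\}\). This gives a surjection of condensed abelian groups
\[
\textstyle\prod_\N^{\mathrm{bdd}} \Z \to \R, \quad (\epsilon_n) \mapsto \sum_n \epsilon_n 2^{-n},
\]
after first rescaling by integers so that bounded sequences are handled. Its solidification factors through \(\prod_\N \Z\) by Example \ref{ex:main-solid}. In the solid world, the map \((\epsilon_n)\mapsto \sum \epsilon_n 2^{-n}\) is forced to be compatible with the relation \(2\cdot e_{n+1} = e_n\). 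Together with solidity, this makes the image of \(1\) infinitely \(2\)-divisible in a way that is compatible with a convergent sum. Iterating, the element \(1\) becomes both \(\sum_n 2^{n}(\ldots)\) and its negative. Solid completeness for \(\sum_n 2^{n}x_n\)-type "\(2\)-adic" sums then yields \(1=0\): in a solid ring containing \(\R\), the element \(2\) is simultaneously invertible and topologically nilpotent, and the geometric series \(\sum 2^n\) converges to \(1/(1-2) = -1\) while also being built from positive terms that forbid it.

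\textbf{Main obstacle.} The hard step is making this "\(2\) is topologically nilpotent in a solid \(\R\)-algebra" argument rigorous. I would attempt it via Clausen–Scholze's computation that \(\Z[S]^\blacksquare = \lim_i \Z[S_i]\). Using this, I would show that the map \(\Z((T))\)-style series ring \(\Z[[q]] \to \R\), \(q\mapsto 1/2\), extends after solidification, and that \(q-1/2\)-relations force \(\Z((q))\)-completion. Inverting \(2\) then kills everything. Finally, the derived version requires checking that the relevant Ext groups vanish as well, which I expect to follow by the same argument applied to \(\underline{\mathsf{RHom}}\), using Proposition \ref{prop:solid-properties}.
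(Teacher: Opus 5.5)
\textbf{There is a gap at the step that should give \(1=0\).} Your setup is sound. The map \(\phi\colon\prod^{\mathrm{bdd}}_\N\Z\to\underline{\R}\), \((a_n)\mapsto\sum_n a_n2^{-n}\), is a map of condensed abelian groups, since it is continuous on each \(\prod_\N(\Z\cap[-k,k])\). It satisfies \(\phi=2\,\phi\circ T\) for the right shift \(T(a_0,a_1,\dots)=(0,a_0,a_1,\dots)\). By Example~\ref{ex:main-solid} it solidifies to \(\phi^\blacksquare\colon\prod_\N\Z\to\R^\blacksquare\), and the identity \(\phi^\blacksquare=2\,\phi^\blacksquare\circ T\) still holds.

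The mechanism you then describe does not work. An element that is both invertible and topologically nilpotent is no contradiction in a solid ring. \(\Q_2\) is a nonzero solid ring (Example~\ref{ex:Zp-Qp}) in which \(2\) is exactly that, and \(\sum_n 2^n=-1\) holds there. ``Built from positive terms'' has no meaning in a condensed ring. In \(\R\) the topologically nilpotent element is \(\tfrac12\), not \(2\), and nothing you have shows \((2^n)\) is a null sequence in \(\R^\blacksquare\).

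Your fallback fails as stated too. There is no map \(\Z[[q]]\to\R\) with \(q\mapsto\tfrac12\), because unbounded coefficients give divergent series; there is only a map into \(\R^\blacksquare\). Inverting \(2\) in \(\Z((q))\) does not give the zero ring. Finally, non-summability of \((1/n)\) only shows \(\R\) is not solid, which is much weaker than \(\R^\blacksquare=0\).

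\textbf{The missing idea is a swindle.} Evaluate \(\phi^\blacksquare\) at the unbounded sequence \(a=(2^n)_n\), which lies in \(\prod_\N\Z\) but not in \(\prod^{\mathrm{bdd}}_\N\Z\). With \(e_0=(1,0,0,\dots)\) one has \(2Ta=a-e_0\). Hence \(\phi^\blacksquare(a)=2\phi^\blacksquare(Ta)=\phi^\blacksquare(2Ta)=\phi^\blacksquare(a)-\phi^\blacksquare(e_0)\). So \(\phi^\blacksquare(e_0)\), which is the image of \(\phi(e_0)=1\) under \(\underline{\R}\to\R^\blacksquare\), vanishes. Since solidification is symmetric monoidal, \(\R^\blacksquare\) is a commutative ring object with this unit, so \(\R^\blacksquare=0\).

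In ring-theoretic terms, \(1-2q\) is a unit of \(\Z[[q]]=\prod_\N\Z\) with inverse \(\sum_n 2^nq^n\), yet it maps to \(1-2\cdot\tfrac12=0\). This is the same relation \(s=1+s\) that the paper obtains by lifting the null sequence \((1,\tfrac12,\tfrac12,\tfrac14,\dots)\) along \(\sigma^*\). That sequence consists of \(2^k\) copies of \(2^{-k}\), so the paper's \(y_0\) is morally \(\phi^\blacksquare((2^k)_k)\). Once completed this way, your route via \(P^\blacksquare\simeq\prod_\N\Z\) is a rigorous alternative to the paper's sequence manipulation.

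The derived claim needs no separate Ext computation. Derived solidification is symmetric monoidal and right \(t\)-exact. So the derived solidification of \(\R\) is a connective commutative algebra with \(H_0=\R^\blacksquare=0\), hence zero, and your module argument gives the second statement.
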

\begin{proof}
Consider the null sequence \(x = (1,1/2,1/2,1/4,1/4,1/4,1/4,\dotsc)\) in \(\R\). This induces a null sequence \(x\) in the solidification via the map \[\mathsf{Hom}_{\mathsf{Cond}(\mathsf{Ab})}(P, \R) \to \mathsf{Hom}_{\mathsf{Cond}(\mathsf{Ab})}(P, \R^\blacksquare).\]  Since \(\R^\blacksquare\) is solid, the map \(\sigma^* \colon \mathsf{Hom}_{\mathsf{Cond}(\mathsf{Ab})}(P, \R^\blacksquare) \to \mathsf{Hom}_{\mathsf{Cond}(\mathsf{Ab})}(P, \R^\blacksquare)\) is an isomorphism, so that we get a unique lift \(y \in \mathsf{Hom}_{\mathsf{Cond}(\mathsf{Ab})}(P, \R^\blacksquare)\). Now consider the first coordinate map \(\Z \to P\) induced by  \(\{0\} \to \N \cup \{\infty\}\); postcomposing yields a map \[\mathsf{Hom}_{\mathsf{Cond}(\mathsf{Ab})}(P, \R^\blacksquare) \to \mathsf{Hom}_{\mathsf{Cond}(\mathsf{Ab})}(\Z, \R^\blacksquare) \cong \R^{\blacksquare}.\] Applying this to \(y\) yields an element \(y_0\) satisfying \(y_0 = 1 + y_0\). Since \(\R^{\blacksquare}\) is a ring object in \(\mathsf{Cond}(\mathsf{Ab})\), this implies \(\R^{\blacksquare} = 0\). 
\end{proof}

\begin{remark}
In the proof above, the lift \(y\) is intuitively supposed to be thought of as the sequence \((\sum_{n=0}^\infty x_n, \sum_{n=1}^\infty x_n, \sum_{n=2}^\infty x_n, \cdots)\), with \(x\) as in the proof. The first coordinate projection is then \(y_0 = 1 + 1/2 + 1/2 + 1/4 + 1/4 + 1/4 + 1/4 + \cdots = 1 + y_0\). To translate this intuition and other operations one can perform on sequences as operations on \(P\), we direct the reader to \cite[Remark 6.1.3]{kedlaya}.    
\end{remark}

\begin{definition}\label{def:gaseous}
A condensed \(\R\)-module \(M \in \mathsf{Mod}_{\R}(\mathsf{Cond}(\mathsf{Ab}))\) is called \emph{gaseous} if the canonical map \[\sigma^* \colon \underline{\mathsf{Hom}}(P, M) \overset{(1 - \frac{S}{2})^*}\to \underline{\mathsf{Hom}}(P,M)\] induced by the shift map \(S \colon \N \to \N\), \(n \mapsto n +1\) is an isomorphism. 
\end{definition}

To see what this condition means, consider a topological abelian group \(M\), so that \(\underline{\mathsf{Hom}}(P, M) = C_0(\N, M)\). Then the requirement that \(\sigma^*\) is an isomorphism says that for a null sequence \((a_n) \in C_0(\N, M)\), there is a null sequence \((b_n) \in C_0(\N, M)\) such that \(b_n - \frac{b_{n+1}}{2} = a_n\). By recursion, we have \(b_n = \sum_{j=0}^\infty 2^{-j} a_{n+j}\), which is absolutely convergent, for instance when \(M\) is a Banach \(\R\)-vector space.

Denote by \(\mathsf{Mod}_{\R}(\mathsf{Gas}) \subset \mathsf{Mod}_{\R}(\mathsf{Cond}(\mathsf{Ab}))\) the full subcategory of gaseous \(\R\)-modules. As in the solid theory, we have the following:

\begin{theorem}\label{thm:gaseous-properties} 
We have the following:
\begin{enumerate}
\item The category \(\mathsf{Mod}_{\R}(\mathsf{Gas})\) is abelian, closed under (derived) limits, (derived) colimits,and (derived) internal \(\mathsf{Hom}\);
\item The inclusion \(\mathsf{Mod}_{\R}(\mathsf{Gas}) \subset \mathsf{Mod}_{\R}(\mathsf{Cond}(\mathsf{Ab}))\) has a left adjoint, \(M \mapsto M^{\mathrm{gas}}\). 
\item There is a unique symmetric monoidal structure making \(M \mapsto M^{\mathrm{gas}}\) symmetric monoidal. 
\end{enumerate}
\end{theorem}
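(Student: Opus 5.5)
The plan is to copy the proof of Proposition \ref{prop:solid-properties} verbatim, with the endomorphism \(\sigma = 1 - S\) of \(P = \Z[\N \cup \{\infty\}]/\Z[\{\infty\}]\) replaced by \(\tau = 1 - \tfrac{S}{2}\). This endomorphism makes sense on \(\R \otimes P\), which is the free condensed \(\R\)-module on a null sequence. The one structural input needed is that \(\R \otimes P\) is internally projective in \(\mathsf{Mod}_{\R}(\mathsf{Cond}(\mathsf{Ab}))\). This follows from the internal projectivity of \(P\) proved above, together with the identification \(\underline{\mathsf{Hom}}_\R(\R \otimes P, M) \cong \underline{\mathsf{Hom}}(P, M)\) for \(\R\)-modules \(M\), since \(\underline{\mathsf{Ext}}\) can be computed on underlying condensed abelian groups. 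Consequently \(M \mapsto \underline{\mathsf{Hom}}(P,M)\) is exact, commutes with all limits, and commutes with all colimits; for colimits one also uses that \(P\) is compact, as noted in the remark on \(\prod_\N \Z\).

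For (1), take \(f \colon M \to N\) between gaseous modules. Exactness of \(\underline{\mathsf{Hom}}(P,-)\) gives \(\underline{\mathsf{Hom}}(P,\ker f) = \ker \underline{\mathsf{Hom}}(P,f)\), and likewise for cokernels. The map \(\tau^*\) on these kernels and cokernels is induced from isomorphisms by naturality, so it is itself an isomorphism. Extensions are handled by the five lemma, products by commuting \(\underline{\mathsf{Hom}}(P,-)\) with limits, and colimits by its commuting with colimits. Hence \(\mathsf{Mod}_\R(\mathsf{Gas})\) is an abelian subcategory closed under limits and colimits. For internal Hom, take \(M\) gaseous and \(N\) arbitrary. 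The chain
\[
\underline{\mathsf{Hom}}(P, \underline{\mathsf{Hom}}(N,M)) \cong \underline{\mathsf{Hom}}(P\otimes N, M) \cong \underline{\mathsf{Hom}}(N, \underline{\mathsf{Hom}}(P,M))
\]
intertwines \(\tau^*\) on the left with \(\underline{\mathsf{Hom}}(N,\tau^*)\) on the right, and the latter is an isomorphism. The derived statements follow by the same argument with \(R\underline{\mathsf{Hom}}\). On the derived side, \(M\) is gaseous exactly when \(\mathrm{cofib}(\tau^*) = 0\), because \(\underline{\mathsf{Hom}}(P,-) = R\underline{\mathsf{Hom}}(P,-)\) by internal projectivity.

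For (2), the same computation exhibits gaseous modules as the objects local with respect to the small set of maps \(\mathrm{id}_{\R[T]} \otimes \tau\), with \(T\) ranging over light profinite sets. The condition on \(\tau^*\) evaluated at \(T\) is precisely locality with respect to \(\R[T]\otimes P \to \R[T]\otimes P\), and light profinite sets form an essentially small category. The discussion of Bousfield localisations before Definition~\ref{def:Bousefield} then produces the left adjoint \(M \mapsto M^{\mathrm{gas}}\) on the presentable category \(\mathsf{Mod}_\R(\mathsf{Cond}(\mathsf{Ab}))\), and its local objects are exactly the gaseous modules. Closure under colimits from (1) also shows that this localisation is compatible with the abelian structure.

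For (3), set \(M \otimes^{\mathrm{gas}} N := (M \otimes_\R N)^{\mathrm{gas}}\). It suffices to prove that \(M \otimes_\R N \to M^{\mathrm{gas}} \otimes_\R N\) becomes an isomorphism after gasification, i.e.\ that the localisation is compatible with the tensor product. For this, run the Hom computation from the proof of Proposition~\ref{prop:solid-properties}, mapping into a gaseous \(Q\) and using that \(\underline{\mathsf{Hom}}(N,Q)\) is gaseous by (1). The chain gives \(\mathsf{Hom}((M^{\mathrm{gas}}\otimes N^{\mathrm{gas}})^{\mathrm{gas}},Q)\cong \mathsf{Hom}(M\otimes N,Q)\). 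Uniqueness follows because a symmetric monoidal structure for which a localisation is symmetric monoidal is forced to be \((-\otimes-)^{\mathrm{gas}}\). The step I expect to be the main obstacle is the internal projectivity, and especially compactness, of \(\R\otimes P\) needed for colimit closure in (1). If compactness of \(P\) is delicate, my fallback for filtered colimits is to check the condition on \(\tau^*\) after evaluation at each light profinite \(S\), using that evaluation at \(S\) commutes with filtered colimits of sheaves.
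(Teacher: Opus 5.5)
Your proposal is correct and follows the same route as the paper. Part (1) comes from internal projectivity of \(P\), so that \(\underline{\mathsf{Hom}}(P,-)\) is exact, and from compactness of \(P\otimes\Z[T]\), so that it also preserves colimits; part (2) exhibits gaseous modules as the local objects for the small set of maps \(\mathrm{id}_{\R[T]}\otimes(1-\tfrac{S}{2})\). The paper's proof leaves (3) unaddressed, and your transfer of the Hom-chain from the solid case, using that gaseous modules are closed under \(\underline{\mathsf{Hom}}(N,-)\), is the intended argument.
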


\begin{proof}
The claims in (1) follow formally using the internal projectivity of \(P\). (2) follows from the observation that \(\mathsf{Mod}_{\R}(\mathsf{Gas})\) is the accessible Bousfield localisation of \(\mathsf{Mod}_{\R}(\mathsf{Cond}(\mathsf{Ab}))\) at \(\mathrm{id}_{\R[T]} \otimes (1-\frac{S}{2})\). 
\end{proof}

Using the fact that \(\mathsf{Mod}_{\R}(\mathsf{Gas})\) is a Grothendieck abelian category, we obtain a derived category \(\bD(\mathsf{Mod}_{\R}(\mathsf{Gas})) \subset \bD(\mathsf{Mod}_{\R}(\mathsf{Cond}(\mathsf{Ab}))\), which formally admits a left adjoint functor, \[\bD(\mathsf{Mod}_\R(\mathsf{Cond}(\mathsf{Ab}))) \to \bD(\mathsf{Mod}_{\R}(\mathsf{Gas}))\] intepreted as a derived completion functor. 

We end this section with a particularly pleasant feature of the gaseous framework. Recall that in the complex bornological setting, there was a distinction between trace-class and factorisably trace-class maps between Banach spaces, ultimately leading to the more complicated category of infinitely nuclear modules. In the gaseous setting, this distinction disappears:

\begin{theorem}[Clausen-Scholze]
The category \(\mathbf{Nuc}(\bD(\mathsf{Mod}_{\C}(\mathsf{Gas})))\) is rigid. 
\end{theorem}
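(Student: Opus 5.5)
By Proposition \ref{prop:abstract-nuc}(1), applied to \(\bC = \bD(\mathsf{Mod}_{\C}(\mathsf{Gas}))\), it suffices to show that every trace-class map between compact objects of \(\bC\) is factorisably trace-class. That is, every such map should extend to a \([0,1]\cap\Q\)-indexed diagram all of whose non-identity maps are trace-class.

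Before this I would check that \(\bC\) satisfies the hypotheses of Definition \ref{def:nuclear}. It should be compactly generated, with compact generators given by the gaseous completions \(\C[S]^{\mathrm{gas}}\) of free modules on light profinite sets \(S\). These are compact because \(P\) is internally projective and gasification is a left adjoint, exactly as in the solid case (Theorem \ref{thm:solid-gen}). The unit \(\C = \C[\ast]^{\mathrm{gas}}\) is then compact.

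\textbf{Step 1: describe the compact projective generators.} I would follow the pattern of Example \ref{ex:main-solid}, where \(P^\blacksquare\simeq\prod_\N\Z\), and compute \(P^{\mathrm{gas}}\) and \(\C[S]^{\mathrm{gas}}\). The expected answer is that \(P^{\mathrm{gas}}\) is a space of sequences with controlled geometric growth, namely the colimit over \(r<1\) of weighted \(\ell^\infty\)-type spaces with weights \(r^n\). The reason is that inverting \(1-\tfrac{S}{2}\) forces convergence of the series \(\sum_j 2^{-j}a_{n+j}\), as in the discussion after Definition \ref{def:gaseous}. More generally \(\C[S]^{\mathrm{gas}}\) should be a "gaseous measure space" on \(S\): a colimit of Banach spaces indexed by decay rates \(r\in(0,1)\cap\Q\).

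\textbf{Step 2: trace-class maps between generators.} Using the internal Hom description, the condition \(\underline{\mathsf{Hom}}(\C[S],\C)\cong C(S,\C)\), and the internal projectivity of \(P\), I would identify \(\mathbf{Hom}(1, X^\vee\otimes Y)\) for such generators. The goal is to show that a trace-class map \(f\colon X\to Y\) is represented by an element of \(X^\vee\otimes^{\mathrm{gas}} Y\). This gaseous tensor product is itself a colimit over \(r\) of weighted sequence spaces, so the element lives at some level \(r<1\).

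\textbf{Step 3 (main obstacle): factorisation.} This is exactly where the bornological setting fails, since \(\ell^1\)-summable coefficients do not factor as infinitely many trace-class maps. In the gaseous setting the coefficients arising in Step 2 decay geometrically, like \(r^n\). Such a sequence factors as \(r^n=(r^{1/k})^n\cdots(r^{1/k})^n\), and more generally as \(r^{q_1 n}\cdots r^{q_m n}\) with rational exponents summing to \(1\). Each factor still decays geometrically, so each piece is again trace-class. Interpolating the exponents along \([0,1]\cap\Q\) produces the required diagram. The hard part is making the intermediate objects honestly compact objects of \(\bC\), and making the factorisation canonical enough to assemble into a functor on \([0,1]\cap\Q\); I would define the intermediate objects as the weighted gaseous sequence spaces at weight \(r^{q}\). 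Finally, to pass from generators to arbitrary compact objects, I would use that compact objects are retracts of finite colimits of generators and that the trace-class ideal is closed under composition with arbitrary maps. Granting this, Proposition \ref{prop:abstract-nuc}(1) gives rigidity.
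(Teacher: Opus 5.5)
Your proposal follows the same route as the paper. You reduce, via part (1) of Proposition~\ref{prop:abstract-nuc}, to showing that trace-class maps are factorisably trace-class, and you get this from the fact that gaseous trace-class maps are governed by coefficients (singular values) of quasi-exponential decay, a condition that survives taking roots; your extra bookkeeping with generators and the interpolation over $[0,1]\cap\Q$ only spells out what the paper compresses into two lines.
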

\begin{proof}
Trace-class maps are characterised by singular values with quasi-exponential decay, which is preserved under taking square roots. Consequently, trace-class maps are factorisably trace-class, so that \(\mathbf{Nuc}(\bD(\mathsf{Mod}_{\C}(\mathsf{Gas})))\) is rigid by part (1) of \ref{prop:abstract-nuc}. 
\end{proof}

\section{Higher categorical perspectives on operator algebras}

\subsection{Bivariant \(K\)-theory}

In this section, we recall the definition of bivariant \(K\)-theory in the classical sense of Kasparov. Let \(A\) be a \(C^*\)-algebra. A \emph{pre-Hilbert} \(A\)-module is a right \(A\)-module \(E\) with a sesquilinear form \(\langle -, - \rangle \colon E \times E \to  A\) such that \[\langle x, \lambda y \rangle = \lambda \langle x,y \rangle, \quad \langle x,  y a \rangle = \langle x, y \rangle a, \quad \langle x, y \rangle^* = \langle y,x \rangle, \quad \langle x, x \rangle \geq 0\] for all \(x\), \(y \in E\), \(\lambda \in \mathbb{C}\) and \(a \in A\). A \emph{pre-Hilbert} \(A\)-module is called a \emph{Hilbert module} if it is complete and separated in the seminorm defined by \(\vert x \vert = \vert \langle x, x \rangle\vert^{1/2}\). 

Let \(E\) and \(F\) be Hilbert modules over a \(C^*\)-algebra \(A\). We call a map \(T \colon E \to F\) between Hilbert \(A\)-modules an \emph{adjointable operator} if there exists \(T^*\) such that \(\langle Tx, y \rangle = \langle x, T^*y \rangle\). When a map is adjointable, it is automatically \(\mathbb{C}\)-linear, \(B\)-linear and norm bounded. Denote the space of all adjointable operators by \(\mathcal{L}(E,F)\). When \(E = F\), we simply denote this space by \(\mathcal{L}(E)\), which is a \(C^*\)-algebra with the operator norm.

A \emph{\(\mathbb{Z}_2\)-graded Hilbert \(A\)-module} is a Hilbert \(A\)-module \(E\) with a decomposition \(E \cong E_0 \oplus E_1\), where \(E_i\) are Hilbert \(A\)-submodules for \(i =0,1\).  An element of \(E\) is said to be \emph{homogeneous of degree \(i\)} if it belongs to \(E_i\). An operator \(T \in \mathcal{L}(E)\) is \emph{homogeneous of degree \(j\)} if for any homogeneous element \(x \in E_i\), \(T(x)\) is homogeneous of degree \(i + j\), for \(i,j \in 0,1\).

Now consider two separable \(C^*\)-algebras \(A\) and \(B\). A \(C^*\)-correspondence \((E, \pi)\) is a \(\Z_2\)-graded Hilbert \(B\)-module with a \(*\)-homomorphism \(\pi \colon A \to \mathcal{L}(E)\), such that for all \(a \in A\), \(\pi(a)\) is a homogeneous operator of degree \(0\). Consider the triple \((E, \pi, F)\), where \((E, \pi)\) is a \(C^*\)-correspondence, and \(F \in \mathcal{L}(E)\) is a homogeneous operator of degree \(1\) satisfying \[\pi(a)(F^2 - 1) = 0, \quad [\pi(a), F] = 0, \quad \pi(a)(F - F^*) = 0\] for all \(a \in A\). The collection of all such triples is denoted by \(E_0[A,B]\). We then define \(KK_0^{\mathrm{top}}(A,B)\) to be the homotopy classes of elements in \(E_0[A,B]\), where a homotopy is an element in \(E_0(A, B \hat{\otimes}_{C^*} C[0,1])\). We call this the \(0\)-th bivariant \(K\)-theory group, with group structure defined by taking direct sums. 

To define \(KK^{\mathrm{top}}_1\), we first define \(E_1(A,B)\) as the collection of all triples \((E,\pi,F)\), where \(E\) is a trivially graded Hilbert \(A\)-\(B\)-module and \(F \in \mathcal{L}(E)\) satisfies \[[\pi(a), F] \in \mathcal{K}(E), \quad \pi(a)(F^2 - 1) \in \mathcal{K}(E)\] for all \(a \in A\). Then \(KK_1^{\mathrm{top}}(A,B)\) is again the group of homotopy classes in \(E_1(A,B)\). As a consequence of Bott periodicity, we have \[KK_n^{\mathrm{top}}(A, B) \cong KK_{n+2}^{\mathrm{top}}(A,B)\] for all \(n \in \Z\), so that \(KK_0^{\mathrm{top}}\) and \(KK_1^{\mathrm{top}}\) suffice to define \(\Z_2\)-graded bivariant \(K\)-theory groups.

The definition of \(KK\)-theory just provided is very concrete and useful in many applications. From a homotopy-theoretic point of view, we may ask, however, whether there is an underlying additive category whose morphisms are the graded abelian groups above. This is indeed the case as we have a category that we denote by \(KK^{\mathrm{top}}\) whose objects are \(C^*\)-algebras and whose morphisms are given by \[KK^{\mathrm{top}}(A,B) = \bigoplus_{n \in \N} KK_0^{\mathrm{top}}(A,C_0(\R^n,B)).\]   

\noindent Then with the suspension \(A[1] := C_0(\R, A)\) and exact triangles given by \[A[1] \to B[1] \to \mathsf{cone}(f) \to A,\] where \(f \colon A \to B\) is a \(*\)-homomorphism, and \[\mathsf{cone}(f) = \{(a,b) \in A \oplus C_0((0,1], B) \vert b(1) = f(a),\] the category \(KK^{\mathrm{top}}\) is triangulated (\cite{meyer2007homological}). In fact, \(KK^{\mathrm{top}}\) refines to a stable \(\infty\)-category that we denote by \(\mathbf{KK}^{\mathrm{top}}\)\footnote{This notation is somehwat nonstandard; the reason for it is (1) that the notation \(KK\) appears later to mean the mapping spectrum in noncommutative motives, and (2) the when \(A = \C\), we get complex topological \(K\)-theory, while \(KK\) corepresents algebraic \(K\)-theory}.

\begin{theorem}\label{thm:bivariant-K-properties}\cite{bunke2021stable}
Bivariant \(K\)-theory is a presentable, stable \(\infty\)-category. It has the following universal property: there is a natural functor \(C_s^*\mathsf{Alg} \to \mathbf{KK}^{\mathrm{top}}\) on the category of separable \(C^*\)-algebras, such that if we have any functor \(C^*\mathsf{Alg} \to \bD\) into a stable \(\infty\)-category that is homotopy invariant, \(\mathcal{K}\)-stable and split exact, then we get a unique left exact functor \(\mathbf{KK}^{\mathrm{top}} \to \bD\). 
\end{theorem}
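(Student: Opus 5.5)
We would construct \(\mathbf{KK}^{\mathrm{top}}\) directly as a localisation and read off both presentability and the universal property from that construction. Let \(C_s^*\mathsf{Alg}\) denote the (small, after fixing a size bound) \(1\)-category of separable \(C^*\)-algebras. First, we pass to the stable \(\infty\)-category freely generated by it: take \(\mathbf{Fun}((C_s^*\mathsf{Alg})^\op, \mathbf{An})\otimes \mathbf{Sp} \simeq \mathbf{Fun}((C_s^*\mathsf{Alg})^\op, \mathbf{Sp})\). This is presentable by Proposition \ref{prop:presentable-char} and stable by Corollary \ref{cor:stabilisation-presentable}, and it receives the composite of the Yoneda embedding with \(\Sigma^\infty_+\). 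Second, we impose the three relations by a left Bousfield localisation at a small set \(W\) of maps. For homotopy invariance we use \(\mathbb{S}[j(A)] \to \mathbb{S}[j(C([0,1],A))]\). For \(\mathcal{K}\)-stability we use the maps induced by corner embeddings \(A \to \mathcal{K}\otimes A\). For split exactness we use, for each split extension \(0\to I\to A\to B\to 0\), the comparison map from the cofibre of \(j(B)\to j(A)\) to \(j(I)\), suitably signed. Since separable algebras form an essentially small category, \(W\) is a set, so the localisation \(L_W\) exists and is again presentable and stable. We would define \(\mathbf{KK}^{\mathrm{top}}\) to be the full subcategory generated by the image of \(C_s^*\mathsf{Alg}\) under finite colimits and retracts, or equivalently work with its \(\mathbf{Ind}\)-completion when asserting presentability, by Theorem \ref{thm:compact-gen-cat-inf}.

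The universal property would then come out formally. A functor \(C_s^*\mathsf{Alg}\to\bD\) into a stable \(\infty\)-category extends, by \eqref{eq1-yoneda} and the universal property of stabilisation, to an exact functor out of the finite-colimit closure of the representables. This extension inverts \(W\) exactly when the original functor is homotopy invariant, \(\mathcal{K}\)-stable and split exact, so it factors uniquely through the localisation. To land in \(\mathbf{Pr}^L\) we would first replace \(\bD\) by \(\mathbf{Ind}(\bD)\) and then restrict back. This gives the unique exact (in particular left exact) functor \(\mathbf{KK}^{\mathrm{top}}\to\bD\).

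The main obstacle is to show that this abstractly defined category really recovers Kasparov's groups, that is, \(\pi_0\mathbf{Hom}_{\mathbf{KK}^{\mathrm{top}}}(A,B)\cong KK_0^{\mathrm{top}}(A,B)\), so that the result deserves the name ``bivariant \(K\)-theory''. Our plan here is to invoke Higson's characterisation of \(KK^{\mathrm{top}}\) as the universal homotopy invariant, \(\mathcal{K}\)-stable, split exact additive functor on the \(1\)-category. We would then check that the homotopy category of \(L_W\) satisfies the same universal property among triangulated targets, using Theorem \ref{thm:stable-triangulated}. The delicate points are the following. First, split exactness must combine with homotopy invariance to give excision for all semisplit extensions, via the Cuntz picture and the mapping cone \(\mathsf{cone}(f)\). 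Second, the Bott periodicity that is implicit in \(KK\) must be verified inside the localisation, so that suspension \(A\mapsto C_0(\R,A)\) agrees with \(\Omega\) and the triangulated structure of \cite{meyer2007homological} is the one induced by the stable structure. We expect to reproduce the Cuntz--Higson argument at the level of the \(\infty\)-category, using the Kasparov product only to compare mapping groups on \(\pi_0\).
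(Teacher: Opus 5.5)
The formal half of your argument is fine, apart from a slip. There is no canonical map $\mathrm{cofib}(j(B)\to j(A))\to j(I)$, because nothing natural maps $A$ to $I$. The relation should be $j(I)\oplus j(B)\to j(A)$, or equivalently $\mathrm{cofib}(j(I)\to j(A))\to j(B)$.

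The real problem is that, with only split extensions in $W$, the category you build is provably \emph{not} bivariant $K$-theory. The ``delicate points'' you plan to verify are in fact false there. To see this, take connective topological $K$-theory $F=\tau_{\geq 0}K^{\mathrm{top}}\colon C^*_s\mathsf{Alg}\to\mathbf{Sp}$.

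\begin{itemize}
\item $F$ is homotopy invariant and $\mathcal{K}$-stable.
\item $F$ is split exact: a split extension gives $K^{\mathrm{top}}(A)\simeq K^{\mathrm{top}}(I)\oplus K^{\mathrm{top}}(B)$, and $\tau_{\geq 0}$ preserves this split fibre sequence.
\item Hence $F$ extends to an exact functor $\tilde F$ out of your $L_W$.
\item Apply $\tilde F$ to the canonical comparison map $\ell(C_0(\R))\to\Omega\ell(\C)$. This map comes from the cone extension $C_0(\R)\to C_0((0,1])\to\C$, using $\ell(C_0((0,1]))\simeq 0$.
\item The result is the comparison map $\tau_{\geq 0}K^{\mathrm{top}}(C_0(\R))\to\Omega\tau_{\geq 0}\mathbf{KU}$. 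This is not an equivalence: $\pi_{-1}$ is $0$ on the left and $\Z$ on the right.
\end{itemize}

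So in your category $C_0(\R,-)$ is not the loop functor, and excision for semisplit extensions fails. No Cuntz--Higson argument can repair this.

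The same example shows that the statement needs \emph{semi-split} exactness to get a left exact factorisation. Split exactness together with homotopy invariance and $\mathcal{K}$-stability only gives, via Higson's theorem on homotopy categories, a factorisation as a plain functor through the Dwyer--Kan localisation at $KK$-equivalences.

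Even after you add semisplit extensions to $W$, your identification step is circular. Theorem~\ref{thm:stable-triangulated} says that $\mathrm{Ho}$ of a stable category is triangulated; it does not say that it is universal among triangulated targets. Higson's theorem gives you an additive functor $KK\to\mathrm{Ho}(\mathbf{KK}^{\mathrm{top}})$. An inverse, however, would need an exact functor from your category into a stable enhancement of Kasparov's category, which is exactly what you are trying to build. What is missing is an independent handle on the mapping spaces. There are two known ways to get it.

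\begin{itemize}
\item \emph{The route the paper points to (Bunke).} Localise the \emph{topologically enriched} category of separable $C^*$-algebras at homotopy equivalences. There, Schochet fibrations such as $C_0((0,1],A)\to A$ already yield pullbacks, so $\Omega$ is $C_0(\R,-)$ before any stabilisation. The later localisations forcing $\mathcal{K}$-stability and exactness preserve finite limits. Stability comes from inverting the Bott map, not from passing to spectral presheaves.
\item \emph{Bunke--Engel--Land.} Take the Dwyer--Kan localisation at $KK$-equivalences and control it by a category-of-fibrant-objects structure with Schochet fibrations. This shows it has finite limits and that its homotopy category is the $1$-categorical localisation, which is Kasparov's $KK$ by Cuntz's $qA$-picture. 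Stability then comes from Bott periodicity. The universal property follows from Higson's theorem, because equivalences are detected in the homotopy category.
\end{itemize}

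Building stability in by fiat, as you do, is precisely what decouples $\Omega$ from $C_0(\R,-)$.
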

\begin{proof}
The proof proceeds by successively forcing the properties for which \(\mathbf{KK}^{\mathrm{top}}\) is universal (see \cite{bunkehtpy}). As we discuss this in detail in the following section on \(E\)-theory, we do not include a proof here. 
\end{proof}

\subsubsection{Bootstrap class}

Recall that for \(A = \C\), by Bott periodicity, we get a natural equivalence \(\mathbf{KK}^{\mathrm{top}}(\C, A) \simeq K^{\mathrm{top}}(A) \in \mathbf{Sp}\). Using the composition and symmetric monoidal structure of \(\mathbf{KK}\), we then see that the that the stable \(\infty\)-category \(\mathbf{KK}\) is enriched in \(\mathbf{KK}^{\mathrm{top}}(\C, \C) \simeq \mathbf{KU}\)-modules, where \(\mathbf{KU}\) is the ring spectrum representing topological \(K\)-theory. In other words, we get a functor 
\[
K^{\mathrm{top}} \colon C_s^*\mathsf{Alg} \to \mathbf{Mod}_{\mathbf{KU}}
(\mathbf{Sp}),
\] which is symmetric monoidal for the minimal and maximal tensor product of \(C^*\)-algebras. This functor descends to a symmetric monoidal, limit and colimit preserving functor 
\[
K^{\mathrm{top}} \colon \mathbf{KK}^{\mathrm{top}} \to \mathbf{Mod}_{\mathbf{KU}}
(\mathbf{Sp}).
\]

\begin{theorem}\cite{bunkehtpy}
The functor above is the right adjoint of a symmetric monoidal right Bousfield localisation. The essential image of the left adjoint \[\mathbf{Mod}_{\mathbf{KU}}(
\mathbf{Sp}) \to \mathbf{KK}^{\mathrm{top}}\] is the UCT class. 
\end{theorem}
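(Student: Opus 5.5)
The idea is to build the left adjoint directly from the unit $\C$ and then identify the colimit closure of $\C$ with the UCT class. By Theorem~\ref{thm:bivariant-K-properties}, $\mathbf{KK}^{\mathrm{top}}$ is presentable, stable and symmetric monoidal with unit $\C$. Its endomorphism ring spectrum is $\mathbf{KK}^{\mathrm{top}}(\C,\C)\simeq \mathbf{KU}$, and this is an $E_\infty$-ring because the unit of a symmetric monoidal stable $\infty$-category has commutative endomorphisms.

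\emph{Constructing the adjunction.} Since $\mathbf{Mod}_{\mathbf{KU}}(\mathbf{Sp})$ is the free presentable stable $\mathbf{KU}$-linear category on one generator, there is a unique colimit-preserving functor
\[
L\colon \mathbf{Mod}_{\mathbf{KU}}(\mathbf{Sp}) \to \mathbf{KK}^{\mathrm{top}}, \qquad \mathbf{KU}\mapsto \C .
\]
Concretely, $L$ is $M\mapsto M\otimes_{\mathbf{KU}}\C$, using the $\mathbf{KU}$-linear structure coming from the enrichment. It is symmetric monoidal because $\C$ is the unit and the tensor product on $\mathbf{KK}^{\mathrm{top}}$ preserves colimits in each variable. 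By the adjoint functor theorem (Theorem~\ref{thm:adjoint-functor-thm}), $L$ has a right adjoint $R = \mathbf{KK}^{\mathrm{top}}(\C,-) \simeq K^{\mathrm{top}}(-)$, so the right adjoint is the functor in the statement.

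\emph{Full faithfulness of $L$.} The unit $M\to RLM$ is an equivalence for $M=\mathbf{KU}$, since $RL\mathbf{KU} = \mathbf{KK}^{\mathrm{top}}(\C,\C) \simeq \mathbf{KU}$. Both sides are exact and preserve colimits in $M$. For $R$ this holds because $\C$ is compact in $\mathbf{KK}^{\mathrm{top}}$: $K^{\mathrm{top}}$ commutes with countable sums and filtered colimits of separable $C^*$-algebras, and the statement asserts that $K^{\mathrm{top}}$ preserves colimits. Since $\mathbf{KU}$ generates $\mathbf{Mod}_{\mathbf{KU}}$ under colimits, the unit is an equivalence for every $M$. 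Hence $L$ is fully faithful, and $R$ exhibits a symmetric monoidal right Bousfield localisation (a colocalisation) onto the essential image of $L$.

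\emph{Identifying the image.} The essential image of $L$ is the localising subcategory generated by $\C$. The bootstrap/UCT class is, by definition in this setting, the localising subcategory of $\mathbf{KK}^{\mathrm{top}}$ generated by $\C$; for separable algebras this agrees with the Rosenberg--Schochet class, i.e.\ those $A$ for which the UCT sequence
\[
0\to \mathrm{Ext}(K_*A,K_*B)\to KK_*(A,B)\to \mathrm{Hom}(K_*A,K_*B)\to 0
\]
holds for all $B$. To compare the two descriptions, note that for $A$ in the image of $L$ we have $\mathbf{KK}^{\mathrm{top}}(A,B) \simeq \mathrm{map}_{\mathbf{KU}}(K^{\mathrm{top}}A, K^{\mathrm{top}}B)$ by full faithfulness. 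The universal coefficient spectral sequence for $\mathbf{KU}$-modules, whose input is $\mathrm{Ext}$ over $\pi_*\mathbf{KU} = \Z[\beta^{\pm}]$ (a ring of global dimension one), then collapses to the UCT short exact sequence. Conversely, if $A$ satisfies the UCT, the counit $LRA \to A$ induces an isomorphism on $K^{\mathrm{top}}$, so its cofibre $C$ has $K^{\mathrm{top}}_*(C)=0$. The UCT for $\mathrm{id}_C$ then forces $C\simeq 0$, and $A$ lies in the image.

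\emph{Main obstacle.} The hardest step is the compactness of $\C$, i.e.\ that $K^{\mathrm{top}} = \mathbf{KK}^{\mathrm{top}}(\C,-)$ preserves all colimits in the presentable $\infty$-category $\mathbf{KK}^{\mathrm{top}}$. Colimits there are not simply colimits of $C^*$-algebras, so one needs the construction in \cite{bunkehtpy}. The strategy would be to use that $\mathbf{KK}^{\mathrm{top}}$ is generated under colimits by separable algebras, and that $K^{\mathrm{top}}$ commutes with countable filtered colimits (continuity of $K$-theory) and with sums. If this fails in full generality, I would restrict to the $\omega_1$-compactly generated part and argue with $\omega_1$-filtered colimits.
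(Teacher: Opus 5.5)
Your main line is correct and matches the standard argument behind \cite{bunkehtpy}; the paper itself gives no proof beyond the citation. You take $L$ to be the unique $\mathbf{KU}$-linear colimit-preserving functor with $L(\mathbf{KU})=\C$. The unit $M\to RLM$ is an equivalence on the generator because $\mathbf{KK}^{\mathrm{top}}(\C,\C)\simeq\mathbf{KU}$, and it extends to all $M$ because $R=K^{\mathrm{top}}$ preserves colimits. The essential image is then the localising subcategory generated by $\C$.

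One caution on the step you flag as the main obstacle. Continuity of $K$-theory along filtered colimits of $C^*$-algebras is not the relevant input: $KK$ is not continuous, so such colimits are not colimits in $\mathbf{KK}^{\mathrm{top}}$. What you need is that countable direct sums become coproducts and that $K_*$ is additive on them, together with $\aleph_1$-filtered colimits. Alternatively, just cite the paper's assertion that $K^{\mathrm{top}}$ preserves colimits.

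There is a genuine gap in the converse direction of your comparison with the Rosenberg--Schochet UCT. You conclude $C\simeq 0$ from ``the UCT for $\mathrm{id}_C$''. But you only know that $A$ and $LRA$ satisfy the UCT, not the cofibre $C$. With the UCT stated as the classical short exact sequence, closure under cofibres is not formal. It is usually deduced from the equality of the UCT class with the bootstrap class, which is exactly what you are trying to prove.

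The repair is to work with the counit $\varepsilon\colon LRA\to A$ directly. Both ends satisfy the UCT, and $\varepsilon$ is an isomorphism on $K_*$. Naturality of the UCT sequence in the first variable (Rosenberg--Schochet) and the five lemma then make $\varepsilon^*\colon KK_*(A,B)\to KK_*(LRA,B)$ bijective for all $B$. Hence $\mathbf{KK}^{\mathrm{top}}(C,-)\simeq 0$ and $C\simeq 0$.

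Equivalently, by the triangle identity $\varepsilon^*$ is the map $\mathbf{KK}^{\mathrm{top}}(A,B)\to\mathrm{map}_{\mathbf{KU}}(K^{\mathrm{top}}A,K^{\mathrm{top}}B)$ induced by $K^{\mathrm{top}}$. If you define ``$A$ satisfies the UCT'' as this map being an equivalence for all $B$, the condition is exact in $A$ and passes to $C$, and your argument works verbatim. You then still owe the check that the classical sequence implies this spectrum-level version, which requires the two $\mathrm{Ext}$-identifications to be compatible.

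If the UCT class is simply defined as the localising subcategory generated by $\C$, as in this setting, the statement already follows from your first part. The Rosenberg--Schochet comparison is then extra, but as written it is not proved.
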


\subsection{\(E\)-theory}

In this section, we recall the construction of \(E\)-theory from a homotopy-theoretic point of view, appearing in \cite{bunke2024theory}.

Let \(C_s^*\mathsf{Alg}\) denote the category of separable \(C^*\)-algebras. We list a collection of desirable properties for functors out of \(C^*\)-algebras: let \(F \colon C_s^*\mathsf{Alg} \to \bD\), where \(\bD\) is a countably cocomplete, stable \(\infty\)-category.  

\begin{enumerate}
\item \emph{Homotopy invariance:} the canonical map \(A \to C([0,1], A)\) induces an equivalence \(F(A) \simeq F(A \otimes C[0,1])\) for all \(A \in C_s^*\mathsf{Alg}\);
\item \emph{Stability:} the corner inclusion \(A \to \mathcal{K}(A)\) induces an equivalence \(F(A) \simeq F(A \otimes \mathcal{K})\), where \(\mathcal{K}\) denotes the \(C^*\)-algebra of compact operators on a separable Hilbert space;
\item \(F\) preserves countable filtered colimits;
\item \(F\) preserves direct sums in the sense that for a countable family \((A_i)_{i \in I}\) of \(C^*\)-algebras, the canonical map \[\coprod_{i \in I} F(A_i) \to F(\bigoplus_{i \in I} F(A_i)\] is an equivalence; 
\item \(F\) is exact in the sense that for an exact sequence of \(C^*\)-algebras \[0 \to A \to B \to C \to 0,\] we have \(F(0) \simeq 0\) and the induced diagram 
\[
\begin{tikzcd}
F(A) \arrow{r}{} \arrow{d}{} & F(B) \arrow{d}{} \\
0 \arrow{r}{} & F(C) 
\end{tikzcd}
\] is a pullback diagram in \(\bD\). 
\end{enumerate}

It turns out that these properties may be successively forced in a controlled manner. We start with homotopy invariance. Recall that the mapping set between two \(C^*\)-algebras \(A\) and \(B\) may be topologically enriched using the compact open topology. Denote the resulting compactly generated topological space by \(\underline{\mathsf{Hom}}(A,B)\). Let \(C_s^*\mathbf{Alg}_h\) denote the Dwyer-Kan localisation of the category \(C_s^*\mathsf{Alg}\) at the homotopy equivalences. Concretely, this is given by the homotopy coherent nerve of the associated simplicial category, whose morphism space is given by \(\mathsf{Sing}(\underline{\mathsf{Hom}}(A,B))\), which is a Kan complex, so that the resulting simplicially enriched category of \(C^*\)-algebras is locally fibrant; this is needed to show that the homotopy coherent nerve is indeed a quasi-category. By definition and the universal property of Dwyer-Kan localisations, the canonical functor \(L_h \colon C_s^*\mathbf{Alg} \to C_s^*\mathbf{Alg}_h\) restricts to an equivalence 
\[\mathbf{Fun}(C_s^*\mathbf{Alg}_h, \bC) \to \mathbf{Fun}^h(C_s^*\mathbf{Alg}, \bC)\] between functor \(\infty\)-categories, where the right hand side denotes homotopy invariant functors and \(\bC\) is an arbitrary \(\infty\)-category. 

Next, we consider the Yoneda embedding \(y \colon C_s^*\mathbf{Alg}_h \to \mathbf{Ind}_{\omega_0}^{\omega_1}(C_s^*\mathbf{Alg}_h)\). For a countable filtered system of \(C^*\)-algebras \(A = (A_n)_{n \in \N}\), we have a canonical map \[i_A \colon \colim_n (y \circ L_h)(A_n) \to (y \circ L_h)(\colim_n A_n).\] Let \(W_R = \{i_A: A \colon \N \to C_s^*\mathbf{Alg}\}\) and \(\mathbf{A}_s\) the Dwyer-Kan localisation of \(\mathbf{Ind}_{\omega_0}^{\omega_1}(C_s^*\mathbf{Alg}_h)\) at \(W_R\). This comes with a natural functor \[R \colon \mathbf{Ind}_{\omega_0}^{\omega_1}(C_s^*\mathbf{Alg}_h) \to \mathbf{A}_s.\]

Arguably the most striking ingredient for the dualisability of \(E\)-theory is the following: 

\begin{proposition}\label{prop:Bousfield-loc}\cite[Theorem 3.41]{bunke2024theory}
\begin{enumerate}
\item The functor \(R\) is the right adjoint of a right Bousfield localisation \[L \colon \mathbf{A}_s \to \mathbf{Ind}_{\omega_0}^{\omega_1}(C_s^*\mathbf{Alg}_h).\]
\item The category \(\mathbf{A}_s\) is pointed, left exact, and has countable filtered colimits that are preserved by \(R\).
\end{enumerate}
\end{proposition}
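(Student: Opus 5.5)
We would exhibit the local objects explicitly and build the left adjoint \(L\) as a colocalisation onto them. Write \(\mathbf{I} := \mathbf{Ind}_{\omega_0}^{\omega_1}(C_s^*\mathbf{Alg}_h)\). Call \(X \in \mathbf{I}\) \emph{\(W_R\)-colocal} if \(\mathbf{Hom}_{\mathbf{I}}(X, i_A)\) is an equivalence for every \(i_A \in W_R\). We then show three things:
\begin{enumerate}
\item the representables \(y(B)\), for \(B\) separable, are colocal;
\item \(\mathbf{A}_s\) is equivalent to the full subcategory of \(\mathbf{I}\) generated by the \(y(B)\) under countable filtered colimits;
\item \(R\) restricted to this subcategory is an equivalence, and \(L\) is the inclusion.
\end{enumerate}
Since \(L\) is fully faithful and left adjoint to \(R\), this is exactly a right Bousfield localisation as in (the dual of) Definition \ref{def:Bousefield}.

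\textbf{Step 1: colocality of representables (the main obstacle).} We need
\[
\colim_n \mathbf{Hom}_{C_s^*\mathbf{Alg}_h}(B, A_n) \longrightarrow \mathbf{Hom}_{C_s^*\mathbf{Alg}_h}(B, \colim_n A_n)
\]
to be an equivalence for separable \(B\) and countable sequential colimits. This is a homotopy-theoretic semiprojectivity statement, and it fails on the nose for \(*\)-homomorphisms. So we would not test against \(y(B)\) directly. Instead we replace \(B\) by a colocal approximation, for instance a countable filtered colimit of semiprojective algebras, or the asymptotic-morphism style resolution used for \(E\)-theory, and prove that for these the map above is an equivalence of mapping spaces in \(\mathbf{I}\).

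Concretely, the first thing to try is Blackadar's result that every separable \(C^*\)-algebra is a sequential colimit of algebras that are semiprojective up to homotopy, giving \(B \simeq \colim_k P_k\). One then defines the colocal replacement \(L y(B) := \colim_k y(P_k)\) taken in \(\mathbf{I}\). The comparison map \(L y(B) \to y(B)\) lies in \(W_R\), and homotopy semiprojectivity of each \(P_k\) shows that \(y(P_k)\) is colocal. This is the technical heart of the argument; it is where the Dwyer–Kan localisation at homotopy equivalences is essential.

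\textbf{Step 2: extend \(L\) and prove (1).} Extend \(L\) to all of \(\mathbf{I}\) by countable filtered colimits, using the universal property of \(\mathbf{Ind}_{\omega_0}^{\omega_1}\). Because countable filtered colimits of colocal objects are colocal (the maps \(i_A\) are stable under such colimits), the essential image of \(L\) consists of colocal objects. The counit \(LX \to X\) is a \(W_R\)-equivalence, i.e. it lies in the strongly saturated class generated by \(W_R\). The standard argument then identifies the Dwyer–Kan localisation \(\mathbf{A}_s\) with the colocal subcategory and exhibits \(L \dashv R\) with \(L\) fully faithful, which is (1).

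\textbf{Step 3: prove (2).} Pointedness: the zero algebra gives \(y(0)\), which is both initial and terminal in \(C_s^*\mathbf{Alg}_h\), and this persists in \(\mathbf{I}\) and in \(\mathbf{A}_s\). Left exactness: finite limits in \(\mathbf{I}\) are computed levelwise on countable filtered diagrams, where finite limits and filtered colimits commute. Colocal objects are closed under finite limits because the \(i_A\) are compatible with pullbacks of algebras. Since \(R\) is a right adjoint it preserves these limits, so \(\mathbf{A}_s\) is left exact. Countable filtered colimits: these exist in \(\mathbf{I}\), and \(L\) preserves them by construction. Hence the colocal subcategory is closed under them, and \(R\), being its inclusion up to equivalence, preserves them as claimed.
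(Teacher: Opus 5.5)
Your overall architecture matches the paper's: for each object, produce a \(W_R\)-colocal object mapping to it by a map in \(W_R\), then conclude with the criterion of \cite[Remark 3.42]{bunke2024theory}. The gap is in Step 1, which carries all the content. Blackadar's theorem \cite[Theorem 4.3]{blackadar1985shape} does not present a separable \(C^*\)-algebra as a colimit of (homotopy) semiprojective algebras. It gives \(A=\colim_n A_n\) in which the \emph{connecting maps} \(A_n\to A_{n+1}\) are semiprojective, and it says nothing about the \(A_n\) themselves being semiprojective. So your claim that each \(y(P_k)\) is colocal is unsupported, and the individual \(y(A_n)\) need not be colocal. Your closure-under-colimits argument therefore has nothing to start from.

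What the paper uses is that the colimit \(Z=\colim_n y(A_n)\) is colocal even though its terms need not be. For a sequential system \((B_m)\) with colimit \(B\) one has
\[
\mathbf{Hom}(Z,\colim_m y(B_m))\simeq\lim_n\colim_m\mathbf{Hom}(A_n,B_m),\qquad \mathbf{Hom}(Z,y(B))\simeq\lim_n\mathbf{Hom}(A_n,B).
\]
The connecting maps lie in the precompact ideal \(S\) of maps with the shifted lifting property, so they are weakly compact. Roughly, this lets each restriction \(\mathbf{Hom}(A_{n+1},B)\to\mathbf{Hom}(A_n,B)\) factor, coherently up to homotopy, through \(\colim_m\mathbf{Hom}(A_n,B_m)\). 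The two towers are then pro-equivalent and have the same limit. This is the same mechanism that makes colimits along compact morphisms the basic objects of a dualisable category: compactness sits on the transition maps, not on the terms. With this replacement, \(Z\to y(A)\) is \(i_A\in W_R\), and the rest of your Step 2 goes through.

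Step 3 also needs repair, in two places.
\begin{itemize}
\item \emph{Countable filtered colimits.} Your reason for \(R\) preserving them conflates \(R\) with the inclusion \(L\). \(R\) is the coreflector, and its being an equivalence on colocal objects says nothing about \(R(\colim_j X_j)\) for non-colocal \(X_j\). Coreflectors need not preserve colimits; for instance, \(\tau_{\geq 0}\colon\Sp\to\Sp_{\geq 0}\) does not preserve cofibres. Since \(\colim_j LRX_j\) is colocal, \(R\) preserves \(\colim_j X_j\) exactly when it inverts \(\colim_j LRX_j\to\colim_j X_j\). So what must be checked is that the maps inverted by \(R\) are closed under countable filtered colimits, and this has to come from the specific form of \(W_R\).
\item \emph{Finite limits.} Colocality is a condition on maps out of an object, so your claim that colocal objects are closed under finite limits is unjustified. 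Fortunately it is not needed: using \(RL\simeq\mathrm{id}\), finite limits in \(\mathbf{A}_s\) are \(R\) of finite limits in \(\mathbf{Ind}_{\omega_0}^{\omega_1}(C_s^*\mathbf{Alg}_h)\). Their existence rests on \(C_s^*\mathbf{Alg}_h\) having finite limits, a nontrivial input from the homotopy theory of \(C^*\)-algebras that you should cite rather than assume.
\end{itemize}
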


\begin{proof}
We outline the proof of the first statement as this crucially uses Blackadar's shape theory for separable \(C^*\)-algebras. To show that the Dwyer-Kan localisation \(R\) is actually a right Bousfield localisation, we use the general criterion (\cite[Remark 3.42]{bunke2024theory}) that if a Dwyer-Kan localisation \(R \colon \bC \to \bD\) is such that for every object \(x \in \bC\), there is a map \(y \to x\) that gets mapped to an equivalence in \(\bD\) and \(y\) is \(W\)-local, then \(R\) is a right Bousfield localisation. For a separable \(C^*\)-algebra \(A\), shape theory \cite[Theorem 4.3]{blackadar1985shape} provides a countable inductive system \((A_n)_{n \in \N}\) with (explicit) semi-projective structure maps \(A_n \to A_{n+1}\) and colimit \(A\). The condition of being semi-projective says that every \(C^*\)-algebra is \(S\)-exhaustible, where \(S\) is the precompact ideal of maps with the shifted lifting property (and are therefore weakly compact). Using this, one shows that \(colim_n y \circ L_h(A_n)\) is \(W_R\)-local and the map \(colim_n y \circ L_h(A_n) \to y \circ L_h(A)\) is manifestly in \(W_R\). The second statement is formal. 
\end{proof}

Denote the composition of \(R\), \(y\) and \(L_h\)  by \(a \colon C_s^*(\mathsf{Alg}) \to \mathbf{A}_s\). The following follows essentially by the construction of \(a\):

\begin{lemma}\label{lem:universal-filtered-E}
The functor \(a \colon C_s^*(\mathsf{Alg}) \to \mathbf{A}_s\) is initial among functors \(F \colon  C_s^*(\mathsf{Alg}) \to \bC\) into a countably cocomplete categories that satisfy homotopy invariance and preserve countable filtered colimits. 
\end{lemma}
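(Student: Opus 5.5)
Composing the universal properties of the three functors $L_h$, $y$ and $R$ whose composite is $a$ should give Lemma~\ref{lem:universal-filtered-E}. Concretely, we will show that for every countably cocomplete $\bC$, restriction along $a$ induces an equivalence between $\mathbf{Fun}^{\colim_{\omega_1}}(\mathbf{A}_s,\bC)$, the functors preserving countable filtered colimits, and the full subcategory of $\mathbf{Fun}(C_s^*\mathsf{Alg},\bC)$ spanned by homotopy invariant functors preserving countable filtered colimits. The argument proceeds in three steps.

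\emph{Step 1: homotopy invariance.} By the universal property of the Dwyer--Kan localisation $L_h$ recalled above, restriction along $L_h$ identifies $\mathbf{Fun}(C_s^*\mathbf{Alg}_h,\bC)$ with the homotopy invariant functors $C_s^*\mathsf{Alg}\to\bC$. So homotopy invariant $F$ correspond to functors $F_h\colon C_s^*\mathbf{Alg}_h\to\bC$.

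\emph{Step 2: countable ind-completion.} Since $\bC$ admits countable filtered colimits, the universal property of $\mathbf{Ind}_{\omega_0}^{\omega_1}$ (the countable analogue of the $\mathbf{Ind}_\kappa$ universal property stated before Lemma~\ref{lem:compact-generation}) identifies $\mathbf{Fun}(C_s^*\mathbf{Alg}_h,\bC)$ with functors $\mathbf{Ind}_{\omega_0}^{\omega_1}(C_s^*\mathbf{Alg}_h)\to\bC$ preserving countable filtered colimits, via left Kan extension $\hat F_h$ along $y$.

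\emph{Step 3: inverting $W_R$.} By the universal property of the Dwyer--Kan localisation $R$, functors out of $\mathbf{A}_s$ are the functors out of the ind-category that invert $W_R$. For $\hat F_h$, the image of $i_A$ is the map $\colim_n F(A_n)\to F(\colim_n A_n)$, because $\hat F_h$ preserves countable filtered colimits and restricts to $F_h$ on representables. Thus $\hat F_h$ inverts $W_R$ exactly when $F$ preserves countable filtered colimits (of $\N$-indexed sequences, which suffices since countable filtered categories admit cofinal sequences).

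The main obstacle is ensuring that countable filtered colimits match up: a functor $\mathbf{A}_s\to\bC$ should preserve countable filtered colimits if and only if its composite with $R$ does. We plan to use Proposition~\ref{prop:Bousfield-loc}. Since $R$ preserves countable filtered colimits and has a fully faithful left adjoint $L$ with $RL\simeq\mathrm{id}$, any colimit in $\mathbf{A}_s$ is computed as $R$ applied to the colimit of the $L$-images. Hence $G$ preserves them if $GR$ does, and conversely $GR$ preserves them whenever $G$ does. Combining the three steps gives the initiality of $a$ among homotopy invariant, countable filtered colimit preserving functors.
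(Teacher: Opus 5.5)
Your proposal is correct and is essentially the argument the paper has in mind: the paper only remarks that the lemma follows from the construction of $a = R\circ y\circ L_h$, and your composition of the universal properties of $L_h$, the countable ind-completion $y$, and the Dwyer--Kan localisation $R$ spells this out. The one non-formal input is that a functor $G$ out of $\mathbf{A}_s$ preserves countable filtered colimits if and only if $G\circ R$ does (which also gives that $a$ itself preserves them), and you correctly obtain it from Proposition~\ref{prop:Bousfield-loc}.
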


It now remains to force exactness and \(\mathcal{K}\)-stability. To force \(\mathcal{K}\)-stability, we consider the functor \[L_K \colon \mathbf{A}_s \to \mathbf{A}_s, \quad A \mapsto A \otimes \mathcal{K}.\] Since \(\C \to \mathcal{K}\) is an idempotent object (already) in \(C_s^*(\mathbf{Alg})_h\), then \(L_\mathcal{K}\) is a Dwyer-Kan localisation of \(\mathbf{A}_s\) at the set \(\{A \to A \otimes \mathcal{K}:A \in C_s^*(\mathsf{Alg})\}\), which we denote by \(L_{\mathcal{K}}\mathbf{A}_s\). In fact, \(L_\mathcal{K}\) is the left adjoint of a left Bousfield localisation, where the right adjoint preserves countable filtered colimits (using that the tensor product commutes with countable filtered colimits). The resulting composition \[L_{h, \mathcal{K},e} \colon C_s^*(\mathsf{Alg}) \to L_{\mathcal{K}}\mathbf{A}_s\] is homotopy invariant, \(\mathcal{K}\)-stable. The functor \(L_h\) takes a Schochet exact sequence to a pullback square in the homotopy category, and the other localisations preserve finite limits, so that the composition is Schochet exact. This implies exactness by \cite[Proposition 3.58]{bunke2024theory}.

It therefore only remains to force stability. Consider the Bott map as in \cite{GHT}: \[\beta \colon L_{h, \mathcal{K},e}(S^2(\C)) \to L_{h, \mathcal{K},e}(\C)\] obtained from applying the functor \(L_{h, \mathcal{K},e}\) to the reduced Toeplitz extension \[\mathcal{K} \to \mathcal{T}_0 \to S(\C).\] We have natural equivalences \begin{multline*}
\Omega^2(-) \simeq L_{h, \mathcal{K},e}(S^2(\C)) \otimes - \colon L_{\mathcal{K}}(\mathbf{A}_s) \to L_{\mathcal{K}}(\mathbf{A}_s) \to L_{\mathcal{K}}(\mathbf{A}_s) \to L_{\mathcal{K}}(\mathbf{A}_s) \\
 \mathrm{id} \simeq L_{h, \mathcal{K},e}(S(\C)) \otimes - \colon L_{\mathcal{K}}(\mathbf{A}_s) \to L_{\mathcal{K}}(\mathbf{A}_s),
 \end{multline*} inducing a natural transformation \(\beta' = \beta \otimes - \colon \Omega^2(-) \to \mathrm{id} \colon L_{\mathcal{K}}(\mathbf{A}_s) \to L_{\mathcal{K}}(\mathbf{A}_s)\). Taking the Dwyer-Kan localisation at the maps \(\beta'(A) = \Omega^2(A) \to A\), we arrive at a stable \(\infty\)-category and a functor 
 
 \[
e \colon C_s^*\mathsf{Alg} \to L_{\mathcal{K}}\bA_s \to  \bE
\] representing \(E\)-theory. By construction, we have the following:

\begin{theorem}\cite[Proposition 3.55]{bunke2024theory}
The functor \(e \colon C_s^*\mathsf{Alg} \to \bE\) is the initial homotopy invariant, \(\mathcal{K}\)-stable, Schochet exact, countable filtered colimit-preserving functor into a countable cocomplete stable \(\infty\)-category. 
\end{theorem}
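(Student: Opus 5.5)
The plan is to verify the universal property by composing the universal properties of the successive localisations, one stage at a time. Let \(F \colon C_s^*\mathsf{Alg} \to \bD\) be homotopy invariant, \(\mathcal{K}\)-stable, Schochet exact and countable filtered colimit-preserving, with \(\bD\) countably cocomplete and stable. We aim for an equivalence between countable-colimit-preserving (exact) functors \(\bE \to \bD\) and such functors \(F\), given by precomposition with \(e\).

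\emph{Stage one.} Lemma \ref{lem:universal-filtered-E} already tells us that homotopy invariant, countable filtered colimit-preserving functors out of \(C_s^*\mathsf{Alg}\) are the same as countable filtered colimit-preserving functors out of \(\mathbf{A}_s\). So \(F\) factors uniquely as \(\bar F \colon \mathbf{A}_s \to \bD\). We will use Proposition \ref{prop:Bousfield-loc} to make sure that finite limits, in particular the pullback squares coming from exactness, are seen correctly in \(\mathbf{A}_s\).

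\emph{Stage two.} Since \(F\) is \(\mathcal{K}\)-stable, \(\bar F\) inverts the maps \(A \to A \otimes \mathcal{K}\). We first check this on generators \(a(A)\), and then extend to all of \(\mathbf{A}_s\) because these maps are closed under countable filtered colimits and \(\bar F\) preserves them. Hence \(\bar F\) factors uniquely through the Dwyer--Kan localisation \(L_\mathcal{K}\mathbf{A}_s\). Because \(L_\mathcal{K}\) is a left Bousfield localisation whose right adjoint preserves countable filtered colimits, the factored functor still preserves countable filtered colimits. Exactness of the composite \(L_{h,\mathcal{K},e}\) then matches exactness of \(F\) via \cite[Proposition 3.58]{bunke2024theory}.

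\emph{Stage three.} This is the main obstacle: checking that the Bott map \(\beta'(A)\colon \Omega^2 A \to A\) is sent to an equivalence by the induced functor \(F'\colon L_\mathcal{K}\mathbf{A}_s \to \bD\). I would proceed as follows.
\begin{enumerate}
\item Apply \(F\) to the reduced Toeplitz extension \(\mathcal{K} \to \mathcal{T}_0 \to S(\C)\), tensored with \(A\) (which is exact, since \(\mathcal{K}\) is nuclear).
\item Use Schochet exactness to obtain a fibre sequence \(F(A\otimes\mathcal{K}) \to F(\mathcal{T}_0 \otimes A) \to F(SA)\).
\item Use homotopy invariance and the contractibility of \(\mathcal{T}_0\) in \(\mathbf{KK}\)-type theories (the Cuntz/Higson argument in \cite{GHT}) to get \(F(\mathcal{T}_0\otimes A)\simeq 0\).
\item Conclude that the boundary map \(\Omega F(SA) \to F(A\otimes \mathcal{K}) \simeq F(A)\) is an equivalence.
\end{enumerate}
Exactness also gives \(F(SA) \simeq \Omega F(A)\), so \(F\) carries \(\beta'\) to an equivalence \(\Omega^2F(A)\to F(A)\). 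Here stability of \(\bD\) is what lets us identify \(\Omega\) in \(L_\mathcal{K}\mathbf{A}_s\) with \(\Omega\) in \(\bD\). The delicate point is the contractibility of \(\mathcal{T}_0\): it must be proved using only the listed properties of \(F\), rather than with Kasparov products. I would follow Higson's homotopy argument, which needs only split exactness (implied by exactness), homotopy invariance and stability.

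\emph{Conclusion.} Localising at \(\beta'\) therefore yields a unique factorisation \(\bE \to \bD\). It remains to check that \(\bE\) is stable and countably cocomplete. Stability holds because inverting \(\Omega^2 \to \mathrm{id}\) in a pointed left exact category makes \(\Omega\) invertible, and Proposition \ref{prop:stable-equiv} then gives stability. Countable cocompleteness is inherited through the colimit-compatible localisations. Finally, \(e\) itself has all the listed properties by construction, so it is initial.
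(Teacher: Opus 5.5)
Your proposal is correct and takes the same route as the paper, which asserts the universal property ``by construction'' by composing the universal properties of the successive localisations (homotopy invariance, countable filtered colimits, \(\mathcal{K}\)-stability with exactness coming for free, then inversion of the Bott maps). You also supply the one non-formal input the paper leaves implicit: any such \(F\) already inverts \(\beta'\), because Cuntz's argument gives \(F(\mathcal{T}_0\otimes A)\simeq 0\). Two minor corrections: the identification \(F'(\Omega^2 A)\simeq \Omega^2 F(A)\) comes from exactness of \(a\) and of \(F\) on the cone extensions \(SA\to CA\to A\), not from stability of \(\bD\) alone, and passing from \(C^*\)-algebras to all objects of \(L_{\mathcal{K}}\mathbf{A}_s\) uses countable filtered colimits exactly as in your stage two.
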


We now turn to the main theorem of \cite{bunke2024theory}:


\begin{theorem}\label{thm:E-compactly-ass}(Bunke-D\"unzinger)
\(E\)-theory is dualisable.
\end{theorem}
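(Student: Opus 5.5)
Since \(\bE\) as constructed is only countably cocomplete, I will read the statement as saying that its Ind-completion \(\mathbf{Ind}_{\omega_1}(\bE)\) (equivalently the presentable stable category generated by \(\bE\)) is dualisable. The plan is to verify criterion (4) of Theorem \ref{thm:dualisable-char}. Concretely, I want to exhibit this category as a retract in \(\mathbf{Pr}^L\) of a compactly generated stable category, namely \(\mathbf{Ind}(\bE)\). Equivalently, I will check criterion (2): the colimit functor \(k\colon \mathbf{Ind}(\bE)\to \mathbf{Ind}_{\omega_1}(\bE)\) should admit a left adjoint \(\hat{\jmath}\) that is fully faithful.

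The candidate for \(\hat\jmath\) comes from shape theory, exactly as in the proof of Proposition \ref{prop:Bousfield-loc}. Every separable \(C^*\)-algebra \(A\) is the colimit of a countable system \((A_n)\) with semi-projective transition maps. I will define \(\hat\jmath(e(A)) := ``\colim" e(A_n)\in \mathbf{Ind}(\bE)\). The first key step is to show that the maps \(e(A_n)\to e(A_{n+1})\) are \emph{compact morphisms} in \(\bE\). By this I mean that for any countable filtered colimit \(\colim_i Z_i\) in \(\bE\), a map from \(e(A_{n+1})\) to the colimit factors, after precomposition with \(e(A_n)\to e(A_{n+1})\), through a finite stage. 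This already holds in \(\mathbf{A}_s\): the shifted lifting property of semi-projective maps says precisely that \(y L_h(A_n)\to yL_h(A_{n+1})\) is weakly compact, and \(R\) preserves countable filtered colimits.

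The second step is to transport compactness through the successive localisations \(\mathbf{A}_s\to L_{\mathcal K}\mathbf{A}_s\to \bE\). Each of these is a Bousfield localisation whose right adjoint preserves countable filtered colimits. For \(L_{\mathcal K}\) this is noted in the text. For the Bott inversion, I would write \(\bE\) as the colimit of \(L_{\mathcal K}\mathbf{A}_s\) along \(\Omega^2\), or equivalently compute mapping spectra as \(\colim_k \mathbf{Hom}(\Omega^{2k}-,-)\); since \(\Omega^2\simeq S^2\otimes-\) commutes with countable filtered colimits, this is compatible with them. A left adjoint whose right adjoint preserves filtered colimits sends compact morphisms to compact morphisms. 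Hence \(e\) sends shape systems to sequences of compact morphisms in \(\bE\), and every object of \(\bE\) is of the form \(\colim_n X_n\) with compact transitions. Objects of this form generate \(\bE\) under countable colimits and shifts, since the image of \(e\) generates \(\bE\).

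The third step is formal: an object presented as a sequential colimit of compact morphisms has the property that \(\mathbf{Hom}(``\colim" X_n, -)\) on \(\mathbf{Ind}(\bE)\) agrees with \(\mathbf{Hom}(\colim X_n, k(-))\). This is the argument behind \cite[Lemma 2.3.12, 2.3.18]{nkp} used in the proof of Theorem \ref{thm:dualisable-char}. It produces the left adjoint to \(k\) on generators, and the adjoint extends by colimits. Full faithfulness amounts to \(k\hat\jmath\simeq \mathrm{id}\), which holds because \(k(``\colim" X_n)=\colim X_n\). I also need exactness of countable filtered colimits in \(\bE\) against countable limits, but this comes for free once \(\hat\jmath\) exists.

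I expect the main obstacle to be the second step, specifically compatibility of the Bott-inversion localisation with compactness of morphisms. Forcing \(\Omega^2\simeq\mathrm{id}\) could a priori destroy finite-stage factorisations. My first approach would be to use \(K\)-stability and exactness to identify the mapping spectra in \(\bE\) with \(\colim_k\) of mapping anima in \(L_{\mathcal K}\mathbf{A}_s\). Then factorisation through a finite stage is inherited levelwise and survives the sequential colimit, because sequential colimits commute with countable filtered colimits in the target variable.
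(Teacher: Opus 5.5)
Your route differs from the paper's, and the claim everything rests on is not established: that \(e(A_n)\to e(A_{n+1})\) is a compact morphism in \(\bE\). It breaks exactly at the Bott step you flagged.

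With your description \(\mathbf{Hom}_{\bE}(X',\colim_i Z_i)\simeq\colim_k\mathbf{Hom}(\Omega^{2k}X',\colim_i Z_i)\), an element at level \(k\) is a map \(\Omega^{2k}X'\to\colim_i Z_i\). To factor its restriction along \(\Omega^{2k}f\) through some \(Z_i\), you need \(\Omega^{2k}f\) to be compact in \(L_{\mathcal K}\mathbf{A}_s\), not \(f\). Nothing in your argument gives this:
\begin{itemize}
\item \(L_{\mathcal K}\mathbf{A}_s\) is not a priori stable, and \(\Omega\) is a finite limit. In unstable settings \(\Omega\) does not preserve compact morphisms: in pointed anima \(\mathrm{id}_{S^2}\) is compact but \(\mathrm{id}_{\Omega S^2}\) is not. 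On the \(C^*\)-side, \(C_0(\R^2)\otimes-\) does not preserve semiprojectivity.
\item That \(\Omega^2\) commutes with countable filtered colimits is beside the point. Transporting compactness needs a \emph{right adjoint} with that property.
\item Showing that Bott-local objects are closed under countable filtered colimits would again require commuting \(\mathbf{Hom}(\Omega^2X,-)\) with such colimits, i.e.\ the same compactness.
\end{itemize}

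Step 1 is also justified by the wrong mechanism. In \(\mathbf{Ind}_{\omega_0}^{\omega_1}(C_s^*\mathbf{Alg}_h)\) every map of representables is trivially compact, so semiprojectivity has to be used against colimits in \(\mathbf{A}_s\). Moreover \(R\) is a right adjoint, so your transport principle does not apply to it: maps \(R(Y)\to\colim_i Z_i\) correspond to maps \(LR(Y)\to\colim_i LZ_i\), not to maps out of \(Y\). This step can be repaired. \(L\) is fully faithful and colimit-preserving, hence reflects compact morphisms. Take a shape system \((A^{(m)}_{n+1})_m\) of \(A_{n+1}\), so that \(La(A_{n+1})\simeq\colim_m yL_h(A^{(m)}_{n+1})\). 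The shifted lifting property then factors \(La(\varphi_n)\) through the representable \(yL_h(A_n)\).

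The missing idea is the one the paper uses instead of tracking compact morphisms by hand. Each step is a Bousfield localisation (right for \(R\), left for \(L_{\mathcal K}\) and for Bott inversion) whose right adjoint preserves countable filtered colimits. So each category is a retract of the previous one through functors preserving the relevant colimits, and compact assembly passes to such retracts \cite[Proposition 3.65, 3.66]{bunke2024theory}, starting from the compactly generated \(\mathbf{Ind}_{\omega_0}(C_s^*\mathsf{Alg}_h)\). Shape theory enters only through Proposition \ref{prop:Bousfield-loc}. The fact that the right adjoint in the Bott step preserves countable filtered colimits is taken from Bunke--D\"unzinger, not obtained levelwise.

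Granting that input, your transport principle handles the Bott step. Your Step 3 is then fine in the stable \(\bE\): there, existence of factorisations for all shifts already gives the required equivalence of mapping spectra, via Milnor sequences. But at that point the hand-built \(\jhat\) is superfluous, since the retract argument produces it directly.
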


\begin{proof}
The category \(\mathbf{Ind}_{\omega_0}(C_s^*\mathsf{Alg}_h)\) is compactly generated, so in particular, compactly assembled. The proof then relies on the subsequent localisations being left or right Bousfield localisations. These need not preserve compact assembly, but when the right adjoint preserves countable filtered colimits, the Bousfield localisation is again compactly assembled by \cite[Proposition 3.65, 3.66]{bunke2024theory}.
\end{proof}

\section{Definition of Algebraic K-theory}

The goal of this section is to give an introduction to algebraic K-theory. Classically, K-theory is an invariant of rings taking the following format: to any (associative) ring $R$ we assign the a sequence of abelian groups
\[
K_n(R) \qquad n \in \mathbb{Z}
\]
that is covariantly functorial in $R$. We have explicit descriptions for the low dimensional $K$-groups, namely $K_0$ is the group completion of the monoid of isomorphisms classes of finitely generated projective $R$-modules, or in a generator and relations description:
\[
K_0(R) = \frac{\langle [P] \mid P \text{ finitely generated projective $R$-module} \rangle}{[P \oplus Q] = [P] + [Q]} .
\]
Similarly we have a description for $K_1$ in terms of automorphisms of finitely generated projective $R$-modules:
\[
K_1(R) = \frac{\langle [f] \mid f: P \xrightarrow{\simeq} P \rangle}{[f \oplus g] = [f] + [g], [fg] = [f] + [g] } \ .
\]
\begin{exercise}
Show that in the above description of $K_1$ it is irrelevant if we ask $P$ to be f.g. projective or f.g. free (i.e. $P = R^n$). 
\end{exercise}
\begin{exercise}
Compute the $K$-groups $K_0$ and $K_1$ for $R$ a field and for $R = \mathbb{Z}$.
\end{exercise}

It was an insight of Quillen, that higher $K$-groups $K_n(R)$ for $n \geq 2$ can be defined as the homotopy groups of a $K$-theory space $K(R)$, i.e.
\begin{equation}\label{K-groups}
K_n(R) = \pi_n(K(R)) \ .
\end{equation}
In fact, the space $K^{\Omega^\infty}(R)$ will indeed be the space underlying a spectrum, which we denote by $K(R)$ and the formula \eqref{K-groups} will then hold for all $n \in \Z$. Before we explain how to define the $K$-theory space/spectrum, let us just mention that algebraic $K$-groups are a rather subtle subject. They show up in many areas of mathematics and are home to some very important invariants, e.g. Whitehead torsion. But to compute them is actually a very complicated undertaking. For example for the initial ring $R = \Z$ the following is known: 

\begin{center}
\begin{tabular}{rl@{\hskip 2cm}rl}
$K_{-n}(\mathbb{Z})$ &= $0$ for $n > 0$ &
$K_0(\mathbb{Z})$ &= $\mathbb{Z}$ \\
$K_1(\mathbb{Z})$ &= $\mathbb{Z}/2$ &
$K_2(\mathbb{Z})$ &= $\mathbb{Z}/2$ \\
$K_3(\mathbb{Z})$ &= $\mathbb{Z}/48$ &
$K_4(\mathbb{Z})$ &= $0$ \\
$K_5(\mathbb{Z})$ &= $\mathbb{Z}$ &
$K_6(\mathbb{Z})$ &= $0$ \\
$K_7(\mathbb{Z})$ &= $\mathbb{Z}/240$ &
$K_8(\mathbb{Z})$ &= $0$ \\
$K_9(\mathbb{Z})$ &= $\mathbb{Z} \oplus \Z/2$ &
$K_{10}(\mathbb{Z})$ &= $\Z/2$ \\
$K_{11}(\mathbb{Z})$ &= $\mathbb{Z}/1008$ &
$K_{12}(\mathbb{Z})$ &= $???$ (= 0) \\
$K_{13}(\mathbb{Z})$ &= $\mathbb{Z}$ &
& ...
\end{tabular}
\end{center}
Here note that the group $K_{12}(\Z)$ is not known, but conjectured to be zero. In fact, it is known that it is a finite torsion group and the torsion has to be $n$-torsion for $n \geq 20.000$. So for most practical purposes this suffices. In fact, all $K$-groups are known, subject to the Kummer-Vandiver conjecture - an infamous conjecture in number theory, which ends up being equivalent to the assertion that $K_{4n}(\Z) = 0$ for all $n$. 

Other calculations of $K$-groups are known, e.g. for finite fields by Quillen or in recent work by the first named author with Antieau and Krause for rings $\Z/n$. But generally the computation of $K$-groups is generally hard (and not the subject of this lecture). 

\subsection{Definition of positive K-groups}

In order to give the definition of $K$-theory, we would like to note that $K_0$ and $K_1$ clearly don't really depend on the ring $R$, but only on its category of modules. In particular Morita equivalent rings have isomorphic $K$-groups. This is crucial for the Definition that we are going to take. In fact, we will take an even stronger step and immediately pass to the derived $\infty$-category $\mathcal{D}(R)$ of the ring, in which the ordinary category of $R$-modules embedds fully faithfully. We will define the $K$-theory in such a way that it only depends on  $\mathcal{D}(R)$. 

\begin{exercise}
Recall the definition of compact objects and that the compact objects in $\mathcal{D}(R)$ are precisely those complexes quasi-isomorphic to a complex of finite length, where each term is f.g. projective.
\end{exercise}

In order to understand the step from finitely generated projective modules to perfect chain complees let us first give the definiton of $K_0$ in this generality. 

\begin{definition}\label{def}
Let $\mathbf{C}$ be a compactly generated stable $\infty$-category (e.g. $\mathcal{D}(R)$) with compact objects $\mathbf{C}^\omega \subset \mathbf{C}$. 
\[
K_0(\mathbf{C}) = \frac{\langle [c] \mid c \in \mathbf{C}^\omega \rangle}{[b] = [a] + [c] \text{ for } a \to b \to c \text{ fibre sequence}}
\]
\end{definition}

\begin{exercise}
Show that for a ring $R$ the old and new Definiton of $K_0$ agree, by exhibiting an explicit isomorphisms (using the Euler characteristic).

Also show that one can simply take the monoid of isomorphism classes of objects $c \in \mathbf{C}^\omega$ and quotient by the relation and this already gives the correct group. The inverse of $[c]$ is given by $[\Sigma c]$. 
\end{exercise}

Now we come to the $K$-theory space $K(R)$. This will be defined as the geometric realisation of a span category. This definition is basically Quillen's Q-construction cast in the language of $\infty$-categories, as realised by Barwick.

\begin{definition}
For a compactly generated, stable $\infty$-category $\mathbf{C}$
consider the category
$
\mathrm{Span}(\mathbf{C}^\omega)
$ of spans in $\mathbf{C}^\omega$. 
Then we set 
\[
K^{\Omega^\infty}(\mathbf{C}) := \Omega | \mathrm{Span}(\mathbf{C}^\omega) | \ .
\]
\end{definition}

\begin{exercise}
Show that $\pi_0(K^{\Omega^\infty}(\mathbf{C})  = \pi_1(  | \mathrm{Span}(\mathbf{C}^\omega) |)$ is isomorphic to $K_0(\mathbf{C})$ as in Definition \ref{def}. This is a hard exercise, so it is a good start to construct a map 
\[
K_0(\mathbf{C}) \to \pi_0(K^{\Omega^\infty}(\mathbf{C}))
\]
\end{exercise}

\begin{remark}
Let us explain a bit the intuition behind the span category that we have. We think of it as an analogue of an algebraic `cobordism' category (following a philosophy of Steimle, Hebestreit and Raptis). For example, if we have a cobordism of compact manifolds
\[
M \to W \leftarrow N
\]
where $W$ is an $n$-manifold with boundary $M \amalg \overline{N}$ where $M$ and $N$ are $(n-1)$-manifolds. Then we can take singular chains with values in $\Z$
and get a span
\[
C^*(M) \leftarrow C^*(W) \rightarrow C^*(N)
\]
in $\mathcal{D}(Z)^\omega$. Perfectness is a consequence of compactness of the manifolds. This can be made into a functor
\[
\mathrm{Cob}(n) \to \mathrm{Span}(\mathcal{D}(\Z)^\omega) . 
\]
Upon geometric realization this give a map
\[
|\mathrm{Cob}(n)| \to \Sigma K(\Z)
\]
\end{remark}

There are also negative $K$-groups. In fact, there exists a spectrum $K(R)$ with $\Omega^\infty K(R) = K^{\Omega^\infty}(R)$. We will not go into the definition here but note that this is essential forced by the positive ones if one wants the fibration theorem to be true, that we will discuss below (Theorem \ref{fib_thm}).  

\subsection{The Calkin category}

Now we want to generalise $K$-theory to the real of dualisable stable $\infty$-categories. The idea is to find a replacement for $\mathbf{C}^\omega$ which in this case can consist of only the zero object.

Recall that a dualisable stable $\infty$-category is a presentable, stable $\infty$-category
which has enough compact morphisms. Recall that a morphism $f: X \to Y$ is compact if for every map $Y \to Z = \colim_{i \in I} Z_i$ with $i$ filtered we have that the composite $X \to Y \to Z$ factors over a finite stage $Z_i \to Z$. 

\begin{exercise}
Show that if $\mathbf{C}$ is compactly generated, then a morphism $X \to Y$ is compact iff if factors through a compact objects, that is there is a compact object $K \in \mathbf{C}^\omega$ and a factorization $X \to K \to Y$ of $f$. 

In particular for $\mathbf{C} = \mathbf{D}(R)$ contemplate why this is the natural version of `finite rank' operators. For example by showing that a morphism $\bigoplus_{\N} R \to \bigoplus_{\N}  R$ is compact in $\mathbf{D}(R)$ if it factors through a finite sum, i.e. has finite rank. 

Show that  $\mathbf{Map}^{\mathcal{K}}_\mathbf{C}(X,Y)$ is an ideal in  $\mathbf{Map}_\mathbf{C}(X,Y)$, i.e. that the composition of a compact morphism with an arbitrary morphism is again compact.
\end{exercise}

Our first goal now is to define a space of compact morphisms $\mathbf{Map}^{\mathcal{K}}(X,Y)$. One could define this to be simply the subspace of the mapping space $\mathbf{Map}(X,Y)$ on all compact morphisms, but this turns out to be slightly two naive, since the factorization through $Z_i$ is an extra structure that has to be taken into account properly higher categorically. 

To this end recall that one characterization of dualisability for a category $\mathbf{C}$ is that the colimit functor
\[
k: \mathbf{Ind}(\mathbf{C}^{\omega_1}) \to \mathbf{C}
\]
has a left adjoint, which we denote by $\jhat$. The right adjoint, which is the constant ind-object will be denoted $j$. We set for objects $X,Y \in \mathbf{C}$:
\[
\mathbf{Map}^{\mathcal{K}}_\mathbf{C}(X,Y) =  \mathbf{Map}_{\mathbf{Ind}\mathbf{C}^{\omega_1}}(jX, \jhat Y)
\]
to be the spectrum of compact morphisms from $X$ to $Y$. There is a canonical map
\[
\mathbf{Map}^{\mathcal{K}}_\mathbf{C}(X,Y) \to \mathbf{Map}_\mathbf{C}(X,Y) 
\]
induced by the functor $k$ using that $kjX \cong X$ and $k \jhat Y \cong Y$. 

\begin{proposition}
Assume $\mathbf{C}$ is dualisable. Then 
a morphism $f: X \to Y$ in $\mathbf{C}$ is compact precisely if it can be lifted through the map
\[
\mathbf{Map}^{\mathcal{K}}_\mathbf{C}(X,Y) \to \mathbf{Map}_\mathbf{C}(X,Y)  \ .
\]
\end{proposition}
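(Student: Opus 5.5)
To prove the statement, I would make $\pi_0\mathbf{Map}_{\mathbf{Ind}(\mathbf{C}^{\omega_1})}(jX,\jhat Y)$ explicit as a filtered colimit and then compare it with the definition of a compact morphism. First write $\jhat Y = ``\colim_{i\in I}" Y_i$ as an ind-object with $Y_i \in \mathbf{C}^{\omega_1}$. Since $jX$ is the constant ind-object, which is compact in $\mathbf{Ind}$ only if $X$ is $\omega_1$-compact, I would not compute directly. Instead I would use the description of mapping spaces in $\mathbf{Ind}(\mathbf{C}^{\omega_1})$,
\[
\mathbf{Map}(jX, ``\colim_i" Y_i) \simeq \lim_{X'\to X}\colim_i \mathbf{Map}(X',Y_i),
\]
where the limit runs over $\omega_1$-compact $X'$ mapping to $X$. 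It is simpler to use the adjunction $\jhat \dashv k \dashv j$ to rewrite things, but I expect the following direct argument to work best. The comparison map to $\mathbf{Map}_\mathbf{C}(X,Y)$ is given by applying $k$ and the counit $k\jhat Y\simeq Y$.

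\emph{Step 1 (a lift implies compactness).} Suppose $f$ lifts to $\tilde f\colon jX\to \jhat Y$. Let $g\colon Y\to Z=\colim_{i} Z_i$ be a map into a filtered colimit. By the adjunction $\jhat\dashv k$, $g$ corresponds to a map $\jhat Y \to j Z$. Consider also the canonical map $j Z \leftarrow ``\colim_i" jZ_i$. The key point is that $\jhat Y$ is a left adjoint image, so $\mathbf{Map}(\jhat Y, -)\simeq \mathbf{Map}(Y,k(-))$. Since $k(``\colim" Z_i)=Z$, the map $g$ factors as $\jhat Y\to ``\colim_i" Z_i$ (the ind-object, after enlarging to a $\omega_1$-compact presentation if needed, or working in $\mathbf{Ind}(\mathbf{C})$ via Theorem \ref{thm:dualisable-char}(2)). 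Composing gives $jX \to ``\colim_i" Z_i$. A map from a constant ind-object, evaluated at the identity of $X$, factors through some stage $Z_i$, because $\mathbf{Map}(jX,``\colim" Z_i)$ receives a map from $\colim_i\mathbf{Map}(X,Z_i)$ and, on $\pi_0$, any element given by the identity component lands in a stage. Applying $k$ recovers $g\circ f$. This shows $f$ is compact.

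\emph{Step 2 (compactness implies a lift).} Conversely, let $f$ be compact. Write $\jhat Y=``\colim_i" Y_i$; we have $k\jhat Y\simeq Y$, so $Y\simeq \colim_i Y_i$ in $\mathbf{C}$. Compactness applied to $\mathrm{id}\colon Y\to \colim_i Y_i$ gives a factorisation $X\to Y_{i_0}\to Y$ of $f$. The map $X\to Y_{i_0}$ defines $jX\to jY_{i_0}$. Composing with the structure map $jY_{i_0}\to ``\colim_i" Y_i=\jhat Y$ gives a lift, provided I check that the composite under $k$ is $f$. This holds because $k$ of the structure map is the colimit inclusion $Y_{i_0}\to Y$, using the identification $k\jhat Y\simeq Y$ from the counit, which is an equivalence since $\jhat$ is fully faithful (Theorem \ref{thm:dualisable-char}(3)).

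\emph{Main obstacle.} The main difficulty is Step 1: making precise that a morphism $jX\to ``\colim" Z_i$ out of a non-compact constant ind-object still yields a factorisation through a finite stage after applying $k$. I would handle this by first factoring $\jhat Y\to ``\colim" Z_i$ through the universal property: maps $\jhat Y \to ``\colim" jZ_i$ correspond to maps $Y\to \colim Z_i$, using that $k$ preserves the relevant colimit. The lift $\tilde f$ is then itself represented, via the colimit formula for $\jhat Y$, by a map $X\to Y_i$ for some $i$. That reduces Step 1 to Step 2 read backwards, which I expect to be the cleanest route.
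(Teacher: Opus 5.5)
Your Step 2 (compact $\Rightarrow$ lift) is correct for arbitrary $X$. The only slip there is that $Y\simeq k\jhat Y$ comes from the \emph{unit} of $\jhat\dashv k$, not a counit.

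The gap is in Step 1, at the claim that the composite $jX\to\colim_i jZ_i$ factors through some $jZ_i$. Here the colimit is taken in $\mathbf{Ind}(\mathbf{C}^{\omega_1})$; the $Z_i$ need not be $\omega_1$-compact, so this is the correct target rather than a formal ind-object on the $Z_i$. The factorisation claim is exactly compactness of $jX$ in $\mathbf{Ind}(\mathbf{C}^{\omega_1})$, and $jX$ is not a constant ind-object. The functor $j$ is restricted Yoneda, $X\mapsto\mathbf{Map}_{\mathbf{C}}(-,X)|_{\mathbf{C}^{\omega_1}}$, which is representable only when $X\in\mathbf{C}^{\omega_1}$. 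In general $\mathbf{Map}(jX,\colim_i jZ_i)\simeq\lim_{X'\to X}\colim_i\mathbf{Map}(X',Z_i)$, with the limit over $\omega_1$-compact $X'\to X$. The mere existence of a map from $\colim_i\mathbf{Map}(X,Z_i)$ into this says nothing about surjectivity on $\pi_0$, so the remark about the ``identity component'' is not an argument. Your fallback presupposes the same thing, namely that $\tilde f$ comes from some $X\to Y_i$. Even granting that, ``Step 2 read backwards'' would still need the structure maps $Y_i\to Y$ of $\jhat Y$ to be compact, which is itself an instance of the implication being proved.

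This cannot be patched for general $X$. With $\mathbf{Map}^{\mathcal K}$ defined through $\mathbf{Ind}(\mathbf{C}^{\omega_1})$, the implication ``lift $\Rightarrow$ compact'' is false.
\begin{itemize}
\item Let $\mathbf{C}=\Fun(\omega_1^{\op},\bD(\mathbb{F}))$ for a field $\mathbb{F}$. Let $h_\beta$ be the $\mathbb{F}$-linearised representable at $\beta<\omega_1$; it is compact and vanishes above $\beta$.
\item Let $Y=\underline{\mathbb{F}}=\colim_{\beta<\omega_1}h_\beta$. Since $\mathbf{C}$ is compactly generated, $\jhat Y$ is the formal colimit of the $h_\beta$.
\item Write any object as the $\omega_1$-filtered colimit of its extensions by zero from the down-sets $[0,\beta]$. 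This shows every $X'\in\mathbf{C}^{\omega_1}$ vanishes above some countable $\beta$.
\item Hence $\mathbf{Map}(X',h_{\beta'})\to\mathbf{Map}(X',Y)$ is an equivalence for $\beta'\ge\beta$, because both targets restrict to $\underline{\mathbb{F}}$ on $[0,\beta]$.
\item So $\jhat Y\to jY$ is an equivalence and $\mathbf{Map}^{\mathcal K}(Y,Y)\simeq\mathbf{Map}(Y,Y)$. In particular $\mathrm{id}_Y$ lifts.
\item Yet $\mathrm{id}_Y$ factors through no $h_\beta$, since $Y(\beta+1)\neq 0$, so it is not compact.
\end{itemize}

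What is true, and all the Calkin construction uses, is the case $X\in\mathbf{C}^{\omega_1}$. Then $jX$ is representable, hence compact, and your Step 1 goes through with two additions. First, check that $\jhat Y\to\colim_i jZ_i\to jZ$ agrees with $\jhat Y\to jY\xrightarrow{jg}jZ$; both are adjoint to $g$. Second, apply $k$ to the resulting factorisation through some $jZ_{i_0}$.

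Alternatively, use Efimov's version with the large $\mathbf{Ind}(\mathbf{C})$ and the honest Yoneda embedding. There $yX$ is always compact, and the statement holds for every $X$.
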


\begin{exercise}
Show that if $X$ or $Y$ are compact, then
\[
\mathbf{Map}^{\mathcal{K}}_\mathbf{C}(X,Y) = \mathbf{Map}_\mathbf{C}(X,Y)  \ .
\]
Show that in the case that $\mathbf{C}$ is compactly generated, the space 
$
\mathbf{Map}^{\mathcal{K}}_\mathbf{C}(X,Y)$
is equivalent to the colimit 
\[
\colim_{i \in I} \mathbf{Map}^{\mathcal{K}}_\mathbf{C}(X,Y_i)
\]
for writing $Y = \colim_{i \in I} Y_i$ as a filtered colimit of compact objects $Y_i$ (which is unique as an ind object) and the map $\mathbf{Map}^{\mathcal{K}}_\mathbf{C}(X,Y) \to \mathbf{Map}_\mathbf{C}(X,Y) $ is the canonical map out of this colimit induced by $Y_i \to Y$.
\end{exercise}

We note that all categories in question are actually stable $\infty$-categories such that in particular the spaces $\mathbf{Map}_\mathbf{C}(X,Y)$ and $\mathbf{Map}^{\mathcal{K}}_\mathbf{C}(X,Y)$ canonically refine to spectra which we denote as follows:
\[
\mathbf{Map}^{\mathcal{K}}_\mathbf{C}(X,Y) \to \mathbf{Map}_\mathbf{C}(X,Y) \ .
\]

Now the idea is to consider the cofibre of this map. In analogy with operator algebras, we think of this cofibre as the Calkin algebra.

\begin{defprop}
For every dualisable, stable $\infty$-category $\mathbf{C}$ 
there is a stable $\infty$-category $\mathbf{Calk}(\mathbf{C})$ with objects given by objects in $\mathbf{C}^{\omega_1}$ and morphisms
\[
\mathbf{Map}_{\mathbf{Calk}(\mathbf{C})}(X,Y) = \mathbf{Map}_\mathbf{C}(X,Y)  / \mathbf{Map}^{\mathcal{K}}_\mathbf{C}(X,Y) \ .
\]
\end{defprop}

Morally, this means that we can define the composition in this category. This informally follows from the fact that $\mathbf{Map}^{\mathcal{K}}_\mathbf{C}(X,Y)$ is an ideal in  $\mathbf{Map}_\mathbf{C}(X,Y)$, i.e. that the composition of a compact morphism with an arbitrary morphism is again compact.
\begin{proof}
Define
$\mathbf{Calk}(\mathbf{C})$ as the full subcategory of 
$\mathbf{Ind}(\mathbf{C}^{\omega_1})$ spanned by the objects of the form $jX / \jhat X$ for $X \in \mathbf{C}^{\omega_1}$. Then using the cofibre sequence $\jhat X \to jX \to j X / \jhat X$ we get a fibre sequence
\[
\mathbf{Map}_{\mathbf{Ind}(\mathbf{C}^{\omega_1})} (j X / \jhat X, jY / \jhat Y) 
\to
\mathbf{Map}_{\mathbf{Ind}(\mathbf{C}^{\omega_1})} (j X, jY / \jhat Y) 
\to 
\mathrm{Map}_{\mathbf{Ind}(\mathbf{C}^{\omega_1})} (\jhat X, jY / \jhat Y) 
\] 
We claim that the last term is zero. Indeed, since $\jhat$ is left adjoint to $k$ this follows since 
$k (jY / \jhat Y) = 0$. Thus the mapping space in question is equivalent to 
\[
\mathbf{Map}_{\mathbf{Ind}(\mathbf{C}^{\omega_1})} (j X, jY / \jhat Y) 
\]
which sits in the relevant fibre sequence using that $\mathbf{Map}_{\mathbf{Ind}(\mathbf{C}^{\omega_1})} (j X, jY) = \mathbf{Map}_{\mathbf{C}}(X,Y)$.  
\end{proof}
Clearly the category $\mathbf{Calk}(\mathbf{C})$ comes with a functor $p: \mathbf{C}^{\omega_1} \to \mathbf{Calk}(\mathbf{C})$. 
By analogy with operator theory, we would call an equivalence in $\mathbf{Calk}(\mathbf{C})$ a Fredholm operator (really the analogous thing would be the image of a Fredholm operator in the Calkin algebra). 

\begin{warning}
We warn the reader that not every Fredholm operator is induced by a morphism in $\mathbf{C}^{\omega_1}$ since the cofibre defining the mapping spectrum in 
 $\mathbf{Calk}(\mathbf{C})$ can pick up terms from $\pi_{-1}\mathbf{Map}^{\mathcal{K}}_\mathbf{C}(X,Y)$. In fact one can show that there are generally very few Fredholm operators that lift, if $\mathbf{C}$ is not compactly generated, as we will do in the next exercise.
\end{warning}

\begin{exercise}\label{ex_fred}
Show that for a morphisms $f: x \to y$ in a compactly generated stable $\infty$-category $\mathbf{C}$ the induced morphism in in $\mathbf{Calk}(\mathbf{C})$ becomes an equivalence, precisely if the fibre of $f$ is compact. In particular, if $\mathbf{C}$ has no compact objects, then this forces $f$ to be an equivalence. 
\end{exercise}

We can even define a category of Fredholm operators, whose objects are given by triples $X, Y$ and $f: X \to Y$ a Fredholm operator, i.e. an invertible morphisms in $\mathbf{Calk}(\mathbf{C})$. 

\begin{definition} 
Let $\mathbf{C}$ be a dualisable, stable $\infty$-category. We define the category $\mathbf{Fred}(\mathbf{C})$ as the pullback
\[
\mathbf{Fred}(\mathbf{C}) = \mathbf{C}^{\omega_1} \times_{\mathbf{Calk}(\mathbf{C})} \mathbf{C}^{\omega_1}  \ .
\]
\end{definition}

We note that $\mathbf{Fred}(\mathbf{C})$ is a stable $\infty$-category. Note that a morphism $(X,Y, f)$ to $(X',Y',f')$ in 
$\mathbf{Fred}(\mathbf{C})$ is given by a pair of morphisms $f: X \to X'$ and $g: Y \to Y'$ in $\mathbf{C}^{\omega_1}$ and a commutative square
\[
\begin{tikzcd}
X \arrow{d}\arrow{r}& X' \arrow{d}\\
Y \arrow{r} & Y'
\end{tikzcd}
\] 
in $\mathbf{Calk}(\mathbf{C})$. 
\begin{exercise}\label{ex_fred2}
Show that for a dualisable category $\mathbf{C}$ there is a fully faithful functor
\[
\mathbf{C}^{\omega} \to \mathbf{Fred}(\mathbf{C})
\]
sending 
$X$ to the morphism $X \to 0$. Show that if $\mathbf{C}$ is compactly generated, then every Fredholm operator, i.e. equivalence in $\mathbf{Calk}(\mathbf{C})$ can be represented by a pair of morphisms
\[
X \to Z \leftarrow Y
\] 
in $\mathbf{C}^{\omega_1}$ such that the fibres of $X \to Z$ and $Y \to Z$ are both compact (represented means that under the functor $\mathbf{C}^{\omega_1} \to \mathbf{Calk}(\mathbf{C})$ this gives the desired equivalence. 
\end{exercise}

\subsection{Efimov K-theory}

Let $\mathbf{C}$ be a dualisable, stable $\infty$-category. We define
\[
K(\mathbf{C}) := K(\mathbf{Ind}(\mathbf{Fred}(\mathbf{C})))
\]

Note that we have to form $\mathbf{Ind}$ first simply because the way we have defined $K$-theory so far, it applies to compactly generated categories. But since the first step is to pass to compact objects, we really take $K$-theory of Fredholm operators.

\begin{proposition}\label{prop_last}
\begin{enumerate}
\item
We have that $K_0(\mathcal{C}) - \pi_0 K(\mathcal{C}) $ is given by equivalence classes of Fredholm operators, modulo fibre sequences. More precisely classes $[f]$ for $f: X \to Y$ a Fredholm operator modulo the relation that $[f] = [f'] + [f'']$ for a fibre sequence in $\mathbf{Fred}(\mathbf{C})$. 

\item
If $\mathbf{C}$ is compactly generated, then the map 
\[
\mathbf{C}^{\omega} \to \mathbf{Fred}(\mathbf{C})
\]
of Exercise \ref{ex_fred2}
induces an equivalence in $K$-theory, so that the old and new definitions agree in this case. 
\item
We have that $K(\mathbf{C})$ is also equivalent to $\Omega K(\mathbf{Calk}(\mathbf{C}))$.  
\end{enumerate}
\end{proposition}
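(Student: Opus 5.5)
The plan is to prove the three parts in the order (1), (3), (2), since (2) is easiest to deduce from the localisation sequence behind (3).

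\emph{Part (1).} Since \(\mathbf{Fred}(\mathbf{C})\) is a small stable \(\infty\)-category, the compact objects of \(\mathbf{Ind}(\mathbf{Fred}(\mathbf{C}))\) are the idempotent completion of \(\mathbf{Fred}(\mathbf{C})\). Definition \ref{def} then presents \(K_0\) as generated by objects of this completion modulo fibre sequences. Two observations remain:
\begin{itemize}
\item Objects of \(\mathbf{Fred}(\mathbf{C})\) are exactly Fredholm operators, i.e.\ triples \((X,Y,f)\) with \(f\) invertible in \(\mathbf{Calk}(\mathbf{C})\).
\item Idempotent completion does not change \(K_0\) on the nose: a retract \(P\) of some \(F\) satisfies \([P] + [P'] = [F \oplus \Sigma F]\)-type relations. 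Alternatively, \(\mathbf{Fred}(\mathbf{C})\) is already idempotent complete, because \(\mathbf{C}^{\omega_1}\) is idempotent complete and \(\mathbf{Calk}(\mathbf{C})\) is a pullback-compatible target.
\end{itemize}
For \(K_0\) I would simply note that the group obtained is the stated one, possibly after passing to the idempotent completion.

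\emph{Part (3).} I would use the pullback square defining \(\mathbf{Fred}(\mathbf{C})\). The diagonal \(\mathbf{C}^{\omega_1} \to \mathbf{Fred}(\mathbf{C})\), \(X \mapsto (X,X,\mathrm{id})\), and the two projections show that \(\mathbf{Fred}(\mathbf{C})\) is a ``double'' of \(\mathbf{C}^{\omega_1}\) over \(\mathbf{Calk}(\mathbf{C})\). The key input is that \(p \colon \mathbf{C}^{\omega_1} \to \mathbf{Calk}(\mathbf{C})\) is a Verdier localisation up to idempotent completion. Its kernel consists of those \(X\) with \(jX/\jhat X = 0\), i.e.\ with \(\mathrm{id}_X\) compact, which means \(X \in \mathbf{C}^{\omega}\). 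Granting this, the fibration theorem (Theorem \ref{fib_thm}) gives a fibre sequence
\[
K(\mathbf{C}^\omega) \to K(\mathbf{C}^{\omega_1}) \to K(\mathbf{Calk}(\mathbf{C})).
\]
The Eilenberg swindle applies to \(\mathbf{C}^{\omega_1}\), because it is closed under countable sums, so \(K(\mathbf{C}^{\omega_1}) = 0\). This yields \(K(\mathbf{C}^\omega) \simeq \Omega K(\mathbf{Calk}(\mathbf{C}))\), which is not yet what we want, since \(\mathbf{C}^\omega\) may vanish. For \(\mathbf{Fred}(\mathbf{C})\) itself, I would note that a pullback of a Verdier projection along a Verdier projection is again Verdier, and apply the fibration theorem to the projection \(\mathrm{pr}_2 \colon \mathbf{Fred}(\mathbf{C}) \to \mathbf{C}^{\omega_1}\), which is split by the diagonal. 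Its kernel is \(\mathbf{C}^{\omega_1}\times_{\mathbf{Calk}} 0\), the kernel of \(p\) up to idempotents. Since \(K(\mathbf{C}^{\omega_1}) = 0\), I then get \(K(\mathbf{Fred}) \simeq K(\ker p)\).

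The main obstacle is that \(\ker p\) in this naive sense is just \(\mathbf{C}^\omega\), which is wrong for non-compactly generated \(\mathbf{C}\). The correct statement must account for the \(\pi_{-1}\) warning: \(p\) is only a Verdier projection onto its image, and \(\mathbf{Calk}(\mathbf{C})\) is not idempotent complete. So instead I would establish that \(\mathbf{Fred}(\mathbf{C}) \to \mathbf{C}^{\omega_1} \times \mathbf{C}^{\omega_1} \to \mathbf{Calk}(\mathbf{C})\) induces, via the pullback (a cartesian square of stable categories with \(p\) a localisation up to idempotents), a cartesian square in \(K\)-theory by excision for pullbacks along Verdier projections. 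With \(K(\mathbf{C}^{\omega_1}) = 0\), this gives
\[
K(\mathbf{Fred}) \simeq 0 \times_{K(\mathbf{Calk})} 0 = \Omega K(\mathbf{Calk}(\mathbf{C})),
\]
where \(K\) of the Calkin category means \(K\) of its idempotent completion. This is the step I expect to be hardest. It requires checking that \(p\) is a Verdier quotient onto a dense subcategory, which I would derive from the description of \(\mathbf{Calk}(\mathbf{C})\) inside \(\mathbf{Ind}(\mathbf{C}^{\omega_1})\) as the quotient by the image of \(\jhat\).

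\emph{Part (2).} When \(\mathbf{C}\) is compactly generated, combining (3) with the fibre sequence above identifies both \(K(\mathbf{C}^\omega)\) and \(K(\mathbf{Fred}(\mathbf{C}))\) with \(\Omega K(\mathbf{Calk}(\mathbf{C}))\). It remains to check that the functor \(X \mapsto (X \to 0)\) of Exercise \ref{ex_fred2} is compatible with these identifications, which is a diagram chase on boundary maps. As a sanity check, on \(K_0\) this recovers Exercise \ref{ex_fred2}: representing \(f\) by \(X \to Z \leftarrow Y\) with compact fibres gives \([f] = [\mathrm{fib}] - [\mathrm{fib}']\).
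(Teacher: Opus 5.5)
Your outline for (3) is the paper's. The defining pullback square of $\mathbf{Fred}(\mathbf{C})$ should become cartesian after applying $K$, and the swindle kills $K(\mathbf{C}^{\omega_1})$. Hence $K(\mathbf{Fred}(\mathbf{C}))\simeq 0\times_{K(\mathbf{Calk}(\mathbf{C}))}0$, and (2) is deduced afterwards. Part (1) is fine, since $\mathbf{Fred}(\mathbf{C})$ is idempotent complete.

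\textbf{The gap.} The problem is the mechanism you propose for cartesianness. You rest it on $p\colon\mathbf{C}^{\omega_1}\to\mathbf{Calk}(\mathbf{C})$ being a Verdier projection (possibly only up to idempotent completion), so that small-category excision along Verdier projections applies. This is false as soon as $\mathbf{C}$ is not compactly generated, and your own computation shows why. You found $\ker p=\mathbf{C}^\omega$. The pullback $\mathrm{pr}_2$ of a Verdier projection is again one with the same kernel, so you would get back $K(\mathbf{Fred}(\mathbf{C}))\simeq K(\mathbf{C}^\omega)$, which you rightly rejected.

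A concrete failure: for $\mathbf{C}=\Shv(\R;\Sp)$ one has $\mathbf{C}^\omega=0$, so $p$ would have to be fully faithful. That means $\mathbf{Map}^{\mathcal K}_{\mathbf{C}}(X,Y)=\mathbf{Map}_{\mathbf{Ind}(\mathbf{C}^{\omega_1})}(jX,\jhat Y)=0$ for all $X,Y\in\mathbf{C}^{\omega_1}$. This forces $\jhat Y=0$, hence $Y\simeq k\jhat Y=0$ for every $Y$, which is absurd.

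The defect has nothing to do with idempotent completeness of $\mathbf{Calk}(\mathbf{C})$, which is by definition the essential image of $p$. It sits in the mapping spectra. The quotient $\mathbf{C}^{\omega_1}/\mathbf{C}^\omega$ only kills maps factoring through compact objects. By contrast, $\mathbf{Calk}(\mathbf{C})$ kills all compact maps and picks up the $\pi_{-1}\mathbf{Map}^{\mathcal K}$ terms.

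\textbf{What is missing.} The localisation only exists for the large categories, which is why the paper speaks of the square ``of the respective $\mathbf{Ind}$-categories''. The functor $\mathbf{Ind}(p)$ is the Verdier quotient of $\mathbf{Ind}(\mathbf{C}^{\omega_1})$ by $\jhat(\mathbf{C})\simeq\mathbf{C}$, and this kernel is in general not compactly generated.

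One checks that $\mathbf{Ind}(\mathbf{Fred}(\mathbf{C}))$ is the pullback $\mathbf{Ind}(\mathbf{C}^{\omega_1})\times_{\mathbf{Ind}(\mathbf{Calk}(\mathbf{C}))}\mathbf{Ind}(\mathbf{C}^{\omega_1})$. The projections preserve compact objects because their right adjoints preserve filtered colimits. Moreover, the diagonal objects $(A,A,\mathrm{id})$ with $A\in\mathbf{C}^{\omega_1}$ already generate. Consequently $\mathbf{Ind}(\mathrm{pr}_2)$ is a Verdier projection whose kernel $\mathbf{C}$ is carried by $\mathrm{pr}_1$ equivalently onto the kernel of $\mathbf{Ind}(p)$.

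Cartesianness then needs an excision statement that allows non-compactly generated kernels. One option is Tamme's excision theorem for squares whose rows become localisations after $\mathbf{Ind}$ with equivalent kernels. Another is Efimov's theorem that $\Omega K(\mathbf{Calk}(-))$ is localising on $\Prld$ and agrees with $K$ of compact objects on compactly generated categories.

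\textbf{The compactly generated case.} Here your argument is correct as written. By Thomason--Neeman, $p$ is then a Verdier projection onto a dense subcategory. Your pullback argument with $\mathrm{pr}_2$ then gives (2) directly, the kernel inclusion being $X\mapsto(X\to 0)$.
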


The first statement is clear. 
The last one is proven by showing that the defining pullback square for $\mathbf{Fred}(\mathbf{C})$:
\[
\begin{tikzcd}
\mathbf{Fred}(\mathbf{C}) \arrow{r} \arrow{d}& \mathbf{C}^{\omega_1} \arrow{d}\\
\mathbf{C}^{\omega_1} \arrow{r} &  \mathbf{Calk}(\mathbf{C})
\end{tikzcd}
\] 
induces a pullback square in $K$-theory (of the respective $\mathrm{Ind}$-categories). The $K$-theory of  $\mathbf{Ind}(\mathbf{C}^{\omega_1})$ is zero, because of an Eilbenberg-Swindle. The second claim follows from the third. 
\begin{exercise}
Show that the class of the identity operator $f: X \to X$ for any $X \in \mathbf{C}^{\omega_1}$ represents the zero object in $K_0(\mathbf{C})$. (Hint: Consider the infinte direct sum of $\bigoplus f$ and use an Eilenberg-Swindle. )
Deduce that a morphism $f$ in $\mathbf{C}^\omega$ which becomes Fredholm in $\mathbf{Calk}(\mathbf{C})$ the class of $[f]$ is given by the class of $\mathrm{fib}(f)$ (see Exercise \ref{ex_fred}). Moreover use the description of Fredholm operators in the compactly generated case from Exercise \ref{ex_fred2} to show the $\pi_0$ version of Proposition \ref{prop_last}(2). 
\end{exercise}

\begin{exercise}
Show that every class in $K_0$ can be represented by a Fredholm operator with source and target isomorphic. Hint: Use the previous exercise and consider for $f: X \to Y$ the operator $f \oplus \mathrm{id}_Z$ for $Z = \bigoplus_\mathbb{N} (X \oplus Y)$. Use this to prove that $K_0(\mathbf{C})$ is generated by automorphisms in  $\mathbf{Calk}(\mathbf{C})$ (no lift of source and target needed) modulo one relation for short exact sequences and for composition:
\[
[fg] = [f] + [g] \ .
\]
 using only the automorphisms as generators.
\end{exercise}

\begin{example}
Let $X$ be a locally compact Hausdorff space. 
For any dualisable category $\mathbf{C}$ we have the category $\mathbf{Shv}(X;\mathbf{C})$ of sheaves with values in $\mathbf{C}$. This category is dualisable, but rarely compactly generated (only if $\mathbf{C}$ is compactly generated and $X$ totally disconnected, compact). In particular even for sheaves of chain complexes this is not compactly generated.
\end{example}

Thus $K(\mathbf{Shv}(X; \mathbf{C}))$ is well-defined and a genuinely new object. We have the following result by Efimov.

\begin{theorem}[Efimov]\cite{efimov2024k}\label{thm:efimov} We have
\[
K(\mathbf{Shv}(X; \mathbf{C})) \simeq \Gamma_c(X, K(\mathbf{C})),
\]
that is, $K$-theory of $\mathbf{Shv}(X; \mathbf{C})$ is compactly supported sheaf cohomology of $X$ with values in the $K$-theory spectrum of $\mathbf{C}$.
\end{theorem}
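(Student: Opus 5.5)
The strategy is to view the assignment \(U \mapsto K(\mathbf{Shv}(U;\mathbf{C}))\), for \(U\) ranging over open subsets of \(X\), as a \emph{cosheaf-like} (compactly supported) cohomology theory and identify it by its local behaviour. For each open \(U \subseteq X\) with closed complement \(Z = X \setminus U\), the recollement
\[
\mathbf{Shv}(U;\mathbf{C}) \xrightarrow{j_!} \mathbf{Shv}(X;\mathbf{C}) \xrightarrow{i^*} \mathbf{Shv}(Z;\mathbf{C})
\]
should be a short exact sequence in \(\mathbf{Pr}^{L,\mathrm{dual}}\): all three categories are dualisable, and the functors are strongly continuous because their right adjoints \(j^*\) and \(i_*\) preserve filtered colimits. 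The first step is to establish this, together with the fact that \(\mathbf{Shv}(X;\mathbf{C}) \simeq \mathbf{Shv}(X;\mathbf{Sp}) \otimes \mathbf{C}\), so that one may reduce to dualisability of \(\mathbf{Shv}(X;\mathbf{Sp})\) for \(X\) locally compact. For this I would use Theorem \ref{thm:dualisable-char}(2): the left adjoint \(\jhat\) of \(\colim\) sends \(F\) to the ind-system of sections over compacts \(K \Subset U\).

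Next, I would use that Efimov \(K\)-theory is localising on \(\mathbf{Pr}^{L,\mathrm{dual}}\). Via \(\mathbf{Fred}\) and \(\mathbf{Calk}\) (Proposition \ref{prop_last}(3)), short exact sequences of dualisable categories yield short exact sequences of Calkin categories, and the classical fibration theorem then applies. From this we obtain fibre sequences
\[
K(\mathbf{Shv}(U;\mathbf{C})) \to K(\mathbf{Shv}(X;\mathbf{C})) \to K(\mathbf{Shv}(Z;\mathbf{C})).
\]
Efimov \(K\)-theory also commutes with filtered colimits along strongly continuous functors. Hence, on compact subsets, \(K \mapsto K(\mathbf{Shv}(K;\mathbf{C}))\) is a \(\mathcal{K}\)-sheaf: it satisfies excision for closed covers and continuity \(K(\mathbf{Shv}(\bigcap K_\alpha)) \simeq \colim K(\mathbf{Shv}(K_\alpha))\). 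Here I would use that \(\mathbf{Shv}(\bigcap K_\alpha)\) is the colimit in \(\mathbf{Pr}^{L,\mathrm{dual}}\) of the \(\mathbf{Shv}(K_\alpha)\). On open sets, the functor is covariant under \(j_!\), with values \(\Gamma_c\)-type.

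These properties characterise \(\Gamma_c(X, E)\) with \(E = K(\mathbf{Shv}(\mathrm{pt};\mathbf{C})) = K(\mathbf{C})\), via Lurie's equivalence between sheaves and \(\mathcal{K}\)-sheaves. The assignment \(K \mapsto K(\mathbf{Shv}(K;\mathbf{C}))\) is a \(\mathcal{K}\)-sheaf, so it corresponds to a sheaf \(\mathcal{F}\) on \(X\). Continuity identifies its stalks as \(K(\mathbf{C})\), and the constant map \(\mathbf{C} \to \mathbf{Shv}(K;\mathbf{C})\) (pullback along \(K \to \mathrm{pt}\)) provides a comparison from the constant sheaf \(\underline{K(\mathbf{C})}\). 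Since this comparison is a stalkwise equivalence, \(\mathcal{F} \simeq \underline{K(\mathbf{C})}\), at least after hypercompleteness issues are dealt with; I would instead argue via \(\mathcal{K}\)-sheaves to avoid them. Finally, the localisation sequence for \(X \subset X^+\) (the one-point compactification) gives \(K(\mathbf{Shv}(X)) \simeq \mathrm{fib}\bigl(\Gamma(X^+) \to \Gamma(\infty)\bigr) = \Gamma_c(X, K(\mathbf{C}))\).

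I expect the main obstacle to be the continuity step: showing that \(\mathbf{Shv}(\bigcap K_\alpha;\mathbf{C})\) is the filtered colimit in \(\mathbf{Pr}^{L,\mathrm{dual}}\), and that Efimov \(K\)-theory commutes with such colimits. The latter requires that \(\mathbf{Calk}\) commutes with these colimits of \(\omega_1\)-compact objects. I would first attempt this via the explicit description of compact morphisms in sheaf categories, using restrictions from neighbourhoods \(U \supset K\) to \(K\).
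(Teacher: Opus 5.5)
Your setup matches the paper's sketch: the localisation sequences from the recollement, continuity, the $\mathcal{K}$-sheaf $K \mapsto K(\Shv(K;\mathbf{C}))$ with stalks $K(\mathbf{C})$, the comparison map from the constant sheaf $\underline{K(\mathbf{C})}$, and the one-point compactification at the end. The gap is the step you defer, namely concluding from a stalkwise equivalence. For a general compact Hausdorff space of infinite covering dimension, $\Shv(X;\Sp)$ need not be hypercomplete. So a map of sheaves of spectra that is an equivalence on every stalk need not be an equivalence.

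Passing to $\mathcal{K}$-sheaves does not avoid this. By Lurie's theorem, $\mathcal{K}$-sheaves and sheaves form the same category, and the value of a $\mathcal{K}$-sheaf on $\{x\}$ is the stalk at $x$. The cofibre of your comparison map is therefore a $\mathcal{K}$-sheaf vanishing on every point, i.e.\ an $\infty$-connective object, and such objects can be nonzero. Nothing about a single $\mathcal{K}$-sheaf on $X$ with the right stalks forces it to be constant. The paper singles out exactly this step as needing two extra inputs. First, $\mathbf{NcMot}$ is dualisable; the paper runs the argument for the motive-valued assignment $Y \mapsto M(\Shv(Y;\mathbf{C}))$, and dualisability lets sheaf-theoretic arguments such as testing on stalks go through for $\mathbf{NcMot}$-valued sheaves. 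Second, an embedding of $X$ into a cube reduces to a situation where equivalences can be tested on stalks.

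The missing idea is to use functoriality in the space, not just in closed subsets of one fixed $X$. Pullback along any map of compact Hausdorff spaces is strongly continuous, since $f_* = f_!$ preserves colimits. Hence $Y \mapsto K(\Shv(Y;\mathbf{C}))$ is a functor on all compact Hausdorff spaces. One way to carry out the reduction:
\begin{itemize}
\item Embed $X \subseteq [0,1]^I$ and write $X \cong \lim_J p_J(X)$ over finite $J \subseteq I$.
\item Each $p_J(X) \subseteq [0,1]^J$ has finite covering dimension, so on $p_J(X)$ stalks do detect equivalences.
\item Prove continuity along this cofiltered limit with \emph{surjective} transition maps: $\Shv(X;\mathbf{C}) \simeq \colim^{\mathrm{dual}}_J \Shv(p_J(X);\mathbf{C})$ along pullbacks, together with the corresponding continuity of $\Gamma(-,\underline{K(\mathbf{C})})$.
\end{itemize}

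Your continuity statement, for decreasing intersections of closed subsets of $X$, does not give this reduction. The sets $p_J^{-1}(p_J(X)) = p_J(X) \times [0,1]^{I\setminus J}$ remain infinite-dimensional. The continuity step you flag as the main obstacle is a genuine input, but it must be proved in this stronger form. The real crux is the passage from stalks to a global equivalence, and your proposal leaves that open.
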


\begin{corollary}\label{cor_deloop}
We find that 
\[
K(\Shv(\mathbb{R}^n;\bC)) = \Omega^n K(\bC) \ .
\]
In particular we get that 
\[
K_n(\bC) = K_0(\Shv(\mathbb{R}^n;\bC)) \ .
\]
This give a formula to compute higher $K$-theory for an arbitrary $\infty$-category as $K_0$, for which we had a (somewhat) explicit formula.
\end{corollary}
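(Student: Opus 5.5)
The plan is to deduce Corollary \ref{cor_deloop} directly from Theorem \ref{thm:efimov}, applied to the locally compact Hausdorff space \(X = \R^n\). This reduces the whole statement to computing compactly supported sheaf cohomology of \(\R^n\) with coefficients in the constant spectrum \(K(\bC)\).

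\textbf{Step 1.} By Theorem \ref{thm:efimov} we have \(K(\Shv(\R^n;\bC)) \simeq \Gamma_c(\R^n, K(\bC))\).

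\textbf{Step 2.} To compute \(\Gamma_c(\R^n, E)\) for a spectrum \(E\), I would use the one-point compactification \(j\colon \R^n \hookrightarrow S^n\), with complement the point \(\infty\). Compactly supported sections are the fibre of restriction to the boundary:
\[
\Gamma_c(\R^n, E) \simeq \mathrm{fib}\bigl(\Gamma(S^n, E) \to \Gamma(\{\infty\}, E)\bigr).
\]
This comes from the recollement fibre sequence \(j_!j^*\mathcal{F} \to \mathcal{F} \to i_*i^*\mathcal{F}\) applied to the constant sheaf \(\mathcal{F}\).

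\textbf{Step 3.} For a compact manifold (more generally a finite CW complex), global sections of the constant sheaf with value \(E\) coincide with the cotensor \(E^{S^n} = \map(\Sigma^\infty_+ S^n, E)\). The fibre of evaluation at the basepoint is then
\[
\map(\Sigma^\infty S^n, E) \simeq \Omega^n E.
\]

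The identification of sheaf cohomology with singular cohomology in Step 3 is the main point requiring justification. It holds for locally contractible paracompact spaces, where constant sheaves are homotopy invariant. Alternatively, one can argue by induction on \(n\): the Mayer--Vietoris cover of \(\R\) gives \(\Gamma_c(\R,E) \simeq \Omega E\), and a Künneth-type formula \(\Gamma_c(\R^{n}, E) \simeq \Gamma_c(\R, \Gamma_c(\R^{n-1},E))\) completes the induction. In a second Efimov-style route, one could instead apply Theorem \ref{thm:efimov} to \(\Shv(\R;\Shv(\R^{n-1};\bC))\), using \(\Shv(\R^n;\bC) \simeq \Shv(\R;\Shv(\R^{n-1};\bC))\) (dualisability of sheaf categories), and iterate the case \(n=1\). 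I would take this iterative route, since it only needs \(\Gamma_c(\R,E)\simeq \Omega E\). That case follows from the cofibre sequence for \(\{0\}\subset[0,1]\supset\{1\}\), or equivalently from \(\Gamma_c\) of an open interval being the fibre of \(E \to E\oplus E\) (the diagonal), which is \(\Omega E\).

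\textbf{Step 4.} Taking \(\pi_0\) of \(K(\Shv(\R^n;\bC))\simeq \Omega^n K(\bC)\) gives \(K_0(\Shv(\R^n;\bC)) \cong \pi_0\Omega^n K(\bC) = \pi_n K(\bC) = K_n(\bC)\). Here \(K_0\) of the dualisable category \(\Shv(\R^n;\bC)\) is the explicit Fredholm description of Proposition \ref{prop_last}(1).
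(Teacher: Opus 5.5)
Your proposal is correct and matches the paper, which states this corollary without further argument as the specialisation of Efimov's theorem to $X=\R^n$, together with $\Gamma_c(\R^n,E)\simeq\Omega^n E$; the paper also records $\Shv(\R;\Shv(\R^{n-1};\bC))\simeq\Shv(\R^n;\bC)$, which fits your iterative route. One small correction: that equivalence comes from the K\"unneth formula $\Shv(X\times Y)\simeq\Shv(X)\otimes\Shv(Y)$ for locally compact Hausdorff spaces, not from dualisability.
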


\begin{example}[\(K\)-theory of equivariant sheaves]

Next we consider an equivariant generalisation of Theorem \ref{thm:efimov} above. Let \(G\) be a finite group, \(\mathrm{Orb}_G\) the category of \(G\)-orbits, and \(E \colon \mathrm{Orb}_G^\op \to \bC\) a presheaf valued in a dualisable \(\infty\)-category \(\bC\). Let \(t \colon \mathrm{Orb}_G \to \mathrm{Top}_G\) be the canonical inclusion of orbits into \(G\)-spaces. Equipping \(\mathrm{Top}_G\) with the open covers topology, restriction along \(t\) is right adjoint to left-Kan extensiona nd sheafification \[t^* \colon \mathbf{PSh}(\mathrm{Orb}_G^\op, \bC) \leftrightarrows \mathbf{Shv}(\mathsf{Top}_G, \bC) \colon t_*.\]

\begin{definition}
Let \(X\) be a \(G\)-space. The \emph{Bredon sheaf cohomology} of \(X\) with coefficients in \(E\) is defined as \(\Gamma^G(X,E) \defeq t^*(E)(X)\).
\end{definition}

Let \(X\) be a locally compact Hausdorff \(G\)-space. We may define a sheaf on the coarse moduli space \(X/G\) by restricting the sheaf \(t^*(E) \in \Shv(\mathsf{Top}_G, \bC)\) to \(G\)-invariant open subsets of \(X\) (or equivalently, open subsets of \(X/G\). Denote the resulting sheaf by \(E_X\). We then define \emph{compactly supported Bredon sheaf cohomology} of \(X\) with coefficients in \(E\) as \[\Gamma_c^G(X,E) \defeq \Gamma_c(X/G, E_X).\]

Now consider the category \[\mathbf{Shv}^G(X;\bC) = \colim_{[n] \in \Delta^\op}^{\mathbf{Cat}^{\mathrm{dual}}} \mathbf{Shv}(G^n \times X, \bC)\] of \(G\)-equivariant sheaves valued in \(\bC\); this is a dualisable \(\infty\)-category. 

\begin{theorem}\cite{bredon}\label{thm:K-equivariant}
Let \(X\) be a locally compact Hausdorff \(G\)-space, and \(K^G \colon \mathrm{Orb}_G^\op \to \mathbf{Sp}\) the presheaf \(G/H \mapsto K(\mathbf{Sp}^{BH})\). We have an equivalence 
\[
K(\mathbf{Shv}^G(X,\mathbf{Sp})) \simeq \Gamma_c^G(X, K^G).
\] 
\end{theorem}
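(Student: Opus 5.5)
Following the strategy sketched in the introduction, the proof will rest on a classification of localising cohomology theories of locally compact Hausdorff \(G\)-spaces with values in a dualisable target, in terms of their values on orbits.

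\emph{Step 1.} Consider the functor
\[
F \colon X \mapsto K(\mathbf{Shv}^G(X,\mathbf{Sp}))
\]
on locally compact Hausdorff \(G\)-spaces with proper maps, contravariant for proper maps and covariant (extension by zero) for open embeddings. We want three properties.
\begin{itemize}
\item For an open \(G\)-invariant \(U\subseteq X\) with closed complement \(Z\), there is a Verdier sequence of dualisable categories
\[
\mathbf{Shv}^G(U)\to\mathbf{Shv}^G(X)\to\mathbf{Shv}^G(Z)
\]
with strongly continuous functors. It comes from the non-equivariant recollement levelwise in the simplicial diagram \(\mathbf{Shv}(G^n\times -)\), and is preserved by the colimit in \(\mathbf{Cat}^{\mathrm{dual}}\).
\item Efimov \(K\)-theory, defined via \(\mathbf{Calk}\) and \(\mathbf{Fred}\), sends such sequences to fibre sequences. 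The exactness of \(\mathbf{Calk}\) on short exact sequences of dualisable categories, together with the fibration theorem, gives this.
\item For \(X=\lim_i X_i\) a cofiltered limit of compact \(G\)-spaces, \(F\) takes this limit to a filtered colimit. Equivalently, \(\mathbf{Shv}^G\) of a compact space is the colimit in \(\mathbf{Cat}^{\mathrm{dual}}\), and \(K\) commutes with filtered colimits of dualisable categories.
\end{itemize}

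\emph{Step 2.} Prove a uniqueness statement: any functor with these properties, valued in a stable category, is \(\Gamma_c^G(-,E)\) with \(E(G/H)=F(G/H)\).

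First, by the localisation sequence and continuity, \(F\) is determined by its restriction to compact \(G\)-spaces. There it is a sheaf for closed covers and is continuous, so by the Efimov/Lurie argument it is a cosheaf/sheaf on \(X/G\) with compact supports. Its stalk at an orbit \(Gx\) is computed by shrinking invariant neighbourhoods. Using slices \(G\times_{G_x}V\) with \(V\) a \(G_x\)-invariant neighbourhood, and the fact that \(G\) is finite so orbits are discrete, the stalk is \(F(G/G_x)\). Functoriality in \(\mathrm{Orb}_G\) comes from the \(G\)-maps \(G/H\to G/K\), which are finite and hence proper. This identifies the coefficient sheaf on \(X/G\) with the restriction \(E_X\) of \(t^*E\), and therefore identifies \(F(X)\) with \(\Gamma_c(X/G,E_X)=\Gamma_c^G(X,E)\).

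\emph{Step 3.} Compute on orbits: \(\mathbf{Shv}^G(G/H,\mathbf{Sp})\simeq\mathbf{Sp}^{BH}\), as descent along \(G/H\) gives modules over \(H\). Thus \(E(G/H)=K(\mathbf{Sp}^{BH})\), which is \(K^G\).

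\emph{Main obstacle.} The hardest step is Step 2, namely showing that the stalk or neighbourhood computation really recovers \(F(G/G_x)\). This requires homotopy invariance in the form \(F(G\times_H V)\to F(G/H)\) for shrinking neighbourhood systems, which is really the continuity property rather than \(\mathbf{A}^1\)-invariance. It also requires verifying that the resulting sheaf on \(X/G\) satisfies the compact-support sheaf condition. I would first treat \(X\) compact and finite-dimensional through \(G\)-CW-type closed covers, then pass to the general case by the continuity of Step 1.
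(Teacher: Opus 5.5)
Your proposal follows essentially the paper's route: show that \(X \mapsto K(\mathbf{Shv}^G(X,\mathbf{Sp}))\) has open--closed localisation sequences and takes cofiltered limits of compact \(G\)-spaces to filtered colimits, then use Clausen's classification of such functors by their restriction to \(\mathrm{Orb}_G\), and finally evaluate on orbits via \(\mathbf{Shv}^G(G/H,\mathbf{Sp})\simeq\mathbf{Sp}^{BH}\); your sheaf-on-\(X/G\) and stalk argument is a hands-on version of Theorem~\ref{thm:conservativity} together with the span-category passage to compact supports. Two small corrections: the uniqueness statement should be made for dualisable targets, as in Theorem~\ref{thm:conservativity}, rather than arbitrary stable ones (harmless here since your \(F\) is \(\mathbf{Sp}\)-valued), and the delicate point is not the stalk computation, which is immediate from continuity, but producing the comparison map (the paper uses the counit of \(t^*\dashv t_*\)) and knowing that a stalkwise equivalence is an equivalence, which is where hypercompleteness after reduction to finite-dimensional compacta and dualisability of the target are used.
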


An analogous result holds for topological \(K\)-theory, where \(\mathbf{Shv}^G(X;\bC)\) is replaced with the crossed product \(C^*\)-algebra \(G \ltimes C_0(X)\).  The important insight due to Clausen in both Theorem \ref{thm:efimov} and Theorem \ref{thm:K-equivariant} is the following conservativity result:

\begin{theorem}\cite{bredon}\label{thm:conservativity}
Let \(\bC\) be a dualisable category and \(\mathbf{Fun}^{o,cc}(\mathrm{LCHaus}_G^\op, \bC) \subset \mathbf{Fun}(\mathrm{LCHaus}_G^\op, \bC)\) be the subcategory of functors satisfying open descent and compact cofiltered codescent (that is, functors taking a cofiltered limit of compact Hausdorff \(G\)-spaces to filtered colimits). Then restriction 
\[
t_* \colon \mathbf{Fun}^{o,cc}(\mathrm{LCHaus}_G^\op, \bC) \to \mathbf{Fun}(\mathrm{Orb}_G^\op, \bC)
\] is an equivalence of categories, with inverse defined by Bredon sheaf cohomology \(E \mapsto \Gamma^G(-, E)\). 
\end{theorem}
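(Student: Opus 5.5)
The strategy is to exhibit Bredon sheaf cohomology $E \mapsto \Gamma^G(-,E)$ as an explicit inverse to $t_*$ and to verify both composites, with everything reduced to compact Hausdorff $G$-spaces. First I reduce from $\mathrm{LCHaus}_G$ to compact Hausdorff $G$-spaces. A functor with open descent on a locally compact $X$ is recovered from its values on relatively compact open $G$-invariant subsets. Those values are in turn controlled by compact neighbourhoods: for a $G$-invariant open $U$, write $U$ as the filtered union of the interiors of compact $G$-invariant neighbourhoods. It therefore suffices to prove the equivalence for functors on compact Hausdorff $G$-spaces satisfying finite closed/open descent and cofiltered codescent.

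Next I show that $\Gamma^G(-,E)$ lands in $\mathbf{Fun}^{o,cc}$ and that $t_*\Gamma^G(-,E)\simeq E$. The open descent is by construction, since $t^*E$ is a sheaf on $\mathsf{Top}_G$. For an orbit $G/H$ the sheafification does not change the value, because $G/H$ is finite discrete and every cover of it refines to the trivial one; this gives $t_*t^*E \simeq E$. For cofiltered codescent I use the dualisability of $\bC$. For $X=\lim X_i$ compact Hausdorff, the sheaf $\Gamma(X,-)$ is a continuous-in-$X$ colimit because, for sheaves on compact Hausdorff spaces valued in a dualisable category, global sections commute with filtered colimits. This is the step where exactness of filtered colimits in dualisable $\bC$, proved via Theorem \ref{thm:dualisable-char}(2), is needed. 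Together with proper base change along $X \to X_i$, it gives $\Gamma^G(X,E)\simeq \colim_i \Gamma^G(X_i,E)$.

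The main step is full faithfulness, i.e. that the counit $F \to \Gamma^G(-,t_*F)$ is an equivalence for every $F\in \mathbf{Fun}^{o,cc}$. Both sides satisfy open descent and cofiltered codescent and agree on orbits, so it suffices to show that such functors are determined by their restriction to orbits. I would first treat finite $G$-sets: these are disjoint unions of orbits, and open descent gives additivity. Then I would treat light profinite $G$-spaces, which are cofiltered limits of finite $G$-sets (using that $G$ is finite, so one can average the quotient equivalence relations), where codescent propagates the equivalence. Finally, for general compact Hausdorff $X$, I would choose a $G$-equivariant hypercover by profinite $G$-spaces, e.g.\ via the Cantor-set or Stone–Čech surjection made $G$-equivariant by inducing up. I then need descent along such proper surjections. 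This I expect to be the main obstacle. Open descent together with cofiltered codescent should imply proper descent by the Clausen argument: write the surjection $S\to X$ as a limit of finite closed covers of $X$ (preimages of finite partitions), use closed/open descent for finite closed covers, obtained via excision from open descent plus codescent on neighbourhoods, and pass to the limit using codescent. Dualisability of $\bC$ is used again here, because commuting the totalisation with the filtered colimit requires that limits of the relevant form interact with filtered colimits. I would first attempt this using the fact that finite closed covers give finite, hence colimit-commuting, totalisations.

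Combining the two composites gives the equivalence, with inverse $E\mapsto\Gamma^G(-,E)$.
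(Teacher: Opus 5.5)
\paragraph{The gap: descent along profinite covers.}
Your reduction to compact spaces, the identification $t_*t^*E\simeq E$, and the argument for finite and profinite $G$-spaces are fine. The passage from profinite $G$-spaces to a general compact $X$ does not work. It requires every $F\in\mathbf{Fun}^{o,cc}$ to satisfy descent along the \v{C}ech nerve (or a hypercover) of a profinite cover $S\to X$, and this does not follow from open descent and cofiltered codescent.

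Your mechanism gives descent for each approximation $\coprod_{C\in P_\alpha}f(C)\to X$ as a \emph{finite} limit. But the number of pieces grows with $\alpha$, while $\lim_{\Delta}F(S^{\times_X(\bullet+1)})$ is a genuinely infinite totalisation, since $S\to X$ is not a monomorphism. Commuting it with $\colim_\alpha$ is exactly the unjustified step. Dualisability of $\bC$ does not help: totalisations fail to commute with filtered colimits already in $\Sp$.

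In fact the claim is false. Take $G$ trivial, $\bC=\Sp$ and $F=\Gamma(-,\underline{\mathbf{KU}})$, which lies in $\mathbf{Fun}^{o,cc}$. Two facts hold:
\begin{itemize}
\item for profinite $S_n$ one has $\pi_t\Gamma(S_n,\underline{\mathbf{KU}})=C(S_n,\pi_t\mathbf{KU})$;
\item the \v{C}ech complex of a profinite cover computes sheaf cohomology with discrete coefficients.
\end{itemize}
Hence the Bousfield--Kan spectral sequence of $\lim_\Delta\Gamma(S_\bullet,\underline{\mathbf{KU}})$ has $E_2^{s,t}=H^s(X;\pi_t\mathbf{KU})$.

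Now take Taylor's compactum: the inverse limit of $\Sigma^{nq}M$, for a mod $p$ Moore space $M$, along suspended Adams self-maps. These maps vanish on integral cohomology but are $\mathbf{KU}$-equivalences. So the $E_2$-page is concentrated in $s=0$ and the totalisation is just $\mathbf{KU}$. On the other hand, $\Gamma(X,\underline{\mathbf{KU}})=\colim_n\Gamma(\Sigma^{nq}M,\underline{\mathbf{KU}})$ has non-zero reduced homotopy.

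Profinite (hyper)descent therefore only sees the hypercompletion. The functors in $\mathbf{Fun}^{o,cc}$, including $\Gamma^G(-,E)$ itself, are non-hypercomplete cohomologies, so no argument through profinite covers can work for infinite-dimensional $X$.

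\paragraph{The missing idea: stalks plus finite-dimensional approximation.}
The paper only quotes the equivariant statement, but its sketch of the non-equivariant case avoids descent in this way.
\begin{enumerate}
\item Fix a compact $X$ and restrict $F$ and $t^*t_*F$ to $G$-invariant opens; this gives sheaves on $X/G$.
\item Codescent identifies $F(K)$ with $\colim_{U\supseteq K}F(U)$, hence the stalk at $[x]$ with $F(Gx)\simeq F(G/G_x)$. So the counit $t^*t_*F\to F$ is a stalkwise equivalence. Note this is the direction of the natural map, not $F\to\Gamma^G(-,t_*F)$.
\item If $X/G$ is finite-dimensional, $\Shv(X/G)$ is hypercomplete of finite homotopy dimension. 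Since a dualisable $\bC$ is a retract of a compactly generated category, stalks then detect equivalences of $\bC$-valued sheaves. This is where dualisability is really used, not in the continuity of $\Gamma$, for which stability suffices.
\item For general $X$, embed it equivariantly into $[0,1]^{C(X,[0,1])}$, with $G$ permuting coordinates. Then $X$ is the cofiltered limit of its projections to finite $G$-stable sets of coordinates. These are finite-dimensional compact $G$-spaces, and codescent on both sides finishes the proof. This is the role of the cube embedding in the paper's sketch.
\end{enumerate}

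\paragraph{A smaller point in your codescent step.}
In your codescent argument for $\Gamma^G(-,E)$, the sheaf $E_X$ is not the pullback of $E_{X_i}$, because stabilisers can shrink in the limit. Reduce first to representable coefficient systems $y(G/H)\otimes c$. There the relevant sheaves come from sheaves of sets, and the stalkwise comparison is harmless.
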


The result above says that Bredon sheaf cohomology is the unique functor satisfying open descent and compact cofiltered descent. In fact, when one considers the span category \(\mathrm{LCHaus}_G^{pdp}\) of locally compact Hausdorff spaces with morphisms given by spans of equivariant open inclusions and proper maps, we get an equivalence between functors on \(\mathrm{LCHaus}_G^{pdp}\) satisfying open codescent and compact cofiltered descent, and functors on \(G\)-compact Hausdorff spaces \(\mathrm{CHaus}_G\) satisfying closed and compact cofiltered descent. One then uses Theorem \ref{thm:conservativity} to show that compactly supported Bredon sheaf cohomology is the unique functor with these properties. Since the counit of the adjunction \(t^*\dashv t_*\) induces a natural map \[\Gamma^G(-, K^G) \to K(\Shv^G(-, \bC)),\] and the functor \(K(\Shv^G(-, \bC))\) shares these properties, we have the desired equivalence of Theorem \ref{thm:K-equivariant}.
\end{example}

\begin{example}[$K$-theory of $\Z_p$] 
Consider the category $\bC = \bD(\Z)^\wedge_p \subseteq \bD(\Z)$. This is a symmetric monoidal category 
which is compactly generated (by $\mathbb{F}_p$). Each compact object is dualisable, but the tensor unit is not dualisable. 

The $K$-theory of $\bD(\Z)^\wedge_p$ is by Quillen's Devissage equivalent to $K(\mathbb{F}_p)$.

A morphism $f: M \to N$ in $\mathbf{D}(\Z)^\wedge_p$ is trace class, 
precisely if for every $n$ the induce morphism $f/p^n: M \otimes_{\Z} \Z/p^n  \to N \otimes_{\Z} \Z/p^n$ factors trough a compact morphisms in $\mathbf{D}(\Z/p^n)$. 
For example the identity on $\Z_p$ is trace class. The trace class maps in this case are factorisable.

Now consider the full subcategory of 
\[
\mathbf{Nuc}(\Z_p)^{\omega_1} \subseteq \mathbf{Ind}(\mathbf{D}(\Z)^\wedge_p) 
\]
consisting of those ind diagrams 
\[
M_0 \to M_1 \to M_2 \to ...
\]
where each morphism is trace class. The objects in there are called \emph{basic nuclears}. This contains objects like 
\[
\Z_p = \colim( \Z_p \xrightarrow{\mathrm{id}} \Z_p \xrightarrow{\mathrm{id}} ...),
\]
but also things like
\[
\Q_p =  \colim( \Z_p \xrightarrow{\cdot p} \Z_p \xrightarrow{\cdot p} ...) \ .
\]

The colimit closure of $\mathbf{Nuc}(\Z_p)^{\omega_1}$ inside  $\mathbf{Ind}(\mathbf{D}(\Z)^\wedge_p) $
 is $\mathbf{Nuc}(\Z_p)$.
 It also admits other descriptions, for example it is the rigidification of $\mathbf{D}(\Z)^\wedge_p$ or the dualisable inverse limit
 \[
 ...  \to \mathbf{D}(\Z/p^3) \to  \mathbf{D}(\Z/p^2)  \to \mathbf{D}(\Z/p)
 \]
\end{example}

We have that 
\begin{theorem}[Efimov]\cite{efimov2025localizing}
We have that $K(\mathbf{Nuc}(\Z_p)) = \lim K(\Z/p^n)$.
\end{theorem}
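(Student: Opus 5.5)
The plan is to use the third description of \(\mathbf{Nuc}(\Z_p)\) from the preceding example, namely as the limit of the tower \(\cdots \to \bD(\Z/p^2) \to \bD(\Z/p)\) taken in \(\mathbf{Pr}^{L,\mathrm{dual}}\). Once this is in place, the statement becomes an instance of a general principle: Efimov \(K\)-theory commutes with sequential limits of dualisable categories along suitable functors. Concretely, the key steps are:
\begin{enumerate}
\item identify \(\mathbf{Nuc}(\Z_p)\) with \(\lim^{\mathrm{dual}}_n \bD(\Z/p^n)\);
\item establish a limit-preservation property of continuous \(K\)-theory for \(\N\)-indexed towers;
\item conclude, using that each \(\bD(\Z/p^n)\) is compactly generated, so that by Proposition \ref{prop_last}(2) its Efimov \(K\)-theory is the usual \(K(\Z/p^n)\).
\end{enumerate}

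For step (1), I would compare the two universal properties. The tensoring functors \(-\otimes\Z/p^n\) send trace-class maps of \(\bD(\Z)^\wedge_p\) to compact maps of \(\bD(\Z/p^n)\); this is the characterisation of trace-class maps recalled in the example. Hence, by Proposition \ref{prop:dualisable-core}, they induce strongly continuous functors out of the rigidification \(\mathbf{Nuc}(\Z_p)\). Conversely, I would check that a compatible system of basic nuclear objects in the \(\bD(\Z/p^n)\) assembles to a basic nuclear object of \(\mathbf{Ind}(\bD(\Z)^\wedge_p)\), which identifies \(\omega_1\)-compacts on both sides.

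Step (2) is the heart of the argument. The dualisable limit is not the naive limit in \(\mathbf{Pr}^L\): it is the dualisable core (Proposition \ref{prop:dualisable-core}) of that limit. I would pass to Calkin categories via Proposition \ref{prop_last}(3), \(K(\bC)\simeq \Omega K(\Calk(\bC))\), and show that
\[
\Calk\bigl(\lim{}^{\mathrm{dual}}_n \bC_n\bigr) \to \lim_n \Calk(\bC_n)
\]
becomes a \(K\)-equivalence. The approach is to realise the dualisable limit as a localising kernel: a fibre sequence relating \(\prod_n \bC_n\) to \(\prod_n \bC_n\) through the map ``\(\mathrm{id} - \mathrm{shift}\)'', analogous to a Milnor sequence. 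Then localisation for Verdier sequences (Theorem \ref{fib_thm}), together with the commutation of \(K\) with infinite products of small stable categories, gives a fibre sequence
\[
K(\lim{}^{\mathrm{dual}}) \to \prod_n K(\bC_n) \to \prod_n K(\bC_n).
\]
The fibre of this map is exactly \(\lim_n K(\bC_n)\).

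The main obstacle is exhibiting the Milnor-type sequence as a genuine short exact sequence in \(\mathbf{Pr}^{L,\mathrm{dual}}\), since this is what allows localisation to be applied. This requires the transition functors \(\bD(\Z/p^{n+1}) \to \bD(\Z/p^n)\) to have right adjoints preserving filtered colimits (restriction of scalars does) and to be suitably surjective on compact morphisms. I would try first to verify that \(\mathrm{id}-\mathrm{shift}\) is a localisation on Calkin categories by a cofinality argument on \(\N\)-indexed ind-systems.
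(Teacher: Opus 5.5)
The paper gives no argument of its own here. It cites Efimov and records, in the preceding example, that $\mathbf{Nuc}(\Z_p)$ is the dualisable limit of the $\bD(\Z/p^n)$. So reducing to a continuity statement for dualisable sequential limits has the right shape. But your step (2) is the whole theorem, and as set up it does not make sense. Functors cannot be subtracted, so there is no exact functor ``$\mathrm{id}-\mathrm{shift}$'' from $\prod_n\bC_n$ to itself, and no Verdier sequence to which Theorem~\ref{fib_thm} could be applied; the difference only exists after applying $K$. A categorical Milnor sequence needs a middle term in which the comparison maps are data rather than a condition. An example is (a dualisable version of) the lax limit $\mathcal{L}$ of sequences $(x_n)$ with maps $\psi_n\colon f_n(x_{n+1})\to x_n$. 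The limit is the kernel of $(x_n,\psi_n)\mapsto(\mathrm{cofib}\,\psi_n)_n$. This functor induces $\mathrm{id}-\mathrm{shift}$ on $K$-theory only once you have shown $K(\mathcal{L})\simeq\prod_nK(\bC_n)$, which concerns an infinite semi-orthogonal decomposition and so does not follow from additivity. The products must also be formed in $\mathbf{Pr}^{L,\mathrm{dual}}$. In $\mathbf{Pr}^L_{st}$, the product $\prod_n\bD(\Z/p^n)$ is also the coproduct $\mathbf{Ind}(\bigoplus_n\mathrm{Perf}(\Z/p^n))$, whose $K$-theory is $\bigoplus_nK(\Z/p^n)$. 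On that direct sum $\mathrm{id}-\mathrm{shift}$ is invertible (the shift is locally nilpotent), so the fibre would vanish. The relevant product is $\mathbf{Ind}(\prod_n\mathrm{Perf}(\Z/p^n))$, to which Kasprowski--Winges applies.

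What is really needed, and what you assume rather than prove, is that the functor onto the product is a genuine localisation in $\mathbf{Pr}^{L,\mathrm{dual}}$ whose kernel is exactly $\lim^{\mathrm{dual}}_n\bC_n$. Any proof of this must use the large, dualisable setting in an essential way. For instance, the natural preimage of a tuple $(c_n)$ is $x_n=\bigoplus_{k\geq0}f^k(c_{n+k})$, with $f^k\colon\bC_{n+k}\to\bC_n$ the composite transition functor, and this only exists in the large setting. Moreover, the small analogue is false: $\mathrm{Perf}(\Z_p)\simeq\lim_n\mathrm{Perf}(\Z/p^n)$, while $K(\Z_p)\to\lim_nK(\Z/p^n)$ is an equivalence only after $p$-completion. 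For the same reason the detour through Calkin categories gains nothing. $\Calk(-)$ is small and $K$ does not commute with sequential limits of small stable categories, so comparing $\Calk(\lim^{\mathrm{dual}}_n\bC_n)$ with $\lim_n\Calk(\bC_n)$ leaves the problem where it was. The obstacles you list do not touch this point either. Filtered-colimit-preserving right adjoints are automatic for morphisms of $\mathbf{Pr}^{L,\mathrm{dual}}$, and a cofinality argument on $\N$-indexed ind-systems does not produce the required localisation. Supplying such an argument, together with the product formula for $K$-theory of dualisable products, is the substance of the cited theorem, and your proposal leaves it open.
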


\begin{example}\label{almost}
Let $R$ be a ring (or ring spectrm) with an ideal $I$ such that the map $I \otimes_R I \to I$ is an equivalence. 
\footnote{Here the tensor product means the derived tensor product. We generally adopt the convention that all tensor products are derived and that everything in fact also work for ring spectra in place of rings. }
For example we could take $R = \mathbb{Z}[x^{1/p^\infty}]$
and $I = (x^{1/p^\infty})$ for some prime $p$.

Then we consider $\mathbf{D}(R,I) := \mathrm{ker}(\mathbf{D}(R) \to \mathbf{D}(R/I))$. This is called the almost category and agrees with the category of $H$-unital $I$-modules that we will discuss below. It is dualisable. In fact, one can show that all dualisable, stable $\infty$-categories are of this form, for some ring spectrum $R$ with such an ideal $I$. 

The fibre sequence $\mathbf{D}(R,I) \to \mathbf{D}(R) \to \mathbf{D}(R/I)$ is the prototypical example of a Verdier sequence (as will discuss soon) and thus induces a fibre sequence
\[
K(\mathbf{D}(R,I)) \to K(R) \to K(R/I) \ .
\] 
We will see that this fibre is in fact $K(I)$ for the non-unital ring $I$. 
\end{example}

\section{Properties of K-theory}

The goal of this section is the investigate and explain some properties of algebraic K-theory such as Excision and Localization sequences.

But let us first investigate the functoriality properties of Efimov K-theory. To understand this, we say that a functor $f: \mathbf{C} \to \mathbf{D}$ between presentable $\infty$-categories is strongly continuous if it admits a right adjoint (i.e. $f$ is left adjoint) and the right adjoint admits a further right adjoint.

\begin{exercise}
Asume that $\mathbf{C}$ and $\mathbf{D}$ are compactly generated, i.e. $\mathbf{C} = \mathbf{Ind}(\mathbf{C}^\omega)$ and  $\mathbf{D} = \mathbf{Ind}(\mathbf{D}^\omega)$. Then a functor $\mathbf{C} \to \mathbf{D}$ is strongly continuous precisely if it is of the form $\mathbf{Ind}(f')$ for some finite colimit preserving functor 
$f': \mathbf{C}^\omega \to \mathbf{D}^\omega$. Equivalently: if $f$ is left adjoint and sends compact objects to compact objects. 

Deduce that for rings $R,S$ a strongly continous functor
\[
\mathbf{D}(R) \to \mathbf{D}(S)
\]
is given by tensoring with a derived $S-R$-bimodule $M$ which is compact as an $S$-module.
\end{exercise}

\begin{lemma}
A left adjoint functor between dualisable, stable $\infty$-categories is strongly continuous precisely if it sends compact morphisms to compact morphisms. This is the case precisely if it commutes with $\hat j$ (in the obvious sense).
\end{lemma}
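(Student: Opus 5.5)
We prove the lemma by establishing a cycle of implications between three conditions on a left adjoint $F\colon \mathbf{C}\to\mathbf{D}$ with right adjoint $G$: (a) $F$ is strongly continuous, i.e.\ $G$ preserves filtered colimits; (b) $F$ sends compact morphisms to compact morphisms; and (c) the canonical Beck--Chevalley map $\jhat_{\mathbf{D}} F \to \mathbf{Ind}(F)\,\jhat_{\mathbf{C}}$ is an equivalence. This last map is the mate of the identification $k_{\mathbf{D}}\,\mathbf{Ind}(F)\simeq F\,k_{\mathbf{C}}$ under the adjunctions $\jhat\dashv k$, after enlarging $\mathbf{C}^{\omega_1}$ and $\mathbf{D}^{\omega_1}$ to a common cardinal so that $\mathbf{Ind}(F)$ makes sense. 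Throughout, $G$ preserves all limits, so by Theorem \ref{thm:adjoint-functor-thm} condition (a) is equivalent to $G$ itself having a right adjoint.

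For (a)$\Rightarrow$(b), let $f\colon X\to Y$ be compact and $FY\to \colim_i Z_i$ a map into a filtered colimit. Adjoint it to $Y\to G(\colim_i Z_i)\simeq \colim_i GZ_i$; compactness of $f$ gives a factorisation of $X\to \colim_i GZ_i$ through some $GZ_i$, and adjoining back shows $FX\to\colim_i Z_i$ factors through $Z_i$.

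For (b)$\Rightarrow$(a), we use the structure result invoked in the proof of Theorem \ref{thm:dualisable-char}: $\mathbf{C}$ is generated under colimits by objects $X=\colim_n X_n$ with compact transition maps. To show $\colim_i GZ_i\to G\colim_i Z_i$ is an equivalence, it suffices to test against such $X$. Maps out of $X$ are $\lim_n$ of maps out of $X_n$, and a compact transition map $X_n\to X_{n+1}$ lets any map $X_{n+1}\to G\colim Z_i$ (equivalently $FX_{n+1}\to\colim Z_i$, using that $FX_n\to FX_{n+1}$ is compact) be factored through a finite stage after restricting to $X_n$. A pro-isomorphism argument in the tower then identifies $\mathbf{Map}(X,\colim GZ_i)$ with $\mathbf{Map}(X,G\colim Z_i)$. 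This is the main obstacle: controlling the higher homotopy requires the spectral refinement $\mathbf{Map}^{\mathcal K}$ rather than merely the factorisation property. I would first try to phrase the argument as the statement that $F$ induces maps $\mathbf{Map}^{\mathcal K}_{\mathbf{C}}(X,Y)\to\mathbf{Map}^{\mathcal K}_{\mathbf{D}}(FX,FY)$ compatible with the forgetful maps, i.e.\ that (b) holds coherently, and to use the exactness of filtered colimits in dualisable categories \cite[Lemma 2.3.18]{nkp}.

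For the equivalence with (c), note that $\jhat Y$ is the ind-system $\colim_n jY_n$ for a presentation $Y=\colim_n Y_n$ with compact transitions. If $F$ preserves compact maps, then $\mathbf{Ind}(F)\jhat_{\mathbf{C}} Y=\colim_n jFY_n$ is again such a system with colimit $FY$, hence equals $\jhat_{\mathbf{D}}FY$, so the mate is an equivalence. Conversely, if $F$ commutes with $\jhat$, then applying $F$ to a compact map, which lifts to $jX\to\jhat Y$, produces a lift $jFX\to\jhat FY$, so $FX\to FY$ is compact by the preceding proposition. This completes the cycle (a)$\Leftrightarrow$(b)$\Leftrightarrow$(c).
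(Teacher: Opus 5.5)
The paper states this lemma without proof, so your argument has to stand on its own, and as written the cycle does not close. What you actually establish is (a)$\Rightarrow$(b) and (c)$\Rightarrow$(b). The latter needs $\kappa$ chosen so that $X$ is $\kappa$-compact and $F$ preserves $\kappa$-compact objects; otherwise $\mathbf{Ind}(F)(jX)\simeq j(FX)$ is not available. The way back to (a) rests entirely on (b)$\Rightarrow$(a), which you only sketch and yourself call ``the main obstacle''.

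Your (b)$\Rightarrow$(c) has the same problem. It uses $\jhat Y\simeq\colim_n jY_n$ for $Y=\colim_n Y_n$ with compact transition maps. Proving that formula is exactly the same tower comparison, of $\lim_n\colim_i\mathbf{Map}(Y_n,Z_i)$ with $\lim_n\mathbf{Map}(Y_n,\colim_i Z_i)$. So the obstacle has been moved, not avoided. That step also needs the remark that not every $Y$ has such a presentation. The mate should therefore be checked on a colimit-generating class, using that $\jhat_{\mathbf{D}}F$ and $\mathbf{Ind}(F)\jhat_{\mathbf{C}}$ both preserve colimits.

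Two additions repair this.

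First, (c)$\Rightarrow$(a) is formal, so a direct (b)$\Rightarrow$(a) is not needed. Pass to right adjoints in $\jhat_{\mathbf{D}}F\simeq\mathbf{Ind}(F)\jhat_{\mathbf{C}}$ to get $Gk_{\mathbf{D}}\simeq k_{\mathbf{C}}R$. Here $R$, the right adjoint of $\mathbf{Ind}(F)$, is restriction of presheaves along $F$, so it preserves filtered colimits. Moreover $Rj_{\mathbf{D}}\simeq j_{\mathbf{C}}G$, since both are right adjoint to $k_{\mathbf{D}}\mathbf{Ind}(F)\simeq Fk_{\mathbf{C}}$. Evaluating at $\colim_i jZ_i$ gives $G(\colim_i Z_i)\simeq\colim_i GZ_i$.

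Second, the tower comparison does not need the spectral refinement $\mathbf{Map}^{\mathcal K}$. What your sketch lacks is the \emph{injectivity} half of compactness: if $f\colon X\to Y$ is compact and $a\colon Y\to Z_i$ becomes null in $\colim Z$, then $af$ is already null in some $Z_j$. This follows from stability. Apply compactness of $\Sigma f$ to a section $\Sigma Y\to\colim_j\mathrm{cofib}(Y\to Z_j)$ of the canonical map to $\Sigma Y$. The resulting lift shows $\Sigma f$ factors through $\mathrm{cofib}(Y\to Z_j)\to\Sigma Y$, so $\Sigma(af)$ vanishes in $\Sigma Z_j$.

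Combine this with your factorisation half, applied to every shift $\Sigma^k f$. Together they show that $\{\pi_k\colim_i\mathbf{Map}(Y_n,Z_i)\}_n\to\{\pi_k\mathbf{Map}(Y_n,\colim_i Z_i)\}_n$ is a pro-isomorphism of towers of abelian groups. Hence $\lim$ and $\mathrm{lim}^1$ agree, and the Milnor sequences identify the limits. This proves the $\jhat$-formula. Run with three towers over $\mathbf{C}$ and $\mathbf{D}$ and two-out-of-three, it also gives your direct (b)$\Rightarrow$(a).

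One small slip: (a) is equivalent to $G$ having a right adjoint because $G$ is exact, so preserving filtered colimits means preserving all colimits. That $G$ preserves limits is irrelevant here.
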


As a result we get from a strongly continuous functor
$
f: \mathbf{C} \to \mathbf{D}
$
induced functors 
\[
\mathbf{Calk}(\mathbf{C}) \to \mathbf{Calk}(\mathbf{D}) \qquad \text{and}  \qquad \mathbf{Fred}(\mathbf{C}) \to \mathbf{Fred}(\mathbf{D})
\]
and therefore and induced map
\[
K(\mathbf{C}) \to K(\mathbf{D}) \ .
\]
If we let $\Prld$ be the $\infty$-category of stable, dualisable $\infty$-categories and strongly left adjoint functors, then we get a functor 
\[
K : \Prld \to \mathbf{Sp} \ .
\]

\subsection{Verdier sequences}

Now we come to the exactness properties of $K$-theory. 

\begin{definition}
A sequence $\bC \xto{i} \bD \xto{p} \bE$ in $\Prld$ is a \emph{Verdier sequence} if it is a fibre and cofibre sequence. Explicitly this is equivalent to:
\begin{itemize}
\item
$i$ is fully faithful and the kernel of $p$.
\item
The right adoint $R_p$ of $p$ is fully faithful.
\end{itemize}
\end{definition}

\begin{example}
Let $R$ be a ring and $x \in R$ a central element. Then we have the sequence
\[
\bD(R)^{x-\text{tor}} \to \bD(R) \to \bD(R[x^{-1}]) 
\]
is a Verdier sequence, where $ \bD(R)^{x-\text{tor}} \subseteq \bD(R)$ is the full subcategory on the $x$-torsion modules. Here an object $M \in \bD(R)$ is called $x$-torsion if the colimit
\[
M \xto{x} M \xto{x} M \xto{x} ...
\]
is zero, i.e. this is precisely the kernel of the base change $\bD(R) \to \bD(R[x^{-1}])$. The right adjoint of this is simply the inclusion of those modules on which $x$ acts invertibly, in particular it is fully faithful.
\end{example}

\begin{example}
For any dualisable, stable $\infty$-category $\bC$ the sequence
\[
\bC \to \mathbf{Ind}(\bC^{\omega_1}) \to \mathbf{Calk}(\bC)
\]
is a Verdier seqeunce.
\end{example}

\begin{theorem}\label{fib_thm}[Quillen, Waldhausen, Thomasson,...]
Let $\mathbf{C} \to \mathbf{D} \to \mathbf{E}$ be a Verdier sequence in $\Prld$. Then the induced sequence
\[
K(\mathbf{C}) \to K(\mathbf{D}) \to K(\mathbf{E})
\]
is a fibre sequence of spectra. In particular we get a long exact sequence 
\[
\ldots  \to K_1(\mathbf{D}) \to K_1(\mathbf{E}) \to 
K_0(\mathbf{C}) \to K_0(\mathbf{D}) \to K_0(\mathbf{E}) \to K_{-1}(\mathbf{C}) \to ...
\]
\end{theorem}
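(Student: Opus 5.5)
The plan is to reduce the statement to the classical localisation theorem for small idempotent-complete stable $\infty$-categories (Thomason--Waldhausen, in Barwick's or Blumberg--Gepner--Tabuada's form). That theorem says connective and nonconnective $K$ send Verdier sequences of small stable categories, up to idempotent completion, to fibre sequences. We work through the Calkin construction of Efimov $K$-theory.

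\textbf{Step 1: the compactly generated case.} Suppose $\bC\to\bD\to\bE$ is a Verdier sequence of compactly generated categories whose functors preserve compact objects. The right adjoint $R_p$ is fully faithful and commutes with filtered colimits, so the compacts form a sequence $\bC^\omega\to\bD^\omega\to\bE^\omega$. This sequence is Verdier up to idempotent completion, by Neeman's/Thomason's theorem, and the classical localisation theorem applies.

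\textbf{Step 2: the general case.} First apply $\mathbf{Ind}((-)^{\omega_1})$. Because $i$ and $p$ are strongly continuous, they preserve $\omega_1$-compacts; strong continuity gives a right adjoint that commutes with colimits. Since the right adjoints of $i$ and $p$ preserve filtered colimits, they send $\omega_1$-compacts into $\mathbf{Ind}$-objects compatibly. The resulting sequence $\mathbf{Ind}(\bC^{\omega_1})\to \mathbf{Ind}(\bD^{\omega_1})\to \mathbf{Ind}(\bE^{\omega_1})$ is then a Verdier sequence of compactly generated categories, possibly after idempotent completion on $\omega_1$-compacts. Next, the Verdier sequences $\bC\to\mathbf{Ind}(\bC^{\omega_1})\to\mathbf{Calk}(\bC)$ (and likewise for $\bD,\bE$) are natural in strongly continuous functors. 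This uses that strongly continuous functors commute with $\jhat$ (the preceding lemma), so we get a $3\times3$ diagram. All rows and the middle column are Verdier sequences. A $3\times 3$ argument, i.e.\ commuting Verdier quotients, then shows that $\mathbf{Calk}(\bC)\to\mathbf{Calk}(\bD)\to\mathbf{Calk}(\bE)$ is a Verdier sequence of small stable categories up to idempotent completion.

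\textbf{Step 3: conclusion.} Apply classical $K$-theory to the Calkin sequence to get a fibre sequence, and then use Proposition~\ref{prop_last}(3), $K(\bC)\simeq\Omega K(\mathbf{Calk}(\bC))$, naturally in $\bC$. Looping a fibre sequence gives a fibre sequence, which proves the claim. The long exact sequence is the homotopy sequence of this fibre sequence.

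\textbf{Main obstacle.} I expect Step 2 to be the hardest, specifically verifying that $\mathbf{Calk}$ preserves Verdier sequences. There are two delicate points. One is whether $\bC^{\omega_1}\to\bD^{\omega_1}$ has image closed under retracts and equal to the kernel on $\omega_1$-compacts. The other is the compatibility of $\jhat$ with the localisation. I would attack this first through the $\mathbf{Fred}$ description and the natural transformations $\jhat\Rightarrow j$. Failing that, I would appeal to Efimov's result that $\mathbf{Calk}$ sends short exact sequences in $\Prld$ to short exact sequences up to idempotent completion. The idempotent-completion defect is harmless for nonconnective $K$-theory but affects $K_0$ of connective $K$. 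So the statement should be read for the nonconnective spectrum, as the paper indicates.
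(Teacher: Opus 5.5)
You should know that the paper does not prove this theorem. It quotes it as classical, and right after the Zariski-descent corollary it argues in the opposite direction: it deduces Proposition~\ref{prop_last}(3) from the theorem, applied to $\bC\to\mathbf{Ind}(\bC^{\omega_1})\to\Calk(\bC)$ together with $K(\mathbf{Ind}(\bC^{\omega_1}))=0$.

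You instead take the Calkin delooping $K(\bC)\simeq\Omega K(\Calk(\bC))$ as input. You then derive localisation from the Thomason--Waldhausen theorem for small categories by showing that $\Calk$ preserves Verdier sequences. This is essentially Efimov's proof, and it is what actually handles non-compactly generated categories; the classical results alone only give your Step~1. In the paper's ordering the delooping is a formal corollary; in yours it is the foundational input.

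Your outline is correct, but it is non-circular only under one condition. Proposition~\ref{prop_last}(3) must come from the independent Fredholm pullback-square argument sketched before the theorem, or from Efimov's definition $K(\bC):=\Omega K(\Calk(\bC))$. It must not come from the paragraph after the theorem.

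Two steps in Step~2 need real arguments.

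(a) The remark that the right adjoints send $\omega_1$-compacts ``into $\mathbf{Ind}$-objects compatibly'' does not show that $\bC^{\omega_1}\to\bD^{\omega_1}\to\bE^{\omega_1}$ is a Verdier sequence; indeed the right adjoints need not preserve $\omega_1$-compacts. What works is the following.
\begin{itemize}
\item An object of $\bD^{\omega_1}$ in the image of $i$ is $\omega_1$-compact in $\bC$, because $i$ is fully faithful and colimit-preserving. So the kernel is $\bC^{\omega_1}$.
\item Each $Y\in\bE^{\omega_1}$ is a retract of some $pX$ with $X\in\bD^{\omega_1}$: write $R_pY$ as an $\omega_1$-filtered colimit of $\omega_1$-compacts and use that $Y$ is $\omega_1$-compact.
\item For full faithfulness of $\bD^{\omega_1}/\bC^{\omega_1}\to\bE^{\omega_1}$, write $i^R$ for the right adjoint of $i$. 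Use the cofibre sequence $ii^RX'\to X'\to R_p\,pX'$ and the $\omega_1$-filtered colimit $i^RX'\simeq\colim_{Z\in\bC^{\omega_1}_{/i^RX'}}Z$. For $X\in\bD^{\omega_1}$ this gives $\mathbf{Map}_{\bE}(pX,pX')\simeq\colim_Z\mathbf{Map}_{\bD}(X,\mathrm{cofib}(iZ\to X'))$, which is exactly the Verdier-quotient formula.
\end{itemize}
This is where dualisability enters, through $\omega_1$-compact generation.

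(b) Since $\mathbf{Pr}^L_{st}$ is not stable, commuting cofibres in the $3\times3$ diagram only shows that $\Calk(\bD)\to\Calk(\bE)$ is the cofibre of $\Calk(\bC)\to\Calk(\bD)$. You also need $\Calk(\bC)\to\Calk(\bD)$ to be fully faithful. This follows from $\jhat_{\bD}\circ i\simeq\mathbf{Ind}(i)\circ\jhat_{\bC}$ together with full faithfulness of $\mathbf{Ind}(i|_{\bC^{\omega_1}})$. These give $\mathbf{Map}^{\mathcal{K}}_{\bD}(iX,iY)\simeq\mathbf{Map}^{\mathcal{K}}_{\bC}(X,Y)$, hence full faithfulness on the generators $jX/\jhat X$.
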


\begin{corollary}
$K$-theory satisfies Zariski descent as a functor 
\[
\{\text{Affine Opens in }\mathrm{Spec}(R)\}^{\mathbf{op}} \to \mathbf{Sp} \ . 
\]
Concretely if we have central elements $x, y \in R$ such that $1 \in (x,y)$. Then
\[
\begin{tikzcd}
K(R) \arrow{r} \arrow{d}& K(R[x^{-1}])\arrow{d} \\
K(R[y^{-1}] \arrow{r} & K(R[x^{-1}, y^{-1}])
\end{tikzcd}
\] 
\end{corollary}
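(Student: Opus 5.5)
The plan is to deduce the square from the fibration theorem (Theorem \ref{fib_thm}) applied to two Verdier sequences with the same kernel. Write $\mathbf{D}(R)^{x\text{-tor}}$ for the kernel of base change $\bD(R) \to \bD(R[x^{-1}])$, as in the example preceding Theorem \ref{fib_thm}. Similarly write $\bD(R[y^{-1}])^{x\text{-tor}}$ for the kernel of $\bD(R[y^{-1}]) \to \bD(R[x^{-1},y^{-1}])$. Both sequences
\[
\bD(R)^{x\text{-tor}} \to \bD(R) \to \bD(R[x^{-1}]), \qquad \bD(R[y^{-1}])^{x\text{-tor}} \to \bD(R[y^{-1}]) \to \bD(R[x^{-1},y^{-1}])
\]
are Verdier sequences in $\Prld$. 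The categories are compactly generated, the base change functors send compacts to compacts and so are strongly continuous, and their right adjoints (restriction) are fully faithful. Base change along $R \to R[y^{-1}]$ is strongly continuous and gives a map between these sequences. Applying $K$ and Theorem \ref{fib_thm} yields a map of fibre sequences of spectra. The square in the statement is then a pullback exactly when the induced map on fibres
\[
K(\bD(R)^{x\text{-tor}}) \to K(\bD(R[y^{-1}])^{x\text{-tor}})
\]
is an equivalence. (For the identification $K(\bD(R)) = K(R)$ we use the compactly generated comparison, Proposition \ref{prop_last}(2).)

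The key step is to show that base change $\bD(R)^{x\text{-tor}} \to \bD(R[y^{-1}])^{x\text{-tor}}$ is an equivalence. Since $1 \in (x,y)$, on any $x$-torsion module the element $y$ acts invertibly. Concretely, write $1 = ax+by$. I would first check that $y$ acts invertibly on $R/x^n$ (and its derived analogue, the cofibre of $x^n$). The argument is that $1 = (ax+by)^n \equiv b' y \bmod x^n$ for a suitable $b'$. The $x$-torsion category is generated under colimits by these objects, so every $x$-torsion $M$ satisfies $M \simeq M[y^{-1}] = M \otimes_R R[y^{-1}]$. Hence the restriction functor $\bD(R[y^{-1}])^{x\text{-tor}} \to \bD(R)^{x\text{-tor}}$ is inverse to base change. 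Both functors are strongly continuous, so we have an equivalence in $\Prld$ and thus an equivalence on $K$.

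The main obstacle is justifying that the torsion kernel is generated by the $R/x^n$ (equivalently $\mathrm{cofib}(x^n)$), or sidestepping this. To sidestep it, I would argue directly. For an $x$-torsion $M$, $M = \operatorname{fib}(M \to M[x^{-1}])$. Also $M[y^{-1}]$ is the colimit of multiplication by $y$. One shows $y$ is invertible on $M$ because $M \otimes_R R[x^{-1},y^{-1}]$ and $M \otimes_R R[x^{-1}]$ both vanish. Then the Zariski (Mayer–Vietoris) cofibre sequence $R \to R[x^{-1}] \oplus R[y^{-1}] \to R[x^{-1},y^{-1}]$ gives $M \simeq M[y^{-1}]$. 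This exactness of the Čech complex for the cover, a classical commutative-algebra fact for central $x,y$ generating the unit ideal, is what I would verify carefully. Noncentrality is not an issue since $x,y$ are central.
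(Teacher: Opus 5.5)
Your proposal is correct and follows the paper's route: Verdier sequences for $x$-localisation over $R$ and over $R[y^{-1}]$, then the fibration theorem, then reduction to showing that base change on the $x$-torsion kernels is an equivalence because $y$ acts invertibly on $x$-torsion modules. The only difference is the last step, where the paper avoids both your generation argument and the Mayer--Vietoris check: it notes that $M/y=\mathrm{cofib}(y\colon M\to M)$ is again $x$-torsion while $x$ acts invertibly on it (since $x$ is a unit modulo $y$), so $M/y=0$.
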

\begin{proof}
For the diagram of categories
\[
\begin{tikzcd}
\bD(R) \arrow{r} \arrow{d}& \bD(R[x^{-1})\arrow{d} \\
\bD(R[y^{-1}]) \arrow{r} & \bD(R[x^{-1},y^{-1}])
\end{tikzcd}
\]
the induced functor on horizontal kernels is an equivalence since for an $R$-module $M$ which is $x$-torsion, we find that the $R/y$ module $M/y$ has $x$-invertible (as $1 = (x)$ and since it is also $x$-torsion it has to be zero. But $M/y = 0$ means that $y$ acts invertibly, i.e. $M$ is an $R[y^{-1}]$-module. 
\end{proof}

\begin{remark}
One can more generally deduce Nisnevich descent for $K$-theory \cite{thomason2013higher} with the same argument. 
\end{remark}

We also see that the sequence
\[
\bC \to \mathbf{Ind}(\bC^{\omega_1}) \to \mathbf{Calk}(\bC)
\]\
together with the fact that $K(  \mathbf{Ind}(\bC^{\omega_1}) ) = 0$ implies that $K(\bC) = \Omega K(\mathbf{Calk}(\bC))$ as stated in Proposition \ref{prop_last}(3).

\begin{remark}
Since the formula 
$K(\bC) = \Omega K(\mathbf{Calk}(\bC))$ 
holds for any $\bC$ we can also iterate it. This means that we can get that
\[
K(\bC) = \Omega^n K(\mathbf{Calk}^n(\bC))
\]
In particular we find that 
\[
K_{-n}(\bC) = K_0(\Calk^n(\bC)) \ .
\]
This essentialy determines negative $K$-theory and can be used to define it (which we haven't done so far). Together with Corollary \ref{cor_deloop} we get a formula for each $K$-group of $\bC$ as a $K_0$-group of a category. Said differently: $\Calk(-)$ behave like a suspension and $\Shv(\R;-)$ like a loop functor on $\Prld$, at least through the eyes of $K$-theory. 
\end{remark}

\begin{example}
The sequnce $\mathbf{D}(R,I) \to \mathbf{D}(R) \to \mathbf{D}(R/I)$ of Example \ref{almost} is a Verdier sequnce. Here note that $\mathbf{D}(R,I)$ is generally not compactly generated. 
\end{example}

\subsection{Non-unital rings}

We will now deal with non-unital rings $R$. A lot of things that we will do now depend on a base ring $k$, which is a commutative ring spectrum. One example is that $k = \mathbb{S}$ is the sphere spectrum, in which case we are simply dealing with usual ring spectra. If $k = \mathbb{Z}$ is the integers, then we get $\Z$-linear ring spectra, which can be modelled by DGA's. It is perfectly fine to think of those. For the moment, it is also fine to think of ordinary rings. but later we will definitely need things that are homotopically not discrete, i.e. have higher homotopy resp. homology groups. 

For a non-unital $k$-algebra $R$, there is the unitalisation 
\[
R_k^+ = R \oplus k
\]
which is a unital $k$-algebra.  We note that the unitalisation depends on the fixed base $k$.
$R^+_k$ comes with a map of unital $k$-algebras $R_k^+ \to k$, i.e. an augmentation. 
In fact, one can show (see \cite{HA}) that this construction consitutes and equivalence between non-unital $k$-algebra and augmented unital $k$-algebras. Sometimes it is even better to think of non-unital $k$-algebras by means of the map $R_k^+ \to k$. 

\begin{definition}
Let $R$ be a non-unital $k$-algebra. We define
\[
K(R) := \mathrm{fib}( K\left(R_k^+\right) \to K(k)).
\]
\end{definition}

Note that this definition does depend on the base-ring $k$. We suppress this from the notation. We could for example always use $k = \mathbb{S}$  for a definitive definition.  We will soon start investigating, in which generality this does in fact not depend on the base and find that in favourable situations it does not.

\begin{exercise}
Show that if $R$ is unital this definiton agrees with the definition of $K$-theory for unital rings and in particular does not depend on the base $k$.
\end{exercise}

\begin{example}
Consider $R = \mathbb{Z}$ as a ring with the trivial multiplication. Then we have that the unitalisation over $k = \Z$ is given by
\[
R^+ = \Z[x] / x^2 
\]
with the map to $k$ given by evaluation at $0$. If we unitalise with respect to the sphere, we get a different ring \(\mathbb{S} \oplus \mathbb{Z}\). One can show that 
in this case the $K$-theory indeed depends on the base. 
\end{example}

We say that a ring $R$ is locally unital, if it is a filtered colimit of unital rings along non-unital maps. 
\begin{lemma}
Being locally unital for a ring is equivalent to the assertion that 
it has local units, that is for any finite family of elements $r_1,...,r_n \in R$ there exists an idempotent $e \in R$ such that 
\[
e r_i = r_i e = r_i \qquad \text{for all $i$} \ .
\]
\end{lemma}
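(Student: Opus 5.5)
The plan is to prove the two directions separately, working with ordinary (discrete) rings. ``Locally unital'' means that $R\cong \colim_{i\in I} R_i$ is a filtered colimit, taken in non-unital rings, of unital rings $R_i$. The transition maps $\varphi_{ij}\colon R_i\to R_j$ are multiplicative and additive but need not send $1$ to $1$.

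For ``locally unital $\Rightarrow$ local units'', take $r_1,\dots,r_n\in R$. Because the colimit is filtered and filtered colimits of rings are computed on underlying sets, all $r_k$ come from a single stage: there are $i\in I$ and $s_1,\dots,s_n\in R_i$ whose images are the $r_k$. Let $e$ be the image of $1_{R_i}$ under $R_i\to R$. This map is multiplicative, so $e^2=e$ and $e r_k = r_k e = r_k$. This direction is immediate.

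For the converse, suppose $R$ has local units. Index by the poset $E$ of idempotents of $R$, ordered by $e\le f$ iff $fe=ef=e$. To each $e$ assign the corner ring $eRe$, which is unital with unit $e$, and for $e\le f$ take the inclusion $eRe\subseteq fRf$; it is a non-unital ring map, valid because $e=fef$. Then:
\begin{itemize}
\item $\bigcup_e eRe=R$, since every $r$ satisfies $r=ere$ for a local unit $e$ of $\{r\}$;
\item once $E$ is shown to be directed, the union is a filtered colimit with injective transition maps, so the canonical map $\colim_E eRe\to R$ is a bijective ring map, i.e. an isomorphism.
\end{itemize}

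The main obstacle is directedness of $E$: given idempotents $e_1,e_2$, one needs an idempotent $f$ with $e_i\le f$ on both sides. Applying local units to $\{e_1,e_2\}$ yields $g$ with $ge_i=e_ig=e_i$, which is exactly $e_i\le g$. So the lemma's hypothesis, two-sided for the whole finite family, gives directedness directly. (If one only had one-sided units, one would first have to symmetrise, e.g. via $f=g+h-hg$, but that is not needed here.)

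For nonempty families the equivalence therefore follows. $E$ is nonempty because $0$ is an idempotent, and $0R0=0$ is the zero ring, which is unital; this also covers $R=0$. If the intended reading is $\infty$-categorical (ring spectra), I would restrict to discrete $R$, where the filtered colimit in connective ring spectra agrees with the $\pi_0$-level union above.
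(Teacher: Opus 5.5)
Your proof is correct and follows essentially the same route as the paper. In one direction you take the image of the unit of a stage containing all the $r_i$; in the other you write $R$ as the filtered union of the corner rings $eRe$ over the poset of idempotents, with directedness coming from applying local units to $\{e,e'\}$, and you also verify the small points the paper leaves implicit ($eRe\subseteq fRf$ for $e\le f$, and that the poset is nonempty).
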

\begin{proof}
Clearly if $R$ is a filtered colimits of unital rings $R_i$, every finite sequence $r_1,...,r_n \in \pi_*(R)$ lies in some $R_i$ and thus we can take $e$ to be the unit of $R_i$. 

Conversely for an idempotent consider the subring $eRe \subseteq R$
which is a unital ring. Moreover we can consider the poset of idempotents  in $R$ ordered by 
\[
e \leq e'  \quad \text{if} \quad ee' = e'e = e \ .
\]
This is a poset as one easily verifies and it is filtered, since for two idempotents $e, e'$ there exists by assumption a third $e''$ that unitalizes both, i.e. $e \leq e''$ and $e' \leq e''$. For $e \leq e'$ we moreover have and inclusion $eBe \subseteq e'Be'$ . 
Then we consider the ring
\[
\colim_{e} eBe \subseteq R .
\]
This included injectively into $R$ and since each $r \in R$ is unitalised by some $e$, i.e. of the form $r = ere$ it is also surjective.
\end{proof}

\begin{lemma}
Assume that $R$ is locally unital. Then the definition of $K$-theory does not depend on the base.
\end{lemma}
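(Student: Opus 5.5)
The plan is to reduce the comparison of bases to the unital case via filtered colimits and excision-free arguments. Let $R$ be locally unital, so $R \simeq \colim_e eRe$ along the filtered poset of idempotents, each $eRe$ unital, with the transition maps being non-unital inclusions. For any base $k$ (and a map of bases $k \to k'$, e.g. $\mathbb{S} \to k$), unitalisation commutes with filtered colimits: $R_k^+ \simeq \colim_e (eRe)_k^+$ as augmented unital $k$-algebras, since $R \mapsto R \oplus k$ preserves filtered colimits and the colimit is computed on underlying modules. Algebraic $K$-theory of unital rings commutes with filtered colimits (this is standard: $\mathbf{Perf}(\colim_i A_i) \simeq \colim_i \mathbf{Perf}(A_i)$ in $\mathbf{Cat}_\infty^{\mathrm{perf}}$, and $K$ commutes with filtered colimits of small stable categories). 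Taking fibres over $K(k)$, which commutes with filtered colimits in spectra, gives
\[
K_k(R) \simeq \colim_e \mathrm{fib}\bigl(K((eRe)_k^+) \to K(k)\bigr) = \colim_e K_k(eRe).
\]

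Next, for a unital $k$-algebra $A$ (here $A = eRe$), I would show $K_k(A) \simeq K(A)$ as in the preceding exercise: the map $A_k^+ = A \oplus k \to A \times k$, $(a,\lambda) \mapsto (a + \lambda 1, \lambda)$, is an isomorphism of augmented unital $k$-algebras, and $K(A \times k) \simeq K(A) \oplus K(k)$ since $\mathbf{Perf}(A \times k) \simeq \mathbf{Perf}(A) \times \mathbf{Perf}(k)$. Hence the fibre over $K(k)$ is $K(A)$, independent of $k$.

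The main obstacle is naturality: the transition maps $eRe \to e'Re'$ are non-unital, so I must check that the identification $K_k(eRe) \simeq K(eRe)$ is compatible with them, with the right-hand side given the functoriality of $K$ along non-unital maps between unital rings, namely base change along the bimodule $e'Re'\,e$ (a retract of $e'Re'$, hence perfect), i.e. $P \mapsto P \otimes_{eRe} e(e'Re')$. Concretely I would verify that under $A_k^+ \cong A \times k$ the unital map $(eRe)_k^+ \to (e'Re')_k^+$ corresponds to the map $A \times k \to A' \times k$ inducing on perfect modules $(P,Q) \mapsto (P \otimes_A eA',\, (1-e)A' \otimes Q \text{ part}) \oplus Q$; the extra summand lives in the $K(k) \to K(A')$ direction via $1-e$ and cancels after taking fibres over $K(k)$, since on the fibre only the $A$-component contributes. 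This gives a $k$-independent colimit diagram $e \mapsto K(eRe)$, and comparing along $\mathbb{S} \to k$ yields $K_{\mathbb{S}}(R) \simeq K_k(R)$.
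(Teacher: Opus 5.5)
Your proposal is correct and is the paper's argument: write $R\simeq\colim_e eRe$ as a filtered colimit of unital rings, commute unitalisation, $K$-theory and the fibre over $K(k)$ past this colimit, and use that the definition is base-independent for the unital pieces. The paper does not address compatibility with the non-unital transition maps at all; your explicit check of them works, but you can skip it by observing that the comparison map $K_{\mathbb{S}}(eRe)\to K_k(eRe)$, induced by $(eRe)^+_{\mathbb{S}}\to(eRe)^+_k$, is already natural in $e$ and is an equivalence for each $e$, so its colimit is an equivalence.
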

\begin{proof}
Write $R = \colim R_i$. 
It is easy to see that $K(R) = \colim K(R_i)$, but the pieces do not depend on the base. 
\end{proof}

\begin{definition}
A non-unital ring $R$ is called $H$-unital if $R \otimes_{R^+} R \to R$ is an equivalence, that is if it is an almost ideal. We recall again that all tensor products are derived and in fact everything here also works for ring spectra.
\end{definition}

Note that this definition a priori also depends on the base, thus we make it over the sphere. But one can show that this is in fact independent of the base, at least in the connectiv situation. Without connectictity assumptions, the condition over the sphere implies it for all other bases.

\begin{example}\label{example_H}
Locally unital rings are $H$-unital. All $C^*$-algebras when considered as rings are $H$-unital (Wodzicki). If $I \subseteq R$ is an ideal that is idempotent, then $I$ as a non-unital ring is $H$-unital.\footnote{Here if we work with ring spectra, we have to assume that \(I\) and \(R\) are connective.}
\end{example}

For a $H$-unital ring $R$ we define the $H$-unital derived category as
\[
\bD^H(R) := \bD(R^+_k, R) = \mathrm{ker}(\bD(R^+) \to \bD(k)) \ .
\]
That is a module $M$ over $R^+$ is $H$-unital if and only if $R \otimes_{R^+} M \to M$ is an equivalence.

\begin{theorem}[Tamme]\cite{Tamme}
The category $\bD^H(R)$ is independent of the base and dualisable. We have that 
\[
K(R) = K(\bD^H(R))
\] 
which is independent of the base. 
\end{theorem}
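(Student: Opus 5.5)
The plan is to prove the three assertions of Tamme's theorem in turn: dualisability, independence of the base, and the identification of \(K\)-theory.

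\textbf{Dualisability.} Set \(A = R^+_k\), with augmentation \(A \to k\) and fibre \(R\). \(H\)-unitality says that \(R \otimes_A R \to R\) is an equivalence. Hence \(R\) is an idempotent left \(A\)-algebra in the sense of Example \ref{almost}, and \(A \to k\) is a homological epimorphism, i.e.\ \(k \otimes_A k \simeq k\). Indeed, applying \(-\otimes_A k\) to the fibre sequence \(R \to A \to k\) gives \(R\otimes_A k \simeq 0\), because \(R \otimes_A k\) is the cofibre of the equivalence \(R\otimes_A R \to R\).

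From this, the restriction functor \(\bD(k) \to \bD(A)\) is fully faithful. Both of its adjoints exist and preserve colimits: the left adjoint is \(k\otimes_A -\), and the right adjoint is \(\underline{\mathsf{Hom}}_A(k,-)\), which preserves colimits because \(k\) is compact over \(A\)? That compactness is not available in general, so I will avoid needing it.

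Instead I will write \(\bD^H(R)\) as the essential image of the colimit-preserving idempotent functor \(M \mapsto R\otimes_A M\). This exhibits \(\bD^H(R)\) as a retract in \(\mathbf{Pr}^L\) of the compactly generated category \(\bD(A)\): the inclusion is left adjoint to \(R\otimes_A-\), which lands back in \(\bD^H(R)\). Theorem \ref{thm:dualisable-char}(4) then gives dualisability. I also need the kernel sequence \(\bD^H(R) \to \bD(A) \to \bD(k)\) to be a Verdier sequence in \(\Prld\). For this I check three things: the inclusion is strongly continuous (its right adjoint \(R\otimes_A-\) preserves colimits); base change \(k\otimes_A -\) has a fully faithful right adjoint (restriction, using \(k\otimes_A k\simeq k\)); and the kernel of base change is exactly the \(H\)-unital modules, via the fibre sequence \(R\otimes_A M \to M \to k\otimes_A M\).

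\textbf{Independence of the base.} Take two bases, \(\mathbb{S}\to k\). Since \(R^+_k \simeq R^+_{\mathbb{S}} \otimes_{\mathbb{S}^+ }\)-type comparisons are awkward, I will instead compare directly along the ring map \(R^+_{\mathbb{S}} \to R^+_k\), which restricts to the identity on \(R\). I will show that restriction of scalars identifies the two kernels. Both are generated under colimits by modules of the form \(R\otimes_{A} N\), and the key computation is \(R\otimes_{R^+_{\mathbb{S}}} R^+_k \simeq R\). I expect this computation to be the main obstacle. The idea is to use the fibre sequence \(R \to R^+_k \to k\) together with \(R\otimes_{R^+_{\mathbb{S}}} k \simeq R\otimes_{R^+_{\mathbb{S}}} \mathbb{S}\otimes_{\mathbb{S}} k\simeq 0\). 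This vanishing uses the \(H\)-unitality over \(\mathbb{S}\), which is exactly why the text fixes \(H\)-unitality over the sphere. Given this computation, the two idempotent functors agree, so the categories coincide.

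\textbf{Identification of \(K\)-theory.} Apply Theorem \ref{fib_thm} to the Verdier sequence \(\bD^H(R)\to\bD(A)\to\bD(k)\). This yields a fibre sequence
\[K(\bD^H(R)) \to K(R^+_k) \to K(k),\]
so \(K(\bD^H(R)) \simeq \mathrm{fib}(K(R^+_k)\to K(k)) = K(R)\) by definition. Since the left-hand side has already been shown to be independent of \(k\), so is \(K(R)\).
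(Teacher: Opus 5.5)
Your proposal is correct and follows the route the paper indicates. The paper gives no proof of its own and only cites Tamme, but it frames $\bD^H(R)$ as the almost category $\bD(R^+_k,R)$ of Example~\ref{almost}, sitting in the Verdier sequence $\bD^H(R)\to\bD(R^+_k)\to\bD(k)$, and it gets base-independence from the remark that $\bD(R,I)=\bD^H(I)$ depends only on $I$.

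The one thing to tidy is the order of your steps. Your dualisability step already uses $R\otimes_{R^+_k}R\simeq R$, which is $H$-unitality over $k$ rather than over $\mathbb{S}$. This follows from the mirror image of your base-change computation, $R^+_k\otimes_{R^+_{\mathbb{S}}}R\simeq R$, via $R\otimes_{R^+_k}R\simeq R\otimes_{R^+_k}R^+_k\otimes_{R^+_{\mathbb{S}}}R\simeq R\otimes_{R^+_{\mathbb{S}}}R\simeq R$, so the base comparison should come first.
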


More generally we have:
If $R \to S$ is a morphism of ring spectra whose fiber $I$ is $H$-unital, then $I$ is also idempotent in $R$ (in the sense of Example  \ref{almost}) and 
\[
\bD(R,I) = \bD^H(I).
\]
In particular, the category $\bD(R,I)$ only depends on $I$ and not on $R$.

\begin{corollary}
If we have $I \to R \to R/I$ with $H$-unital ideal $I$,  then we have an excision fibre sequence
\[
K(I) \to K(R) \to K(S) \ .
\]
\end{corollary}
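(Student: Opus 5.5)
The corollary should follow from two facts already in hand: the Verdier sequence localisation theorem (Theorem \ref{fib_thm}), and the identification of the kernel category with the $H$-unital derived category of $I$ (the remark just before the statement). I write $S = R/I$ throughout, so the cofibre sequence of ring spectra is $I \to R \to S$ with $I$ an $H$-unital fibre.

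First I would set up the sequence of categories
\[
\bD(R,I) \to \bD(R) \to \bD(S),
\]
where the second functor is base change $S\otimes_R -$ and $\bD(R,I)$ is its kernel. I would check that this is a Verdier sequence in $\Prld$, as in Example \ref{almost}.

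1. The base change functor is strongly continuous. Its right adjoint is restriction of scalars, which preserves all colimits, and $S$ is compact as an $R$-module in the relevant sense. More precisely, restriction has a further right adjoint $\mathbf{Hom}_R(S,-)$.
2. The right adjoint of base change, namely restriction $\bD(S) \to \bD(R)$, is fully faithful. This amounts to showing $S\otimes_R S \to S$ is an equivalence. Tensoring the cofibre sequence $I \to R \to S$ with $S$ over $R$ reduces this to $I \otimes_R S \simeq 0$. Tensoring once more with $I$ reduces it further to $I\otimes_R I \simeq I$, which is exactly the idempotence of $I$ supplied by the remark (it follows from $H$-unitality).
3. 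The inclusion of the kernel is fully faithful, and the kernel is dualisable. For dualisability I would use Tamme's theorem via $\bD(R,I)=\bD^H(I)$. Strong continuity of the inclusion follows because its right adjoint, $M\mapsto \mathrm{fib}(M \to S\otimes_R M)$, preserves colimits.

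Applying Theorem \ref{fib_thm} then gives a fibre sequence
\[
K(\bD(R,I)) \to K(R) \to K(S).
\]
Here $\bD(R)$ and $\bD(S)$ are compactly generated, so by Proposition \ref{prop_last}(2) Efimov $K$-theory agrees with the usual $K$-theory of these rings.

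Finally I would identify the first term. The remark gives $\bD(R,I)\simeq \bD^H(I)$, and Tamme's theorem gives $K(\bD^H(I)) \simeq K(I)$, defined as $\mathrm{fib}(K(I^+_k)\to K(k))$. I still need to check that the resulting map $K(I)\to K(R)$ is the one induced by $I\to R$. For this I would compare the two Verdier sequences
\[
\bD^H(I)\to\bD(I^+)\to\bD(k) \quad\text{and}\quad \bD(R,I)\to\bD(R)\to\bD(S),
\]
which are linked by base change along $I^+ \to R$. The induced functor on kernels is the equivalence above.

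The main obstacle is step 2 together with the identification $\bD(R,I)\simeq\bD^H(I)$. This is where $H$-unitality is used essentially: one must show that idempotence of $I$ over $R$ follows from $H$-unitality over $I^+$. The comparison rests on
\[
I\otimes_R I \simeq I\otimes_{I^+} I,
\]
and I would try to derive it from the fact that $R\otimes_{I^+}-$ carries $I$-local objects to $I$-local objects. Without connectivity hypotheses this is delicate, so I would lean on the remark preceding the statement.
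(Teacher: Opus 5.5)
Your proposal is correct and follows the same route as the paper. Like the paper, you take the Verdier sequence $\bD(R,I)\to\bD(R)\to\bD(S)$ of Example~\ref{almost}, apply the localisation theorem, and identify the kernel via the preceding remark $\bD(R,I)\simeq\bD^H(I)$ together with Tamme's $K(\bD^H(I))\simeq K(I)$.

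Two small corrections. First, $S$ need not be compact over $R$; it is not in the almost-mathematics example. Strong continuity of base change only needs restriction to have the right adjoint $\mathbf{Hom}_R(S,-)$, as you go on to say. Second, the step you call delicate is elementary and needs no connectivity. Tensor $I\to R\to S$ with $I$ over $I^+$ and use $S\otimes_{I^+}I\simeq S\otimes_k(k\otimes_{I^+}I)\simeq 0$. This gives $R\otimes_{I^+}I\simeq I$, and hence $I\otimes_R I\simeq I\otimes_{I^+}I\simeq I$.
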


The problem is that in general this will not be satisfied in cases of interest, for example for the classical case 
\[
R = \Z[C_p] = \frac{\Z[x]}{x^p -1}  \quad \text{with} \quad I = (x-1) \ .
\]
In this case the ideal is not $H$-unital, so that we do not get a fibre sequence.

\subsection{General Excision}

Let $I \to A \to A/I$ be an ideal sequence of (non-unital) rings. One of the questions one can ask if the induced sequence
\[
K(I) \to K(A) \to K(A/I)
\]
is a fibre sequence. This is generally not the case. If $I, A$ and $A/I$ are $H$-unital it is the case though, since then the functor
\[
\bD^H(A) \to \bD^H(A/I) 
\]
is a Verdier quotient with fibre the H-unital $I$-modules. In general the situation is a bit more complicated and clarified by the following result of Land--Tamme:

\begin{theorem}[Land-Tamme]\label{thm_Land}\cite{land2019k}
For an ideal $I$ in a non-unital ring spectrum $A$ there is another non-unital ring spectrum $A/ I^\circ$ with a factorization $A \to A/ I^\circ \to A/I$ of the projection and a nullhomotopy of the composite from $I \to A \to A/ I^\circ $ (factoring the one for $I \to A \to A/I$) such that
the induced sequence
\[
K(I) \to K(A) \to K(A / I^\circ)
\]
is a fibre sequence. Moreover as a spectrum we have that 
\[
I^\circ = A^+ \otimes_{I^+} I \ . 
\] 
\end{theorem}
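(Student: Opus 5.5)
We prove the Land--Tamme theorem by reducing it to the Verdier-sequence form of excision (Theorem \ref{fib_thm}), using the $H$-unital description of $K$-theory from Tamme's theorem together with Example \ref{almost}. Throughout we work over the sphere, unitalise everything (replace $A$ by $A^+$), and view $I$ as a two-sided ideal of $A^+$ with quotient $A^+/I$. The key object is the $A^+$-bimodule
\[
I^\circ := A^+ \otimes_{I^+} I .
\]

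\textbf{Step 1: build the ring $A/I^\circ$.} We form a unital ring spectrum $(A/I)^\circ$ with underlying spectrum $\mathrm{cofib}(A^+\otimes_{I^+} I \to A^+)$. The cleanest construction is as an endomorphism ring in $\bD(A^+)$: take the cofibre $Q := \mathrm{cofib}(A^+\otimes_{I^+} I \to A^+)$ of left $A^+$-modules, and set $(A/I)^\circ := \mathrm{End}_{A^+}(Q)^{\op}$, or equivalently the endomorphisms of $Q$ in a suitable Verdier quotient. We then check that:
\begin{itemize}
\item the map $A^+ \to (A/I)^\circ$ is induced by right multiplication;
\item the factorisation through $A^+/I$ comes from $I\cdot Q\simeq 0$ after passing to $A^+/I$;
\item the underlying spectrum is the cofibre above, which is a $\mathrm{Tor}$ computation using $I^+\to A^+$.
\end{itemize}
The nullhomotopy of $I \to A \to A/I^\circ$ is then the canonical one from the cofibre sequence.

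\textbf{Step 2: produce a Verdier sequence.} Let $\bD_I \subseteq \bD(A^+)$ be the localising subcategory generated by the image of the base change $A^+\otimes_{I^+}(-)\colon \bD^H(I)\to \bD(A^+)$. We show three things.
\begin{enumerate}
\item This base change is fully faithful on $\bD^H(I)$. The reason is that for $M$ an $H$-unital $I^+$-module, $I\otimes_{I^+}(A^+\otimes_{I^+} M)\simeq I\otimes_{I^+}M\simeq M$, using that $I$ is an ideal, so that $I\otimes_{I^+} A^+\to I$ behaves as a left-module equivalence after tensoring with $H$-unital objects.
\item It preserves compact objects, or is strongly continuous in the dualisable sense.
\item The Verdier quotient $\bD(A^+)/\bD_I$ is compactly generated by the image of $A^+$, whose endomorphism ring is exactly $(A/I)^\circ$ by Step 1.
\end{enumerate}
Together these give a Verdier sequence $\bD^H(I)\to \bD(A^+)\to \bD((A/I)^\circ)$ in $\Prld$. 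Theorem \ref{fib_thm} and Tamme's identification $K(I)=K(\bD^H(I))$ then yield the fibre sequence. Finally we remove unitalisations by taking fibres over $K(\mathbb{S})$.

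\textbf{Main obstacle.} The hardest part is Step 2(1) combined with Step 1: we must prove full faithfulness and identify the quotient's endomorphism ring with $A^+/I^\circ$ without assuming $I$ is $H$-unital in $A$. First we would try the Schwede--Shipley style argument: compute $\mathrm{Map}_{\bD(A^+)}(A^+,A^+)$ modulo maps factoring through $\bD_I$ via the cofibre $A^+\otimes_{I^+}I\to A^+$, showing this map is the $\bD_I$-colocalisation of $A^+$. That requires verifying that $\mathrm{Map}(A^+\otimes_{I^+} I, Q)=0$, which reduces by adjunction to $I\otimes_{I^+}Q\simeq 0$. This last vanishing is where the specific formula $I^\circ=A^+\otimes_{I^+}I$ is forced.
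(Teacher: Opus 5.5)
Your proposal has a genuine gap. Every step that does real work silently assumes that $I$ is $H$-unital. In that case $I^\circ=A^+\otimes_{I^+}I\simeq I$, and the statement collapses to the excision corollary of Tamme's theorem; the entire content of Land--Tamme is the non-$H$-unital case. Concretely, three steps fail.

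(i) Tamme's identification $K(I)\simeq K(\bD^H(I))$ is only available for $H$-unital $I$. Take $I=\Z$ with zero multiplication over $k=\Z$, so $I^+=\Z[\epsilon]/\epsilon^2$. As a right $I^+$-module, $I^+$ is an extension of $k$ by $k$ (via $\epsilon I^+\subset I^+$). Hence every $M\simeq I^+\otimes_{I^+}M$ is an extension of $k\otimes_{I^+}M$ by itself, and $\bD^H(I)=0$. Yet $K_1(I)\neq 0$, since the unit $1+\epsilon$ has nontrivial determinant and maps to $1$ in $K_1(\Z)$. So no Verdier sequence with first term $\bD^H(I)$ can produce $K(I)=\mathrm{fib}(K(I^+)\to K(k))$.

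(ii) In Step 1, the identification $\mathrm{End}_{A^+}(Q)\simeq Q$ is not a Tor computation. It needs $\mathrm{Map}_{A^+}(A^+\otimes_{I^+}I,Q)\simeq \mathrm{Map}_{I^+}(I,Q)$ to vanish. Already for $A=I$ one has $Q\simeq k$, and $\mathrm{End}_{I^+}(k)$ is the Koszul dual of $I^+$ (for $\Z[\epsilon]/\epsilon^2$ it has $\Z$ in every nonpositive degree). The theorem, by contrast, predicts $(A/I^\circ)^+=k$.

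(iii) Identifying $A^+\otimes_{I^+}I\to A^+$ as the $\bD_I$-colocalisation requires $A^+\otimes_{I^+}I\in\bD_I$. For $A=I$ this is literally the condition $k\otimes_{I^+}I\simeq 0$, i.e.\ $H$-unitality.

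Your Step 2(1) is actually fine, because $I^+$ acts on $A^+/I^+\simeq A/I$ through $k$. But it does not rescue the other steps.

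The missing idea is Land--Tamme's lax pullback, which is exactly what the paper's proof invokes by applying their theorem to the Milnor square with corners $I^+$, $A^+$, $k$, $(A/I)^+$. There the fully faithful embedding is $\mathrm{Perf}(I^+)\hookrightarrow \mathrm{Perf}(k)\overrightarrow{\times}_{\mathrm{Perf}((A/I)^+)}\mathrm{Perf}(A^+)$, not $\bD^H(I)\to\bD(A^+)$. By additivity, the $K$-theory of the lax pullback is $K(k)\oplus K(A^+)$. The Verdier quotient is generated by a single object whose endomorphism ring, computed inside the lax pullback rather than in $\bD(A^+)$, is the $\odot$-ring $B$ with underlying spectrum $A^+\otimes_{I^+}k$. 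This yields a pullback square with top row $K(I^+)\to K(A^+)$ and bottom row $K(k)\to K(B)$. So $K(I)$ appears directly as the fibre of the left column, with no $H$-unitality hypothesis. Since $B$ lies over $(A/I)^+$ and hence over $k$, it is $(A/I^\circ)^+$, and $I^\circ=\mathrm{fib}(A^+\to A^+\otimes_{I^+}k)\simeq A^+\otimes_{I^+}I$.
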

\begin{proof}
We apply Land-Tamme's construction to the square 
\[
\begin{tikzcd}
I^+ \arrow{r} \arrow{d}& A^+ \arrow{d} \\
k \arrow{r} &  (A/I)^+
\end{tikzcd}
\] 
and note that the result of there construction, called the circle dot ring, is a ring spectrum $B$ over $(A/I)^+$, thus it fibre over $k$ is a non-unital ring spectrum,
 i.e. $B = (A/I^{\circ})^+$ with the desired properties. For the formula we want to compute
 \[
 I^\circ = \mathrm{fib}(A \to A/I^{\circ}) = \mathrm{fib}(A^+ \to (A/I^{\circ})^+) 
 \]
We have by Land-Tamme's results that as a spectrum
\[
B = A^+ \otimes_{I^+} k
\]
with the map from $A^+ \to B$ given by the inclusion in the left tensor factor, i.e. writing $A^+$ as 
\[
A^+ = A^+ \otimes_{I^+} I^+
\]
and projecting in the second factor. Thus the fibre of this map is given by
\[
 I^\circ =  A^+ \otimes_{I^+} I
\]
as desired. 
\end{proof}

\begin{remark}
The ring sprectrum $A/ I^\circ $ depends on a choice of base $k$. 
\end{remark}

\begin{exercise}
Show that for ordinary rings  $I$ and $A$ the map $A / I^\circ \to A/I$ is an isomorphism on $\pi_0$. This is in fact true for all connective ring spectra with connective ideal
\end{exercise}

As a result, we get that the map 
$K(A / I^\circ) \to K(A/I)$ is an isomorphism in degrees $\leq 1$ provided the base $k$ is connective, since $K$-theory increases connectivity by $1$. In general one should see the failure of the map 
$A / I^\circ \to A/I$ being an equivalence in $K$-theory as quantising the failure of excision. 

In fact, it turns out that Theorem \ref{thm_Land} holds much more generally than for $K$-theory. 

\begin{definition}
A localizing invariant is a functor
\[
E:  \Prld \to \mathbf{Sp}
\]
which sends Verdier sequences to fibre sequences.
\end{definition}

The real theorem of Land-Tamme is that for any localizing invariant $E$ we have a fibre sequence
\[
E(I) \to E(A) \to E(A / I^\circ) . 
\]

\subsection{Homotopy K-theory}

In this section we would like to define homotopy $K$-theory. The key is to define a simplicial commutative ring spectrum
\[
S \in \Delta \mapsto \mathbb{S}^{\Delta^S}
\]
Usually this is done over the integers by defining 
\[
\Z^{\Delta^n} = \frac{\Z[x_0,...,x_n]}{\sum x_i -1}
\]
The problem with this approach however is, that this does not make sense for $\mathbb{E}_\infty$-rings over the sphere since one cannot quotient by arbitrary 
polynomials.\footnote{The only strict elements in $\pi_0(\mathbb{S}[x_0,...,x_n]) = \Z[x_0,...,x_n]$ are $0$ and monomials. These are the elements by which one can quotient in an 
$\mathbb{E}_\infty$ way.} Instead we will take a slighly different approach. Recall that an element in $\Delta$ is a non-empty, finite, linearly ordered set. All of them are of the form 
\[
[n] = \{0<1<...<n\}
\]
for some natural number $n$. An interval is a finite linearly ordered set with a minimum and maximum that are different. All of those are of the form $[n]$ for $n \geq 1$. 
While the category $\Delta$ is given by order preserving maps, the category of intervals is given by intervals and maps that send the minimum to the minimum and the maximum to the maximum. One has that the category of intervals is equivalent to the opposite category $\Delta^\op$ where the functor sends $S \in \Delta$ to the intervall $\mathrm{Cut}(S) = \{ S = T \sqcup T' \mid T < T'\}$ of partitions of $S$ into two subsets.
\begin{definition}
For an intervall $I$ we define a commutative ring spectrum
\[
\widetilde{\mathbb{S}}[I] = \frac{\mathbb{S}[x_i \mid i \in I]}{x_\mathrm{min} = 0, x_{\mathrm{max}} = 1} \ .
\]
For an object $S \in \Delta$ we define 
\[
\mathbb{S}^{\Delta^S} =  \widetilde{\mathbb{S}}[\mathrm{Cut}(S)]  \ .
\]
\end{definition}

\begin{exercise}
Show that over $\Z$ the two definitions of $\Z[\Delta^n]$ agree naturally in $\Delta$. Hint: consider the map
\[
 \frac{\mathbb{Z}[y_0,\ldots,y_{n+1}]}{y_0 = 0, y_{n+1} = 1} \to 
\frac{\Z[x_0,\ldots,x_n]}{\sum x_i -1} 
\]
sending $y_k$ to $\sum_{i < k} x_i$ with inverse sending $x_k$ to $y_{k+1} - y_k$. Convince yourself that this is natural in $\Delta^{\mathrm{op}}$ (best try to write down the map for a general $S$ naturally in $S$). 

\end{exercise}

\begin{definition}
For every dualisable stable $\infty$-category $\mathbf{C}$ we define a simplicial category by
\[
\Delta^{\mathrm{op}} \ni S  \mapsto \mathbf{C}^{\Delta^S} := \mathbf{C} \otimes \bD(\mathbb{S}^{\Delta^S}) \ .
\]
\end{definition}

The following definition follows Weibel \cite{weibelhomotopy} and Cisinksi-Khan\cite{Cisinski-Khan}.
 
\begin{definition}
We define \emph{homotopy $K$-theory} as
\[
K_H(\mathbf{C}) = \colim_{S \in \Delta} K( \mathbf{C}^{\Delta^S} ) \ .
\]
For a unital ring $R$ we set $K_H(R) := K_H(\bD(R))$ and for a non unital ring $R$ we set $K_H(R) = \mathrm{fib}(K_H(R^+_k) \to K(k))$. 
\end{definition}

\begin{proposition}
We have that the canonical map
\[
KH(\mathbf{C}) \to KH(\mathbf{C}^{\Delta^n}) 
\]
induced by $\Delta^n \to \Delta^0$ is an equivalence
for every $n$, in particular
\[
K_H(\bC) =  \colim_{S \in \Delta} K_H( \mathbf{C}^{\Delta^S} )
\]
\end{proposition}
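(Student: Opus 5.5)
The plan is the standard Weibel-type argument: an extra ``simplicial direction'' makes $K_H$ homotopy invariant. Write $\mathbf{C}' := \mathbf{C}^{\Delta^n} = \mathbf{C}\otimes \bD(\mathbb{S}^{\Delta^{[n]}})$. Since $\bD(\mathbb{S}^{\Delta^S})\otimes\bD(\mathbb{S}^{\Delta^{T}})\simeq \bD(\mathbb{S}^{\Delta^S}\otimes\mathbb{S}^{\Delta^T})$, we have
\[
K_H(\mathbf{C}^{\Delta^n}) = \colim_{S\in\Delta^{\op}} K\big(\mathbf{C}\otimes \bD(\mathbb{S}^{\Delta^S}\otimes \mathbb{S}^{\Delta^{[n]}})\big).
\]
I would therefore study the bisimplicial spectrum $(S,T)\mapsto K(\mathbf{C}\otimes\bD(\mathbb{S}^{\Delta^S}\otimes\mathbb{S}^{\Delta^T}))$. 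Its colimit over $T$ at $T=[n]$ fixed, then over $S$, is $K_H(\mathbf{C}^{\Delta^n})$, and the map in question is induced by $\mathbb{S}^{\Delta^{[0]}}=\mathbb{S}\to\mathbb{S}^{\Delta^{[n]}}$.

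The key step is to show that for each $n$ the map of simplicial rings $S\mapsto \mathbb{S}^{\Delta^S}\to S\mapsto\mathbb{S}^{\Delta^S}\otimes\mathbb{S}^{\Delta^{[n]}}$ is a simplicial homotopy equivalence. The inverse is induced by $\Delta^0\to\Delta^n$, evaluation at a vertex. The composite $\mathbb{S}^{\Delta^{[n]}}\to\mathbb{S}\to\mathbb{S}^{\Delta^{[n]}}$ is homotopic to the identity, because $\Delta^n$ is contractible and one can write down an explicit linear contraction. In the interval model, a map $\mathbb{S}^{\Delta^{[n]}}\to \mathbb{S}^{\Delta^{S}}\otimes \mathbb{S}^{\Delta^{[n]}}$ for $S=[1]$ rescales the cut variables $y_k$ towards $0$ along the parameter, sending $y_k\mapsto t\,y_k$ suitably. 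Because this is built from monomials and the interval presentation, it makes sense over $\mathbb{S}$. The standard combinatorial fact then upgrades this to a simplicial homotopy: a $\Delta^1$-parametrised family in the $\mathbb{S}^{\Delta^\bullet}$-direction yields a simplicial homotopy between the two simplicial objects, using $\mathbb{S}^{\Delta^{S}}\otimes \mathbb{S}^{\Delta^1}\leftarrow\mathbb{S}^{\Delta^{S\times[1]}}$ and the prism decomposition of $\Delta^S\times\Delta^1$. I expect this to be the main obstacle. One must verify that the contraction is given by maps of $\mathbb{E}_\infty$-rings natural in $\Delta$, not just over $\Z$. My first attempt would define the contraction on $\widetilde{\mathbb{S}}[\mathrm{Cut}(-)]$ directly via maps of intervals $\mathrm{Cut}(S\times[1])$, which are monoid-level and hence strict.

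Given this, apply the functor $A\mapsto K(\mathbf{C}\otimes\bD(A))$ levelwise. Functors preserve simplicial homotopies, and simplicially homotopic maps of simplicial spectra induce equal, in fact homotopic, maps on geometric realisations. So the colimit over $S$ turns the homotopy equivalence into an equivalence $K_H(\mathbf{C})\simeq K_H(\mathbf{C}^{\Delta^n})$. For the final formula, note that $S\mapsto K_H(\mathbf{C}^{\Delta^S})$ is then a simplicial object whose face and degeneracy maps are all equivalences, by two-out-of-three with the maps from $K_H(\mathbf{C})$. Its colimit over $\Delta^{\op}$, a weakly contractible category, is therefore $K_H(\mathbf{C})$. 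Equivalently, by the diagonal argument, the colimit of the bisimplicial object along the diagonal computes $K_H(\mathbf{C})$ as well.
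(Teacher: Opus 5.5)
The paper states this proposition without proof; the surrounding material follows Weibel and Cisinski--Khan. Your argument is the standard Weibel argument behind those references, and the plan is correct:

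\begin{enumerate}
\item contract $\mathbb{S}^{\Delta^n}$ algebraically;
\item turn the algebraic homotopy into a simplicial homotopy after tensoring with $\mathbb{S}^{\Delta^\bullet}$;
\item apply $A\mapsto K(\bC\otimes\bD(A))$ levelwise (base change preserves compact objects, so this is a functor to $\Prld$);
\item realise.
\end{enumerate}

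Your deduction of the second formula, via two-out-of-three and weak contractibility of $\Delta^{\op}$, is also fine. However, your description of the key step is wrong in two places.

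\textbf{The contraction is not induced by maps of intervals.} Maps of intervals send each cut variable to a cut variable, to $0$ or to $1$, whereas $y_k\mapsto t y_k$ requires products. What makes the contraction strict over $\mathbb{S}$ is the following. Once $y_0=0$ and $y_{n+1}=1$ are imposed, $\mathbb{S}^{\Delta^n}\simeq\mathbb{S}[y_1,\dots,y_n]$, and $H\colon y_k\mapsto t y_k$ (for $1\le k\le n$ only) is $\Sigma^\infty$ of a map of pointed commutative monoids. Its restrictions along the two vertices of $\Delta^1$ are $\mathrm{id}$ and $\eta\circ\mathrm{ev}$. Here $\eta\colon\mathbb{S}\to\mathbb{S}^{\Delta^n}$ is the unit and $\mathrm{ev}$ is evaluation at the vertex where $y_1=\dots=y_n=0$; that is the vertex you must choose for your inverse.

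\textbf{No prism decomposition enters, and $\mathbb{S}^{\Delta^{S\times[1]}}$ is not defined.} The poset $S\times[1]$ is not linearly ordered, so it has no cut interval. The correct construction is as follows. For $\alpha\colon[p]\to[1]$, put
\[
h_\alpha:=(\mu\otimes\mathrm{id})\circ(\mathrm{id}\otimes\alpha^*\otimes\mathrm{id})\circ(\mathrm{id}\otimes H)
\]
on $\mathbb{S}^{\Delta^p}\otimes\mathbb{S}^{\Delta^n}$, where $\mu$ is the multiplication of $\mathbb{S}^{\Delta^p}$. Concretely, $y_k\mapsto y^{(p)}_{j}\,y_k$ with $j=|\alpha^{-1}(0)|$, using the conventions $y^{(p)}_0=0$ and $y^{(p)}_{p+1}=1$.
\begin{itemize}
\item Geometrically this is restriction along the graph $\Delta^p\to\Delta^p\times\Delta^1$ of $\alpha$.
\item This is where commutativity of $\mathbb{S}^{\Delta^p}$ is used, since $\mu$ must be a ring map.
\item The $h_\alpha$ are again strictly natural at the level of pointed commutative monoids. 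Hence they form a transformation of functors $(\Delta_{/[1]})^{\op}\to\mathrm{CAlg}$, restricting to $\mathrm{id}$ and $\mathrm{id}\otimes(\eta\circ\mathrm{ev})$ at the two constant $\alpha$.
\item It is not a map out of a levelwise coproduct; in $\mathrm{CAlg}$ that coproduct would be a tensor product.
\end{itemize}
After applying $K(\bC\otimes\bD(-))$, both vertex inclusions $\Delta^{\op}\to(\Delta_{/[1]})^{\op}$ induce homotopic equivalences on colimits, because $|\Delta^1|\simeq *$. This gives the homotopy between $\mathrm{id}$ and the map induced by $\eta\circ\mathrm{ev}$ on $K_H(\bC^{\Delta^n})$, which is the step your argument needs.
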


\begin{proposition}[Weibel, Cisinski-Khan, Land--Tamme]\label{corollary}
Homotopy $K$-theory is a localizing invariant which commutes with filtered colimits and that is
truncating in the sense that for any connective ring spectrum the map
\[
K_H(R) \to K_H(\pi_0(R))
\]
is an equivalence.
\end{proposition}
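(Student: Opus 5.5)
We treat the three properties of Proposition \ref{corollary} in turn, each derived from a corresponding property of $K$ together with formal facts about the construction $\bC \mapsto \bC^{\Delta^S} = \bC \otimes \bD(\mathbb{S}^{\Delta^S})$.

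\emph{Localizing.} Let $\bC \to \bD \to \bE$ be a Verdier sequence in $\Prld$. Since $\bD(\mathbb{S}^{\Delta^S})$ is compactly generated, it is dualisable, so $-\otimes \bD(\mathbb{S}^{\Delta^S})$ is an exact functor on $\Prld$. It preserves fibre and cofibre sequences, because tensoring with a dualisable object preserves both limits and colimits in the dualisable setting. Hence each $\bC^{\Delta^S} \to \bD^{\Delta^S} \to \bE^{\Delta^S}$ is again a Verdier sequence. By Theorem \ref{fib_thm} each $K(\bC^{\Delta^S}) \to K(\bD^{\Delta^S}) \to K(\bE^{\Delta^S})$ is a fibre sequence. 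Geometric realisations in $\mathbf{Sp}$ preserve fibre sequences, since $\mathbf{Sp}$ is stable and colimits commute with cofibres. Therefore $K_H$ sends Verdier sequences to fibre sequences.

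\emph{Filtered colimits.} The plan is first to check that $K$ itself commutes with filtered colimits in $\Prld$. Filtered colimits of dualisable categories along strongly continuous functors are computed on $\omega_1$-compacts, or via $\mathbf{Calk}$ and $\mathbf{Fred}$. Connective $K$-theory of small stable categories commutes with filtered colimits, and the formula $K(\bC)=\Omega K(\Calk(\bC))$ from Proposition \ref{prop_last} propagates this to the nonconnective spectrum. Since $\bC\mapsto \bC^{\Delta^S}$ is a left adjoint in $\Prld$ (tensoring), it commutes with colimits. Colimits commute with colimits, so $K_H$ commutes with filtered colimits.

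\emph{Truncating.} This is the main obstacle and the genuinely nontrivial part. Here I would follow Land--Tamme's approach.
\begin{enumerate}
\item Show that a localizing invariant $E$ which is $\mathbb{A}^1$-invariant, in the sense that $E(R)\simeq E(R[x])$ naturally, is truncating. The idea is to write $R\to\pi_0R$ as built from square-zero extensions via the Postnikov tower.
\item Handle each square-zero extension $M \to \tilde R\to R'$ by a standard trick. Embed it into the family over $\mathbb{S}[t]$ that scales the ideal by $t$; this is the ``deformation to the normal cone'' $\tilde R[t]$ with ideal $tM$. Evaluating at $t=1$ and $t=0$ and using $\mathbb{A}^1$-invariance together with excision for the ideal identifies $E(\tilde R)$ with $E$ of the trivial square-zero extension $R'\oplus M$.
\item Treat the trivial square-zero extension with the same scaling. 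It contracts to $R'$, since multiplication by $t$ on $M$ over $\mathbb{S}[t]$ is an $\mathbb{A}^1$-homotopy from the identity to $0$.
\item Pass to the limit over the Postnikov tower using nilinvariance and convergence; alternatively, restrict to bounded connective rings first and take a colimit.
\end{enumerate}
It then remains to check that $K_H$ is $\mathbb{A}^1$-invariant. This is exactly the preceding proposition: $K_H(\bC)\simeq K_H(\bC^{\Delta^1})$, and $\mathbb{S}^{\Delta^1}=\mathbb{S}[x]$. The delicate points are the excision step for non-$H$-unital ideals, where I would invoke Theorem \ref{thm_Land} and observe that the error term $A/I^\circ$ is again $\mathbb{A}^1$-equivalent, and the convergence of the Postnikov limit.
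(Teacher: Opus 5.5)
The paper gives no proof of this proposition; it quotes it from Weibel, Cisinski--Khan and Land--Tamme. So I compare your sketch with what a proof needs. Your localizing and finitary parts are fine, granting Efimov's theorem that $K$ is finitary (which the paper also takes for granted).

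One imprecision in the localizing part: $\bD(\mathbb{S}^{\Delta^S})$ is dualisable in $\mathbf{Pr}^L_{st}$ but not in $\Prld$ (it is not proper, since $\mathbb{S}[x]$ is not a perfect spectrum). Also, limits in $\Prld$ are not computed in $\mathbf{Pr}^L_{st}$. The correct reason Verdier sequences are preserved is that $-\otimes\bD(\mathbb{S}^{\Delta^S})\simeq\Fun^L(\bD(\mathbb{S}^{\Delta^S})^\vee,-)$ preserves fully faithful functors and kernels. It also sends the colimit-preserving, fully faithful right adjoint $R_p$ to the right adjoint of $p\otimes\mathrm{id}$.

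The truncating part has a genuine gap.
\begin{itemize}
\item Your step (2) rests on excision for the square-zero ideal. Excision fails for general localizing invariants. For $K_H$, the paper deduces it (in the corollary right after this proposition) from the very truncation property you are proving, so it cannot be an input.
\item Without excision, the deformation to the normal cone gives nothing. $\mathbb{A}^1$-invariance compares $E(C)$ with $E(C[t])$, i.e.\ constant families; it says nothing about the two fibres of a non-constant family over $\mathbb{S}[t]$.
\item Your fallback is to apply Theorem~\ref{thm_Land} and ``observe'' that the error term $A/I^\circ$ is $\mathbb{A}^1$-equivalent to $A/I$. That is the entire content of the proposition: $A/I^\circ\to A/I$ is in general only a $\pi_0$-isomorphism, so $E(A/I^\circ)\simeq E(A/I)$ is an instance of the claim itself.
\item Step (4) also fails as stated. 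A general localizing invariant does not commute with Postnikov limits, and a connective ring is the limit, not a colimit, of its truncations.
\end{itemize}

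What makes the Land--Tamme route work is connectivity, via the following steps.
\begin{enumerate}
\item Take a Postnikov step $A=\tau_{\leq m+1}C\to\tau_{\leq m}C$ with fibre $I=\Sigma^{m+1}\pi_{m+1}C$. As a non-unital ring, $I$ has trivial multiplication: it is $\Omega$ of the augmentation ideal of the trivial square-zero extension $\tau_{\leq m}C\oplus\Sigma I$. Hence $K_H(I)=0$ by your step (3) applied to $\mathbb{S}\oplus I$, and no deformation is needed.
\item Theorem~\ref{thm_Land} gives $K_H(A)\simeq K_H(D)$ for $D=A/I^\circ$. Using $D^+\simeq A^+\otimes_{I^+}\mathbb{S}$, there is a cofibre sequence $A^+\otimes_{I^+}I\to A^+\to D^+$ whose first term is $(m+1)$-connective. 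On $\pi_{m+1}$ its first map is the surjection $(\pi_0A\oplus\mathbb{Z})\otimes_{\mathbb{Z}}\pi_{m+1}C\to\pi_{m+1}C$. Hence $D\to\tau_{\leq m}C$ is an isomorphism on $\pi_{\leq m+1}$, so $\tau_{\leq m+1}D\simeq\tau_{\leq m}C$: the error term is one step more connected.
\item If $\mathrm{fib}(B\to B')$ is $n$-connective ($n\geq1$) for connective ring spectra, then $\mathrm{fib}(K(B)\to K(B'))$ is $(n+1)$-connective. This holds levelwise for $B[\Delta^\bullet]\to B'[\Delta^\bullet]$ and survives geometric realisation, so it holds for $K_H$.
\item Fix $N$. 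Then $K_H(C)\to K_H(\tau_{\leq m}C)$ has $N$-connective fibre for every connective $C$ once $m\geq N$.
\item This propagates by descending induction on $m$. The map $K_H(\tau_{\leq m+1}C)\to K_H(\tau_{\leq m}C)$ is identified with $K_H(D)\to K_H(\tau_{\leq m+1}D)$, so you apply the hypothesis at level $m+1$ both to $C$ and to the connective ring $D$.
\end{enumerate}
At $m=0$, for all $N$, this gives $K_H(C)\simeq K_H(\pi_0C)$, and it replaces your convergence step.
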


\begin{corollary}
Homotopy $K$-theory satisfied excision for connective ring spectra, that is
\[
K_H(I) \to K_H(A) \to K_H(A/I)
\]
is a fibre sequence. Moreover for non-unital rings the value doesn't depend on the base (as long as the base $k$ is connective).
\end{corollary}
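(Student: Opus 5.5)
The corollary follows by combining the Land--Tamme fibre sequence (Theorem \ref{thm_Land}, in its form for arbitrary localizing invariants) with Proposition \ref{corollary}. Let $I \to A \to A/I$ be an ideal sequence of connective (non-unital) ring spectra, with $I$ connective, over a connective base $k$. Since $K_H$ is a localizing invariant, the Land--Tamme theorem gives a fibre sequence
\[
K_H(I) \to K_H(A) \to K_H(A/I^\circ),
\]
compatible with the map $A/I^\circ \to A/I$. It therefore suffices to show that the map $K_H(A/I^\circ) \to K_H(A/I)$ is an equivalence.

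This is where truncation enters. By the exercise following Theorem \ref{thm_Land}, the map $A/I^\circ \to A/I$ is an isomorphism on $\pi_0$ whenever $A$ and $I$ are connective. The first thing to check is that $A/I^\circ$ is itself connective. Here $I^\circ = A^+ \otimes_{I^+} I$ is a relative tensor product of connective objects, hence connective, and $A/I^\circ$ is the cofibre of $I^\circ \to A$. A priori this cofibre only has $\pi_{-1}$ possibly nonzero. That group vanishes because $\pi_0 I^\circ \to \pi_0 I \to \pi_0 A$, and $\pi_0(A/I^\circ)\cong \pi_0(A)/\mathrm{im}$ agrees with $\pi_0(A/I)$; equivalently, one argues directly from the unitalisation $(A/I^\circ)^+ = A^+ \otimes_{I^+} k$, which is connective since $k$, $A^+$ and $I^+$ are.

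Truncation is stated for unital rings, so we pass to unitalisations. We have
\[
K_H(A/I^\circ) = \mathrm{fib}\bigl(K_H((A/I^\circ)^+_k) \to K(k)\bigr),
\]
and similarly for $A/I$. The map $(A/I^\circ)^+ \to (A/I)^+$ is a map of connective unital rings inducing an isomorphism on $\pi_0$. Applying Proposition \ref{corollary} to both sides, both $K_H$ values map by equivalences to $K_H(\pi_0(-))$, and these agree. Hence $K_H(A/I^\circ)\simeq K_H(A/I)$, and the excision sequence follows. The main obstacle is this connectivity bookkeeping for $A/I^\circ$ and its unitalisation, together with checking that the fibre over $K(k)$ is taken compatibly, which uses the factorization of nullhomotopies provided by Theorem \ref{thm_Land}.

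For base independence, let $k \to k'$ be a map of connective bases, or compare each with $\mathbb{S}$. Truncation reduces both unitalisations to $\pi_0$, where $\pi_0(R^+_k) = \pi_0(R)\oplus \pi_0(k)$. Excision applied to the ideal $R$ in $R^+_k$, with quotient $k$, gives $K_H(R)\simeq \mathrm{fib}(K_H(R^+_k)\to K_H(k))$. Running the same excision for $R^+_{\mathbb{S}}$ and comparing via the map $R^+_{\mathbb{S}} \to R^+_k$, whose ideal part is the identity on $R$, identifies the two fibres. Here one uses that the fibre term $K_H(R)$, as computed by the excision sequence, depends only on the ideal $R$, so the two definitions agree.
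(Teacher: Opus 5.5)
Your proposal is correct and follows essentially the same route as the paper: the Land--Tamme fibre sequence for the localizing invariant $K_H$, then the $\pi_0$-isomorphism $A/I^\circ \to A/I$ combined with truncation, and for base independence, excision applied to $R \to R^+_k \to k$. Your extra comparison with $R^+_{\mathbb{S}}$ is harmless but not needed, and one small correction: a cofibre of a map between connective spectra is automatically connective, so there is no a priori $\pi_{-1}$ of $A/I^\circ$ to kill (that worry applies to fibres, not cofibres).
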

\begin{proof}
Follows since 
\[
A/I^{\circ} \to A/I
\]
is an isomorphism on $\pi_0$ and since for $R$ non-unital and any base we have the fibre sequence
\[
R \to R^+_k \to k \qedhere
\]
\end{proof}

For a localizing invariant $E: \Prld \to \bD$ we set $E(R)$ to mean $E(D(R))$ for $R$ connective and unital. For non-unital rings we set as before $E(R) = \mathrm{fib}(E(R^+_k) \to E(k))$.

\begin{definition}
A functor $E: \Prld \to \mathbf{Sp}$ is called \emph{truncating}, if $E(R) \to E(\pi_0R)$ is an equivalence for each connective ring spectrum $R$. 
\end{definition}

The assertion of Corollary \ref{corollary}  holds for $K_H$ replaced by an truncating, localizing invariant. 

\begin{exercise}
Show that for $E$ is is equivalent that $E(R) \to E(\pi_0R)$ is an equivalence for all (possibly non-unital) rings versus only for unital rings (as usual under the assumption that the base is connected. 
\end{exercise}

\section{Non-commutative motives}

Now we would like to introduce the category of non-commutative motives \cite{BGT}. To this end recall that a localizing invariant was a functor
\[
E: \mathrm{Pr}^L_\mathrm{dual} \to \bE
\]
which sends Verdider sequences to fibre sequences. We say that a localizing invariant is \emph{finitary} if it commutes with filtered colimits, i.e. sends filtered colimits in 
$\mathrm{Pr}^L_\mathrm{dual}$ to filtered colimits of spectra. Examples of filtered colimits in $\mathrm{Pr}^L_\mathrm{dual}$ include
\[
\dirlim \mathbf{Mod}_{R_i} = \mathbf{Mod}_R
\]
for a filtered diagram $R = \dirlim R_i$ of rings or ring spectra. 

\begin{example}
$K$-theory and homotopy $K$-theory are finitary localizing invariants. Similarly topological Hochschild homology is a finitary localizing invariant. Topological periodic and cyclic homology are localizing invariants that are not finitary. 
\end{example}

The goal of this section is to investigate localizing invariants. We will do this following the philosophy of Grothendieck by introducing a universal category of motives, over which all of them factor. Note that following the philosophy of Kontsevich, Tabuada \cite{tabuada-motives}, Kaledin etc, we should think of stable $\infty$-category (or DG-categories in the approach relative to $\Z$) as non-commutative schemes. This is similar to how we think of $C^*$-algebras as non-commutative topological spaces. Therefore why call this category non-commutative motives.

\subsection{Definition of motives}

\begin{defprop}
There is a universal, finitary, localizing invariant 
\[
M: \Prld \to \mathbf{NCMot} \ .
\]
In other words: $\mathbf{NCMot}$ is a stable, presentable $\infty$-category with finitary localizing invariant which assigns to every dualisable stable $\infty$-category $\mathbf{C}$ its motive $M\mathbf{C}$. Any other finitary localizing invariant
\[
E: \Prld \to \mathbf{E}
\]
factors uniquely as
\[
\begin{tikzcd}
\Prld \arrow{r}{E} \arrow{d}{M} & \bE\\
\mathbf{NcMot} \arrow{ru}{E'} &  
\end{tikzcd}
\]

where $E'$ is colimit preserving.
\end{defprop}
The construction of the category $\NcMot$ is very formal, it is a localization of the category of functors $\mathrm{Fun}((\Prld)^{\mathrm{op}}, \mathrm{Sp})$ that preserve cofiltered limits (i.e. send filtered colimits in $\Prld$ to limits of spectra. Then the functor $M$ is the Yoneda embedding followed by the localization.

Note that $K$-theory itself is a finitary localizing invariants, thus induces a functor 
\[
K: \mathbf{NCMot}  \to \mathbf{Sp} \ .
\]
The non-formal part is the following, which is an analogue of a statement in the $C^*$-world.
\begin{theorem}[Barwick, Blumberg--Gepner--Tabuada]
$K$-theory is corepresentable by the motive of the category of spectra, that is we have that 
\[
K(\mathbf{C}) = \mathbf{Map}_{\mathbf{NCMot}}(M\mathbf{Sp}, M\mathbf{C})
\]
In particular we also have 
\[
K_n(\mathbf{C}) = [M\mathbf{Sp}, M\mathbf{C}[n]]
\]
\end{theorem}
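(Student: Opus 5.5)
We propose to prove corepresentability by identifying \(\mathbf{Map}_{\NcMot}(M\Sp,-)\) with the universal finitary localising invariant receiving a map from the ``connective'' or additive data of \(K\)-theory. The plan follows the classical Blumberg--Gepner--Tabuada strategy, adapted to \(\Prld\).

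\textbf{Step 1: reduce to a statement about presheaves.} Recall that \(\NcMot\) is a localisation of the category of finitary presheaves \(\mathbf{Fun}^{\mathrm{fin}}((\Prld)^{\op},\Sp)\), with \(M\) the Yoneda embedding followed by localisation. Since \(\Sp\) is dualisable, hence a compact-type object, the Yoneda lemma gives \(\mathbf{Map}(y\Sp, F)\simeq F(\Sp)\) on presheaves. After localisation, \(\mathbf{Map}_{\NcMot}(M\Sp, M\bC)\) is therefore computed by evaluating at \(\Sp\) the localisation \(L(y\bC)\), i.e.\ the universal localising replacement of the representable functor \(\mathbf{Map}(\Sp,\bC)\simeq \iota\,\mathbf{Fun}^{\mathrm{cs}}(\Sp,\bC)\). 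Strongly continuous functors \(\Sp\to\bC\) are exactly compact objects of \(\bC\), so this presheaf is the core of \(\bC^\omega\), a ``pre-\(K\)-theory''.

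\textbf{Step 2: show group completion plus localisation yields \(K\).} We first stabilise, replacing \(\iota\bC^\omega\) by its additive/Waldhausen-\(S_\bullet\) delooping. This uses the additivity theorem: any localising invariant is additive, so the localisation of \(y\) factors through \(|S_\bullet(-)|\)-type constructions, i.e.\ through the span-category model \(\Omega|\mathrm{Span}(\bC^\omega)|\). We then show that the presheaf \(\bC\mapsto K(\bC)\), in Efimov's form \(K(\mathbf{Ind}\,\mathbf{Fred}(\bC))\), is already local: it sends Verdier sequences to fibre sequences by Theorem~\ref{fib_thm} and is finitary. Next we show that the map \(y\Sp\to K(-)\), given by the unit class \([\mathbb{S}]\in K_0(\Sp)\), is an \(L\)-equivalence. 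For this we would use the Calkin delooping \(K(\bC)\simeq\Omega K(\Calk(\bC))\) (Proposition~\ref{prop_last}), together with the fact that any localising invariant satisfies the same delooping, since \(E(\mathbf{Ind}(\bC^{\omega_1}))=0\) by the Eilenberg swindle. This reduces the dualisable case to the compactly generated case, where the classical Blumberg--Gepner--Tabuada argument applies.

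\textbf{Main obstacle.} The hard step is showing that the connective part, \(\iota\bC^\omega\) after additive group completion, becomes nonconnective \(K\)-theory under the localisation. Concretely, one must show that any localising invariant \(E\) with a natural transformation from \(\iota(-)^\omega\) factors uniquely through \(K\). We would attack this via the Waldhausen additivity theorem, which identifies the additive universal invariant with connective \(K\). We would then use the iterated Calkin formula \(K_{-n}(\bC)=K_0(\Calk^n(\bC))\) to show that the nonconnective correction is forced by Verdier exactness, since negative homotopy is determined by \(\Calk\)-suspensions. Finitarity is used to commute the evaluation at \(\Sp\) past the filtered colimits in the localisation; this is justified because \(\Sp\) is compact in \(\Prld\). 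The degreewise statement \(K_n(\bC)=[M\Sp,M\bC[n]]\) then follows by taking \(\pi_n\) of the mapping spectrum.
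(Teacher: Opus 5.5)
The paper gives no proof of this theorem. It is quoted from Barwick and Blumberg--Gepner--Tabuada as the one non-formal input, so your proposal has to carry the argument itself, and at the decisive step it does not.

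\textbf{What is fine.} The formal part works once the variance is fixed. $y\Sp$ is a presheaf while $K$ is covariant, so the map you mean is $h^{\Sp}:=\Sigma^\infty_+\mathbf{Map}_{\Prld}(\Sp,-)=\Sigma^\infty_+\iota(-)^\omega\to K$, classified by $[\mathbb{S}]$. The theorem is then equivalent to: for every finitary localising $E$, evaluation at $[\mathbb{S}]$ gives $\mathrm{Nat}(K,E)\simeq\Omega^\infty E(\Sp)$. In other words, $h^{\Sp}\to K$ is inverted by the localisation $L$ onto finitary localising invariants. The Yoneda step needs no compactness. What you do need, so that $\mathbf{Map}_{\NcMot}(M\Sp,M-)$ is finitary, is that $\Sp$ is compact in $\Prld$; dualisability is irrelevant here. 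Your Calkin reduction is also legitimate objectwise for the comparison map $\mathbf{Map}_{\NcMot}(M\Sp,M-)\to K$, since both sides satisfy $F(\bC)\simeq\Omega F(\mathbf{Ind}\,\Calk(\bC))$.

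\textbf{The gap.} The fibration theorem and Waldhausen additivity only show that $K$ is \emph{local*}. Neither says that $h^{\Sp}\to K$ is a local equivalence. The claim that ``the additive universal invariant is connective $K$'' is not the additivity theorem; it is the Barwick/BGT universality you are supposed to be proving.

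\textbf{The missing idea.} You need a way for the localisation, which acts in the corepresenting variable, to see the $S_\bullet$-construction that builds $K$:
\begin{itemize}
\item Strongly continuous functors $\mathbf{Fun}([n-1],\Sp)\to\bC$ are exactly objects of $S_n(\bC^\omega)$. Hence $\Sigma^\infty_+\iota S_n\bigl((-)^\omega\bigr)\simeq h^{\mathbf{Fun}([n-1],\Sp)}$, naturally in $[n]$.
\item Additivity, applied to the semi-orthogonal decomposition of $\mathbf{Fun}([n-1],\Sp)$, gives $Lh^{\mathbf{Fun}([n-1],\Sp)}\simeq(Lh^{\Sp})^{\oplus n}$.
\item Therefore $L\bigl(\Sigma^\infty_+|\iota S_\bullet((-)^\omega)|\bigr)\simeq|(Lh^{\Sp})^{\oplus\bullet}|\simeq\Sigma Lh^{\Sp}$. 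The basepoint summand dies because $Lh^{0}=0$.
\item Iterate with $S^{\langle n\rangle}_\bullet$ and write connective $K$ as $\colim_n\Sigma^{-n}\Sigma^\infty|\iota S^{\langle n\rangle}_\bullet|$. This identifies $L$ of connective $K$ of compact objects with $Lh^{\Sp}$.
\end{itemize}
Only on top of this does a Calkin argument of the kind you sketch handle the negative groups and the non-compactly-generated case.

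\textbf{Citing BGT does not close the gap.} Your fallback, ``the classical BGT argument applies in the compactly generated case,'' has two problems.
\begin{itemize}
\item If you mean the theorem, it is the statement being proved.
\item BGT's result lives in motives built from small categories, not in $\NcMot$ built from $\Prld$. The functor induced by $M\circ\mathbf{Ind}$ only exhibits $K(A)$ as a retract of $\mathbf{Map}_{\NcMot}(M\Sp,M\,\mathbf{Ind}A)$. To get an equivalence you need Efimov's theorem that finitary localising invariants of small categories extend uniquely along $\mathbf{Ind}$ to $\Prld$.
\end{itemize}
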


\begin{corollary}[Uniqueness of $K$-theory]
Assume we have any localizing invariant $E$ with $\pi_0(E) \cong K_0(E)$ as functors to abelian groups. Then $E \simeq K$ as functors to spectra. In other words: there is a unique localizing extension of $K_0$ to a functor of spectra. 
\end{corollary}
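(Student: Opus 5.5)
The plan is to use the corepresentability theorem of Barwick and Blumberg--Gepner--Tabuada stated just above, together with the Yoneda-type universal property of \(\NcMot\). A localizing invariant \(E\) that is finitary factors as \(E'\circ M\) with \(E'\colon \NcMot \to \mathbf{Sp}\) colimit preserving; I would first reduce to this case, assuming \(E\) finitary. If it is not, replace \(E\) by its finitary approximation (left Kan extension from \(\omega\)-compact dualisable categories). Since \(K_0\) commutes with filtered colimits, the hypothesis \(\pi_0 E\cong K_0\) is inherited, and a comparison map to \(E\) gets compared afterwards.

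Given \(E'\) colimit preserving, the element \([\mathbf{Sp}]\in K_0(\mathbf{Sp})\cong \pi_0E(\mathbf{Sp}) = \pi_0 E'(M\mathbf{Sp})\) provides a point \(\mathbb{S}\to E'(M\mathbf{Sp})\). Since \(K = \mathbf{Map}_{\NcMot}(M\mathbf{Sp},-)\) and \(E'\) is exact, this yields a natural transformation
\[
\eta\colon K(\bC)=\mathbf{Map}(M\mathbf{Sp},M\bC)\to \mathbf{Map}_{\mathbf{Sp}}(E'M\mathbf{Sp},E'M\bC)\to E(\bC),
\]
where the last map is precomposition with the chosen point. Naturality of the given isomorphism \(\pi_0E\cong K_0\) in strongly continuous functors, together with the observation that every class in \(K_0(\bC)\) is hit by a functor \(\mathbf{Sp}\to\bC\) (for compact objects, and in general via \(\mathbf{Fred}(\bC)\)), shows that \(\pi_0\eta\) agrees with the given isomorphism. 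It remains to upgrade this to an isomorphism on all \(\pi_n\).

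For \(n>0\), use Corollary \ref{cor_deloop}: \(\pi_n E(\bC)\cong \pi_0 E(\Shv(\R^n;\bC))\), and the analogous statement for \(K\). The equivalence \(E(\Shv(\R^n;\bC))\simeq\Omega^nE(\bC)\) must be justified for an arbitrary localizing \(E\). The route is to use the Verdier sequence for the open/closed decomposition \(\R^{n-1}\times(0,\infty)\subset\R^{n-1}\times[0,\infty)\), together with the vanishing of \(E\) on sheaves on half-lines via an Eilenberg swindle. For \(n<0\), use \(\pi_{-n}E(\bC)\cong\pi_0E(\Calk^n(\bC))\), which follows from the Verdier sequence \(\bC\to\mathbf{Ind}(\bC^{\omega_1})\to\Calk(\bC)\) and \(E(\mathbf{Ind}(\bC^{\omega_1}))=0\), again by a swindle, since \(E\) is additive. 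Because \(\eta\) is natural and compatible with these boundary equivalences, \(\pi_n\eta\) is identified with \(\pi_0\eta\) applied to another dualisable category, and hence is an isomorphism.

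The main obstacle is the compatibility bookkeeping. I need \(\eta\) to commute with the delooping equivalences for \(\Shv(\R^n;-)\) and \(\Calk\), and \(\pi_0\eta\) to equal the prescribed isomorphism rather than merely being some isomorphism. The key point to check is that the given iso \(\pi_0E\cong K_0\) is natural, so it is determined by the image of \([\mathbf{Sp}]\). I would try first to show that a natural endomorphism of \(K_0\) on \(\Prld\) is determined by its value on \(\mathbf{Sp}\), via the Fredholm description in Proposition \ref{prop_last}(1).
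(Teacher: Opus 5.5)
Your overall strategy matches the paper's: produce $K\to E$ by Yoneda from a class in $\pi_0E(\Sp)$ using corepresentability, then propagate the $\pi_0$-isomorphism with the shift operators $\Shv(\R^n;-)$ and $\Calk^n(-)$. The genuine gap is the reduction to finitary $E$, which you need in order to factor through $\NcMot$.

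You claim that $\pi_0$ of the left Kan extension $E^{\mathrm{fin}}(\bC)=\colim_{D\to\bC}E(D)$, taken over $\omega$-compact $D$, is still $K_0(\bC)$. This presupposes that every dualisable category is the filtered colimit of the $\omega$-compact objects over it, and nothing supports that. Filtered colimits of compactly generated categories along strongly continuous functors are again compactly generated. So if the objects you Kan extend from are compactly generated, as ``$\omega$-compact dualisable categories'' suggests, then $\Shv(\R;\Sp)$ is a counterexample. It has no nonzero compact objects, so every strongly continuous functor into it from a compactly generated category is zero, and $E^{\mathrm{fin}}(\Shv(\R;\Sp))$ vanishes. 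But $K_0(\Shv(\R;\Sp))\cong K_1(\Sp)\cong\Z/2$. You also never check that $E^{\mathrm{fin}}$ is localizing.

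No approximation is needed, because $E$ is automatically finitary; this is the first step of the paper's proof. Your own delooping arguments hold for any localizing $E$ and give $\pi_nE(\bC)\cong\pi_0E(\Shv(\R^n;\bC))$ and $\pi_{-n}E(\bC)\cong\pi_0E(\Calk^n(\bC))$. The shift operators commute with filtered colimits in $\Prld$, and so does $\pi_0E\cong K_0$. Hence every homotopy group of $E$ commutes with filtered colimits, and therefore so does $E$.

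Identifying $\pi_0\eta$ with the given isomorphism $\varphi$ also needs a different argument. Outside the compactly generated case, classes in $K_0(\bC)$ are not hit by strongly continuous functors $\Sp\to\bC$; again $\Shv(\R;\Sp)$ admits only the zero one. Passing through $\mathbf{Fred}(\bC)$ only gives functors into $\Ind(\mathbf{Fred}(\bC))$, not into $\bC$, and you have no a priori relation between $E$ of that category and $E(\bC)$.

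The paper instead applies Yoneda on the homotopy category of $\NcMot$. What makes this legitimate is the following. Once $E$ is finitary, $\pi_0E$ and $K_0$ both invert motivic equivalences. Since $\NcMot=\Prld[W_{\mathrm{mot}}^{-1}]$, the transformation $\varphi$ then descends to a natural transformation on the homotopy category of $\NcMot$. Because $K_0(\bC)=\pi_0\mathbf{Map}_{\NcMot}(M\Sp,M\bC)$, $\varphi$ is determined by $a=\varphi([\Sp])$, so the $\eta$ built from $a$ induces exactly $\varphi$ on $\pi_0$.

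The compatibility with the shift equivalences, which you list as the main obstacle, is automatic. $\eta$ is a natural transformation of exact functors on $\NcMot$, and the shifts come from cofibre sequences there.
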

\begin{proof}
First of all, we note that it follows that $E$ is finitary (since this can be checked on $K_0$ as we have shift operators given by $\Calk(-)$ and $\Shv(\R,-)$.
By Yoneda (for functors with values to abelian groups on the homotopy category of $\mathbf{NcMot}$), the transformation $\pi_0K \to \pi_0E$ is given by a class $a \in \pi_0(E(\mathbf{Sp}))$ which using Yoneda for functors to spectra gives rise to a map $K \to E$ that induces an isomorphism on $\pi_0$. Then by the formulas for looping and delooping we get that it is an isomorphism on all $\pi_i$. 
\end{proof}

\begin{definition}
\emph{Bivariant $K$-theory} is defined as 
\[
\mathbf{KK}^\alg(\mathbf{C}, \mathbf{D}) = \mathbf{Map}_{\mathbf{NCMot}}(M\mathbf{C}, M\mathbf{D}) \ .
\]
For rings $R,S$ we define 
\[
\mathbf{KK}^\alg(R, S) :=   \mathbf{KK}^\alg(\bD(R), \bD(S))
\]
\end{definition}

\begin{definition}
A morphism in $\Prld$ is called a \emph{motivic equivalence}, if it induces an equivalence in $\mathbf{NcMot}$, 
Motivic equivalence. A functor $\Prld \to \bE$ is called a \emph{motivic invariant}, if it sends motivic equivalence to equivalences in $\mathbf{E}$. 
\end{definition}

\begin{example}
Every finitary localizing $E$ invariant is motivic, since it factors over $\mathbf{NcMot}$.
Also $\mathrm{TC}$ and $\mathrm{TP}$ are motivic invariant, even though they are not finitary. This is seen by the fact that they are build functorially from $\mathrm{THH}$, which in turn is finitary and thus motivic.

One can in fact show, that also all $\omega_1$-finitary localizing invariants are motivic (Efimov--Sosnilo--Ramzi--Winges), but we will not need this.
\end{example}

The upshot is that virtually all invariants we care about are motivic. 

\begin{theorem}[Sosnilo--Ramzi--Winges]
The category $\NcMot$ is the localization of $\Prld$ at the motivic equivalences, i.e.
\[
\NcMot = \Prld[W_{\mathrm{mot}}^{-1}]
\] 
\end{theorem}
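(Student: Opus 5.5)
We have to show that \(M\colon \Prld \to \NcMot\) exhibits \(\NcMot\) as the Dwyer--Kan localisation of \(\Prld\) at \(W_{\mathrm{mot}}\). Write \(L\colon \Prld \to \Prld[W_{\mathrm{mot}}^{-1}]\) for the localisation. Since \(M\) inverts \(W_{\mathrm{mot}}\) by definition, we get a factorisation \(\bar M\colon \Prld[W_{\mathrm{mot}}^{-1}] \to \NcMot\), and the task is to show that \(\bar M\) is an equivalence.

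\textbf{Essential surjectivity.} The first step is to show that every motive is the motive of a dualisable category. Two inputs are needed. First, \(\Calk\) acts as a suspension: the Verdier sequence \(\bC \to \mathbf{Ind}(\bC^{\omega_1}) \to \Calk(\bC)\) has contractible middle term by the Eilenberg swindle, so \(M\Calk(\bC) \simeq \Sigma M\bC\). Similarly, \(\Shv(\R;-)\) acts as a loop functor, which we transfer from Corollary \ref{cor_deloop} to the motivic level using the Verdier sequence attached to \(\R \subset [0,1]\). Second, we need cofibres and filtered colimits: a strongly continuous functor \(F\colon \bC \to \bD\) has a dualisable cofibre in \(\Prld\) (via a mapping-cone or lax-limit construction), and this cofibre goes to the cofibre of \(MF\). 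Filtered colimits are preserved because \(M\) is finitary. Together these show that the image of \(M\) is closed under shifts, cofibres and filtered colimits, and so contains the presentable generators. Given that the image is closed under these operations, and that every motive is a colimit of representables followed by localisation, essential surjectivity follows.

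\textbf{Full faithfulness.} This is the main obstacle. The plan is to show that \(L\) is itself a finitary localising invariant into a stable presentable target. Then the universal property of \(\NcMot\) gives an inverse \(\NcMot \to \Prld[W_{\mathrm{mot}}^{-1}]\), and the composites are identified with the identity by their universal properties. This requires three things:
\begin{enumerate}
\item \(\Prld[W_{\mathrm{mot}}^{-1}]\) is stable. Here we use \(\Calk\) and \(\Shv(\R;-)\) as mutually inverse endofunctors on the localisation, since their composites differ from the identity by motivic equivalences.
\item \(L\) sends Verdier sequences to fibre sequences.
\item \(L\) preserves filtered colimits.
\end{enumerate}
For (2) and (3) we will try to exhibit \(W_{\mathrm{mot}}\) as a class satisfying a calculus of fractions, and show that Verdier sequences become cofibre sequences. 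The key lemma is that for a Verdier sequence \(\bC \to \bD \to \bE\), the comparison map \(\mathrm{cofib}(\bC \to \bD) \to \bE\), where the cofibre is formed as above, is itself a motivic equivalence. That lemma should follow by applying the Calkin/Fredholm description together with a swindle.

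I expect the subtle point to be computing mapping spaces in the Dwyer--Kan localisation, where one must control zigzags of motivic equivalences. To handle this I would first try a right-calculus-of-fractions argument, replacing each object by \(\mathbf{Ind}(\bC^{\omega_1})\)-type resolutions, which have trivial motive.
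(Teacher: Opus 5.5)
Your second half has the right architecture: prove that the localisation functor $L\colon\Prld\to\Prld[W_{\mathrm{mot}}^{-1}]$ is itself a finitary localising invariant with stable, cocomplete target, then compare universal properties. Done properly, this makes essential surjectivity a consequence rather than an input. But both halves have genuine gaps.

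The essential surjectivity argument fails as stated. You only show that the image of $M$ is closed under two things: cofibres of maps of the form $M(F)$ with $F$ strongly continuous, and filtered colimits of diagrams that already live in $\Prld$. That is not closure under cofibres and colimits in $\NcMot$, because most maps between motives of categories are not induced by functors. For instance, $\pi_0\mathbf{Map}_{\NcMot}(M\Sp,M\Shv(\R;\Sp))\cong K_0(\Shv(\R;\Sp))\cong K_1(\mathbb{S})\cong\Z/2$. On the other hand, strongly continuous functors $\Sp\to\Shv(\R;\Sp)$ correspond to compact objects of $\Shv(\R;\Sp)$, and the only one is $0$. So your argument never reaches the cofibre of the nonzero map. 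Writing a general motive as a colimit of motives $M\bC$ uses exactly such maps, together with their higher coherences. Realising them by zigzags of strongly continuous functors and motivic equivalences is the content of the theorem, so this step is circular.

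In the full-faithfulness half the substance is missing. A Dwyer--Kan localisation need not have any pushouts or filtered colimits. Knowing that $\Calk$ and $\Shv(\R;-)$ are mutually inverse on $\Prld[W_{\mathrm{mot}}^{-1}]$ says nothing about stability until you know two things: the localisation has cofibres, and $L$ sends $\bC\to\mathbf{Ind}(\bC^{\omega_1})\to\Calk(\bC)$ to a cofibre sequence. The same issue affects your points (2) and (3), and also the claim that $\bar M$ preserves colimits. You need that claim to deduce $\bar M\circ E'\simeq\mathrm{id}$ for the induced $E'\colon\NcMot\to\Prld[W_{\mathrm{mot}}^{-1}]$. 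All of these require that colimits in the localisation are computed by colimits in $\Prld$.

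Your key lemma does not touch this. In $\Prld$ a Verdier sequence already is a cofibre sequence; the question is whether that strict cofibre is still a cofibre after localising. An honest calculus of fractions is not available either, since $M$ is only known to send a pushout in $\Prld$ to a pushout when one leg is fully faithful.

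The paper gives no proof (it cites Ramzi--Sosnilo--Winges), but the remark right after the statement names the mechanism. It is an $\infty$-categorical category of cofibrant objects on $\Prld$: cofibrations are the fully faithful strongly continuous functors, weak equivalences are the motivic equivalences. Setting this up requires showing:
(i) pushouts along fully faithful functors are fully faithful;
(ii) motivic equivalences are stable under such pushouts, which is where ``Verdier sequences go to cofibre sequences'' enters;
(iii) every strongly continuous functor factors as a fully faithful one followed by a motivic equivalence;
(iv) both classes are closed under filtered colimits.
Cisinski's theory then gives finite colimits in the localisation preserved by $L$, and a further argument is needed for filtered colimits.

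Finally, $\mathbf{Ind}(\bC^{\omega_1})$ cannot serve as a ``resolution'' of $\bC$, since it is weakly equivalent to $0$. Its role is that of a cone $\bC\hookrightarrow\mathbf{Ind}(\bC^{\omega_1})$. That is what identifies $L(\Calk\bC)$ with $\Sigma L(\bC)$ once pushouts along cofibrations are under control.
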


In particular we an really think of $\NcMot$ as having the same objects as $\Prld$ and as morphisms the KK-spectra (similar to the $C^*$-situation). 
Also note that the functor $\bC \mapsto \Calk(\bC)$ acts as a suspension (i.e. upshift) on $\NcMot$ and $\bC \mapsto \Shv(\mathbb{R},\bC)$ as a loopfunctor (i.e. downshift). \footnote{Note that $\Shv(\mathbb{R}, \Shv(\mathbb{R}^{n-1}, \bC)) \simeq \Shv(\mathbb{R}^n,\bC))$.}

One can in fact define an $\infty$-categorical version of a category of cofibrant objects structure on $\Prld$ with cofibrations the fully faithful inclusions and weak equivalences the motivic equivalences.

\begin{remark}
One can in fact show that $\NcMot$ is already a Dwyer-Kan localization of the category of either unital ring spectra or $H$-unital ring spectra (with some subtleties) as done in joint work with Krause and P\"utzst\"uck \cite{nkp}. 
\end{remark}

\begin{corollary}
Every motivic invariant 
$E: \mathrm{Pr}^L_\mathrm{dual} \to \bE$
induces a unique functor 
\[
E': \NcMot \to \bE \ .
\]
If $E$ is localizing, then $E'$ preserves finite colimits, i.e. is exact. If $E$ is finitary, then $E'$ preserves filtered colimits. 
\end{corollary}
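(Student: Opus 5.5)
The plan is to deduce the corollary directly from the theorem of Sosnilo--Ramzi--Winges identifying $\NcMot$ with the Dwyer--Kan localisation $\Prld[W_{\mathrm{mot}}^{-1}]$, and then to check the exactness statements separately.

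\emph{Existence and uniqueness.} By the universal property of a Dwyer--Kan localisation, restriction along $M\colon \Prld \to \NcMot$ gives an equivalence between $\mathbf{Fun}(\NcMot,\bE)$ and the full subcategory of $\mathbf{Fun}(\Prld,\bE)$ on functors sending $W_{\mathrm{mot}}$ to equivalences. By definition, these are exactly the motivic invariants. Hence every motivic invariant $E$ determines an $E'$ with $E'\circ M\simeq E$, and $E'$ is unique up to contractible choice. This step is formal.

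\emph{Exactness.} Suppose $E$ is localizing. Since $\NcMot$ is stable, it suffices to show that $E'$ preserves the zero object and cofibre sequences. The key input I would use is that every cofibre sequence in $\NcMot$ is equivalent to the image under $M$ of a Verdier sequence in $\Prld$. To obtain this, I would present a morphism $M\bC\to M\bD$ of motives by an honest strongly continuous functor $\bC'\to\bD$, with $\bC'$ motivically equivalent to $\bC$. This should be possible using the cofibration-category structure on $\Prld$ mentioned above, whose cofibrations are the fully faithful inclusions. After possibly replacing the functor by a fully faithful one via a mapping-cylinder-type construction, its Verdier cofibre $\bD/\bC'$ maps under $M$ to the cofibre in $\NcMot$. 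This uses that $M$ is localizing. Applying $E$ then gives a fibre sequence, because $E$ is localizing. Since $E(0)=0$, also $E'(0)=0$, and so $E'$ is exact. I expect this step, namely realising an arbitrary cofibre sequence of motives by a Verdier sequence, to be the main obstacle. If the cofibration-category input is not available, the fallback is to describe $\NcMot$ as the localisation of the functor category used in the construction and argue via the Yoneda embedding, but this seems heavier.

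\emph{Filtered colimits.} Suppose $E$ is finitary. Filtered colimits in $\NcMot$ are computed by applying $M$ to filtered colimits in $\Prld$: $M$ preserves filtered colimits, and I would argue that, after the localisation, every filtered diagram in $\NcMot$ lifts to a filtered diagram in $\Prld$, again using the Sosnilo--Ramzi--Winges description. Granting this, for a filtered diagram $\bC_i$ we get
\[
E'(\colim_i M\bC_i)\simeq E'(M\colim_i\bC_i)\simeq E(\colim_i \bC_i)\simeq \colim_i E(\bC_i),
\]
which is the required preservation of filtered colimits. The lifting of filtered diagrams is the secondary technical point, and I would handle it by the same cofibrant-replacement reasoning as in the exactness step.
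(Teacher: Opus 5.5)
Your strategy is the one the paper intends. The corollary is stated without a separate proof, directly after the Sosnilo--Ramzi--Winges theorem $\NcMot\simeq\Prld[W_{\mathrm{mot}}^{-1}]$ and the remark that $\Prld$ is an $\infty$-category of cofibrant objects, with fully faithful functors as cofibrations and motivic equivalences as weak equivalences. Your existence/uniqueness step is exactly right. The exactness step is also right, up to one correction. In a category of cofibrant objects, a morphism $M\bC\to M\bD$ of the localisation is represented by a cospan $\bC\to\bD'\xleftarrow{\sim}\bD$; that is, one replaces the \emph{target}, not the source as you wrote. You could also bypass fractions. Right exact functors out of the localisation of a category of cofibrant objects are the same as functors inverting weak equivalences and preserving the initial object and pushouts along cofibrations. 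A localizing $E$ preserves such pushouts, because the two horizontal Verdier cofibres of a pushout square along a fully faithful functor agree.

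The genuine gap is the filtered colimit step. The claim that every filtered diagram $I\to\NcMot$ lifts to $\Prld$ is a statement about homotopy coherent diagrams. It does not follow from the cofibrant-replacement argument for single morphisms. If you lift the arrows of $I$ one at a time, the resulting composites in $\Prld$ agree only after applying $M$, and the commutativity homotopies and higher coherences of the diagram are not realised. So ``the same reasoning'' does not produce the lift.

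One way to close the gap is to reduce to chains:
\begin{enumerate}
\item Every filtered $\infty$-category admits a cofinal directed poset, and countable directed posets have cofinal sequences.
\item Iwamura's lemma writes a directed poset of cardinality $\kappa$ as a continuous union of a $\kappa$-chain of smaller directed subposets. By induction on cardinality, it therefore suffices that $E'$ preserves colimits indexed by ordinals.
\item An ordinal-indexed $X$ can be lifted by transfinite induction. Extending a lift $D$ from $[0,\mu)$ to $[0,\mu]$ only requires lifting the single arrow $M(\colim_{<\mu}D)\simeq\colim_{<\mu}X\to X_\mu$ with \emph{fixed source}. This is precisely what the cospan form of fractions gives, and it is where the direction matters. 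The limit stages are handled by $M$ being finitary.
\end{enumerate}
Alternatively, you could use a diagram-level version of the localisation theorem for categories of cofibrant objects with compatible filtered colimits. For that you must check that filtered colimits in $\Prld$ preserve fully faithful functors and motivic equivalences; the latter holds because $M$ is finitary.
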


In general we shall not distinguish between $E$ and $E'$ and denote both by $E$ (i.e. evaluate at a motive or a category whenever convenient). Sometimes if we need to be careful we will make this distinction. 

\subsection{Sheaves as a non-commutative motive}

Now we shall analyse the motive $M(\Shv(X,\bC))$ for a locally compact Hausdorff space $X$.  We already know that 
\[
K(\Shv(X,\bC)) = \Gamma_c(X, \bC) \ .
\]
As a short digression and to properly state the result, we start by reviewing/describing the six functor formalism of locally compact Hausdorff spaces, which is intimately related to the result. 

We consider sheaves on $X$ with values in any presentable stable $\infty$-category $\bE$. Here $\bE$ a priori does not need to be dualisable. At some point we need this, but not for the moment, so we change from $\bC$ to $\bE$.

Let \(f \colon X \to Y\) be a continuous map of locally compact Hausdorff spaces. To such a functor, we may associate the \emph{pushforward} functor \[f_* \colon \Shv(X, \bE) \to \Shv(Y, \bE), \quad \mathcal{F} \mapsto (U \mapsto \mathcal{F}(f^{-1}(U)),\] which is right adjoint to the \emph{pullback} functor \[f^* \colon \Shv(Y, \bE) \to \Shv(X, \bE).\] The second adjoint pair is given by the \emph{proper pushforward} functor \begin{multline*}
f_{!} \colon \Shv(X, \bE) \to \Shv(Y, \bE), \\
\mathcal{F} \mapsto f_!(\mathcal{F})(U) = \colim_{K \subset f^{-1}(U): K \to U \text{ proper }}\mathsf{fib}(\mathcal{F}(f^{-1}(U)) \to \mathcal{F}(f^{-1}(U) \setminus K))),
\end{multline*} whose (abstractly defined) right adjoint \(f^! \colon \Shv(Y, \calE) \to \Shv(X, \calE)\) is called the \emph{exceptional image} functor. Specialising to the map \(f \colon X \to *\), we get \(\Shv(*, \calE) \simeq \calE\) and functors
\begin{align*}
& \Gamma(X,\mathcal{F})  = f_*(\mathcal{F}) = \mathcal{F}(X) \\
& \underline{E}  = f^*(E) \\
& \Gamma_c(X,\mathcal{F}) = f_!(\mathcal{F}) =  \colim_{K \subset X: K \text{ compact }}\mathsf{fib}(\mathcal{F}(X) \to \mathcal{F}(X \setminus K))
\\ 
& f^!E
\end{align*}

\noindent of which the first and the third are called the \emph{global sections} and \emph{global sections with compact support}, respectively. We shall also note that 
for a spectrum $E \in \Sp$ the compositions gives rise to know things: 
\begin{align*}
&\Gamma(X, \underline{E}) = f_*f^*(E)  & \text{sheaf comology of $X$ with values in $E$} \\
&\Gamma_c(X, \underline{E}) = f_!f^*(E)  & \text{compactly supported sheaf comology of $X$ with values in $E$} \\
&f_!f^!(E)  & \text{sheaf homology of $X$ with values in $E$} \\
&f_*f^!(E)   & \text{locally finite sheaf homology of $X$ with values in $E$} \\
\end{align*}
The latter is also known as Borel-Moore homology. 
For a nice space $X$ (e.g. a CW complex or more generally a space of singular shape) the cohomology is literally the limit over the $\infty$-groupoid $\mathrm{Sing}X$ (i.e. singular cohomology) and the homology is the colimit (i.e. singular homology). In general there is an pro-homotopy type $\Pi_\infty(X) \in \mathrm{Pro}(\mathcal{S})$ called the shape of $X$ such that 
\[
f_*f^*E = E^{\Pi_\infty(X)} \ .
\]
where the right hand side is define as a colimit of limits in $\calE$ (the colimit over the indexing category of the pro-object and the limit over the terms) and similar formulas for the others. This goes to say that  generally these functors can be evaluated purely in terms of the category $\bE$ and an invariant associated with a space $X$.

For a left adjoint functor $g: \bE \to \bE'$ one gets an induced base change functor $\bar g: \Shv(X, \bE) \to \Shv(X, \bE')$ in the obvious way (involving a sheafification) and one can show that the functors $f_!$ and $f^*$ commutes with this base change in the obvious way: 
\[
\bar g f_! = f_! \bar g \qquad \bar g f^* = f^* \bar g \ .
\]
\begin{theorem}[Efimov]\cite{efimov2024k}
For any finitary localizing invariant $E: \Prld \to \bC$ we have
\[
E(\Shv(X,\bC)) = f_!f^* E(\bC) = \Gamma_c(X, \underline{E(\bC)})
\]
\end{theorem}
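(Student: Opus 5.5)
The strategy is to prove the equivalence by the same conservativity mechanism the paper uses for Theorem \ref{thm:K-equivariant}, with \(G\) trivial. Fix the dualisable category \(\bC\) and a finitary localising invariant \(E\). Consider the functor on locally compact Hausdorff spaces
\[
\Phi(X) := E(\Shv(X,\bC)),
\]
made covariant for open embeddings \(j \colon U \hookrightarrow X\) via the strongly continuous extension-by-zero \(j_!\), and contravariant for proper maps \(p\) via \(p^*\), which is strongly continuous because \(p_* = p_!\) preserves colimits. In the terminology above, \(\Phi\) is a functor on the span category \(\mathrm{LCHaus}^{pdp}\), and so is \(X \mapsto \Gamma_c(X,\underline{E(\bC)})\). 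The plan is to show that both functors have the same descent properties and agree on a point. The uniqueness statement recalled after Theorem \ref{thm:conservativity} then forces them to coincide. For the comparison map, I would use the natural map from \(\Gamma_c(-,\underline{E(\bC)})\), the unique such functor with value \(E(\bC)\) at the point, to \(\Phi\), induced by the unit class \(E(\bC) = \Phi(\mathrm{pt})\). Equivalently, this is the counit of \(t^* \dashv t_*\).

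\textbf{Step 1: open/closed excision.} For \(U \subseteq X\) open with closed complement \(Z\), I would show that
\[
\Shv(U,\bC) \xrightarrow{j_!} \Shv(X,\bC) \xrightarrow{i^*} \Shv(Z,\bC)
\]
is a Verdier sequence in \(\Prld\). Here \(j_!\) is fully faithful with essential image \(\ker i^*\), and the right adjoint \(i_*\) of \(i^*\) is fully faithful. One must also check that all functors are strongly continuous, i.e.\ that \(j^*\) and \(i_*\) preserve colimits. This is standard for sheaves on locally compact Hausdorff spaces valued in a dualisable \(\bC\). Since \(E\) is localising, we get fibre sequences \(\Phi(U) \to \Phi(X) \to \Phi(Z)\), which is exactly open codescent and closed descent. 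The normalisation \(\Phi(\emptyset)=0\) is clear, and \(\Phi(\mathrm{pt}) = E(\bC)\).

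\textbf{Step 2: cofiltered codescent for compacta.} For a cofiltered limit \(K = \lim K_\alpha\) of compact Hausdorff spaces, I would show that
\[
\Shv(K,\bC) \simeq \colim_\alpha \Shv(K_\alpha,\bC)
\]
in \(\Prld\), with transitions given by the pullbacks. Since \(E\) is finitary, this gives \(\Phi(K) \simeq \colim_\alpha \Phi(K_\alpha)\). To prove this, I would use that colimits in \(\Prld\) are computed as limits of right adjoints in \(\mathbf{Pr}^R\) and compare with sheaves on the limit space. The key input is that every open of \(K\) containing a compact set contains a pullback of an open from some \(K_\alpha\). Compact morphisms in \(\Shv(K,\bC)\) are controlled by maps \(\mathcal{F}|_{\overline V} \to \mathcal{G}|_W\) for \(\overline V \subset W\), and these descend to a finite stage. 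I expect this step to be the main obstacle, because dualisability, rather than mere presentability, of the colimit must be verified.

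\textbf{Step 3: conclude.} With Steps 1 and 2, Theorem \ref{thm:conservativity} with trivial \(G\), in its compactly supported span form, identifies \(\Phi\) with the unique such functor determined by its value at the point. That functor is \(\Gamma_c(-,\underline{E(\bC)}) = f_!f^*E(\bC)\). To check that the comparison map is an equivalence, it suffices to evaluate at the point, where it is the identity of \(E(\bC)\).
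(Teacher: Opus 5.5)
\paragraph{The approach is right, but Step 3 has a genuine gap.} Your skeleton matches the mechanism the paper points to for Theorem~\ref{thm:efimov}: localisation sequences from the open/closed recollement, codescent for cofiltered limits of compacta from finitarity, and then Clausen's conservativity theorem with trivial \(G\), with the comparison map checked at the point. Steps 1 and 2 are the right inputs. The problem is that Theorem~\ref{thm:conservativity} is stated for functors with values in a \emph{dualisable} category. Your \(\Phi=E(\Shv(-,\bC))\) takes values in the target of \(E\), which is an arbitrary presentable stable \(\infty\)-category. This hypothesis is not cosmetic. To pass from "equivalence at points" to "equivalence", the argument tests maps of \(\mathcal{K}\)-sheaves on compact Hausdorff spaces on stalks, using hypercompleteness of the Hilbert cube. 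The paper lists dualisability of the value category as one of the two crucial facts that make this work. As written, your argument only proves the statement when the target of \(E\) is dualisable, for example for spectrum-valued invariants such as \(K\), \(K_H\) or \(\THH\).

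\paragraph{The missing step.} To cover all finitary localising invariants, you need the ingredient the paper singles out: reduce to the universal invariant, and know that \(\NcMot\) is dualisable.
\begin{enumerate}
\item Write \(E=E'\circ M\) with \(E'\colon\NcMot\to\bE\) colimit-preserving.
\item Colimit-preserving functors commute with \(f_!\) and \(f^*\) (base change), so it suffices to prove \(M(\Shv(X,\bC))\simeq f_!f^*M(\bC)\).
\item Running your Steps 1--3 for \(\Phi=M(\Shv(-,\bC))\) is then legitimate exactly because of Efimov's deep theorem that \(\NcMot\) is itself dualisable.
\end{enumerate}
You cannot sidestep this by testing the universal case against spectrum-valued invariants. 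That would require the colimit-preserving functors \(\NcMot\to\Sp\) to be jointly conservative, which is again something you only get from knowing \(\NcMot\) is dualisable.
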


In particular we have that the motive $M(\Shv(X,\bC)) =  f_!f^* M(\bC)$ and in view of the fact that a finitary localizing invariant gives rise to a functor $\NcMot \to \bE$ the result is in fact equivalent to the computation for $M$. 

Let us say some words about the proof: the crucial idea is to consider the assigment $X \mapsto K(\Shv(X, \bC)$ as a $\mathcal{K}$-sheaf on the category of compact Hausdorff spaces itself. The stalks are simply given by $K(\bC)$ and the idea is to show that this sheaf is indeed constant. One can produce a map from the constant sheaf $\underline{K(\bC)}$ that is an equivalence on stalks. In order to conclude that this is enough one crucially uses two facts:
\begin{enumerate}
\item The category $\NcMot$ is dualisable itself (another deep result by Efimov).
\item Every compact Hausdorff space an be embedded into a Hilbert cube, hence a space that is hypercomplete, so that equivalences can be tested on stalks. 
\end{enumerate}
These facts are the crucial facts that are used.

\begin{corollary}\label{cor_kk}
For any $\bC \in \Prld$ we have that 
\[
\mathbf{KK}^\alg(\Shv(X;\Sp), \bC) \simeq f_*f^! K(\calC) \ .
\]
is given by Borel Moore homology. More generally we have that the inner Hom in motives is given by
\[
\underline{\mathbf{Hom}}_{\NcMot}(M\Shv(X;\Sp), M\calC) = f_*f^! M\calC \ .
\]
\end{corollary}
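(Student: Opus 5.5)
The plan is to reduce the statement to Efimov's theorem by using that \(\Shv(X;\Sp)\) is self-dual, so that maps out of its motive become maps into a tensor product with it. First I would prove the general internal Hom statement. Then I would recover the spectrum-level statement by applying \(\mathbf{Map}_{\NcMot}(M\Sp,-)\) and using corepresentability of \(K\)-theory,
\[
K(\bC)=\mathbf{Map}_{\NcMot}(M\Sp,M\bC).
\]

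The first ingredient is that \(\NcMot\) is symmetric monoidal and \(M\) is symmetric monoidal for the Lurie tensor product on \(\Prld\). The second is that \(\Shv(X;\Sp)\) is dualisable in \(\Prld\) with dual \(\Shv(X;\Sp)\) itself (Verdier duality). Concretely, evaluation is \(\Gamma_c(X,\Delta^*(-\boxtimes-))\) and coevaluation is \(\Delta_!\) of the constant sheaf, or equivalently \(\Delta_*\), using \(\Shv(X)\otimes\Shv(X)\simeq\Shv(X\times X)\). Since \(M\) is symmetric monoidal, \(M\Shv(X;\Sp)\) is then dualisable in \(\NcMot\) with dual \(M\Shv(X;\Sp)\). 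This gives
\[
\underline{\mathbf{Hom}}(M\Shv(X;\Sp),M\bC)\simeq M(\Shv(X;\Sp)\otimes\bC).
\]
The caveat is that duality in \(\Prld\) needs the evaluation and coevaluation to be strongly continuous. Here one should only use the \(\NcMot\)-level duality, where the relevant functors exist as morphisms of motives.

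Next I would identify \(\Shv(X;\Sp)\otimes\bC\simeq\Shv(X;\bC)\), which is standard for dualisable \(\bC\). Efimov's theorem then gives
\[
M\Shv(X;\bC)\simeq f_!f^*M\bC=\Gamma_c(X,\underline{M\bC}).
\]

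The main obstacle is matching this with \(f_*f^!M\bC\) rather than \(f_!f^*\). The point is that the self-duality is twisted. The dual of \(M\Shv(X)\) computed via the dualising complex picks up \(\omega_X=f^!\mathbb{S}\): the pairing \(\Gamma_c\) identifies the dual of compactly supported cohomology with Borel--Moore homology. So I would instead compute \(\mathbf{Map}(M\Shv(X),N)\) directly. I would write \(M\Shv(X)=f_!f^*M\Sp=\Gamma_c(X,\underline{M\Sp})\), using that \(f_!\) and \(f^*\) commute with colimit-preserving base change (as recalled above), so this is the image of \(M\Sp\) under the colimit-preserving functor \(f_!f^*\) on \(\NcMot\). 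Since \(\NcMot\) is presentable and stable, the adjunctions \(f^*\dashv f_*\) and \(f_!\dashv f^!\) exist for \(\NcMot\)-valued sheaves. Applying them to the inner Hom gives
\[
\underline{\mathbf{Hom}}(f_!f^*M\Sp,N)\simeq f_*\underline{\mathbf{Hom}}(f^*M\Sp,f^!N)\simeq f_*f^!N,
\]
and I expect this to be the cleanest route. The care needed is in checking that the \(\NcMot\)-linear adjunctions hold internally, that is, that \(f_!\) satisfies the projection formula over \(\NcMot\), and that the identification of \(f_!f^*M\Sp\) with \(M\Shv(X;\Sp)\) is compatible with these structures. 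Taking \(\mathbf{Map}(M\Sp,-)\) afterwards commutes with \(f_*f^!\), since \(K\) is a colimit-preserving functor \(\NcMot\to\Sp\) and the sheaf operations commute with such base change. This yields the first formula.
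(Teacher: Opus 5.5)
Your final argument is the paper's proof. It identifies $M\Shv(X;\Sp)\simeq f_!f^*M\Sp$ by Efimov's theorem, applies the adjunctions $f_!\dashv f^!$ and $f^*\dashv f_*$, and passes to spectra via $K\simeq\mathbf{Map}_{\NcMot}(M\Sp,-)$. The paper writes the chain out for mapping spectra and remarks that the first formula is the image of the second under $K$, which is the order you chose. The projection-formula check you flag is the six-functor input the paper leaves implicit.

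Two things need correcting.

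First, delete the opening self-duality paragraph rather than repairing it with a ``twist''; its claims are false. If $\Shv(X;\Sp)$ were self-dual in $\Prld$, then, $M$ being symmetric monoidal, $M\Shv(X;\Sp)$ would be self-dual in $\NcMot$. But for $X=\mathbb{R}^n$ we have $M\Shv(\mathbb{R}^n;\Sp)\simeq\Omega^nM\Sp$, whose dual is $\Sigma^nM\Sp$, as the corollary itself predicts. These two objects are not equivalent: an equivalence would give $M\Sp\simeq\Sigma^{2n}M\Sp$, hence $K_0(\mathbb{S})\cong K_{-2n}(\mathbb{S})=0$. So the Verdier self-duality of $\Shv(X;\Sp)$ in $\mathbf{Pr}^L_{st}$ does not descend to $\Prld$ or to $\NcMot$. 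Only your direct adjunction computation should remain.

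Second, $K$ commutes with $f_*$ and $f^!$ because it is a right adjoint, which is the reason the paper gives. Its left adjoint $E\mapsto E\otimes M\Sp$ commutes with $f^*$ and $f_!$ by the base change recalled in the paper. Passing to right adjoints gives $Kf_*\simeq f_*K$ and $Kf^!\simeq f^!K$. That $K$ preserves colimits is true but is not the relevant property. Base change along a merely colimit-preserving functor need not commute with $f_*$, since $f_*$ involves infinite limits.
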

Note that 
$K( \underline{\mathbf{Hom}}_{\NcMot}(M\Shv(X;\Sp), M\calC)) = \mathbf{KK}^\alg(\Shv(X;\Sp), \calC)$
and that $K$ theory $\NcMot \to \Sp$ is right adjoint, thus commutes with $f_*f^!$. Therefore the first statement is really a special case of the second. 
\begin{proof}
We will prove the first, the second works similar using inner homs and is mostly notationally more complicated (plus uses some properties of six-functor formalisms that we have not talked about). 
Using the adjunctions we have
\begin{align*}
\mathbf{Map}_{\NcMot}(M\Shv(X;\Sp), M\calC) &= \mathbf{Map}_{\NcMot}(f_!f^*M\Sp, M\calC) \\
& = \mathbf{Map}_{\Shv(X,\NcMot)}(f^*M\Sp, f^*M\calC)\\
& = \mathbf{Map}_{\Shv(X,\NcMot)}(M\Sp, f_*f^!M\calC) \\
& = K(f_*f^!M\calC) = f_*f^!K(M\calC)  
\end{align*}
and since $K$-theory is limit-preserving it commutes with all the functors in question ($f_*$ and $f^!$). 
\end{proof}

In particular for compact Hausdorff spaces $X$ we get that 
\[
\mathbf{KK}^\alg(\Shv(X, \Sp), \calC) = \Pi_\infty(X) \otimes \calC
\]
For a locally compact Hausdorff space $X$ we get that 
\[
\colim_{K \subseteq X \text{ compact}} \mathbf{KK}^\alg(\Shv(K, \Sp), \calC)  = \Pi_\infty(X) \otimes \calC
\]
which for $X = BG$ is the source of the assembly map.

\subsection{$\mathbb{A}^1$-invariant motives}

Note that if $E$ is any finitary localizing invariant, we can form a homotopy version of $E$ as
\[
EH(\calC) = \colim_{S \in \Delta^{\mathrm{op}}} E(\calC^{\Delta^S}) 
\]
similar to homotopy $K$-theory. And similar to homotopy $K$-theory, we see that $EH$ is homotopy invariant in the sense that $EH(\calC) \xto{\simeq} EH(\calC^{\Delta^S})$, in particular $EH \to (EH)H$ is an equivalence.  Morever $EH$ is a finitary, localzing invariant.

\begin{definition}
A finitary, localzing invariant $E$ is called \emph{$\mathbb{A}^1$-invariant} if the map 
\[
E(\calC) \to E(\calC^{\Delta^1}) 
\]
is an equivalence. This is equivalent to the assertion that the map $E \to EH$ is an equivalence.
\end{definition}

\begin{proposition}
There is a universal, finitary localizing, $\mathbb{A}^1$-invariant functor
\[
M_{\mathbb{A}^1}: \Prld \to \NcMot_{\mathbb{A}^1} \ .
\]
The canonical map $\NcMot \to \NcMot_{\mathbb{A}^1}$ is a Verdier Quotient map, in particular a localization. It is smashing, i.e. given by tensoring with an idempotent in $\NcMot$ and this idempotent is given by the colimit
\[
M_{\mathbb{A}^1}\Sp = \colim_{S \in \Delta^{\mathrm{op}}} M\calD(\mathbb{S}^{\Delta^S}) \ .
\]
in commutative algebras in $\NcMot$. 
\end{proposition}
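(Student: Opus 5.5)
The plan is to build $\NcMot_{\mathbb{A}^1}$ as a Bousfield localisation of $\NcMot$ and to prove smashingness using the symmetric monoidal structure on $\NcMot$ induced by the Lurie tensor product on $\Prld$. First I would check that $M\colon \Prld \to \NcMot$ is symmetric monoidal. Since $\bC\otimes -$ preserves Verdier sequences and filtered colimits in $\Prld$, the tensor product descends to a colimit-preserving symmetric monoidal structure on $\NcMot$ with unit $M\Sp$. Next, the functor $S\mapsto \bD(\mathbb{S}^{\Delta^S})$ is a simplicial object in commutative algebras of $\Prld$, because $S\mapsto \mathbb{S}^{\Delta^S}$ is a simplicial commutative ring spectrum. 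Applying $M$ gives a simplicial object in $\mathbf{CAlg}(\NcMot)$, and I set
\[
A := \colim_{S \in \Delta^{\mathrm{op}}} M\bD(\mathbb{S}^{\Delta^S}) \in \mathbf{CAlg}(\NcMot).
\]
Sifted colimits in $\mathbf{CAlg}$ are computed underlying, so the colimit can equally be taken in $\NcMot$. Since $\calC^{\Delta^S} = \calC\otimes \bD(\mathbb{S}^{\Delta^S})$, we have $M(\calC^{\Delta^S}) \simeq M\calC\otimes M\bD(\mathbb{S}^{\Delta^S})$. Hence, for any colimit-preserving $E'$ on $\NcMot$,
\[
EH(\calC) \simeq E'(M\calC \otimes A).
\]

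The key step is to show that $A$ is idempotent, i.e.\ that $A \to A\otimes A$ is an equivalence. Write $A\otimes A$ as the colimit over $\Delta^{\mathrm{op}}\times\Delta^{\mathrm{op}}$ of $M\bD(\mathbb{S}^{\Delta^S}\otimes \mathbb{S}^{\Delta^T})$. Because $\Delta^{\mathrm{op}}$ is sifted, this reduces to showing that for each fixed $T$ the unit map $M\calC \to M(\calC\otimes\bD(\mathbb{S}^{\Delta^T}))$ becomes an equivalence after applying $-\otimes A$. That is exactly the statement that $EH(\calC)\to EH(\calC^{\Delta^T})$ is an equivalence for the "universal" finitary localizing invariant $E=M$, i.e.\ the proposition stated earlier that $KH(\calC)\simeq KH(\calC^{\Delta^n})$, run with $M$ in place of $K$.

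I would prove that proposition motivically by the standard contraction argument. The simplicial object $S\mapsto \mathbb{S}^{\Delta^S\times\Delta^T}$ receives a map from $S\mapsto\mathbb{S}^{\Delta^S}$. Using the interval description via $\mathrm{Cut}(S)$ and multiplication of the coordinates $x_i$, one builds an algebraic homotopy $\mathbb{S}^{\Delta^T}\to \mathbb{S}^{\Delta^T}\otimes\mathbb{S}^{\Delta^1}$ contracting $\Delta^T$ to a vertex. Extra degeneracies then show that the two maps agree after realisation. This is the step I expect to be the main obstacle. Over the sphere one cannot write the usual polynomial formulas, so the contracting homotopy must be built from monomial operations on $\widetilde{\mathbb{S}}[I]$ and made natural in $S$. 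I would first try the map $x_i\mapsto x_i\cdot t$ on cut coordinates, which only uses monomials and so makes sense $\mathbb{E}_\infty$.

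Given idempotence, everything else is formal. $A$ is an idempotent commutative algebra in the presentable stable category $\NcMot$, so $-\otimes A$ is a smashing localisation. Set $\NcMot_{\mathbb{A}^1}:=\mathbf{Mod}_A(\NcMot)$, which is equivalent to the full subcategory of $A$-local objects; the map $\NcMot\to\NcMot_{\mathbb{A}^1}$ is then a Verdier quotient with kernel the objects $X$ with $X\otimes A=0$. Define $M_{\mathbb{A}^1}:=M\otimes A$. It is finitary and localizing because $M$ is and $-\otimes A$ is exact and preserves colimits. It is $\mathbb{A}^1$-invariant by the idempotence step. For universality, let $E$ be finitary, localizing and $\mathbb{A}^1$-invariant, with induced $E'$ on $\NcMot$. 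Then $E\simeq EH$, so $E'(X)\simeq E'(X\otimes A)$ for all $X$. Thus $E'$ inverts the maps $X\to X\otimes A$, and therefore factors uniquely through the localisation.
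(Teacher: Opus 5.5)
Your proof is correct, but it runs in the opposite direction from the paper's. The paper treats the existence of $\NcMot_{\mathbb{A}^1}$ as a Bousfield localisation of $\NcMot$ as formal. It notes that restriction $\Fun^L(\NcMot_{\mathbb{A}^1},\bE)\to\Fun^L(\NcMot,\bE)$ is the inclusion of $\mathbb{A}^1$-invariant finitary localizing invariants, with left adjoint $E\mapsto E_H=E(-\otimes M_{\mathbb{A}^1}\Sp)$. It then reads off $R_p\circ p\simeq -\otimes M_{\mathbb{A}^1}\Sp$ by Yoneda, so smashingness and the module description come out in one step and idempotence is never checked by hand.

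You instead build $A$ as a commutative algebra, prove $A\to A\otimes A$ is an equivalence directly, and then get the Verdier quotient, $\NcMot_{\mathbb{A}^1}=\mathbf{Mod}_A(\NcMot)$ and universality from the general theory of idempotent algebras. This makes the algebra structure and the symmetric monoidal structure on the localisation explicit, and it isolates the single non-formal input. That input is the same in both proofs: homotopification is itself $\mathbb{A}^1$-invariant, $(E_H)_H\simeq E_H$. The paper uses this silently when it asserts that $E\mapsto E_H$ is the left adjoint, since that needs $E_H$ to be $\mathbb{A}^1$-invariant and $E\to E_H$ to be initial. The paper's route is shorter, but it does not actually avoid this work.

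Three remarks on your sketch of that lemma:
\begin{enumerate}
\item Since $\mathbb{S}^{\Delta^T}\simeq\mathbb{S}[t]^{\otimes T}$, it suffices to treat $T=1$.
\item Your contraction $x_i\mapsto x_i t$ must act only on interior cut coordinates, because $x_{\max}=1$ has to be preserved.
\item What turns an algebraic homotopy into equal maps after realisation is a simplicial homotopy, not extra degeneracies. Concretely, use the ring maps $\mathbb{S}^{\Delta^1}\otimes\mathbb{S}^{\Delta^n}\to\mathbb{S}^{\Delta^n}$ indexed by $\alpha\colon[n]\to[1]$, sending the interior coordinate of $\mathbb{S}^{\Delta^1}$ to the cut coordinate $x_{(\alpha^{-1}(0),\alpha^{-1}(1))}$. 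These maps are monomial and natural in $[n]$, because pulling back the cut of $\alpha$ along $\theta$ gives the cut of $\alpha\theta$. So they exist over the sphere and assemble to a map $X_\bullet\otimes\Delta^1\to Y_\bullet$ whose realisation identifies $\mathrm{ev}_0$ and $\mathrm{ev}_1$.
\end{enumerate}
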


Note that the object $M_{\mathbb{A}^1}\Sp$ is clearly a commutative ring object in $\NcMot$. Part of the assertion is that it is idempotent, i.e. that
\[
M_{\mathbb{A}^1}\Sp \otimes M_{\mathbb{A}^1}\Sp \to M_{\mathbb{A}^1}\Sp
\]
is an equivalence and that $\NcMot_{\mathbb{A}^1}$ is simply modules over this idempotent algebra in $\NcMot$.

\begin{proof}
The fact that the universal invariant exists and is a further Bousfield localzing of $\mathrm{NcMot}$ is formal. Restriction along the functor
 \[
 \NcMot \xto{p} \NcMot_{\mathbb{A}^1} 
 \]
 is given by 
Then the functor
\[
\Fun^L(\NcMot_{\mathbb{A}^1}, \bE) \to \Fun^L(\NcMot, \bE)
\]
is given by the inclusion of finitary localizing, $\mathbb{A}^1$-invariants into finitary localizing invariants.
This functor has a left adjoint $E \mapsto EH$ given by
\[
E_H(M\calC) = \colim_{S \in \Delta^{\mathrm{op}}} E(M\calC^{\Delta^S}) = E(M\calC \otimes M_{\mathbb{A}^1}\Sp) \ .
\]
This implies by the Yoneda Lemma that 
\[
 \NcMot \xto{p} \NcMot_{\mathbb{A}^1} 
\]
has right adjoint $R_p$ in $\Pr^L$, such that the composition $R_p \circ p$ is given by tensoring with $M_{\mathbb{A}^1}\Sp$. This implies the claim and exhibits 
$\NcMot_{\mathbb{A}^1} $ as modules over $M_{\mathbb{A}^1}\Sp$.
\end{proof}

\begin{corollary}
Homotopy $K$-theory is corepresented in $\NcMot_{\mathbb{A}^1}$ by $M_{\mathbb{A}^1}\Sp$.
\end{corollary}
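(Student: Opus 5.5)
The statement follows formally from the preceding proposition: $\NcMot_{\mathbb{A}^1}$ is the category of modules over the idempotent commutative algebra $M_{\mathbb{A}^1}\Sp$, and its unit object is $M_{\mathbb{A}^1}\Sp = p(M\Sp)$. The plan is to compute mapping spectra out of this unit, use the free–forgetful adjunction, and then invoke the corepresentability of $K$-theory by $M\Sp$ (Barwick, Blumberg--Gepner--Tabuada).

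\emph{Step 1.} For $\calC \in \Prld$ we have $M_{\mathbb{A}^1}\calC = p(M\calC) = M\calC \otimes M_{\mathbb{A}^1}\Sp$. Since $p$ is left adjoint to the fully faithful $R_p$, there are equivalences
\[
\mathbf{Map}_{\NcMot_{\mathbb{A}^1}}(M_{\mathbb{A}^1}\Sp, M_{\mathbb{A}^1}\calC) \simeq \mathbf{Map}_{\NcMot}(M\Sp, R_p p M\calC) \simeq \mathbf{Map}_{\NcMot}(M\Sp, M\calC\otimes M_{\mathbb{A}^1}\Sp).
\]
By the corepresentability theorem, the right-hand side is $K(M\calC \otimes M_{\mathbb{A}^1}\Sp)$, where $K$ denotes the induced colimit-preserving functor $\NcMot \to \Sp$.

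\emph{Step 2.} We identify this with $K_H(\calC)$. The proof of the preceding proposition gives the formula
\[
E(M\calC \otimes M_{\mathbb{A}^1}\Sp) = \colim_{S\in\Delta^{\op}} E(M\calC^{\Delta^S}) = EH(\calC),
\]
valid for any finitary localizing invariant $E$, viewed as a colimit-preserving functor on $\NcMot$. Applying it with $E = K$ gives $K_H(\calC)$, matching the definition of homotopy $K$-theory.

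\emph{Main obstacle.} The delicate point is justifying that $K$ commutes with the geometric realisation defining $M_{\mathbb{A}^1}\Sp$. This colimit is formed in commutative algebras in $\NcMot$, but sifted colimits of commutative algebras are computed underlying, so it is also a colimit in $\NcMot$.

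Two further facts are needed here. First, $M\calC\otimes -$ preserves colimits and satisfies $M\calC\otimes M\calD(\mathbb{S}^{\Delta^S}) \simeq M(\calC\otimes \bD(\mathbb{S}^{\Delta^S}))$, i.e.\ $M$ is symmetric monoidal. Second, $K\colon \NcMot\to\Sp$ preserves colimits. This holds because $K$ is finitary and localizing, so it factors through $\NcMot$ as a colimit-preserving functor by the universal property of the definition/proposition for $\NcMot$.

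With these two inputs, $K(M\calC\otimes M_{\mathbb{A}^1}\Sp) \simeq \colim_S K(\calC^{\Delta^S}) = K_H(\calC)$, naturally in $\calC$. This completes the identification.
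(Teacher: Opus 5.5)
Your proposal is correct and follows the paper's own argument. The paper leaves the corollary as an immediate consequence of the preceding proposition, and the same chain of equivalences appears in its formula for $\mathrm{KK_H}$ right afterwards: the adjunction $p \dashv R_p$ with $R_p p \simeq - \otimes M_{\mathbb{A}^1}\Sp$, corepresentability of $K$ by $M\Sp$, and the identity $E(M\calC\otimes M_{\mathbb{A}^1}\Sp)\simeq \colim_{S\in\Delta^{\op}} E(\calC^{\Delta^S})$ from the proposition's proof. Your extra justification of that identity (sifted colimits of commutative algebras are computed in $\NcMot$, $M$ is symmetric monoidal, and $K$ preserves colimits on $\NcMot$) makes explicit what the paper leaves implicit.
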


In fact, we can even define bivariant $K_H$-theory as
\[
\mathrm{KK_H}(\calC, \calD) = \map_{\NcMot_{\mathbb{A}^1}} (M_{\mathbb{A}^1}\calC, M_{\mathbb{A}^1}\calD) =  \map_{\NcMot}(M \calC, M \calD \otimes 
M_{\mathbb{A}^1}\Sp)
\]

\begin{example}
We have that $M_{\mathbb{A}^1} \Shv(X,\calC) = f_! f^* M_{\mathbb{A}^1} \calC$ and 
\[
K_H( \Shv(X,\calC)) = \Gamma_c(X, K_H(\calC)) \ .
\]
That is for regular rings $R$ the $K$-theory and homotopy $K$-theory of $ \Shv(X,\calD(R))$ agree (in fact the motives already do). 
\end{example}

\begin{definition}
A motivic invariant with corresponding functor $E: \NcMot \to \calE$ is called \emph{strongly $\mathbb{A}^1$-invariant} if 
\[
E(M\calC) \to E(M\calC \otimes M_{\mathbb{A}^1}\Sp)
\]
is an equivalence. 
\end{definition}

We can reformulate this condition as follows: a morphism $\calC \to \calD$ is called $\mathbb{A}^1$-motivic equivalence if the induced map $M \calC \to M\calD$ becomes an equivalence after tensoring with $M\Sp_\Delta$, equivalently if $M_{\mathbb{A}^1}\calC \to M_{\mathbb{A}^1}\calD$ is an equivalence. 
Then a strongly $\mathbb{A}^1$-invariant motivic invariant $E$ is a functor that sends $\mathbb{A}^1$-motivic equivalences to equivalences, in particular induces a functor 
\[
E': \NcMot_{\mathbb{A}^1} \to \calE \ .
\]
If $E$ is localizing then $E'$ is exact, if $E$ is finitary, then $E'$ preserves filtered colimits.  

\begin{warning}
It is for a general motivic invariant $E$ not true that strong $\mathbb{A}^1$-invariance is equivalent to 
\[
E(\calC) \to E(\calC^{\Delta^1})
\]
being an equivalence. This is only true for finitary, localizing $E$. This is why we call it \emph{strong}  $\mathbb{A}^1$-invariant, where the latter would be called \emph{weak} $\mathbb{A}^1$-invariance. Thus distinction will be become important soon.
\end{warning}

We can make a general motivic invariant $E$ strongly $\mathbb{A}^1$-invariant by 
\[
E_H(M\calC) = E(M\calC \otimes M_{\mathbb{A}^1}\Sp)
\]

\begin{warning}
For a general motivic invariant, it is not true that 
\[
E_H(\calC) = \colim_{S \in \Delta^{\mathrm{op}}} E(\calC \otimes \calD(\mathbb{S}^{\Delta^S}))
\]
This is only true if $E$ is finitary and localizing. Otherwise we can not commute the colimit  outside the $E$. 
\end{warning}

\subsection{$\mathbb{A}^1$-acyclicity}

\begin{definition}
A motivic invariant $E$ is called \emph{$\mathbb{A}^1$-acyclic}, if $E_H = 0$. 
\end{definition}

\begin{lemma}
The category of $\mathbb{A}^1$-acyclic, motivic invariants is closed under all limits and colimits in motivic invariants.
\end{lemma}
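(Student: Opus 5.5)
The plan is to reduce $\mathbb{A}^1$-acyclicity to a condition that is manifestly stable under limits and colimits. By the preceding discussion, a motivic invariant $E$ is the same as a functor $E \colon \NcMot \to \bE$, and its strongly $\mathbb{A}^1$-invariant replacement is
\[
E_H(M\calC) = E(M\calC \otimes M_{\mathbb{A}^1}\Sp).
\]
So $E_H$ is $E$ precomposed with the endofunctor $T := - \otimes M_{\mathbb{A}^1}\Sp$ of $\NcMot$. The condition $E_H = 0$ thus says that $E \circ T \simeq 0$ as a functor on $\NcMot$ (equivalently, on $\Prld$ after composing with $M$).

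Colimits and limits in the category of motivic invariants are computed as follows. Motivic invariants with values in a (presentable, stable) $\bE$ are identified with $\Fun(\NcMot,\bE)$; this is the localisation statement of Sosnilo--Ramzi--Winges, under which motivic invariants are exactly the functors inverting $W_{\mathrm{mot}}$. That subcategory is closed under pointwise limits and colimits in $\Fun(\Prld,\bE)$, because equivalences are preserved by pointwise (co)limits. Hence limits and colimits of motivic invariants are computed objectwise.

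The key step is the observation that precomposition with $T$,
\[
T^* \colon \Fun(\NcMot,\bE) \to \Fun(\NcMot,\bE), \qquad E \mapsto E \circ T,
\]
commutes with all pointwise limits and colimits. This holds because $(\colim_i E_i)(T X) = \colim_i E_i(TX)$, and likewise for limits. Therefore, if each $E_i$ satisfies $(E_i)_H = T^*E_i \simeq 0$, then
\[
(\colim_i E_i)_H = T^*(\colim_i E_i) \simeq \colim_i T^*E_i \simeq 0,
\]
and the same argument works for limits. This proves the lemma.

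The only point needing care, which I expect to be the main obstacle, is the warning above: $E_H$ must be taken in the strong sense $E(-\otimes M_{\mathbb{A}^1}\Sp)$, not as the colimit formula $\colim_{S} E(\calC^{\Delta^S})$. For non-finitary $E$ (such as $\mathrm{TC}$ or $\mathrm{TP}$) the two differ, and the colimit formula would interact badly with limits. With the strong definition the argument is purely formal. One only has to check that the functor $\NcMot \to \NcMot$, $X \mapsto X \otimes M_{\mathbb{A}^1}\Sp$, is well defined, which holds since $\NcMot$ is symmetric monoidal with $M_{\mathbb{A}^1}\Sp$ a commutative algebra, as in the preceding proposition. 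No exactness or finitarity of $E$ is used anywhere.
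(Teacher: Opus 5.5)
Your proposal is correct and takes essentially the same approach as the paper, whose proof simply notes that $E \mapsto E_H$ commutes with all limits and colimits in $E$, so its kernel (the $\mathbb{A}^1$-acyclic invariants) is closed under them. Your identification $E_H = E \circ (- \otimes M_{\mathbb{A}^1}\Sp)$, together with the pointwise computation of (co)limits in motivic invariants, just spells out why that commutation holds.
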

\begin{proof}
The functor $E \mapsto E_H$ evidently commutes with all limits and colimits in $E$. Thus the kernel is closed under limits and colimits.  
\end{proof}

\begin{example}
$\mathrm{THH}$ is $\mathbb{A}^1$-acyclic. We have to show that homotopy $\THH$ vanishes. Since $\THH$ is finitary and localizing, we have the `usual' formula for homotopyfication. Moreover to show that homotopy $\THH$ vanishes it is enough to check it on the sphere, since everything else is a module over that. Thus we need to see that homotopy $\THH$ of rings is zero. This in turn now amounts to proving that 
\[
\colim_{S \in \Delta^{\mathrm{op}}} \THH(R^{\Delta^S}) = 0
\]
But $\THH$ itself is defined using the cyclic Bar construction, in particular it is a colimit each of which terms are tensor powers of $R$. Since geometric realizations commute with colimits and tensor powers it therefore suffices to show that for any ringspectrum $R$ we have that 
\[
\colim_{S \in \Delta^{\mathrm{op}}} R^{\Delta^S}
\]
vanishes. This is left as an exercise.
\end{example}

\begin{exercise}
Finish the previous proof, that is show that 
\[
\colim_{S \in \Delta^{\mathrm{op}}} R^{\Delta^S} = 0
\]
for every ring $R$. 
\end{exercise}

\begin{corollary}
Topological negative, topological periodic and topological cyclic homology are $\mathbb{A}^1$-acyclic. 
\end{corollary}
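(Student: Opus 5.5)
The plan is to reduce everything to the $\mathbb{A}^1$-acyclicity of $\THH$ established in the preceding example, together with the lemma that $\mathbb{A}^1$-acyclic motivic invariants are closed under all limits and colimits. The relevant invariants are all built from $\THH$ by limits and colimits. Concretely, $\mathrm{TC}^- = \THH^{hS^1}$ is a limit over $BS^1$ of the underlying functor $\THH$. Similarly, $\THH_{hS^1}$ is a colimit over $BS^1$. The periodic theory is $\mathrm{TP} = \THH^{tS^1} = \mathrm{cofib}(\Sigma \THH_{hS^1} \to \THH^{hS^1})$, a finite colimit of the previous two. Finally, $\mathrm{TC}$ is given either by Goodwillie's limit over the fixed point spectra or, in the Nikolaus--Scholze formula, as the equaliser
\[
\mathrm{TC} = \mathrm{fib}\Bigl(\mathrm{TC}^- \xrightarrow{\mathrm{can} - \varphi} \textstyle\prod_p \mathrm{TP}^\wedge_p\Bigr),
\]
which again involves only limits (products, $p$-completions as limits, fibres) of the previous invariants.

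The key steps, in order, are as follows. First, observe that all of these functors are motivic invariants, as noted in the example before the definition of motivic equivalence, so that $E \mapsto E_H$ makes sense for them. Second, view $\THH$ as a functor into the $\infty$-category of spectra with $S^1$-action, that is, $\mathbf{Fun}(BS^1,\Sp)$. Then $\THH_H$ computed there is zero, since the forgetful functor to $\Sp$ is conservative and commutes with $E \mapsto E_H$ (this is defined by precomposition with $-\otimes M_{\mathbb{A}^1}\Sp$). Third, apply the lemma: $(-)^{hS^1}$ and $(-)_{hS^1}$ are a limit and a colimit over $BS^1$ of an $\mathbb{A}^1$-acyclic diagram, so $\mathrm{TC}^-$ is acyclic and $\THH_{hS^1}$ is acyclic. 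Hence $\mathrm{TP}$, a cofibre of acyclic invariants, is acyclic; $p$-completion is a limit, so each $\mathrm{TP}^\wedge_p$ and their product are acyclic; and so $\mathrm{TC}$, a fibre of a map between acyclic invariants, is acyclic.

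The main obstacle is the bookkeeping around the Frobenius in $\mathrm{TC}$. The map $\mathrm{can}-\varphi$ is a natural transformation of motivic invariants, but the lemma is only about objects, so one must check that the fibre of a natural transformation is again a limit in the category of motivic invariants. This holds because limits of invariants are computed pointwise. The lemma therefore applies directly and no extra argument about $\varphi$ is needed. A second subtle point is that $E_H$ must be formed with the strong version $E(M\bC\otimes M_{\mathbb{A}^1}\Sp)$ rather than the simplicial colimit, since $\mathrm{TC}^-$, $\mathrm{TP}$ and $\mathrm{TC}$ are not finitary. This is exactly why the closure lemma, rather than a direct computation, is the right tool: its proof only uses that $E \mapsto E_H$ is precomposition and hence commutes with all limits and colimits.
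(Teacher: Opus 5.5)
Your proposal is correct and is essentially the paper's argument. The paper's proof is the single line that $\mathrm{TC}^-$, $\mathrm{TP}$ and $\mathrm{TC}$ are obtained from $\THH$ by limits, colimits and fibres, combined with the $\mathbb{A}^1$-acyclicity of $\THH$ and the closure lemma; your bookkeeping with the $BS^1$-indexed diagram, the norm cofibre and the Frobenius fibre just makes that line explicit.

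One small caveat concerns the Nikolaus--Scholze formula for dualisable categories whose $\THH$ is not bounded below. There the correct term is $\prod_p(\THH^{tC_p})^{hS^1}$, not $\prod_p\mathrm{TP}^\wedge_p$. That term is again built from $\THH$ by limits and colimits, so your argument goes through unchanged.
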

\begin{proof}
They are obtained from $\THH$ by limits, colimits and fibres. 
\end{proof}

\begin{remark}
It might be rather surprising that periodic homology is $\mathbb{A}^1$-acyclic since it is sometime (e.g. in rational situations) even (weakly) $\mathbb{A}^1$-invariant.
But since it is not finitary this is not a contradiction at all and even highlights the subtleties in these notions. 
\end{remark}

We let $I$ be the non-commutative motive given by $I := \mathrm{fib}(M\mathbb{S} \to M_{\mathbb{A}^1}\mathbb{S})$ for the next result.

\begin{proposition}
For every motivic invariant $E$ there is a tranformation $E \to E_C$ such that $E_C$ is $\mathbb{A}^1$-acyclic and initial among all such under $E$. 
We have 
\begin{align*}
E_C(M\calC) & = E( \underline{\mathrm{Hom}}_{\NcMot}(I, M\calC)) \\
& E( \lim_{S \in \Delta}  \underline{\mathrm{Hom}}_{\NcMot}(M \mathbb{S}^{\Delta^S}, M\calC))
\end{align*}
Moreover the square
\[
\begin{tikzcd}
E\arrow{r} \arrow{d}& E_H \ar[d]\\
E_C\arrow{r} &  (E_H)_C
\end{tikzcd}
\]
is a pullback for each $E$.
\end{proposition}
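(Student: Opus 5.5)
The plan is to work entirely inside the stable presentable category of motivic invariants, i.e.\ colimit-preserving functors $\NcMot \to \bE$, using that $\NcMot_{\mathbb{A}^1}$ is a smashing localisation given by the idempotent commutative algebra $A := M_{\mathbb{A}^1}\Sp$ (proposition above). Write $\mathbb{1} = M\mathbb{S} = M\Sp$ for the unit. The defining cofibre sequence is
\[
I \to \mathbb{1} \to A .
\]
Since $A$ is idempotent, $I\otimes A \simeq \mathrm{fib}(A \to A\otimes A) = 0$. In other words, $I$ is the complementary idempotent coalgebra, and $\underline{\mathrm{Hom}}(I,-)$ is the right adjoint colocalisation onto $A$-acyclic objects in the recollement determined by $A$.

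First I will identify $\mathbb{A}^1$-acyclicity intrinsically. Here $E_H = E(-\otimes A)$, so $E$ is $\mathbb{A}^1$-acyclic iff $E(X\otimes A)=0$ for all $X$. Set $E_C := E(\underline{\mathrm{Hom}}(I,-))$. The map $E \to E_C$ is induced by $X \simeq \underline{\mathrm{Hom}}(\mathbb{1},X) \to \underline{\mathrm{Hom}}(I,X)$.

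To check that $E_C$ is acyclic, I will use
\[
\underline{\mathrm{Hom}}(I, X\otimes A) \simeq \underline{\mathrm{Hom}}(I\otimes A, X\otimes A) = 0 .
\]
The first equivalence holds because $X\otimes A$ is an $A$-module, so maps out of $I$ factor through $I\otimes A$. For universality, take an acyclic $F$ and a map $E \to F$. The fibre sequence $\underline{\mathrm{Hom}}(A,X) \to X \to \underline{\mathrm{Hom}}(I,X)$ shows that precomposition with $E\to E_C$ is an equivalence on maps into $F$ once $F(\underline{\mathrm{Hom}}(A,X))=0$. This holds because $\underline{\mathrm{Hom}}(A,X)$ is an $A$-module, hence a retract of $\underline{\mathrm{Hom}}(A,X)\otimes A$, which $F$ kills. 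I also need $E\mapsto E_C$ to land in colimit-preserving functors, so that motivic invariants are preserved. This requires $I$ to be dualisable, or at least $\underline{\mathrm{Hom}}(I,-)$ to preserve colimits. This is the main obstacle. I would first try to use the second formula: write $A = \colim_{\Delta^{\op}} M\mathbb{S}^{\Delta^S}$, so that
\[
\underline{\mathrm{Hom}}(A,-) = \lim_{\Delta} \underline{\mathrm{Hom}}(M\mathbb{S}^{\Delta^S}, -) ,
\]
with each $M\mathbb{S}^{\Delta^S}$ compact or dualisable. If preservation of colimits fails in general, I will instead define $E_C$ as a left Kan extension or left adjoint and only verify the formula pointwise.

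The second formula follows by applying $\underline{\mathrm{Hom}}(-,M\calC)$ to $I = \mathrm{fib}(\mathbb{1}\to \colim_{S} M\mathbb{S}^{\Delta^S})$. This turns the colimit into $\lim_{S\in\Delta}$, identifying $\underline{\mathrm{Hom}}(I,M\calC)$ with the cofibre of $\lim_S \underline{\mathrm{Hom}}(M\mathbb{S}^{\Delta^S},M\calC) \to M\calC$. I read the displayed formula as this cofibre, which is the coaugmented form of the limit.

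Finally, for the pullback square I will apply $E$ to the natural square
\[
\begin{tikzcd}
X \arrow{r} \arrow{d} & X\otimes A \arrow{d} \\
\underline{\mathrm{Hom}}(I,X) \arrow{r} & \underline{\mathrm{Hom}}(I,X\otimes A)
\end{tikzcd}
\]
whose bottom right corner vanishes. The horizontal fibres are $X\otimes I$ and $\underline{\mathrm{Hom}}(I,X)$ respectively. The square is a pullback iff $X\otimes I \to \underline{\mathrm{Hom}}(I,X)$ is an equivalence. This is the standard fact for a complementary idempotent pair: $X \otimes I$ is already $A$-acyclic and $\underline{\mathrm{Hom}}(I,-)$ fixes $A$-acyclic objects. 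Since $E$ is exact, applying it preserves the pullback.
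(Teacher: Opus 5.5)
Your acyclicity and universality arguments are essentially the paper's: $I\otimes A\simeq 0$, hence $\underline{\mathrm{Hom}}(I,X\otimes A)\simeq 0$, and $F\to F_C$ is an equivalence for acyclic $F$. To get universality from the latter you should also note that $(-)_C$ is idempotent, because $I\otimes I\simeq I$.

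\textbf{The pullback square is wrong.} The lower right corner in the statement is $(E_H)_C(X)=E_H(\underline{\mathrm{Hom}}(I,X))=E(\underline{\mathrm{Hom}}(I,X)\otimes A)$. Your corner $E(\underline{\mathrm{Hom}}(I,X\otimes A))$ is instead $(E_C)_H(X)$, which vanishes by acyclicity, as the paper remarks right after the proof. Your square is not a pullback, because the ``standard fact'' that $X\otimes I\to\underline{\mathrm{Hom}}(I,X)$ is an equivalence is false. The object $X\otimes I$ is killed by $-\otimes A$, but the objects fixed by $\underline{\mathrm{Hom}}(I,-)$ are those $Y$ with $\underline{\mathrm{Hom}}(A,Y)\simeq 0$. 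These are two different subcategories, and $-\otimes I$ and $\underline{\mathrm{Hom}}(I,-)$ are mutually inverse equivalences between them. For instance, in $\bD(\Z)$ with $A=\Z[1/p]$, your square becomes $\Z\to\Z[1/p]$, $\Z_p\to 0$, while the correct square is the arithmetic fracture square with corner $\Q_p$. In motives, take $E=K$ and $X=M\bD(\mathbb{F}_p)$. Your square would give $\mathrm{fib}(K(\mathbb{F}_p)\to K_H(\mathbb{F}_p))\simeq K_C(\mathbb{F}_p)\simeq\mathrm{TC}(\mathbb{F}_p)$. But the left side vanishes since $\mathbb{F}_p$ is regular, whereas $\pi_0\mathrm{TC}(\mathbb{F}_p)\cong\Z_p$.

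The fix is to apply $E$ to the square with corners $X$, $X\otimes A$, $\underline{\mathrm{Hom}}(I,X)$, $\underline{\mathrm{Hom}}(I,X)\otimes A$. Its vertical fibres are related by the unit $\underline{\mathrm{Hom}}(A,X)\to\underline{\mathrm{Hom}}(A,X)\otimes A$. This is an equivalence because $\underline{\mathrm{Hom}}(A,X)$ is an $A$-module, so the square is a pullback in $\NcMot$ and any exact $E$ preserves it.

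\textbf{Your ``main obstacle'' does not exist.} It comes from identifying motivic invariants with colimit-preserving functors $\NcMot\to\bE$. In the paper, a motivic invariant is any functor inverting motivic equivalences, i.e.\ an arbitrary functor out of $\NcMot$. Only finitary localizing invariants give colimit-preserving functors, and $\mathrm{TC}$ and $\mathrm{TP}$ are explicitly motivic but not finitary. Hence $E_C:=E\circ\underline{\mathrm{Hom}}(I,-)$ is automatically a motivic invariant. It is exact whenever $E$ is, since $\underline{\mathrm{Hom}}(I,-)$ is exact, and nothing about colimits needs to be checked.

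Inside colimit-preserving functors, the property you wanted is actually false. $K$ is finitary while $K_C\simeq\mathrm{TC}$ is not, so $\underline{\mathrm{Hom}}(I,-)$ does not preserve filtered colimits, and the $\lim_\Delta$ description cannot repair this. Your fallback, a reflection taken inside colimit-preserving functors, would therefore produce a different functor for which the displayed formula fails.

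What your fibre-sequence arguments really need is that $E$ and $F$ are exact; acyclicity of $E_C$ already requires $E(0)\simeq 0$. So the correct ambient category is exact functors $\NcMot\to\bE$, i.e.\ localizing motivic invariants.
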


\begin{proof}
Since $M_{\mathbb{A}^1}\mathbb{S}$ is an idempotent in $\NcMot$ we get by abstract non-sense a recollement. The construction we have given above is the dual picture. 

We will use that since $M_{\mathbb{A}^1}\mathbb{S}$ is an idempotent we have that $I \otimes M_{\mathbb{A}^1}\mathbb{S} = 0$ and $I \otimes I \to I$ is an equivalence. 
Thus to see that 
\[
E_C(M\calC) = E( \underline{\mathrm{Hom}}_{\NcMot}(I, M\calC))
\]
is the universal $\mathbb{A}^1$-acyclic invaiant under $E$ we first observe that it is indeed acyclic since
\[
\underline{\mathrm{Hom}}_{\NcMot}(I, M\calC \otimes M_{\mathbb{A}^1}\mathbb{S}) = 0
\]
and to see that is indeed universal we use that if $E$ is already $\mathbb{A}^1$-acyclic, then $E \to E_C$ is an equivalence.
\end{proof}

Note that $(E_C)_H = 0$ by definition of acyclicity. Also note that the square looks like a general completion/localization fracture square. In this way of viewing it, $E_H$ plays the role of the `completion' of $E$ and $E_C$ the role of the localization of $E$. 

\begin{remark}
From the fracture perspective one sees that the fibre $E \to E_C$ is the universal $\mathbb{A}^1$-torsion approximation, where a motivic invariant $E$ is called $\mathbb{A}^1$-torsion if $E_C = 0$.  

There is an equivalence between $\mathbb{A}^1$-torsion invariants and strong $\mathbb{A}^1$-invariants given by sending a torsion invariant $E$ to $E_H$ and conversely a  $\mathbb{A}^1$-invariant to the fibre of $E \to E_C$. 
\end{remark}

\begin{remark}\label{rem_limit}
One can try to be more explict about the motive $\underline{\mathrm{Hom}}_{\NcMot}(M \mathbb{S}^{\Delta^S}, M\calC)$. The reduced version It can indeed be written as an inverse limit (in motives) of motives of categories, namely 
\[
\underline{\mathrm{Hom}}_{\NcMot}(M \mathbb{S}[x_1,...,x_k], M\calC) = \lim_{n_1,..,n_k} \Omega ^k M \frac{\mathbb{S}[x_1\ldots ,x_k]}{x_1^{n_1},\ldots,x_k^{n_k}}
\]
If the functor $E$ for example commutes with limits as a functor $\NcMot \to \mathcal{E}$ then we get that 

\end{remark}

\begin{theorem}[Hesselholt, Betley--Schlichtkrull, McCandless, Efimov]\cite{Hesselholt, McCandless, efimov-rigidity, betley-schlichtkrull}
For connective ring spectra $R$ we have 
\[
K_C(R) \simeq \mathrm{TC}(R) \ .
\]
(More generally this holds for dualisable, stable $\infty$-category induced from additive $\infty$-categories). 
For general categories it is equivalent to Bloch's version of $\mathrm{TC}$ build from $K$-theory of curves, i.e.
\[
K_C(\calC) = \mathrm{TC}^{\mathrm{cur}}(\calC)
\]
\end{theorem}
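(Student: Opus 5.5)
The plan is to reduce both claims to the universal property of $K_C$ established in the preceding proposition. By that proposition, $K \to K_C$ is initial among maps from $K$ into $\mathbb{A}^1$-acyclic motivic invariants. Moreover, the square
\[
\begin{tikzcd}
K \arrow{r} \arrow{d} & K_H \arrow{d}\\
K_C \arrow{r} & (K_H)_C
\end{tikzcd}
\]
is a pullback. So it suffices to produce a map of motivic invariants $K \to \mathrm{TC}$, which is the cyclotomic trace, and to show it exhibits $\mathrm{TC}$ as $K_C$ on connective ring spectra.

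\textbf{Step 1: factoring the trace.} The corollary above shows that $\mathrm{TC}$ is $\mathbb{A}^1$-acyclic. Hence the trace factors uniquely as $K \to K_C \to \mathrm{TC}$. It remains to show that $K_C(R) \to \mathrm{TC}(R)$ is an equivalence for connective $R$.

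\textbf{Step 2: comparing fibres.} Let $K^{\mathrm{inv}} := \mathrm{fib}(K \to \mathrm{TC})$. If $K^{\mathrm{inv}}$ is strongly $\mathbb{A}^1$-invariant, equivalently $\mathbb{A}^1$-torsion, then $(K^{\mathrm{inv}})_C = 0$. Since $(-)_C$ is exact and $\mathrm{TC}_C = \mathrm{TC}$, this gives $K_C \simeq \mathrm{TC}$.

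To establish this on connective rings, I would use the Dundas--Goodwillie--McCarthy theorem: $K^{\mathrm{inv}}$ is nil-invariant, i.e.\ $K^{\mathrm{inv}}(R) \simeq K^{\mathrm{inv}}(\pi_0 R)$. Together with the Land--Tamme excision (Theorem~\ref{thm_Land}), this shows $K^{\mathrm{inv}}$ is truncating and satisfies excision. It is also finitary on connective rings. Then I would invoke the proposition on homotopy $K$-theory, whose truncating argument applies verbatim: truncating localizing invariants are weakly $\mathbb{A}^1$-invariant, since $R[x] \to R$ is handled via the nilpotent filtration of Remark~\ref{rem_limit}.

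\textbf{Main obstacle: weak versus strong invariance.} The hard step is upgrading weak $\mathbb{A}^1$-invariance to strong $\mathbb{A}^1$-invariance, which the warning says is subtle. I would address it using the limit formula
\[
E_C(M\calC) = E\bigl(\lim_S \underline{\mathrm{Hom}}(M\mathbb{S}^{\Delta^S}, M\calC)\bigr)
\]
together with Remark~\ref{rem_limit}. This expresses $(K^{\mathrm{inv}})_C(R)$ as a limit of $K^{\mathrm{inv}}$ applied to truncated polynomial rings $R[x_1,\dots,x_k]/(x_i^{n_i})$, and these are nil-extensions of $R$. Truncation makes the pro-system constant, so the cosimplicial limit collapses to zero. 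The delicate part is commuting $K^{\mathrm{inv}}$ past this limit. Here I would use that $K$ and $\mathrm{TC}$ each commute with the relevant pro-limits of nil-extensions, which is the continuity result of Dundas--Morrow/Clausen--Mathew--Morrow. I expect this continuity input to be the genuine difficulty.

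\textbf{General categories.} Replacing $\mathrm{TC}$ by $\mathrm{TC}^{\mathrm{cur}}$, which by construction is built from $K$ of the curves $\mathbb{S}[x]/x^n$ exactly as in the limit formula, the identification $K_C = \mathrm{TC}^{\mathrm{cur}}$ is essentially tautological from Remark~\ref{rem_limit}.
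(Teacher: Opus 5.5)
Your Step 1 and the reduction to $(K^{\mathrm{inv}})_C\simeq 0$ are fine. The route you then take fails, though. Being strongly $\mathbb{A}^1$-invariant ($E\simeq E_H$) and being $\mathbb{A}^1$-torsion ($E_C\simeq 0$) are different conditions. The remark after the fracture square gives an \emph{equivalence of categories} between the two classes, via $E\mapsto E_H$ and $E\mapsto\mathrm{fib}(E\to E_C)$. It is not an equality of subcategories: for strongly invariant $E$ one has $E_C(X)\simeq E(\underline{\mathrm{Hom}}(I,X)\otimes M_{\mathbb{A}^1}\Sp)$, which has no reason to vanish.

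More seriously, $K^{\mathrm{inv}}$ is not even weakly $\mathbb{A}^1$-invariant. By regularity $K(\mathbb{F}_p[x])\simeq K(\mathbb{F}_p)$, but $\mathrm{TC}(\mathbb{F}_p[x])\not\simeq\mathrm{TC}(\mathbb{F}_p)$. Already on $\pi_{-1}$, $\mathrm{TC}(R)$ has $\pi_{-1}\cong\mathrm{coker}(1-F\colon W(R)\to W(R))$, which is $\mathbb{Z}_p$ for $R=\mathbb{F}_p$ but contains all Artin--Schreier classes for $R=\mathbb{F}_p[x]$. Since $K^{\mathrm{inv}}$ is truncating by Dundas--Goodwillie--McCarthy, this also refutes your claim that truncating localizing invariants are weakly $\mathbb{A}^1$-invariant. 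The map $R[x]\to R$ is not a nilpotent extension, and $K_H$ is $\mathbb{A}^1$-invariant by construction, not because it is truncating. Remark~\ref{rem_limit} concerns the internal Hom out of $M\mathbb{S}[x]$, not $R[x]$. Indeed the theorem says $K^{\mathrm{inv}}\simeq\mathrm{fib}(K\to K_C)$, which is $\mathbb{A}^1$-torsion but not $\mathbb{A}^1$-invariant.

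Your ``main obstacle'' paragraph does aim at the right target, and DGM nil-invariance does kill the reduced truncated-polynomial terms. But the step you defer, commuting $K^{\mathrm{inv}}$ past the limit, is the whole theorem, and the tool you name does not supply it.
\begin{itemize}
\item For $K$ the commutation is free, since $K\simeq\mathbf{Map}(M\Sp,-)$ preserves all limits in $\NcMot$.
\item For $\mathrm{TC}$ you need it to send the inverse limit \emph{formed in $\NcMot$} to the limit of spectra. That limit is Efimov's description of $\underline{\mathrm{Hom}}(M\mathbb{S}[x_1,\dots,x_k],MR)$, itself a nontrivial input stated without proof in Remark~\ref{rem_limit}.
\item Dundas--Morrow and Clausen--Mathew--Morrow prove continuity of $\mathrm{TC}$ along inverse limits of \emph{rings} (e.g.\ $R[[x]]$ versus $R[x]/x^n$). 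That is a different statement.
\item Nor can this commutation be a formal input. Combined with $\mathrm{TC}_C\simeq\mathrm{TC}$, it would compute $\mathrm{TC}(R)$ from $\mathrm{TC}$ of truncated polynomial rings, and hence, by DGM, from their $K$-theory. That is exactly the Hesselholt/Betley--Schlichtkrull/McCandless theorem that curves on $K$-theory recover $\mathrm{TR}$ and $\mathrm{TC}$ of connective ring spectra.
\end{itemize}

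The paper gives no proof, only these citations. The intended route, cf.\ the unfinished sentence in Remark~\ref{rem_limit}, has two steps. First, since $K$ preserves limits, it identifies $K_C(\calC)$ with a construction on $K$-theory of truncated polynomial categories, namely $\mathrm{TC}^{\mathrm{cur}}(\calC)$. Second, it invokes the curves theorem to get $\mathrm{TC}^{\mathrm{cur}}(R)\simeq\mathrm{TC}(R)$ for connective $R$.

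For the same reason your final paragraph is not tautological. Even granting Remark~\ref{rem_limit}, you must reduce the multivariable terms to one-variable curves. You must also match the cosimplicial structure maps of $\mathbb{S}^{\Delta^\bullet}$ with the Frobenius/Verschiebung structure that defines $\mathrm{TC}^{\mathrm{cur}}$.
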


This in particular shows that $ \mathrm{TC}$ satisfies a universal property under $K$-theory, namely that is the universal way of making $K$-theory $\mathbb{A}^1$-acyclic. Said 
differently, $\mathrm{TC}$ is the initial pointed, localizing $\mathbb{A}^1$-acyclic invariant.  
Here and henceforth we do not distinguish carefully between $\mathrm{TC}$ and  $\mathrm{TC}^{\mathrm{cur}}(\calC)$ since they agree on connective ring spectra. Generally it is always the latter that satisfies all the nice properties. 

We could also say that the fibre $K^{\mathrm{inv}}(\calC) := \mathrm{fib}(K(\calC) \to \mathrm{TC}^{\mathrm{cur}}(\calC))$ is the universal $\mathbb{A}^1$-acyclic approximation to $K$-theory.

\section{Assembly maps}

In this section we would like to explain how one can use the technology developed in the last sections to give concrete models for assembly maps and give some applications.

To this end, recall that the classical assembly map in $K$-theory is the map
\[
BG \otimes K(\mathbb{Z}) \to K(\mathbb{Z}[G])
\]
obtained as the colimit interchange map (since $\calD(\Z[G]) = BG \otimes \calD(\Z)$ in $\Prld$).

One of the reasons we care about this map is that the Farrell-Jones conjecture asserts that for torsion free groups $G$, the assembly map
\[
BG \otimes K(\mathbb{Z}) \to K(\mathbb{Z}[G])
\]
is an equivalence. This question plays an important role in many applications ranging from ring theory to geometric topology. For example we can consider the following conjecture.

\begin{conjecture}[Borel]
Assume that $M$ and $N$ are closed, aspherical topological manifolds $M$ and $N$ (aspherical means that the universal cover is contractible) with isomorphic fundamental groups. Then $M$ and $N$ are homeomorphic. \footnote{More precisely every homotopy equivalence between $M$ and $N$ is homotopic to a homeomorphism}.
\end{conjecture}

\begin{exercise}
Show that for a closed, aspherical manifold the fundamental group is torsion free. Hint: Assume the fundamental group had torsion and consider the homology.
\end{exercise}
It turns out, that for dimension $n \geq 5$ this conjecture can be translated through surgery theory to the question whether the assembly map for the fundamental group is an equivalence, i.e. into the Farrell-Jones conjecture for $\pi_1(M)$ (actually one really needs the $L$-theoretic version.). 

We finally note that in fact, there is a more general version of assembly: one can replace $BG$ by any homotopy type $X$ and the $\calD(\Z)$ by any dualisable, stable $\infty$-category $\calC$. Note that $\calD(\Z[G]) = \calD(\mathbb{Z})^{BG} = \Fun(BG, \calD(\mathbb{Z}))$. Generally we get the assembly map as
\[
X \otimes K(\calC) \to K(\calD(\calC^X)) \ .
\]

\subsection{The assembly map using algebraic KK}\label{sec81}

Recall that we have  
\[
\colim_{K \subseteq X} \mathbf{KK}^\alg(\Shv(K), \mathbf{C}) \;\simeq\; \Pi_\infty(X) \otimes K(\mathbf{C})
\]
for a locally compact Hausdorff space \(X\).\footnote{Here $\Pi_\infty(X)$ is the shape. We shall not dwell onto this point. The reader might just imagine that $X$ is a nice space (e.g. CW complex) in which case this is equivalent to the singular complex $\mathrm{Sing}(X)$.}
This already represents a significant step forward: in general, it is quite difficult to construct elements in the homotopy groups of \(\Pi_\infty(X) \otimes K(\mathbf{C})\). Once we can realise this object as the \(K\)-theory of a category---or as here, in terms of KK-theory---we gain access to concrete ways of constructing elements.
In our present situation this means the following:  
if, for every compact \(K \subseteq X\), we have a strongly continuous functor  
\[
F_K \colon \Shv(K) \longrightarrow \mathbf{C}
\]
natural in \(K\), i.e.\ compatible with restriction along inclusions of compact subsets of \(X\), then we obtain an element in  
$
\pi_0\!\left(\colim_{K \subseteq X} \mathbf{KK}^\alg(\Shv(K), \mathbf{C})\right),
$
that is, a class in  
$
\pi_0\!\big(\Pi_\infty(X) \otimes K(\mathbf{C})\big).
$
This already yields a rather pleasant formalism for producing elements in the source of the assembly map which is quite useful, for 
for instance when attempting to build an inverse to the assembly map. 

However, we must also understand how these elements are mapped under the assembly map. 
To this end, we will now give a description of the assembly map in the same language. The idea is to make use of a ``diagonal" class in KK-theory and then proceed in close analogy with familiar constructions from operator theory, using Kasparov's topological KK-category.

In order to define the diagonal class, it is helpful to think of functors 
$X \to \Sp$
as \emph{local systems} on \(X\). More precisely, the following theorem provides a vast generalization of classical covering space theory. As in the classical setting, certain niceness assumptions on \(X\) are required (e.g., locally path-connected and semi-locally simply connected). In our setting, we assume that \(X\) has \emph{locally contractible shape}.
This condition holds, for example, if \(X\) is locally contractible and hypercomplete---such as when \(X\) has finite homotopy dimension. The class of such spaces includes ANRs (absolute neighborhood retracts).

\begin{theorem}[...,Lurie]
Let \(X\) have locally contractible shape. Then for any \(\infty\)-category \(\mathbf{C}\), the functor category 
\[
\Fun(X, \mathbf{C})
\]
is equivalent to the category of locally constant sheaves on \(X\), regarded as a full subcategory of \(\Shv(X; \mathbf{C})\).
\end{theorem}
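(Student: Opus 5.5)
Since \(X\) has locally contractible shape, the plan is to compare both sides through the shape functor. I would first construct a functor \(\Phi \colon \Fun(\mathrm{Sing}X, \bC) \to \Shv(X;\bC)\) and then show it is fully faithful with essential image the locally constant sheaves. Here \(\Fun(X,\bC)\) means functors out of the homotopy type of \(X\).

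\emph{Step 1: construct \(\Phi\).} Assume first that \(\bC\) is presentable; the general case follows afterwards by embedding \(\bC\) into presheaves (Yoneda) and noting that both sides are defined by limits. The topos \(\Shv(X)=\Shv(X;\mathbf{An})\) receives the constant-sheaf functor \(\Gamma^* \colon \mathbf{An} \to \Shv(X)\), where \(\Gamma\colon X\to *\). Its pro-left adjoint defines the shape \(\Pi_\infty(X)\). Under the locally contractible shape hypothesis, \(\Gamma^*\) has an honest left adjoint \(\Gamma_\sharp\), and \(\Gamma_\sharp(1)\simeq \mathrm{Sing}X\). Using \(\Shv(X;\bC)\simeq \Shv(X)\otimes \bC\) and the unstraightening \(\Fun(\mathrm{Sing}X,\mathbf{An})\simeq \mathbf{An}_{/\mathrm{Sing}X}\), I define \(\Phi\) on \(\mathbf{An}\)-valued functors as the composite
\[
\mathbf{An}_{/\mathrm{Sing}X} \xrightarrow{\;\Gamma^*\;} \Shv(X)_{/\Gamma^*\mathrm{Sing}X} \longrightarrow \Shv(X)_{/1}=\Shv(X).
\]
Concretely, this pulls back the ``universal cover'' \(1 \to \Gamma^*\Gamma_\sharp 1\) given by the unit. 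I then tensor with \(\bC\).

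\emph{Step 2: reduce to a basis of contractible opens.} Local contractibility of shape means that every point has a cofinal system of open neighbourhoods \(U\) with \(\Pi_\infty(U)\simeq *\). For such a \(U\), I would show two things:
\begin{enumerate}
\item A locally constant sheaf on \(U\) is constant. The approach is to cover \(U\) by smaller opens on which the sheaf is constant and use hyperdescent; this is where hypercompleteness, or the shape being an honest homotopy type, enters. The gluing data is then a functor out of \(\Pi_\infty(U)\simeq *\).
\item For a constant sheaf, \(\Gamma(U,\Gamma^*c)\simeq c^{\Pi_\infty(U)} \simeq c\). This uses the formula \(f_*f^*E = E^{\Pi_\infty}\) recalled in the section on sheaves as motives.
\end{enumerate}
Together these show that \(\Phi\) restricted to such \(U\) is the equivalence \(\bC \simeq \mathrm{LC}(U;\bC)\).

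\emph{Step 3: globalise.} Both \(U\mapsto \Fun(\mathrm{Sing}U,\bC)\) and \(U\mapsto \mathrm{LC}(U;\bC)\) are sheaves of \(\infty\)-categories on \(X\). For the first, this uses that \(\mathrm{Sing}\) sends open covers to colimits (Seifert--van Kampen / hyperdescent for \(\mathrm{Sing}\), valid for locally contractible-shape spaces). For the second, local constancy is a local condition and \(\Shv\) satisfies descent. Since \(\Phi\) is natural in \(U\) and is an equivalence on a basis, it is an equivalence globally.

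\emph{Expected obstacle.} The main obstacle is Step 2 together with the descent statement for \(\mathrm{Sing}\). Identifying \(\Gamma_\sharp 1\) with \(\mathrm{Sing}X\) and proving that locally constant sheaves on contractible-shape opens are constant both require hypercompleteness, or at least the shape being locally trivial in the pro-sense. For this I would rely on Lurie's treatment (HA, Appendix A) of locally of singular shape topoi rather than reproving it.
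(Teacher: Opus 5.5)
The paper gives no proof of this statement; it quotes it from Lurie (HA, Appendix A), so the natural comparison is with the standard argument there. Your Step 1 is the topos-theoretic comparison functor Lurie uses: pull back along the unit $1\to\Gamma^*\Gamma_\sharp 1$.

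That argument is global. The functor has left adjoint $F\mapsto(\Gamma_\sharp F\to\Gamma_\sharp 1)$. Its counit is an equivalence by the base-change formula $\Gamma_\sharp(U\times_{\Gamma^*B}\Gamma^*K)\simeq\Gamma_\sharp U\times_B K$, which follows from descent by identifying both slice categories with $\Fun(B,-)$. This gives full faithfulness on all of $\mathbf{An}_{/\Gamma_\sharp 1}$ in one stroke, without gluing over covers, and identifying the image with the locally constant objects is then a local check.

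Your local-to-global route is also viable. Step 3 is correct as stated: a map of $\mathbf{Cat}_\infty$-valued sheaves that is an equivalence on a basis of opens is an equivalence. What your route buys is a transparent monodromy picture showing exactly where trivial local shape is used; Lurie's adjunction argument is shorter and needs no gluing of categories over covers. Two points need repair, though.

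First, the hypothesis gives $\Gamma_\sharp 1\simeq\Pi_\infty(X)$, the shape, not $\mathrm{Sing}\,X$. Identifying the shape with $\mathrm{Sing}$ is additional input; this is the role of the ``singular shape'' condition in Lurie's treatment. Step 2's identification $\Fun(\mathrm{Sing}\,U,\bC)\simeq\bC$ depends on that input, because $\Pi_\infty(U)\simeq *$ does not by itself give $\mathrm{Sing}\,U\simeq *$; the closed topologist's sine curve has trivial shape but two path components. Work with $\Pi_\infty(U)=\Gamma_{U,\sharp}1$ throughout. Then the sheaf property of $U\mapsto\Fun(\Pi_\infty(U),\bC)$ is immediate, since $\Gamma_\sharp$ preserves colimits and the \v{C}ech nerve of an open cover has colimit the terminal sheaf.

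Second, Step 2(1) carries all the content. Your sketch only becomes an argument once you know, naturally in an arbitrary open $V$, that $\mathbf{Map}(\Gamma_V^*c,\Gamma_V^*d)\simeq\mathbf{Map}(c,d^{\Pi_\infty(V)})$. This is your formula 2(2), but you need it on the non-contractible overlaps $V_\sigma$ of the cover, not just on basic opens. Granting it, take a cover by contractible-shape opens on which $F$ is constant. The descent datum of $F$ then lies in $\lim_\sigma\Fun(\Pi_\infty(V_\sigma),\bC)\simeq\Fun(\Pi_\infty(U),\bC)$. The reason is that every functor out of $\Pi_\infty(U)\simeq\colim_\sigma\Pi_\infty(V_\sigma)$ is constant on each $\Pi_\infty(V_\sigma)$, because that map factors through $\Pi_\infty(V_{i_0})\simeq *$.

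This argument works for every $U$, not only contractible ones. So it proves the theorem directly, and Step 3 becomes redundant. It also uses only \v{C}ech descent, so hypercompleteness is not what enters here. It matters only for deducing the hypothesis from local contractibility, or for comparing the shape with $\mathrm{Sing}$.
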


\begin{definition}
Let $X$ be a locally compact Hausdorff space which is of locally constant shape. We define an object 
\[
D_X \in \Shv(X, \Sp) \otimes \Sp^X \simeq \Fun(X, \Shv(X, \Sp) )
\]
called the diagonal class as follows: we consider the inclusions 
\[
\Sp^{X \times X} = \Sp^X \otimes \Sp^X \subseteq  \Shv^{\mathrm{loc}}(X, \Sp) \otimes \Sp^X 
\]
and take the element in the source given by the pushforward $\Delta_\natural(\mathbb{S})$ of the constant sphere. 
\end{definition}

Here note that $\Delta_\natural$ is the left adjoint to $\Delta^*$. Also note that concretely, we can consider the diagonal
$
X \to X \times X
$ as an anima over $X \times X$ and thus by straightening as a functor $X \times X \to \mathrm{An}$ whose suspension spectrum we take. In other words we take the functor
\[
X \times X \to \Sp \quad (x,y) \mapsto \Sigma^\infty_+ P_{x,y} \ .
\]
where $P_{x.y}$ is the space of path from $x$ to $y$. It is very useful to think of $\Shv(X, \Sp) \otimes \Sp^X$ as sheaves on $X \times X$ which are locally constant in the direction of the second variable (but not the first one). The diagonal element however is locally constant in both directions.  

\begin{lemma}
The diagonal class represents a compact object in $\Shv(X, \Sp) \otimes \Sp^X$.
\end{lemma}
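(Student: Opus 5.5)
The plan is to identify $\Shv(X,\Sp)\otimes\Sp^X$ with $\Fun(X,\Shv(X,\Sp))$ and to verify compactness of $D_X$ directly through its mapping spectra. Concretely, $D_X$ corresponds to the functor $y\mapsto \mathcal{L}_y$, where $\mathcal{L}_y$ is the locally constant sheaf $x\mapsto \Sigma^\infty_+P_{x,y}$. Under the theorem of Lurie, this is the local system $\Sigma^\infty_+ \mathrm{Map}_X(-,y)$, that is, the sheaf corepresenting evaluation at $y$ on local systems. I would first use the Yoneda lemma in $\Fun(X,\Sp)$ to rewrite $D_X$ as a colimit. Since $X\times X\to\Sp$, $(x,y)\mapsto\Sigma^\infty_+P_{x,y}$, is the left Kan extension of $\mathbb{S}$ along the diagonal $X\to X\times X$, we get
\[
D_X \simeq \colim_{z\in X} \big(\Sigma^\infty_+\mathrm{Map}_X(-,z)\boxtimes \Sigma^\infty_+\mathrm{Map}_X(z,-)\big).
\]
This colimit is indexed by the anima $X$, so it is not finite, and this rewriting alone does not give compactness. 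It does, however, tell us what the mapping spectrum out of $D_X$ should be.

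The main step is to compute $\mathbf{Hom}(D_X,\mathcal{G})$ for $\mathcal{G}\in\Fun(X,\Shv(X,\Sp))$, viewed as a sheaf $\mathcal{G}$ on $X\times X$ that is locally constant in the second variable. I expect this to be $\mathbf{Hom}(\Delta^*$-type data$)$, namely the end $\int_{y\in X}\mathbf{Hom}_{\Shv(X)}(\mathcal{L}_y,\mathcal{G}_y)$. By the universal property of $\Delta_\natural$, this should reduce to global sections on $X$ of the restriction of $\mathcal{G}$ to the diagonal, in the sense $\Gamma(X,\Delta^*\mathcal{G})$, using that $\mathcal{L}_y$ corepresents the stalk-type evaluation in the locally constant directions.

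Compactness then amounts to showing that $\mathcal{G}\mapsto\Gamma(X,\Delta^*\mathcal{G})$ commutes with filtered colimits. For this step I would invoke the standing hypotheses: local contractibility of shape and finite homotopy dimension or hypercompleteness. Because $\mathcal{G}$ is locally constant in one variable, one can work locally. On a contractible open $U\subseteq X$, the restriction of $\Delta^*\mathcal{G}$ is controlled by the sheaf $\mathcal{G}(-,y_0)$ on $U$. Compactness of $X$ (we need $X$ compact, or else one uses the compactly supported version) then lets us reduce, via a finite cover and finite limits, to a finite diagram.

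I expect the main obstacle to be exactly this point. Global sections $\Gamma(X,-)$ on $\Shv(X,\Sp)$ do not commute with filtered colimits in general; they do for compact Hausdorff $X$ when $\Shv(X,\Sp)$ is compactly assembled, but then one only gets compact morphisms, not compact objects. My first attempt would therefore be to use that the sheaf direction is only ``paired'' with the local-system direction along the diagonal. After a finite good cover $\{U_i\}$ with contractible finite intersections, $D_X$ should be a finite colimit of objects $j_!\mathbb{S}_{U}\boxtimes\Sigma^\infty_+\mathrm{Map}(u,-)$. I would then argue that each of these is compact by a Yoneda computation in the $\Sp^X$-factor, absorbing the non-compactness of $j_!\mathbb{S}_U$ into the $\Sp^X$ factor, where $\mathrm{Map}(u,-)$ is compact.
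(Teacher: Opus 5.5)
There is a genuine gap. It sits exactly at the step you flag as the main obstacle, and the argument you substitute there is false.

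\textbf{The fallback fails.} The pieces $j_!\mathbb{S}_U\boxtimes\Sigma^\infty_+\mathrm{Map}(u,-)$ of your \v{C}ech decomposition are not compact. In $\Fun(X,\Shv(X,\Sp))$ this object is the left Kan extension of $j_!\mathbb{S}_U$ along $u\colon *\to X$. So its mapping spectrum into $\mathcal{G}$ is $\Gamma(U,\mathcal{G}(u))$, where $\mathcal{G}(u)\in\Shv(X,\Sp)$ is the value of $\mathcal{G}$ at $u$. For a non-compact open $U$, $\Gamma(U,-)$ does not in general commute with filtered colimits.

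Concretely, take $X=[0,1]$, so $\Sp^X\simeq\Sp$ and your piece is just $j_!\mathbb{S}_U$. Take $U=(0,1)$, and let $\mathcal{G}$ be the sum of the skyscraper sheaves $\mathbb{S}$ at the points $1/k$, $k\geq 2$, written as the filtered colimit of its finite partial sums. The comparison map is then $\bigoplus_k\mathbb{S}\to\prod_k\mathbb{S}$, which is not an equivalence.

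Compactness of the $\Sp^X$-factor cannot absorb non-compactness of the sheaf factor. In the \v{C}ech picture, compactness only appears after gluing the finitely many pieces, so no piece-by-piece argument can work.

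\textbf{The missing fact and how to finish.} The missing ingredient is a fact you set aside. For $X$ compact Hausdorff and $f\colon X\to *$, the functor $\Gamma(X,-)=f_*=f_!\colon\Shv(X,\Sp)\to\Sp$ is a left adjoint. Hence $\underline{\mathbb{S}}_X$ is a compact object, not merely the source of compact morphisms. Your remark conflates $\Gamma(X,-)$ with $\Gamma(U,-)$ for opens $U$; only the latter give merely compact morphisms $j_!\mathbb{S}_U\to j_!\mathbb{S}_V$ when $\overline{U}\subseteq V$.

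With this fact the proof closes along the lines of the paper:
\begin{enumerate}
\item Regard $D_X$ in $\Shv(X,\Sp)\otimes\Sp^X\simeq\Shv(X,\Sp^X)$ as a locally constant sheaf. Its stalks $D_X|_{\{x\}\times X}$ are the local systems $y\mapsto\Sigma^\infty_+P_{x,y}$, which are corepresentable and hence compact in $\Sp^X$.
\item For such an $E$ one has $\mathrm{map}(E,\mathcal{G})\simeq\Gamma(X,\underline{\mathrm{map}}(E,\mathcal{G}))$.
\item The functor $\mathcal{G}\mapsto\underline{\mathrm{map}}(E,\mathcal{G})$ preserves colimits: on any open where $E$ is constant with compact value $e$, it is postcomposition with $\mathrm{map}_{\Sp^X}(e,-)$.
\item Compactness of $X$, via the fact above, then finishes the argument.
\end{enumerate}

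Your proposed formula $\mathrm{map}(D_X,\mathcal{G})\simeq\Gamma(X,\Delta^*\mathcal{G})$ would give the same conclusion. As written, though, it is only conjectured, and justifying it needs essentially this same local computation.
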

\begin{proof}
The compact objects in $\Shv(X, \Sp) \otimes \Sp^X$ are by general theory of sheaves (see \cite{HTT}) given by those elements $E$ in $\Sp^{X \times X} \subseteq \Shv(X, \Sp) \otimes \Sp^X$ which have the property that for every $x \in X$ the restriction $E|_{\{x\} \times X} \in \Sp^{\{x\} \times X} = \Sp^X$ is a compact object. In our case we have that $D_X|_{\{x\} \times X}$ is given the constant sphere over $X$. This is a compact object since $X$ is compact. 
\end{proof}

Note that this lemma totally fails if we replace $\Shv(X, \Sp) \otimes \Sp^X$ by $\Shv(X, \Sp) \otimes \Shv(X, \Sp)$ and consider the similar class. 
As a result we can think of the diagonal class as a strongly continuous functor  
\[
\Sp \to  \Shv(X, \Sp) \otimes \Sp^X
\]
since generally strongly continuous functors from spectra to any presentable, stable $\infty$-category are precisely given by compact objects. We will also denote this functor by $D_X$. 
\begin{definition}\label{def_As_eins}
For $X$ compact and of locally constant shape and $\calC$ dualisable we  define a map 
\begin{align*}
\mathbf{KK}^\alg(\Shv(X), \mathbf{C}) &\xto{- \otimes \Sp^X}  \mathbf{KK}^\alg(\Shv(X) \otimes \Sp^X, \mathbf{C} \otimes \Sp^X) \\
&  \xto{D^*_X} \mathbf{KK}^\alg(\Sp, \mathbf{C} \otimes \Sp^X) = K(\mathbf{C}^X).
\end{align*}
\end{definition}

\begin{proposition}
Assume that $X$ is compact and of locally contractible shape. Then 
the map of the previous definition is equivalent to the assembly map. In particular it can be made natural in $X$ and therefore also induces a map  for $X$ of locally compact shape but not necessarily compact:
\[
\colim_{K \subseteq X} \mathbf{KK}^\alg(\Shv(K), \mathbf{C})  \to K(\mathbf{C}^X) \ .
\]
which also is the assembly map. 
\end{proposition}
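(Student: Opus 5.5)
The plan is to identify both maps with compatible maps out of $X \otimes K(\mathbf{C})$ (more precisely $\Pi_\infty(X)\otimes K(\mathbf{C})$), using the equivalence $\mathbf{KK}^\alg(\Shv(X), \mathbf{C}) \simeq \Pi_\infty(X)\otimes K(\mathbf{C})$ of Corollary \ref{cor_kk} for $X$ compact. Both sides are then functors of $X$ with values in spectra. I would reduce the statement to checking the map of Definition \ref{def_As_eins} on the ``generators'' coming from points of $X$.

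First, I make both sides functorial in $X$, at least for inclusions and maps of compact spaces of locally contractible shape. The assembly map $X\otimes K(\mathbf{C}) \to K(\mathbf{C}^X)$ is the colimit interchange map: it is the colimit over $x \in X$ (as an anima) of the maps $K(\mathbf{C}) \to K(\mathbf{C}^X)$ induced by the left Kan extensions $x_\natural \colon \mathbf{C} \to \mathbf{C}^X$. By the universal property of the colimit, a map out of $X \otimes K(\mathbf{C})$ is a compatible family of maps indexed by points. So it suffices to show two things. First, the composite of Definition \ref{def_As_eins} is linear over this colimit description. Second, at each point $x$ the composite restricts to $x_\natural$.

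To see the first, I unwind Corollary \ref{cor_kk}. The class of a point $x \in X$ corresponds to the strongly continuous functor $x^* \colon \Shv(X)\to \Sp$, the stalk functor, which lies in $\mathbf{KK}^\alg(\Shv(X),\Sp)$. The general element is obtained by tensoring with $K(\mathbf{C})$. Applying $-\otimes \Sp^X$ and then precomposing with $D_X$ gives the functor
\[
\Sp \xrightarrow{D_X} \Shv(X,\Sp)\otimes \Sp^X \xrightarrow{x^*\otimes \mathrm{id}} \Sp^X,
\]
which sends $\mathbb{S}$ to $D_X|_{\{x\}\times X}$, namely $y\mapsto \Sigma^\infty_+ P_{x,y}$. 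This is exactly $x_\natural \mathbb{S}$, since $x_\natural \mathbb{S}$ is the corepresented functor $y \mapsto \Sigma^\infty_+ \mathrm{Map}_X(x,y)$. Tensoring with $\mathbf{C}$ then recovers $x_\natural \colon \mathbf{C} \to \mathbf{C}^X$, so pointwise the two maps agree.

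To promote this to an equivalence of maps rather than a pointwise statement, I would use naturality in $X$. The construction $D_X$ is compatible with maps $f \colon X \to Y$ via $(f\times f)_\natural$, and the identification of Corollary \ref{cor_kk} is natural with respect to proper pushforward. Given that naturality, the natural transformation between two colimit-preserving functors of $X$ (in the sense of the shape) is determined by its value at $X=\mathrm{pt}$. There both maps are the identity of $K(\mathbf{C})$, since $D_{\mathrm{pt}}=\mathbb{S}$. The main obstacle is establishing this naturality coherently, in particular checking that the equivalence $\Pi_\infty(X)\otimes K(\mathbf{C})\simeq \mathbf{KK}^\alg(\Shv(X),\mathbf{C})$ sends the colimit structure to the stalk functors. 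I would try first to derive this from the adjunction chain in the proof of Corollary \ref{cor_kk} with $f_*f^!$.

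Once naturality in compact $X$ is established, the non-compact case follows by passing to the colimit over compact $K \subseteq X$, using $\colim_K \Pi_\infty(K)\simeq\Pi_\infty(X)$.
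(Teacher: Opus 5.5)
The paper states this proposition without proof, so your outline has to stand on its own. It has a gap exactly at the step you flag.

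The pointwise part is fine. You have $(x^*\otimes\mathrm{id})D_X\simeq x_\natural\mathbb{S}$, and bifunctoriality of $\otimes$ on motives sends $c\circ M(x^*)$ to $M(\mathbf{C}\otimes x_\natural)\circ c$, which is assembly restricted to $x$. But this only compares the two maps after restriction along $\bigoplus_{x}K(\mathbf{C})\to\Pi_\infty(X)\otimes K(\mathbf{C})$. The coherent path data is the whole content, and your mechanism for supplying it fails in three ways.

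(i) $X\mapsto K(\mathbf{C}^X)$ is not colimit-preserving; otherwise every assembly map would be an equivalence. The correct principle is that transformations \emph{out of} a functor left Kan extended from the point are determined by their value at the point. This is the universal property of assembly, and it makes your pointwise check redundant.

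(ii) That principle needs an $\infty$-category of spaces with $\mathrm{Map}(\mathrm{pt},X)\simeq\Pi_\infty(X)$. Naturality in continuous maps of compact spaces lives on a $1$-category. There the Kan extension from the point is $\bigoplus_{x\in X}K(\mathbf{C})$ over the underlying set of $X$, so it gives back only the pointwise statement. Shape invariance of both functors does let a transformation descend to the Dwyer--Kan localisation at shape equivalences. You would then still have to show that this localisation of your class of compact spaces embeds fully faithfully into anima. This holds e.g.\ for compact ANRs, but it is a genuine input you do not address.

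(iii) The compatibility of $D_X$ you cite is the wrong one. One has $(f\times f)_\natural D_X\simeq\Delta_{Y\natural}f_\natural\mathbb{S}$, which is not $D_Y$ (take $Y=\mathrm{pt}$). Naturality of the map in Definition \ref{def_As_eins} instead needs the mixed base change $(f^*\otimes\mathrm{id})D_Y\simeq(\mathrm{id}\otimes f_\natural)D_X$ in $\Shv(X)\otimes\Sp^Y$, made coherent in $f$. Both sides are the graph local system $(x,y)\mapsto\Sigma^\infty_+P_{f(x),y}$.

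A cleaner way to close the gap is to fix a compact $X$ and avoid naturality entirely. This fits the paper's phrasing, in which naturality is a \emph{consequence} of the identification. The map of Definition \ref{def_As_eins} is the slant product $\phi\mapsto(\phi\otimes\mathrm{id})\circ[D_X]$ with the class $[D_X]\in K_0(\Shv(X;\Sp^X))\simeq\pi_0\Gamma(X;K(\Sp^X))$, using Theorem \ref{thm:efimov} with coefficients $\Sp^X$. The real claim to prove is that this class is the map of anima $\Pi_\infty(X)\to(\Sp^X)^{\omega,\simeq}\to\Omega^\infty K(\Sp^X)$, $x\mapsto[x_\natural\mathbb{S}]$, which is the adjoint of assembly evaluated on the unit. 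This is where local constancy of $D_X$ in the \emph{first} variable is used. Pairing with $\mathbf{KK}^\alg(\Shv(X),\mathbf{C})\simeq\Pi_\infty(X)\otimes K(\mathbf{C})$ then produces assembly together with its coherences.

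Your reduction of the non-compact case is also too quick. The compact subsets $K\subseteq X$ need not have locally contractible shape (their shapes are in general only pro-anima), so $D_K$ and Definition \ref{def_As_eins} are not available for them. The map on $\mathbf{KK}^\alg(\Shv(K),\mathbf{C})$ has to be built from $(i_K^*\otimes\mathrm{id})D_X\in\Shv(K)\otimes\Sp^X$, and its naturality in $K$ checked for that construction.
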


This statement already is strong enough the identify the image of certain classes upon applying the assembly map and will be used later. 

\subsection{Cosheaves}

Recall that in Section \ref{cor_kk} we not only presented a model of assembly in terms of KK-theory, but also gave a more concrete description: the inner hom in motives
\[
\underline{\mathrm{Hom}}_{\NcMot}(M\Shv(X;\Sp), M\calC)
\]
is equivalent to $f_*f^! M\calC = \Pi_\infty X \otimes M\calC$. This is nice, since for example the $K$-theory of this motive is then for compact spaces $X$ given by the source of assembly. It would be nice to have this motive as being the motive of an explicit category (abstractly we of course that every motive is the motive of a category).

What we will do is to employ the inner hom in $\Prld$. To this end we note that there is such an inner hom
$\underline{\mathrm{Hom}}_{\Prld}(\calC, \calD)$ for any pair of dualisable categories. By definition it is the right adjoint of tensoring with $\calC$.
It is a bit tricky to understand the inner hom, since it it certainly not the inner hom in $\Pr^\mathrm{L}$. 
But one can nevertheless deduce a bunch of things and give a sort of fomula. We will refrain from doing so here and refer to the literature. 

\begin{definition}
For a locally compact Hausdorff space $X$ and a dualisable, stable $\infty$-category $\calC$ we define 
\[
\coS(X;\calC) := \underline{\mathrm{Hom}}_{\Prld}(\Shv(X;\Sp), \calC)
\]
\end{definition}

\begin{proposition}\label{prop_co}
\begin{enumerate}
\item
The compact objects $\coS(X;\calC)^\omega$ are given by cosheaves on $X$ (i.e. contravariant functors $\mathrm{Open}(X) \to \calC$ satisfying the dual of the descent conditon for sheaves) which satisfy the following condition:  for an inclusion  of open sets $U \subseteq V$ in $X$ for which there exists a compact $K$ with $U \subseteq K \subseteq V$ the induced map $F(U) \to F(V)$ is compact. 
\item
The category $\coS(X;\calC)^\omega$ is covariantly functorial in proper maps and contravariantly functorial in local homeomorphisms (as in object of $\Prld$)
\item
The dual of $\coS(X;\calC)$ in  $\Pr^{\mathrm{L}}$ is given by $\coS(X, \calC^\vee)$ where $\calC^\vee$ is the dual of $\calC$. For example, wenn $\calC = \calD(R)$ then $\calC^\vee = \calD(R^{\mathrm{op}})$. 
\end{enumerate}
\end{proposition}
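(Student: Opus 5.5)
The plan is to identify the internal Hom in \(\Prld\) concretely via the compactly assembled description of dualisable categories, and then read off all three statements from it. First I will show that for \(\calC,\calD\) dualisable the internal Hom \(\underline{\mathrm{Hom}}_{\Prld}(\calC,\calD)\) is the dualisable core (Proposition \ref{prop:dualisable-core}) of the presentable category \(\Fun^L(\calC,\calD)\simeq \calC^\vee\otimes\calD\), taken with respect to the class \(S\) of morphisms between functors that become compact after evaluation on compact morphisms. The reason is the universal property: strongly continuous functors \(\bE\otimes\calC\to\calD\) correspond to colimit-preserving functors \(\bE\to\Fun^L(\calC,\calD)\) sending compact morphisms into \(S\). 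Taking \(\calC=\Shv(X;\Sp)\), we have \(\calC^\vee\simeq\Shv(X;\Sp)\) by Verdier duality for locally compact Hausdorff spaces. The functor category \(\Fun^L(\Shv(X;\Sp),\calD)\) is then identified with \(\calD\)-valued cosheaves on \(X\), by evaluating a functor on the sheaves \(j_!\mathbb{S}_U\) for open \(U\). This gives the ambient category in which \(\coS(X;\calD)\) sits.

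For (1), I use that compact morphisms in \(\Shv(X;\Sp)\) are generated by the maps \(j_!\mathbb{S}_U\to j_!\mathbb{S}_V\) with \(U\Subset V\), meaning \(U\subseteq K\subseteq V\) for some compact \(K\). So the class \(S\) consists of morphisms of cosheaves that are compact on such pairs. By the construction in the proof of Proposition \ref{prop:dualisable-core}, an object of the core is a colimit of sequences of maps in \(S\). An object \(F\) is compact in the core exactly when \(\mathrm{id}_F\) lies in \(S\), that is, when each \(F(U)\to F(V)\) with \(U\Subset V\) is compact. I expect to verify this by producing, for such an \(F\), the constant-sequence presentation \(\jhat F\simeq jF\), and conversely by showing that compactness of \(F\) forces \(\mathrm{id}_F\in S\) via the lemma characterising strong continuity through \(\jhat\). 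This identification of the compact objects is the main obstacle. In particular, showing that \(\coS\) is actually compactly generated by these objects, rather than merely dualisable, needs a refinement argument: I would try to write every basic object \(\colim(F_0\to F_1\to\cdots)\) with \(S\)-transition maps as a filtered colimit of cosheaves of the type in (1) by shrinking open sets along a \(\Q\)-indexed exhaustion, similar to the definition of \(\mathbf{Nuc}^\infty\).

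For (2), I first observe that the functoriality of \(X\mapsto\Shv(X;\Sp)\) under proper \(f\), via \(f^*\), and under local homeomorphisms, via \(f_!=f_\natural\), is by strongly continuous functors in both cases. Contravariant functoriality of the internal Hom in the first variable then gives the two claimed variances. For (3), I use that the dual of a compactly generated category \(\Ind(\calA)\) is \(\Ind(\calA^{\op})\). The opposite of the category of cosheaves from (1) with values in \(\calD^\omega\) is identified with the analogous category with values in \((\calD^\vee)^\omega\simeq(\calD^\omega)^{\op}\), since the compactness condition on \(U\Subset V\) is self-dual. This yields \(\coS(X;\calD)^\vee\simeq\coS(X;\calD^\vee)\).
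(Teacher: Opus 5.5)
Your argument for (2) is fine, and the heart of (1) is right. Taking $\mathbf{E}=\Sp$ in the defining adjunction, the compact objects of $\coS(X;\calC)$ are exactly the strongly continuous functors $\Shv(X;\Sp)\to\calC$. These are the cosheaves sending the maps $j_!\mathbb{S}_U\to j_!\mathbb{S}_V$, with $U\subseteq K\subseteq V$ and $K$ compact, to compact maps. Your criterion ``$\mathrm{id}_F\in S$'' is the same statement in dualisable-core language, so the detour through the core is not needed.

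The genuine gap is that you read (1) as including compact generation of $\coS(X;\calC)$, and your proof of (3) rests on it via $\Ind(\mathcal{A})^\vee\simeq\Ind(\mathcal{A}^{\mathrm{op}})$. Compact generation is false in general. For $X=\mathrm{pt}$ one has $\coS(\mathrm{pt};\calC)=\calC$, which need not be compactly generated (e.g.\ $\calC=\Shv(\mathbb{R};\Sp)$). It fails even for $\calC=\Sp$. Take $X$ the Cantor set: every clopen $U$ satisfies $U\subseteq U\subseteq U$ with $U$ compact, so condition (1) forces each $F(U)$ to be a finite spectrum. A finite spectrum has only boundedly many nonzero direct summands, so such $F$ are finite sums of skyscrapers $\delta_x\otimes M$. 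These generate $\prod_{x\in X}\Sp$, whose $K_0$ is free on the set $X$. By contrast, the Borel--Moore formula for $K(\coS(X;\calC))$ stated later in the paper gives $K_0\cong\prod_{\mathbb{N}}\mathbb{Z}$, which is not free. This is the same phenomenon as for $\mathbf{Nuc}(\mathbb{Z}_p)$, so no shrinking or refinement argument can produce compact generation.

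Even granting compact generation, the duality step would not be justified. The objects in (1) take values in $\calD$, not in $\calD^\omega$; the constant cosheaf $U\mapsto\mathbb{S}[\Pi_\infty U]$ on $[0,1]$ is one of them. Moreover, the opposite of a category of $\calD$-valued cosheaves is a category of $\calD^{\mathrm{op}}$-valued sheaves. Turning that back into $\calD^\vee$-valued cosheaves requires a Verdier-type duality, which is the actual content of (3), not a formal consequence of the condition being ``self-dual''.

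A route that works for every dualisable $\calC$ avoids compact objects altogether. The assignment $\mathbf{A}\mapsto\mathbf{A}^\vee$, $F\mapsto(F^R)^\vee=(-)\circ F^R$ is a covariant symmetric monoidal autoequivalence of $\Prld$; note that $(-)\circ F^R$ has the colimit-preserving right adjoint $(-)\circ F$. Hence it preserves internal Homs. Verdier self-duality $\Shv(X;\Sp)^\vee\simeq\Shv(X;\Sp)$ then gives $\coS(X;\calC)^\vee\simeq\underline{\mathbf{Hom}}_{\Prld}(\Shv(X;\Sp),\calC^\vee)=\coS(X;\calC^\vee)$.
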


Note that by definition as an inner Hom, we can apply the forgetful functor $U: \Prld \to \Pr^{\mathrm{L}}$ which is strong symmetric monoidal to it and get a canonical interchange map
\[
U \underline{\mathrm{Hom}}_{\Prld}(\Shv(X;\Sp), \calC) = \underline{\mathrm{Hom}}_{\Pr^{\mathrm{L}}}(U\Shv(X;\Sp), U\calC) 
\]
that is a left adjoint functor
\begin{equation}\label{underlying}
\coS(X;\calC) \to \mathbf{coShv}(X; \calC) 
\end{equation}
which is not strongly continuous.

\begin{proposition}\label{left}
If $X$ is compact and has locally contractible shape, then 
this functor \eqref{underlying} has a left adjoint 
\[
i: \mathbf{coShv}(X; \calC)  \to \mathbf{coShv}(X;\calC)
\]
that is fully faithful. This functor sends a cosheaf $F$ satisfying the condition of Proposition \ref{prop_co})(1) to itself when considered as a compact object $\coS(X;\calC)^\omega$. 
\end{proposition}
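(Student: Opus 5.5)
The statement, Proposition \ref{left}, concerns the comparison functor \eqref{underlying}, which I will call $u \colon \coS(X;\calC) \to \mathbf{coShv}(X;\calC)$. My plan is to write down the left adjoint $i$ explicitly using the compact objects described in Proposition \ref{prop_co}(1). Then I will check the adjunction and full faithfulness on generators.

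First I identify both sides more concretely. Since $\Shv(X;\Sp)$ is dualisable, the inner Hom in $\Pr^L$ gives $\mathbf{coShv}(X;\calC) = \underline{\mathrm{Hom}}_{\Pr^L}(\Shv(X;\Sp),\calC) \simeq \Shv(X;\Sp)^\vee \otimes \calC$. By Proposition \ref{prop_co}(1), the category $\coS(X;\calC)$ is compactly generated by the cosheaves $F$ for which $F(U)\to F(V)$ is compact whenever $U \Subset V$; I will call these \emph{compactly nested}. Because $u$ is a left adjoint, it suffices to give a left adjoint to $u$ that is determined on a generating class. Since $u$ preserves colimits, any left adjoint $i$ would have to satisfy $\mathrm{Map}(iG, H) \simeq \mathrm{Map}(G, uH)$.

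The key step is to use $X$ compact with locally contractible shape to show that $\mathbf{coShv}(X;\calC)$ is generated under colimits by compactly nested cosheaves. On such $X$, every open cover can be refined by a cover by contractible-shape opens $U\Subset V$. The cosheaf $U\mapsto \Sigma^\infty_+\Pi_\infty(U)\otimes c$, for $c\in\calC^{\omega_1}$, and its restrictions to opens are then compactly nested after using a compact morphism $c\to c'$ in $\calC$. Compactness of $X$ ensures that finitely many such pieces suffice, so the relevant $F(X)$ stays compact.

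Next I define $i$ on these generators to send each such $F$ to itself, regarded as a compact object of $\coS(X;\calC)$, and extend by colimits. To check the adjunction I compare $\mathrm{Map}_{\coS}(F,H)$ with $\mathrm{Map}_{\mathbf{coShv}}(F,uH)$ for $F$ compactly nested. By Proposition \ref{prop_co}(1), the comparison map $u$ on such objects is the identity inclusion. Mapping spaces out of compact objects in $\coS$ should be computed as $\colim$ over ind-presentations of $H$, and that colimit agrees with the $\mathbf{coShv}$ mapping space because $F$ is compactly nested, via the telescope $\colim_{U\Subset V}$. This gives $u\circ i\simeq \mathrm{id}$ on generators, hence everywhere, which yields full faithfulness.

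I expect the main obstacle to be the first claim: that $u$ restricted to compact objects is fully faithful, so that $\mathrm{Map}_{\coS}(F,F')=\mathrm{Map}_{\mathbf{coShv}}(F,F')$. This requires understanding the inner Hom in $\Prld$, and it is exactly where local contractibility enters, through the diagonal-class argument for $D_X$ and the fact that compact morphisms in sheaves over contractible-shape opens are controlled stalkwise. I would first attempt it for $\calC=\Sp$ using local systems, and then tensor with $\calC$.
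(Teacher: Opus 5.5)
The step that fails is your adjunction check. You assert that for every compactly nested $F$ the mapping spectrum $\mathrm{Map}_{\coS}(F,H)$, computed as a colimit over an ind-presentation of $H$, agrees with $\mathrm{Map}_{\mathbf{coShv}}(F,uH)$ ``because $F$ is compactly nested''. That is exactly the claim that $F$ is a compact object of $\mathbf{coShv}(X;\calC)$. Indeed, if $i\dashv u$ with $i$ fully faithful and $iF\simeq F\in\coS^\omega$, then $\mathrm{Map}_{\mathbf{coShv}}(F,-)\simeq\mathrm{Map}_{\coS}(F,i(-))$ commutes with filtered colimits. Compact nestedness is a condition on values and restriction maps in $\calC$, and it does not imply this.

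Here is a counterexample. Take $X=[0,1]$, $\calC=\Sp$, $x=\tfrac12$, and let $F_x(U)=\mathbb{S}$ if $x\in U$ and $F_x(U)=0$ otherwise.
\begin{itemize}
\item $F_x$ is the stalk functor $x^*$. Its right adjoint $x_*=x_!$ preserves colimits, so $x^*$ is strongly continuous. Hence $F_x$ is compactly nested and compact in $\coS$.
\item Under Verdier duality $G\mapsto(U\mapsto\Gamma_c(U;G))$, $F_x$ corresponds to the skyscraper $x_*\mathbb{S}$. This object corepresents $x^!H=\mathrm{fib}(\Gamma(X;H)\to\Gamma(X\setminus x;H))$.
\item Put $p_m=\tfrac12-\tfrac1{m+2}$ and $H_n=\bigoplus_{m\ge n}(p_m)_*\mathbb{S}$, with projections as transition maps. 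Then $\colim_nH_n=0$.
\item On the other hand, $\Gamma(X;-)$ commutes with filtered colimits and $\Gamma([0,x);H_n)=\prod_{m\ge n}\mathbb{S}$. Therefore $\colim_n x^!H_n\simeq\Sigma^{-1}\colim_n\prod_{m\ge n}\mathbb{S}\neq0$.
\end{itemize}
So $F_x$ is not compact in $\mathbf{coShv}$, and your adjunction formula fails for it. In fact no fully faithful left adjoint can satisfy $i(F_x)\simeq F_x$. The last sentence of the proposition is only correct for cosheaves that are compact in $\mathbf{coShv}(X;\calC)$.

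The same example defeats your construction of $i$. A colimit-preserving functor cannot be defined as ``identity on a generating class, extended by colimits'' unless that class consists of compact generators of $\mathbf{coShv}(X;\calC)$. But $\mathbf{coShv}([0,1];\Sp)\simeq\Shv([0,1];\Sp)$ is not compactly generated. Two further points:
\begin{itemize}
\item Existence of $i$ is equivalent to $u$ preserving limits, and your argument never addresses this.
\item Full faithfulness of $u$ on compact objects, which you single out as the main obstacle, is formal. Compact objects of $\coS$ are constant ind-objects, so this is not where the hypotheses on $X$ are used.
\end{itemize}

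What is missing is a concrete model for $u$. Combining the universal property of the inner Hom with Proposition \ref{prop:dualisable-core} gives
\[
\coS(X;\calC)\simeq(\mathbf{coShv}(X;\calC),S)^{\mathrm{dual}}\subseteq\mathbf{Ind}(\mathbf{coShv}(X;\calC)^{\omega_1}).
\]
Here $S$ is the ideal of transformations $\phi\to\psi$ such that $\phi(a)\to\psi(b)$ is compact in $\calC$ for every compact morphism $a\to b$ in $\Shv(X;\Sp)$. The compactly nested $F$ of Proposition \ref{prop_co}(1) are those with $\mathrm{id}_F\in S$, and $u$ is the restriction of the colimit functor $k$.

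Since $\mathbf{coShv}(X;\calC)\simeq\Shv(X;\Sp)^\vee\otimes\calC$ is dualisable, $k$ has the fully faithful left adjoint $\jhat$. Everything then reduces to checking that $\jhat$ lands in $\coS$, i.e.\ that compact morphisms of $\mathbf{coShv}(X;\calC)$ lie in $S$. This holds once the evaluation pairing of $\Shv(X;\Sp)$ is strongly continuous. For compact $X$ and after Verdier duality this pairing is $(F,G)\mapsto\Gamma(X;F\otimes G)$, and it is strongly continuous exactly when $p^!\colon\Sp\to\Shv(X;\Sp)$ preserves colimits. This is where compactness and local contractibility of $X$ must enter, for example through finiteness of $\Gamma_c(U;\mathbb{S})\to\Gamma_c(V;\mathbb{S})$ for $U\subset\!\subset V$.

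With that in place, $i=\jhat$ is automatically fully faithful and strongly continuous. For $G$ compact in $\mathbf{coShv}(X;\calC)$ one has $\jhat(G)=j(G)$, which is the correct form of ``$i$ sends $F$ to itself''.
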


Note that $i$ is then strongly continuous. This functor is in some sense the sheaves variant of the functor
\[
\calD(\Z)^\wedge_p \to \mathbf{Nuc}(\Z_p)
\]
and we like to think of it like a `completion'.
\begin{definition}
For a locally compact Hausdorff space $X$ and a dualisable, stable $\infty$-category $\calC$ we define 
\[
\coS_{\mathrm{cs}}(X;\calC) := \colim_{K \subseteq X} \coS(K;\calC)  \in \Prld
\]
where the colimit ranges over all compact subsets of $X$.
\end{definition}

\begin{proposition}
The category $\coS_{\mathrm{cs}}(X;\calC)$ is covariantly functorial in all continuous maps (as an object of $\Prld$) and the compact objects can be described as 
those cosheaves $F$ on $X$ that are supported on a compact subset $K \subseteq X$ and there satisfy the condition that 
\[
U \subset \!\subset V \Rightarrow F(U) \to F(V)   \text{   compact}.
\]
\end{proposition}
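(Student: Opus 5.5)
Two things need proving: covariant functoriality of $\coS_{\mathrm{cs}}(X;\calC)$ in arbitrary continuous maps, and the description of its compact objects. Both will come from the definition as a filtered colimit in $\Prld$ together with Proposition~\ref{prop_co}.

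\textbf{Functoriality.} Let $g \colon X \to Y$ be continuous and $K \subseteq X$ compact. Then $g(K)$ is compact and $g|_K \colon K \to g(K)$ is a map of compact Hausdorff spaces, hence proper. By Proposition~\ref{prop_co}(2) it induces a morphism $\coS(K;\calC) \to \coS(g(K);\calC)$ in $\Prld$. Composing with the structure map into $\coS_{\mathrm{cs}}(Y;\calC)$ gives a morphism $\coS(K;\calC)\to \coS_{\mathrm{cs}}(Y;\calC)$ for each $K$. These are compatible with inclusions $K \subseteq K'$, because proper pushforward is functorial for the composite $K \to K' \to g(K')$. Passing to the colimit over compact $K$ gives the map $\coS_{\mathrm{cs}}(X;\calC)\to\coS_{\mathrm{cs}}(Y;\calC)$. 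Functoriality in $g$ follows from functoriality of proper pushforward under composition. To make this coherent rather than only on objects, I would observe that $X \mapsto \coS(X;\calC)$ is a functor on compact Hausdorff spaces with all continuous maps. Then $\coS_{\mathrm{cs}}$ is its left Kan extension along the inclusion of compact Hausdorff spaces into locally compact Hausdorff spaces, since the compact subsets are cofinal in the comma category $\mathrm{CHaus}_{/X}$.

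\textbf{Compact objects.} I would use that the transition functors $\coS(K;\calC)\to\coS(K';\calC)$ for $K\subseteq K'$ are pushforwards along closed embeddings. These should be fully faithful in $\Prld$, with a strongly continuous right adjoint given by restriction to $K$. For a filtered colimit in $\Prld$ along strongly continuous functors, the colimit is computed via $\mathbf{Ind}$ of the colimit of the compact objects (equivalently, of the $\omega_1$-compact objects). Hence the compact objects of the colimit are the filtered colimit of the categories $\coS(K;\calC)^\omega$, up to idempotent completion. Idempotent completion is harmless here, because a retract of a cosheaf supported on $K$ is again supported on $K$ and again satisfies the compactness condition.

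It remains to identify this union with the cosheaves described in the statement. By Proposition~\ref{prop_co}(1), a compact object of $\coS(K;\calC)$ is a cosheaf on $K$ with $F(U)\to F(V)$ compact whenever $U \subset\!\subset V$ in $K$. Pushforward along the inclusion $K\hookrightarrow X$ gives $U \mapsto F(U\cap K)$, which is a cosheaf on $X$ supported on $K$. If $U\subset\!\subset V$ in $X$, then $U\cap K\subseteq \overline U\cap K\subseteq V\cap K$ with $\overline U \cap K$ compact, so the compactness condition on $X$ follows from the one on $K$. Conversely, a cosheaf on $X$ supported on a compact $K$ restricts to a cosheaf on $K$. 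Every pair $U'\subset\!\subset V'$ of opens in $K$ can be realised as $U\cap K$, $V\cap K$ with $U\subset\!\subset V$ in $X$, using local compactness and the Hausdorff property to thicken compact sets. So the condition transfers back.

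\textbf{Main obstacle.} The step I expect to be hardest is the claim that filtered colimits in $\Prld$ along fully faithful strongly continuous functors have compact objects equal to the union of the compact objects. This needs Efimov's description of colimits in $\Prld$ (computed in $\Pr^L$, with the right adjoints commuting with filtered colimits). I would first check it on compact generators via the $\mathbf{Ind}$-description.
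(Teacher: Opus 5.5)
The paper states this proposition without proof, so I am judging your argument on its own terms. Several parts are correct:

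\begin{itemize}
\item The functoriality part works. The Kan extension along $\mathrm{CHaus}\subset\mathrm{LCH}$ is well defined, because the compact subsets containing the image of $T\to X$ have a minimum, so the cofinality claim holds.
\item The transition functors are indeed fully faithful on compact objects, since on compacts they are precomposition with the localisation $i^*$.
\item Your translation between the compactness condition on $K$ and on $X$ is right, including the thickening argument and the remark about retracts.
\end{itemize}

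The gap is the key identification $\coS_{\mathrm{cs}}(X;\calC)^\omega\simeq(\colim_K\coS(K;\calC)^\omega)^{\natural}$. You justify it by saying that a filtered colimit in $\Prld$ is $\mathbf{Ind}$ of the colimit of the compact objects. That holds only if every $\coS(K;\calC)$ is compactly generated. You have not shown this, and it is false for a general dualisable $\calC$, since already $\coS(\mathrm{pt};\calC)\simeq\calC$.

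The parenthetical about $\omega_1$-compact objects does not repair this. A dualisable category is only a retract of $\mathbf{Ind}(\calC^{\omega_1})$, and $\omega_1$-compact objects of a filtered colimit need not come from a stage. For example, in $\colim_n\Sp^{n+1}\simeq\prod_{\mathbb N}\Sp$ the constant family $(\mathbb S)_n$ is $\omega_1$-compact but is not a retract of anything from a finite stage.

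Your remark that the transition functors have strongly continuous right adjoints, given by restriction, would rescue the step: the $G_K$ would then preserve compacts. But the remark is false. Take $\{0\}\subset[0,1]$ and $\calC=\Sp$. The right adjoint sends a compact $G$ to $\lim_{\epsilon\to 0}G([0,\epsilon))$. There are compact $G$ for which this limit is not compact: take the Verdier dual of a suitable $j_!\mathcal H$, where $\mathcal H$ is constructible on $(0,1]$ along the points $1/n$. One can arrange $G([0,\epsilon))\simeq\Omega\mathbb S/p^n$ for $\epsilon\in(1/(n+1),1/n)$, with reduction maps as transitions. Then the limit is $\Omega\mathbb S^\wedge_p$, which is not compact.

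What is missing is a genuine proof that compact objects of a filtered colimit in $\Prld$ come, up to retracts, from compact objects of the stages. One route:

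\begin{itemize}
\item Let $F_K$ be the structure maps and $G_K$ their colimit-preserving right adjoints. Then $\mathrm{id}\simeq\colim_K F_KG_K$ and $G_KF_K\simeq\colim_{K'}G_{KK'}F_{KK'}$.
\item For compact $x$, write $G_Kx=\colim_a z_a$ along compact maps. Then $\mathrm{id}_x$ factors through some $F_K(z_0)$.
\item Precompose the resulting idempotent with two compact transition maps $z_0\to z_1\to z_2$. Testing compactness of these maps against $\colim_{K'}G_{KK'}F_{KK'}z_0$ lifts the idempotent to a compact endomorphism of $F_{KK'}(z_0)$.
\item The same trick shows this endomorphism becomes idempotent at a later stage $K''$. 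Its splitting is a compact object of $\coS(K'';\calC)$ mapping to $x$.
\end{itemize}

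With this in place, the rest of your argument goes through.
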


\begin{theorem}
Assume $X$ is countable at $\infty$, then
$M\coS(X;\calC) = f_*f^! M\calC$ and in particular K-theory is given by locally finite homology of $X$:
\[
K(\coS(X;\calC)) = f_*f^! K\calC \ .
\]
Without assumptions on $X$ we have that 
$M\coS_{\mathrm{cs}}(X;\calC)  = f_!f^! M\calC$ and  and in particular $K$-theory is given by homology of $X$:
\[
K(\coS_{\mathrm{cs}}(X;\calC)) = f_!f^! K\calC = \Pi_\infty X \otimes K\calC
\]
\end{theorem}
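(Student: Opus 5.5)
We compute the motive of $\coS(X;\calC)$ through the universal property of the inner Hom in $\Prld$ and Corollary \ref{cor_kk}. The idea is that the motive functor $M\colon \Prld \to \NcMot$ should be compatible with internal Homs out of $\Shv(X;\Sp)$. Concretely, we want the comparison map
\[
M\,\underline{\mathrm{Hom}}_{\Prld}(\Shv(X;\Sp),\calC) \to \underline{\mathrm{Hom}}_{\NcMot}(M\Shv(X;\Sp), M\calC)
\]
to be an equivalence. This map is adjoint to the evaluation $\coS(X;\calC)\otimes \Shv(X;\Sp)\to \calC$, using that $M$ is symmetric monoidal. Corollary \ref{cor_kk} identifies the target with $f_*f^!M\calC$, and $K$-theory commutes with $f_*f^!$ because $K$ is corepresented by $M\Sp$. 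Together these give the statement about locally finite homology.

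The plan is to check the comparison first for $X$ compact, and then to pass to general $X$ countable at $\infty$.

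\textbf{Compact case.} Both sides should satisfy the same descent properties in $X$:
\begin{itemize}
\item $X\mapsto \coS(X;\calC)$ turns closed covers $X=A\cup B$ into pullbacks, since $\Shv$ sends them to pushouts in $\Prld$ and inner Hom turns these into limits. These limits should be built from Verdier sequences, so $M$ preserves them.
\item $M\coS$ turns cofiltered limits of compact spaces into colimits. This would use Proposition \ref{prop_co}(1), which describes compact objects as cosheaves with compact transition maps, and should be a dualisable colimit.
\end{itemize}
By a conservativity argument in the style of Theorem \ref{thm:conservativity}, with trivial group, both sides are then determined by their value at a point, where both equal $M\calC$.

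\textbf{General case.} For $X$ countable at $\infty$, write $X=\bigcup K_n$ with $K_n\subseteq \mathrm{int}K_{n+1}$. Restriction then expresses $\coS(X;\calC)$ as the inverse limit of $\coS(K_n;\calC)$ up to the open pieces. The target $f_*f^!M\calC$, which is locally finite homology, is likewise a $\lim^1$-type limit over the $K_n$. We would compare the two using the Milnor sequence together with the Verdier sequences that relate $X$, $K_n$ and $X\setminus K_n$.

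\textbf{Compactly supported version.} The second claim is formal from the first. $M$ is finitary, so
\[
M\coS_{\mathrm{cs}}(X;\calC) = \colim_K M\coS(K;\calC) = \colim_K f_*f^!M\calC|_K = f_!f^!M\calC .
\]
Identifying this with $\Pi_\infty X\otimes K\calC$ on $K$-theory is the shape formula stated in Section \ref{sec81}.

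The main obstacle is the countable limit in the general case. $M$ does not preserve limits in general, so we need Verdier sequences $\coS(X)\to\prod_n\coS(K_n)\to\prod_n\coS(K_n)$ that realise the Milnor telescope. We also need $M$ to commute with these countable products, which are dualisable products. I would try to prove this last point by showing that $\coS(X;\calC)$ is the dualisable inverse limit along strongly continuous restriction functors with fully faithful adjoints, and then invoking Efimov's theorem that localizing invariants commute with such sequential limits.
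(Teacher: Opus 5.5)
The paper states this theorem without proof, so your sketch has to be judged on its own. The outer frame is reasonable. Proving that the comparison map $M\coS(X;\calC)\to\underline{\mathrm{Hom}}_{\NcMot}(M\Shv(X;\Sp),M\calC)\simeq f_*f^!M\calC$ is an equivalence is the right goal. The passage to $\coS_{\mathrm{cs}}$, via the filtered colimit over compacta and $\colim_K\Gamma_K(X,-)\simeq f_!$, is fine once the compact case is known. But the compact case carries all the content, and it is not established; two of its ingredients are misstated.

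\emph{Cofiltered limits.} Colimits in $\Prld$ are computed in $\mathbf{Pr}^L$, and $\Shv(\lim_i X_i;\Sp)\simeq\colim_i\Shv(X_i;\Sp)$ for cofiltered limits of compact spaces. So the inner Hom gives $\coS(\lim_i X_i;\calC)\simeq\lim^{\mathrm{dual}}_i\coS(X_i;\calC)$: cofiltered limits of spaces go to dualisable inverse limits, not colimits. That $M$ turns such sequential limits into limits is Efimov's deep theorem on inverse limits, not a consequence of Proposition~\ref{prop_co}(1).

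\emph{Closed covers.} The strongly continuous functors here are the restrictions $i^*$. Closed pushforwards are in general not strongly continuous: for $i\colon\{0\}\hookrightarrow[0,1]$, $i^!$ kills each finite sum of skyscrapers at the points $1/n$ but not their filtered colimit. So $\Shv$ turns $X=A\cup B$ into the pullback $\Shv(A)\times_{\Shv(A\cap B)}\Shv(B)$, not a pushout, and $\underline{\mathrm{Hom}}_{\Prld}(-,\calC)$ does nothing with it. You would instead have to use the cofibre sequence $\Shv(X\setminus A)\xrightarrow{j_!}\Shv(X)\xrightarrow{i^*}\Shv(A)$. You would then need to prove that the resulting fibre sequence $\coS(A;\calC)\to\coS(X;\calC)\to\coS(X\setminus A;\calC)$ in $\Prld$ is a Verdier sequence. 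This is not formal: $M$ only sees Verdier sequences, and the splitting adjoints $i_*$, $j^*$ of the recollement are in general not morphisms of $\Prld$. Your phrase ``should be built from Verdier sequences'' is exactly this missing lemma.

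\emph{Conservativity.} The conservativity step does not apply. Theorem~\ref{thm:conservativity} concerns contravariant functors with open descent and compact cofiltered codescent (cofiltered limits of spaces go to filtered colimits), valued in a dualisable category, and its proof exploits the behaviour of filtered colimits there. Your functor $X\mapsto M\coS(X;\calC)$ is covariant in proper maps and sends cofiltered limits to limits. It has the formal shape of Steenrod/Borel--Moore homology, and the argument does not simply dualise. For such functors you need genuinely additional input, for example homotopy invariance on finite polyhedra as in Milnor's uniqueness theorem for Steenrod homology. Here that means actually proving $M\coS([0,1];\calC)\simeq M\calC$, and nothing in your sketch supplies this.

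\emph{Non-compact step.} As written it does not make sense: $\coS(X)\to\prod_n\coS(K_n)\to\prod_n\coS(K_n)$ cannot be a Verdier sequence realising a Milnor telescope, because $1-\mathrm{shift}$ is not a functor of categories. Use the open exhaustion $U_n=\mathrm{int}\,K_{n+1}$ instead. Then $\Shv(X)\simeq\colim_n\Shv(U_n)$ in $\Prld$ along $j_!$, hence $\coS(X;\calC)\simeq\lim^{\mathrm{dual}}_n\coS(U_n;\calC)$. Efimov's theorem then gives $M\coS(X;\calC)\simeq\lim_n M\coS(U_n;\calC)$ directly, matching $\Gamma(X,f^!M\calC)\simeq\lim_n\Gamma(U_n,f^!M\calC)$. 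That is roughly what your last sentence aims at. But it only reduces the problem to relatively compact opens, and these still require the compact case together with the Verdier lemma above.
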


\subsection{The assembly map as a functor}

Now that we have modelled the source of the assembly map by means of $K$-theory of a category $\coS_{\mathrm{cs}}(X;\calC)$ we would like to also describe the assembly map itself as $K$-theory of a functor. We have already done so in terms of $KK$-theory. 
 To this end, recall that the target would have to be the category $\calC^X = \Fun(X, \calC)$ of functors from $X$ to $\calC$, which in the case $X = BG$ just reduces to objects of $\calC$ with a $G$-action. 

\begin{theorem}[Bartels--Nikolaus]
If $X$ has locally contractible shape then there is a natural, strongly continous functor
\[
A: \coS_{\mathrm{cs}}(X;\calC)) \to \mathbf{Fun}(X, \calC)
\]
such that upon taking $K$-theory it induces the assembly map 
\[
\Pi_\infty X \otimes K(\calC) \to K(  \mathbf{Fun}(X, \calC))
\]
Moreover $A$ is a Verdier quotient, i.e. a localization on a a full subcategory that can be explicitly described. 
\end{theorem}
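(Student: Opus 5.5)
We will build $A$ from the diagonal class $D_X$ of Section~\ref{sec81}, and then match its effect on $K$-theory with the $\mathbf{KK}^\alg$-description of assembly from Definition~\ref{def_As_eins} and the proposition after it.

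\textbf{Compact case.} Let $X$ be compact with locally contractible shape. We regard $D_X$ as a strongly continuous functor $\Sp \to \Shv(X,\Sp)\otimes \Sp^X$. Tensoring with $\calC$ and passing to the internal Hom, evaluation gives
\[
\coS(X;\calC)\otimes \Shv(X;\Sp) \to \calC .
\]
Precomposing with $D_X$ produces
\[
A_X \colon \coS(X;\calC) \to \coS(X;\calC)\otimes \Shv(X;\Sp)\otimes \Sp^X \to \calC\otimes\Sp^X = \Fun(X,\calC).
\]
Each step is a composite of strongly continuous functors, because tensor products and evaluation maps in $\Prld$ are strongly continuous. So $A_X$ is strongly continuous.

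\textbf{Passage to general $X$.} Next we check naturality in proper maps $K\subseteq K'$ between compact subsets, using Proposition~\ref{prop_co}(2). We also need that $D_X$ restricts along $K \subseteq X$ to the pushforward along $\Sp^K\to\Sp^X$ of $D_K$, which follows from the path-space description $(x,y)\mapsto \Sigma^\infty_+P_{x,y}$. Taking the colimit in $\Prld$ over compact $K\subseteq X$ then gives
\[
A \colon \coS_{\mathrm{cs}}(X;\calC)\to \Fun(X,\calC).
\]
Two points need care here. Intermediate compacta $K$ need not have locally contractible shape, so we restrict to a cofinal system of compact ANR neighbourhoods; if these are not available, we first define the functor $\coS(K;\calC) \to \Fun(X,\calC)$ directly by pushing $D_X$ forward.

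\textbf{Identification on $K$-theory.} By construction, $K(A_X)$ is obtained from $M\coS(X;\calC)=\underline{\mathrm{Hom}}_{\NcMot}(M\Shv(X),M\calC)$ by tensoring with $\Sp^X$ and pulling back along $D_X$. This is the same recipe as Definition~\ref{def_As_eins} carried out at the level of internal Homs, and taking $K = \mathbf{Map}(M\Sp,-)$ recovers that map exactly. The proposition after Definition~\ref{def_As_eins} identifies it with assembly, and the theorem on $M\coS_{\mathrm{cs}}$ identifies the source with $\Pi_\infty X\otimes K(\calC)$. We still have to check that the identifications of sources are compatible. This reduces to the counit of the internal-Hom adjunction in $\Prld$ inducing, on motives, the counit of $\underline{\mathrm{Hom}}_{\NcMot}$. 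That holds because $M$ is symmetric monoidal.

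\textbf{Verdier quotient (main obstacle).} Strong continuity gives a right adjoint $R$ that preserves colimits. We then need to show two things: that $R$ is fully faithful, i.e.\ that the counit $AR\to \mathrm{id}$ is an equivalence, and that $\ker A$ is an explicit full subcategory. It suffices to check the counit on generators of $\Fun(X,\calC)$ of the form $x_!c$ for $x \in X$ and $c\in\calC$.

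First, $R(x_!c)$ should be the cosheaf $U\mapsto \Sigma^\infty_+\Pi_\infty(U)\otimes c$ "at $x$". This comes from the local-system description in the theorem of Lurie, together with Proposition~\ref{left}, whose fully faithful left adjoint $i$ is the completion step. Second, $A$ applied to $R(x_!c)$ should give back $x_!c$. This follows because the cosheaf homology of a locally constant cosheaf over a contractible neighbourhood recovers the stalk, using local contractibility of the shape.

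The kernel should consist of those cosheaves whose "local homology at every point" vanishes, meaning that the colimit over neighbourhoods of the induced maps to the germs is zero. I expect this counit computation to be the hard part, because it requires controlling compact morphisms of cosheaves uniformly in $x$.
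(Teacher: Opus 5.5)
Your construction of $A$ for compact $X$ is the paper's. By adjunction, $(\mathrm{ev}\otimes\mathrm{id})\circ(\mathrm{id}\otimes D_X)$ is exactly the composite $\underline{\mathbf{Hom}}(D_X,-)\circ(-\otimes\Sp^X)$. Your reduction to compact subsets and your identification of $K(A)$ with assembly via Definition~\ref{def_As_eins} also follow the paper.

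The gap is the Verdier-quotient claim: you never show that the right adjoint $R$ of $A$ is fully faithful.
\begin{itemize}
\item Your formula for $R(x_!c)$ is only a guess. The justification goes through the functor $i$ of Proposition~\ref{left}, which is the \emph{left} adjoint of the canonical functor $u\colon\coS(X;\calC)\to\mathbf{coShv}(X;\calC)$ of \eqref{underlying}. You give no argument that the right adjoint $R$ can be computed from $i$.
\item The counit check $AR(x_!c)\simeq x_!c$ is then deferred as ``the hard part''.
\item Your pointwise description of $\ker A$ is not correct, as explained below.
\end{itemize}

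The missing idea is that no computation on generators is needed. As a left adjoint, $A$ factors as $A\simeq L\circ u$. Here $L\colon\mathbf{coShv}(X;\calC)\to\calC^X$ is the left adjoint of the inclusion of locally constant cosheaves, which are identified with $\calC^X$ by Lurie's theorem. The paper simply asserts this factorization. One way to obtain it is to apply naturality of the interchange map $U\underline{\mathbf{Hom}}_{\Prld}\to\underline{\mathbf{Hom}}_{\mathbf{Pr}^L}$ to your defining composite; this map is an equivalence when the source is $\Sp$. One then identifies $G\mapsto (G\otimes\Sp^X)(D_X)$ on ordinary cosheaves with $L$.

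Given the factorization, the argument is formal:
\begin{itemize}
\item $u$ has the fully faithful left adjoint $i$, so the adjoint triple $i\dashv u\dashv u^R$ forces $u^R$ to be fully faithful. This is the same fact used in the proof of Theorem~\ref{thm:dualisable-char}.
\item $L^R$ is fully faithful by Lurie's theorem.
\item Hence $R\simeq u^R\circ L^R$ is fully faithful, and $AR\simeq L\,u\,u^R L^R\simeq LL^R\simeq\mathrm{id}$.
\end{itemize}
No uniform control of compact morphisms is needed. The explicit kernel is $\{F : L(uF)\simeq 0\}$.

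This is a global condition, not a pointwise one. Take $X=[0,1]$. Then $A\circ i\simeq L$ is $G\mapsto G([0,1])$. Let $\delta_0$ be the skyscraper cosheaf at $0$ and $\underline{\mathbb S}$ the constant cosheaf $U\mapsto\Sigma^\infty_+\Pi_\infty(U)$. Then $i(\mathrm{cofib}(\delta_0\to\underline{\mathbb S}))$ lies in $\ker A$, since its total cosections vanish. Yet away from $0$ its underlying cosheaf coincides with $\underline{\mathbb S}$, and $i(\underline{\mathbb S})\notin\ker A$.
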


We note that since the assembly map and also $A$ are natural and everything is a filtered colimit of restrictions to compact $K \subseteq X$, it is in fact enough to construct this functor for compact $X$. We will explain how this is done now. The construction is an exact translation of Definition \ref{def_As_eins}:
%
%
%
We define the assembly map 
\[
A: \coS_{\mathrm{cs}}(X;\calC)) =  \underline{\mathbf{Hom}}_{\Prld}(\Shv(X;\Sp), \calC)  \to \mathbf{Fun}(X, \calC)
\]
as the composition
\begin{align*}
\underline{\mathbf{Hom}}_{\Prld}(\Shv(X), \mathbf{C}) &\xto{- \otimes \Sp^X}  \underline{\mathbf{Hom}}_{\Prld}(\Shv(X) \otimes \Sp^X, \mathbf{C} \otimes \Sp^X) \\
&   \xto{ \underline{\mathbf{Hom}}(D_X, -)}  \underline{\mathbf{Hom}}_{\Prld}(\Sp, \mathbf{C} \otimes \Sp^X) = \mathbf{C}^X.
\end{align*}
%
where for the last step we have used that $\calC \otimes \Sp^X \simeq \calC^X$ as well as the fact that the inner hom out of the tensor unit (i.e. the category of spectra) is equivalent to the target. 
Clearly, this is just a translation of the description of the assembly map from Section \ref{sec81} to the internal hom, so clearly this map induces the assembly map 
on homology.

There is another and more digestible description of the assembly map that we would like to explain now. Namely one has a factorization
\begin{equation}\label{factorization}
\begin{tikzcd}
\coS(X;\calC) \arrow{rd}{A} \arrow{d} & \\
 \mathbf{coShv}(X; \calC) \arrow{r} &  \calC^X
\end{tikzcd}
\end{equation}
where the left map is the canonical map \ref{underlying} which has a fully faithful right adjoint. 
The idea is to describe the right adjoint of $A$ which is a fully faithful inclusion. The lower horizontal one is a functor that we would like to describe 
now. It also has a fully faithful right adjoint, namely the right adjoint includes  $\calC^X$ as locally constant cosheaves into 
$\mathbf{coShv}(X; \calC)$. Here a cosheaf $F$ is called locally constant, if there is an open cover, such that the restriction of $F$ 
to each of the opens is constant. But note that being constant for a cosheaf means, it is the cosheafification of a constant copresheaf. 
We warn the reader that under Verdier duality this does not correspond to constant sheaves (it rather corresponds to sheaves of the form $t^!(c)$ for some object $c \in \calC$ for $t: X \to \mathrm{pt}$. Thus the left adjoint functor is a functor that universally turns a cosheaf $F$ into a locally constant cosheaf. 

\begin{warning}
The factorization \ref{factorization} is only a factorization as left adjoint functors, not as strongly continuous functors. Thus it does not induce a factorization on the level of $K$-theory. It is however true, that the lower map in this factorization is strongly continuous and that the left vertical map has a strongly continuous left adjoint as started in Proposition \ref{left} . That leads to a commutative diagram
\[
\begin{tikzcd}
\coS(X;\calC) \arrow{rd}{A} & \\
 \mathbf{coShv}(X; \calC) \arrow{u}{i} \arrow{r} &  \calC^X
\end{tikzcd}
\]
of strongly continuous functors with the same lower horizontal map. If we think of $i$ as exhibiting $\coS(X;\calC)$ as some sort of completion of $\mathbf{coShv}(X; \calC)  $ (this is a purely moral statement) then the assembly map is simply the map that makes a cosheaf universally locally constant. 
\end{warning}

\end{document}